\newtheorem{thm}{Theorem}[section]
\newtheorem{cor}[thm]{Corollary}
\newtheorem{lem}[thm]{Lemma}
\newtheorem{prop}[thm]{Proposition}
\newtheorem{defn}[thm]{Definition}
\newtheorem{rem}[thm]{Remark}
\newtheorem{crit}[thm]{Criterion}
\newtheorem{nota}[thm]{Notation}
\newtheorem{fact}[thm]{Fact}
\def\fA{\mathfrak{A}}
\def\rA{\mathrm{A}}
\def\bA{\mathbb{A}}
\def\fB{\mathfrak{B}}
\def\bC{\mathbb{C}}
\def\det{\mathrm{det}}
\def\F{\mathbb{F}}
\def\rG{\mathrm{G}}
\def\GL{\mathrm{GL}}
\def\Hom{\mathrm{Hom}}
\def\ind{\mathrm{ind}}
\def\rL{\mathrm{L}}
\def\max{\text{max}}
\def\rM{\mathrm{M}}
\def\rN{\mathrm{N}}
\def\rP{\mathrm{P}}
\def\cP{\mathcal{P}}
\def\rR{\mathrm{R}}
\def\res{\mathrm{res}}
\def\SL{\mathrm{SL}}
\def\rU{\mathrm{U}}
\begin{document}





\hypersetup{							
pdfauthor = {Peiyi Cui},			
pdftitle = {mod l blocks of SL_n},			
}					
\title{$\ell$-modular blocks of $\mathrm{Rep}_k(\mathrm{SL_n}(F))$}
\author{Peiyi Cui \footnote{peiyi.cui@amss.ac.cn, Morningside Center of Mathematics, No.55, Zhongguancun East Road, Beijing, 100190, China.}}

\date{\vspace{-2ex}}
\maketitle

 \begin{abstract}
Let $F$ be a non-archimedean local field with a residual characteristic $p$, and $k$ an algebraically closed field with characteristic $\ell$, where $\ell\neq p$. Let $\mathrm{Rep}_k(\mathrm{SL}_n(F))$ be the category of smooth $k$-representations of $\mathrm{SL}_n(F)$. In this work, we establish the block decomposition of $\mathrm{Rep}_k(\mathrm{SL}_n(F))$ under the condition that $p$ does not divide the order of the Weyl group of $\mathrm{SL}_n(F)$.
 \end{abstract}

\tableofcontents

\section{Introduction}
\subsection{Backgrounds}
\subsubsection{History}
The block decomposition, also known as Bernstein decomposition, has been established in \cite{BD84}. It provides a finest decomposition of the category of smooth complex representations of a $p$-adic group into a direct product of some full subcategories. The involved full subcategories are called the blocks of this category. The structure of the blocks and their center attract a lot of interest: each block is equivalent to the category of modules of an affine Hecke algebra, and the center is isomorphic to the center of this algebra.

In the late 1980s, Vignéras suggested studying the $\ell$-modular representations of $p$-adic groups, which are representations defined over a vector space of an algebraically closed field with positive characteristic different from $p$. Due to the similarity between complex representations and $\ell$-modular representations in many of their properties, it was always considered that the latter is a generalization of the former. However, with the emergence of cuspidal but non-supercuspidal representation in \cite{V89}, people gradually realized that they have many differences. One of the most significant differences is the block decomposition of the category of smooth $\ell$-modular representations, which has been unknown for general $p$-adic groups so far. It has been established for general linear groups by Vignéras in \cite{V2}, and for their inner forms by Sécherre and Stevens in \cite{SS}. In both cases, the blocks have a similar formulation to the complex setting. With the discovery of an irreducible $\ell$-modular representation whose supercuspidal support is not unique in \cite{Da}, and the construction of projective generators of blocks in \cite{Da12} and \cite{Helm}, we realized that the formulation in \cite{V2} and \cite{SS} is impossible for general $p$-adic groups. Beyond that, there is essential difference comparing to complex setting. A block is equivalent to the category of the modules of an algebra, which is an affine Hecke algebra for complex representations, but is more complicates in $\ell$-modular setting, even for the two known cases in \cite{V2} and \cite{SS}. 

In this work, we study the blocks of the category of smooth $\ell$-modular representations of special linear groups under the tameness condition. Let $F$ be a non-archimedean local field with a residual characteristic $p$, and let $k$ be an algebraically closed field with characteristic $\ell$ such that $\ell \neq p$. Denote $\rG$ as a Levi subgroup of $\mathrm{GL}_n(F)$, and $\rG' = \rG \cap \mathrm{SL}_n(F)$. We assume that $p$ does not divide the order of the Weyl group $\vert W_{\rG'}\vert$ of $\rG'$ (the tameness condition). Let $\mathrm{Rep}_k(\rG')$ be the category of smooth $k$-representations of $\rG'$. In the last example of \cite{C3}, we already know that the formulation of a block of $\mathrm{Rep}_k(\rG')$ differs both from the complex setting and the case of $\mathrm{Rep}_k(\rG)$. In this work, we will provide a complete construction of the blocks of $\mathrm{Rep}_k(\rG')$, and we will construct a projective generator for each block. 

It is worth noticing, projective generators play an important role in two aspects. On one hand, when $\ell\neq 0$, a block is still equivalent to the category of modules of the endomorphism algebra of a projective generator. The structure of this algebra is currently a mystery. A good choice of projective generator can greatly simplify computations for this algebra. On the other hand, in past decade, an idea called ``reduction to depth zero"  has been widely recognised as a promising direction on the study of blocks, aiming on giving a bijection from blocks of a $p$-adic group to the depth zero blocks of some other $p$-adic groups. This article uses a different approach, but is very relevant to this scenario. Due to the absence of endo-equivalence in the case of $\rG'$, we cannot state our result in the language of reduction to depth zero at the moment, but for each block, we can see via its projective generator a natural candidate of the depth zero block that it could correspond.

\subsubsection{Decomposition of category}
Let $\rA$ be a $p$-adic group, i.e., $\rA=\mathbf{A}(F)$, where $\mathbb{A}$ is a connected reductive group defined over $F$. We fix a Borel subgroup. Let $\rL$ be a standard Levi subgroup of $\rA$, and $\rP=\rL\rN$ the standard parabolic subgroup. We denote by $i_{\rL}^{\rA}$ the normalised parabolic induction with respect to $\rP$. Consider an irreducible $k$-representation $\pi$ of $\rA$. We say that $\pi$ is:
\begin{itemize}
\item \textbf{cuspidal}, if $\pi$ is neither a sub-representation nor a quotient-representation of $i_{\rL}^{\rA}\tau$ for any proper $\rL$ and $k$-representation $\tau$ of $\rL$.
\item \textbf{supercuspidal}, if $\pi$ is not a subquotient of $i_{\rL}^{\rA}\tau$.
\end{itemize}

Let $\tau$ be supercuspidal (resp. cuspidal), and $\pi$ irreducible. We say that $(\rL,\tau)$ belongs to the \textbf{supercuspidal (resp. cuspidal) support} of $\pi$, if $\pi$ is a subquotient (resp. a sub-representation or a quotient-representation) of $i_{\rL}^{\rA}\tau$. Consider $(\rL, \tau)$ as a supercuspidal pair, where $\tau$ is supercuspidal of $\rL$. Denote by $[\rL, \tau]$ the supercuspidal class of $(\rL,\tau)$, which is an $\rA$-conjugacy class up to twist of unramified characters of $\rL$. Denote by $\mathrm{Rep}_k(\rA)_{[\rL,\tau]}$ the full-subcategory generated by $[\rL,\tau]$. The supercuspidal support of irreducible subquotients of its objects are contained in $[\rL, \tau]$. Let $\mathcal{SC}$ be the set of supercuspidal classes, and $C$ a finite subset of $\mathcal{SC}$. We define the full-subcategory $\mathrm{Rep}_k(\rA)_C$ generated by $C$: it contains objects that the supercuspidal support of irreducible subquotients belonging to $C$.

When $\ell=0$, the well known result of the block decomposition states that:
$$\mathrm{Rep}_{\bC}(\rA)\cong\prod_{\mathcal{SC}}\mathrm{Rep}_{\bC}(\rA)_{[\rL,\tau]}.$$

When $\ell\neq 0$ and $\rA=\rG$ (see \cite{V2}) or the inner forms of $\rG$ (see \cite{SS}), it is proved that 
$$\mathrm{Rep}_{k}(\rA)\cong\prod_{\mathcal{SC}}\mathrm{Rep}_{k}(\rA)_{[\rL,\tau]},$$
However, the above decomposition does not exist in general. When $\rA=\rG'$, we show (Theorem \ref{thmblocksdecom}) the block decomposition should be
$$\mathrm{Rep}_{k}(\rG')\cong\prod\mathrm{Rep}_{k}(\rG')_{C}.$$
A block is generated by several supercuspidal classes rather than a single one in general. We determine these classes and give a projective generator for each block (Theorem \ref{thmblocksdecom}).


\subsection{Main results}
Let $\rL'$ be a Levi subgroup of $\rG'$, and $\tau'$ be a supercuspidal representation of $\rL'$. We give more details on the supercuspidal classes that determine the block containing $\mathrm{Rep}_k(\rG')_{[\rL',\tau']}$. We begin by outlining the advances in type theory, which forms a piece of the puzzle of the main result (Theorem \ref{thmblocksdecom}).

\subsubsection{Type theory}
Type theory is the primary technical support. In this article, we establish the cover theory of $\ell$-modular representations of $\rG'$, in order to understand the relation between parahoric induction and parabolic induction. To introduce the role it plays, we start from supercuspidal types, also known as maximal simple types, which are established to construct supercuspidal representations of $p$-adic groups from finite groups. A supercuspidal type is a pair consisting of an open compact subgroup and a special irreducible representation of it. In particular when $\rA=\rL$, it is a pair $(J,\lambda)$ with the following characteristics:
\begin{enumerate}
\item $J$ is an open compact subgroup of $\rL$.
\item $\lambda\cong\kappa\otimes\sigma$ is a special irreducible representation of $J$, such that:
\begin{itemize}
\item  The quotient $\mathbb{L}:=J\slash J^1$ is a finite reductive group, where $J^1$ is the pro-$p$ radical.
\item $\sigma$ is inflated from a supercuspidal representation of $\mathbb{L}$. We refer to this as \textbf{the moderate part}.
\item $\kappa$ is more technical, referred to as \textbf{the wild part}. This pair $(J,\kappa)$ is also called a \textbf{wild pair} in $\rL$.
\end{itemize}
\end{enumerate}

We have similar properties for a supercuspidal type $(J',\lambda')$ of $\rL'$. In particular, under the tameness condition, it can be obtained from a supercuspidal type $(J,\lambda)$ of $\rL$ (see Proposition \ref{proptamecond} for more details), where $\rL$ satisfies $\rL\cap\rG'=\rL'$. This yields:
\begin{enumerate}
\item $J'$ is an open compact subgroup of $\rL'$, and $\mathbb{L}':=J'\slash J^{1'}$ is a finite reductive group, where $J'=J\cap \rL'$ and $J^{1'}=J^1\cap\rL'$.
\item $\lambda'\cong\kappa'\otimes\sigma'$, where the wild part $\kappa'$ is determined by $\kappa$, and the moderate part $\sigma'$ is inflated from a supercuspidal representation of $\mathbb{L}'$.
\end{enumerate}

Each supercuspidal representation contains a supercuspidal type. After a unicity property, we can identify the supercuspidal class $[\rL',\tau']$ with the conjugacy class of its type $(J',\lambda')$. In this work, we show that the block containing $\mathrm{Rep}_k(\rG')_{[\rL',\tau']}$ is constructed from a union $\ell$-blocks of a finite reductive group.  This finite reductive group is determined by the wild part of $(J,\lambda)$, and this union of $\ell$-blocks is determined by the moderate part of $(J',\lambda')$.

This finite reductive group is found as a quotient of an open compact subgroup inside $\rL_{\text{max}}'$, which is  a Levi subgroup generally larger than $\rL'$, obtained from the endo-equivalence class of $[\rL',\tau']$. The latter is an equivalent relation on $\mathcal{SC}$ of $\rG$, which has been established in \cite{BuKuI}. Its connected components are endo-equivalence classes (The absence of endo-equivalence theory for $\mathcal{SC}$ of $\rG'$ is the main reason we transit to $\rG$ in this work). The wild pair $(J,\kappa)$ defines the endo-equivalence class of $[\rL,\tau]$. In other words, endo-equivalence is also defined on the set of wild pairs of Levi subgroups of $\rG$. Understanding endo-equivalence from the Galois side provides a more intrinsic perspective, and the relation on both sides has been studied in \cite{BH}. Let $\rL_{\text{max}}$ be the maximal standard Levi subgroup containing a wild pair $(J_{\text{max}},\kappa_{\text{max}})$ that is endo-equivalent to $(J,\kappa)$. We refer to $\rL_{\text{max}}$ as \textbf{the homogeneous Levi} of $(J,\kappa)$. It serves as the bound of cuspidal representations arising from $(\rL,\tau)$ in the following sense: for any Levi subgroup $\rM$ larger than $\rL_{\text{max}}$, there is not cuspidal subquotient of $i_{\rL}^{\rM}\tau$. The  group $\rL_{\text{max}}'$ is the intersection $\rL_{\text{max}}\cap\rG'$. 

Let $J_{\text{max}}'=J_{\text{max}}\cap\rG'$, and $J_{\text{max}}^{1'}=J_{\text{max}}^1\cap\rG'$, where $J_{\text{max}}^1$ is the pro-$p$ radical of $J_{\text{max}}$. The quotient $\mathbb{L}_{\text{max}}':=J_{\text{max}}'\slash J_{\text{max}}^{1'}$ is the finite reductive group we are seeking.

\subsubsection{The blocks}
The moderate part $(J',\sigma')$ defines a full-subcategory $\mathcal{B}$ of $\mathrm{Rep}_k(\mathbb{L}_{\text{max}}')$ (a product of $\ell$-blocks of $\mathbb{L}_{\text{max}}'$). Meanwhile, the tensor product $(J',\kappa'\otimes\rho')$ is a supercuspidal type for any supercuspidal $\rho'$ of $\mathbb{L}'$. Now consider all supercuspidal types by taking $\rho'$ in the supercuspidal support of an irreducible representation in $\mathcal{B}$. Let $[\tau',\sim]$ be the union of the associated supercuspidal classes, and $\mathrm{Rep}_k(\rG')_{[\tau',\sim]}$ be the full-subcategory generated by $[\tau',\sim]$.

\begin{thm}[Theorem \ref{thmblocksdecom}]
The full-subcategory $\mathrm{Rep}_k(\rG')_{[\tau',\sim]}$ is the block containing $\mathrm{Rep}_k(\rG')_{[\rL',\tau']}$.
\end{thm}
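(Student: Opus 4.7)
The plan is to prove two separate statements: first, that $\mathrm{Rep}_k(\rG')_{[\tau',\sim]}$ is contained in a single block of $\mathrm{Rep}_k(\rG')$; second, that this subcategory is stable under Ext-linkages with arbitrary irreducibles of $\rG'$, so that it in fact coincides with its block.

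For the first statement, I would reduce to a linkage question inside $\mathrm{Rep}_k(\mathbb{L}_{\text{max}}')$. By construction, every supercuspidal class of $\rG'$ occurring in $[\tau',\sim]$ is represented by a type $(J',\kappa'\otimes\sigma_i')$ with fixed wild part $\kappa'$, where $\sigma_i'$ is inflated from a supercuspidal $\rho_i'$ of $\mathbb{L}'$ that appears in the cuspidal support of an irreducible of $\mathcal{B}$. Since $\mathcal{B}$ is a union of $\ell$-blocks of the finite reductive group $\mathbb{L}_{\text{max}}'$, any two such irreducibles are linked by a chain of non-split extensions and common composition factors. Through the cover theory developed earlier in the paper these chains lift to Ext-linkages in $\mathrm{Rep}_k(\rG')$ between irreducibles whose supercuspidal supports are exactly the classes under consideration, which forces the latter to share a block.

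For the second statement, the argument proceeds in two stages. I would first show that the wild part of a type is invariant under Ext-linkage in $\mathrm{Rep}_k(\rG')$, by pulling back to $\rG$ and using the endo-equivalence theory recalled in the introduction, the behaviour of wild pairs under $(\cdot)\cap\rG'$, and the tameness hypothesis. Once the wild part is pinned down, any irreducible lying in the same block as $\mathrm{Rep}_k(\rG')_{[\rL',\tau']}$ must carry a type with wild part $\kappa'$, and the associated moderate part must in turn belong to $\mathcal{B}$, because Ext-linkages in $\mathrm{Rep}_k(\rG')$ between such representations are matched, via the Morita-type functor provided by the cover, with Ext-linkages of the corresponding moderate parts in $\mathrm{Rep}_k(\mathbb{L}_{\text{max}}')$; the pieces of $\mathrm{Rep}_k(\mathbb{L}_{\text{max}}')$ that may intervene are precisely the $\ell$-blocks assembled in $\mathcal{B}$.

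The hard part will be the transport of Ext-linkages in both directions between $\rG'$ and its finite reductive quotient $\mathbb{L}_{\text{max}}'$. Because endo-equivalence is not available intrinsically on $\rG'$, one has to pass systematically through $\rG$, track the behaviour of types under intersection with $\rG'$, and handle the phenomenon that a single supercuspidal class of $\rG'$ may be represented by several $\rG'$-conjugacy classes of types. The tameness hypothesis $p\nmid\vert W_{\rG'}\vert$ should be exactly what makes the finite-group side of the picture controlled by the $\ell$-blocks of $\mathbb{L}_{\text{max}}'$, and verifying that the functor coming from the cover simultaneously respects the wild/moderate decomposition of the type and the decomposition into $\ell$-blocks is the central technical point of the proof.
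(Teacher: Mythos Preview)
Your two-part structure (containment in a single block, then maximality) mirrors the paper's logic, but the mechanism you propose for each half diverges from what the paper actually does, and the second half has a genuine gap.

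For maximality (your second statement), the paper does \emph{not} argue by transporting $\mathrm{Ext}$-linkages from $\rG'$ back to $\mathbb{L}_{\text{max}}'$. Instead it applies Morita's criterion (Theorem~\ref{ThmMorita}) directly: it builds explicit projective objects
\[
\cP_{[\lambda_{\rL}',\sim]}=\bigoplus_{b\in B}\bigoplus_{t\in T} i_{\rL_{\text{max}}'}^{\rG'}\ind_{J_{max,b}'}^{\rL_{\text{max}}'}\kappa_{max,b}'\otimes\cP(\mathcal{B}_{t}),
\]
and checks the three Morita conditions. The hard step is Proposition~\ref{formlem007}: every irreducible $\pi'\in\mathrm{Irr}[\lambda_{\rL}',\sim]$ admits a surjection from some $\cP_{[\lambda_{\rL}',\sim],b}$. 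This requires locating the cuspidal support of $\pi'$ inside some $\rM'\subset\rL_{\text{max}}'$ (Lemma~\ref{lemnocusp}), then finding the correct $b\in B$ so that $\mathbf{K}_b'$ sees $\pi'$ (Lemma~\ref{lemPsurjcusp}), and finally using the compatible system of wild pairs from Section~\ref{sectionK'Levi} to climb back up to $J_{max,b}'$. Your ``Morita-type functor provided by the cover'' that matches $\mathrm{Ext}$-groups bidirectionally is not available: the $\mathbf{K}'$ functors go only from $p$-adic to finite, and nothing in the paper gives a categorical equivalence before the block decomposition is already established. In particular, you have overlooked the index set $B$ entirely. A single wild pair $(J_{\text{max}}',\kappa_{\text{max}}')$ on $\rL_{\text{max}}'$ does \emph{not} suffice to reach every cuspidal in the block; one needs the finitely many $\rG'$-conjugacy classes indexed by $B=\det(\mathrm{Stab}_{\rL}(\kappa_{\rL}'))/\det(\mathrm{Stab}_{\rL}(\kappa_{\text{max}}^{\rG,'}))$ (Definition~\ref{defnB}), as already illustrated in Example~\ref{example2} of the introduction.

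For non-splitness (your first statement), your idea of lifting linkages from $\mathcal{B}$ is close to the paper's, but the paper makes it precise via Proposition~\ref{formprop0016}: for each indecomposable projective $P_h$ of $\mathbb{L}_{\text{max}}'$, the induced object $i_{\rL_{\text{max}}'}^{\rG'}\ind_{J_{max,b}'}^{\rL_{\text{max}}'}\kappa_{max,b}'\otimes P_h$ is itself non-split in $\mathrm{Rep}_k(\rG')$. One then chains these along the $\ell$-parablock $\mathcal{B}_{\sigma_{\rL}'}$. The argument for Proposition~\ref{formprop0016} is by contradiction using the exactness of $\mathbf{K}_{\rL}'$ and the uniqueness of supercuspidal support in $\mathbb{L}'$ (Corollary~\ref{coruniqsupcuspL'}); it does not proceed through $\mathrm{Ext}^1$ computations.
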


\subsubsection{Projective generators}
We introduce a projective generator for $\mathrm{Rep}_k(\rG')_{[\tau',\sim]}$. The wild pairs $(J_{\text{max}}',\kappa_{\text{max}}')$ in the above section are not unique. $(J_{\text{max}},\kappa_{\text{max}})$ gives finitely many wild pairs of $\rL_{\text{max}}'$: $(J_{max,b}',\kappa_{max,b}'),b\in B$ with an index set $B$ (see \ref{defnB}). Let $\cP(\mathcal{B})$ be a projective generator of $\mathcal{B}$. We have:

\begin{thm}[Theorem \ref{thmblocksdecom}]
The direct sum
$$\cP:=\bigoplus_{b\in B}i_{\rL_{\text{max}}'}^{\rG'}\ind_{J_{max,b}'}^{\rL_{\text{max}}'}\kappa_{max,b}'\otimes\cP(\mathcal{B}),$$
is a projective generator of $\mathrm{Rep}_k(\rG')_{[\tau',\sim]}$.
\end{thm}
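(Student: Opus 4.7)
The plan is to verify in turn the three properties of a projective generator of $\mathrm{Rep}_k(\rG')_{[\tau',\sim]}$: that each summand lies in this subcategory, that $\cP$ is projective, and that $\Hom_{\rG'}(\cP,\pi')\neq 0$ for every nonzero irreducible $\pi'$ in it.

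For projectivity I would chain four preservation facts applied to the projective $\cP(\mathcal{B})$ in $\mathrm{Rep}_k(\mathbb{L}_{\text{max}}')$. Inflation along $J_{\text{max},b}'\twoheadrightarrow\mathbb{L}_{\text{max}}'$ preserves projectives, since the right adjoint (taking $J_{\text{max},b}^{1'}$-invariants) is exact on smooth representations in characteristic $\ell\neq p$. Tensoring with the finite-dimensional $\kappa_{\text{max},b}'$ preserves projectives, because $\Hom_k(\kappa_{\text{max},b}',-)$ is an exact right adjoint. Compact induction $\ind_{J_{\text{max},b}'}^{\rL_{\text{max}}'}$ preserves projectives, since restriction is exact. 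Finally parabolic induction $i_{\rL_{\text{max}}'}^{\rG'}$ preserves projectives, since the Jacquet functor is its exact right adjoint. Hence each summand, and therefore $\cP$, is projective.

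For membership, every irreducible subquotient of the $b$-th summand carries a type of the form $(J_{\text{max},b}',\kappa_{\text{max},b}'\otimes\rho')$ with $\rho'$ an irreducible subquotient of $\cP(\mathcal{B})$, hence $\rho'\in\mathcal{B}$; by the definition of $[\tau',\sim]$ the resulting supercuspidal class lies in $[\tau',\sim]$. For generation, given a nonzero irreducible $\pi'\in\mathrm{Rep}_k(\rG')_{[\tau',\sim]}$, the cover theory established earlier guarantees that $\pi'$ contains a vector of type $(J_{\text{max},b}',\kappa_{\text{max},b}'\otimes\rho')$ for some $b\in B$ and some irreducible $\rho'$ in $\mathcal{B}$. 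Frobenius reciprocity combined with the covering property for $(J_{\text{max},b}',\kappa_{\text{max},b}')$ relative to $\rL_{\text{max}}'\subset\rG'$ produces a nonzero morphism $i_{\rL_{\text{max}}'}^{\rG'}\ind_{J_{\text{max},b}'}^{\rL_{\text{max}}'}(\kappa_{\text{max},b}'\otimes\rho')\to\pi'$; lifting along a surjection $\cP(\mathcal{B})\twoheadrightarrow\rho'$ then delivers the desired map from the $b$-th summand of $\cP$ into $\pi'$.

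The main obstacle is ensuring that $B$ is exactly the right index set. Unlike in $\rG$, where the wild pair $(J_{\text{max}},\kappa_{\text{max}})$ is unique up to conjugacy, restricting to $\rG'$ breaks it into the finitely many pairs $(J_{\text{max},b}',\kappa_{\text{max},b}')$. One must verify, using the type-theoretic analysis under the tameness hypothesis, that these pairs match combinatorially with the irreducibles of $\mathrm{Rep}_k(\rG')_{[\tau',\sim]}$: every irreducible receives a contribution from some $b\in B$, and no summand produces a subquotient lying outside $[\tau',\sim]$. This reconciliation between the $\rG$- and $\rG'$-sides of the type theory, together with the exactness of the lifting from the Jacquet module back to $\pi'$, is the technical heart of the proof.
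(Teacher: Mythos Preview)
Your projectivity argument is essentially correct, though the assertion that ``the Jacquet functor is its exact right adjoint'' is precisely Bernstein's second adjunction, which in the $\ell$-modular setting is a nontrivial input (the paper invokes \cite{DHKM} for it); the ordinary Frobenius adjunction makes the Jacquet functor a \emph{left} adjoint, which is not what you need.

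The generation step, however, has a genuine gap. You assert that cover theory guarantees $\pi'$ contains a type $(J_{\text{max},b}',\kappa_{\text{max},b}'\otimes\rho')$ and then invoke ``the covering property for $(J_{\text{max},b}',\kappa_{\text{max},b}')$ relative to $\rL_{\text{max}}'\subset\rG'$'' to produce a map from $i_{\rL_{\text{max}}'}^{\rG'}\ind(\cdots)$. But no such $\rG'$-cover is established: the pair $(J_{\text{max},b}',\kappa_{\text{max},b}'\otimes\rho')$ is not a cuspidal type of $\rL_{\text{max}}'$ when $\rho'$ is not cuspidal, so the cover formalism does not apply directly across the step $\rL_{\text{max}}'\subset\rG'$. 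The paper's route is quite different and explains why the compatible system of wild pairs on intermediate Levi subgroups (Section~\ref{sectionK'Levi}) was built. One takes the \emph{cuspidal} support $(\rM',\rho')$ of $\pi'$; Lemma~\ref{lemnocusp} (an endo-equivalence argument) forces $\rM'\subset\rL_{\text{max}}'$. Second adjunction then gives a surjection $i_{\rM'}^{\rG'}\rho'\twoheadrightarrow\pi'$, and Lemma~\ref{lemPsurjcusp} (applied with $\rL_{\text{max}}'$ replaced by $\rM'$, going through the $\rG$-side to locate the right $b$) shows $\rho'$ contains $(J_{\rM,b}',\kappa_{\rM,b}'\otimes\sigma_{b,t}')$. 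Only then does one pass from $\rM'$ up to $\rL_{\text{max}}'$ via the $(\rL_{\text{max}}',\alpha)$-cover and the compatibility of Proposition~\ref{propcomparab}, obtaining the surjection in Proposition~\ref{formlem007}. Your last paragraph correctly flags the matching of $B$ with the $\rG'$-side as the crux, but the mechanism you propose (a direct cover for $\rL_{\text{max}}'\subset\rG'$) is not the one that works; the detour through the cuspidal-support Levi $\rM'$ and the transitivity of the compatible $\kappa$-system is essential.
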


One notable feature of this theorem is that the aforementioned projective generator is induced from a specifically determined Levi subgroup, which may be useful for various applications.This is the first time we have constructed a projective generator in this form. The idea of the projective cover of a supercuspidal type was initially suggested by Vignéras in \cite{V1}. Helm further showed that it can be decomposed as $\kappa\otimes\cP$ in \cite{Helm}, where $\cP$ is projective for $\mathbb{L}$. These works are inspiration for the formulation presented in the above theorem.

\subsection{Examples of $\mathrm{SL}_2(\mathbb{Q}_p)$}
\label{depthzeroexample}
This can be regarded as an application of Theorem \ref{thmblocksdecom}. 

We introduce these blocks and their projective generators to highlight the differences compared to the complex setting. Recall two elementary facts below. We illustrate how they influence the structure of blocks in $\ell$-modular setting (see Example \ref{example1}, \ref{example2} for Fact 2, and Example \ref{example3} for Fact 1), in order to assure the readers that the technical operations in this work are necessary.
\begin{fact}
\label{twodiffs}
\begin{enumerate}
\item For finite groups, indecomposable reducible $\ell$-modular representations exist in general; 
\item For $p$-adic groups or finite groups of Lie type, cuspidal $\ell$-modular representations are not always supercuspidal (see \cite{V89}). 
\end{enumerate}
\end{fact}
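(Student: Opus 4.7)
The plan is to verify each part of the fact by exhibiting an explicit example; no new technique is required since both phenomena are documented in the literature. The statement is a pair of non-existence-of-semisimplicity-type observations whose content is entirely demonstrated by a well-chosen counterexample.

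For item (1), I would take a finite group $H$ whose order is divisible by $\ell$, the simplest choice being $H=\bZ/\ell\bZ$. Consider the two-dimensional $k$-representation $V = k^2$ on which a fixed generator of $H$ acts by the unipotent matrix $\matje{1}{1}{0}{1}$. The line $k\cdot(1,0)$ is stable, so $V$ is reducible. If $V$ were decomposable, the action would be diagonalizable with a single eigenvalue $1$, forcing the matrix to be the identity, which contradicts the construction. Thus $V$ is an indecomposable reducible $k$-representation of the finite group $H$, establishing (1). This is the prototype: the failure of Maschke's theorem in the modular setting produces such examples as soon as $\ell$ divides $|H|$.

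For item (2), I would appeal to the construction of \cite{V89}, whose essential example already appears for $\rG = \mathrm{GL}_2(F)$. Take $q$ the residual cardinality of $F$ and assume $\ell\mid q+1$. Choose a suitable character $\chi$ of the diagonal torus $\rT$ and form the principal series $i_{\rT}^{\rG}\chi$. Over $\bC$ this induction is irreducible, but its integral structure reduces modulo $\ell$ to a representation of length two whose unique irreducible subquotient $\pi$ that is not itself a character is cuspidal in the sense of the definition recalled in the introduction: its Jacquet module with respect to the unipotent radical vanishes, so $\pi$ is neither a subrepresentation nor a quotient of any proper parabolic induction. By construction however $\pi$ is a subquotient of $i_{\rT}^{\rG}\chi$, hence not supercuspidal. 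The analogous finite-group-of-Lie-type example lives inside $\mathrm{GL}_2(\F_q)$ under the same congruence $\ell\mid q+1$, where a cuspidal unipotent mod-$\ell$ representation arises as a subquotient of an induced principal series.

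The only subtlety, and the step that requires some care rather than any real obstacle, is to verify cuspidality of $\pi$ in (2), i.e.\ that $\pi$ does not sit as a sub or quotient of any $i_{\rL}^{\rG}\tau$. This is checked by computing the Jacquet module: since the parabolic induction in question has length two with $\pi$ in the middle socle-cosocle position, neither the socle nor the cosocle of $i_{\rT}^{\rG}\chi$ equals $\pi$, and by Frobenius reciprocity this rules out $\pi$ appearing as a sub or quotient of any $i_{\rT}^{\rG}\chi'$. Taken together the two examples confirm the fact and, as emphasized in the text, justify why the block theory of $\mathrm{Rep}_k(\rG')$ must depart from the complex formulation.
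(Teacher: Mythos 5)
The paper offers no proof of this statement: it is recorded as a known fact, with item (2) delegated to \cite{V89}, so the only thing to assess is the internal correctness of your examples. Your item (1) is correct and is the standard witness: the two-dimensional representation of $\bZ/\ell\bZ$ in characteristic $\ell$ on which a generator acts by a nontrivial unipotent matrix is reducible (it has a stable line) and indecomposable (a nontrivial unipotent matrix is not diagonalizable), exactly the failure of Maschke's theorem.

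Your item (2), however, contains a genuine inconsistency. If the mod-$\ell$ principal series really had length two, each of its two irreducible constituents would be either a subrepresentation or a quotient of $i_{\rT}^{\rG}\chi$, and by the definition of cuspidality recalled in the paper such a constituent could not be cuspidal; your final paragraph, which places $\pi$ ``in the middle socle--cosocle position'' of a length-two object, is therefore self-contradictory. In fact a length-two induced representation can never carry a cuspidal constituent: by the geometric lemma $r_{\rT}^{\rG}(i_{\rT}^{\rG}\chi)$ has length two, a cuspidal constituent contributes nothing to it while each character-type constituent contributes at least one character, so the presence of a cuspidal subquotient forces length at least three. This is what happens in the example of \cite{V89} (and in its finite analogue $\mathrm{GL}_2(\F_q)$ with $\ell\mid q+1$, where the constituents of the mod-$\ell$ principal series are two characters and a cuspidal piece of dimension $q-1$): the induced representation is indecomposable of length three, the cuspidal constituent $\pi$ lies strictly between socle and cosocle, cuspidality is proved by showing $r_{\rT}^{\rG}\pi=0$ (the two characters already exhaust the Jacquet module, and Frobenius reciprocity together with second adjunction then exclude $\pi$ from being a sub or quotient of any proper parabolic induction), and non-supercuspidality is immediate since $\pi$ is a subquotient of $i_{\rT}^{\rG}\chi$. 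With the length corrected and this Jacquet-module argument supplied --- or simply with the citation to \cite{V89}, as the paper itself does --- item (2) is in order.
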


For $\rG'=\mathrm{SL}_2(\mathbb{Q}_p)$, there are two maximal open compact subgroups that are not $\rG'$-conjugate: $K_1=\mathrm{SL}_2(\mathbb{Z}_p)$ and $K_2=xK_1x^{-1}$, where $x=(p,1)$ is a diagonal matrix in $\rG=\mathrm{GL}_2(\mathbb{Q}_p)$. The groups $K_1,K_2$ correspond to two vertices of a chamber of the Bruhat-Tits building of $\rG'$. Let $K_i^{1}$ be the pro-$p$ radical of $K_i$ for $i=1,2$, then the quotient $K_i\slash K_i^1\cong\mathrm{SL}_2(\mathbb{F}_p)$. For an irreducible $k$-representation $\pi$ of $\mathrm{SL}_2(\mathbb{F}_p)$, denote by $\cP_{\pi}$ its projective cover. We use the following notation:
\begin{itemize}
\item $T$ for the diagonal torus in $\rG'$,
\item $T_{0}$ for the open compact subgroup of $T$ with values in $\mathbb{Z}_p$, 
\item $T_p$ for the diagonal torus in $\mathrm{SL}_2(\mathbb{F}_p)$.
\end{itemize}

\subsubsection{Example 1}
\label{example1}
For a ramified character $\tau$ of $T$, the subcategory $\mathrm{Rep}_k(\rG')_{[T,\tau]}$ forms a block of $\mathrm{Rep}_k(\rG')$. Define $\tau_0=\tau\vert_{T_0}$. Denote by $\mathcal{P}_{\tau_0}$ the projective cover of $\tau_0$. Then, 
$$i_{T}^{\rG'}\ind_{T_0}^{T}\cP_{\tau_0}$$
is a projective generator of $\mathrm{Rep}_k(\rG')_{[T,\tau]}$. It is worth noting that the representation $i_{T}^{\rG'}\ind_{T_0}^{T}\tau_0$ is not projective, which is different from the complex setting.

\subsubsection{Example 2}
\label{example2}
The subcategory $\mathrm{Rep}_k(\rG')_{[T,\mathds{1}]}$ is a block of $\mathrm{Rep}_k(\rG')$. Due to the existence of cuspidal but non-supercuspidal representations in $\mathrm{Rep}_k(\rG')_{[T,\mathds{1}]}$ when $\ell\vert p+1$ (see \cite{V89}), the structure of a projective generator is more intricate. 

There are three possibilities for the set of irreducible subquotients of $\ind_{T_p}^{\mathrm{SL}_2(\mathbb{F}_p)}\mathds{1}$:
\begin{enumerate}[label=(\roman*)]
\item $\{\mathds{1},\mathrm{St}\}$, where $\mathrm{St}$ is the Steinberg representation.
\item $\{\mathds{1},\sigma,\mathrm{sgn}\}$, where $\sigma$ is cuspidal, and $\mathrm{sgn}$ is a non-trivial character. Define $\cP:=\cP_{\mathds{1}}\oplus\cP_{\sigma}\oplus\cP_{\mathrm{St}}$.
\item $\{\mathds{1},\sigma_1,\sigma_2\}$, where $\sigma_1,\sigma_2$ are cuspidal. Define $\cP:=\cP_{\mathds{1}}\oplus\cP_{\sigma_1}\oplus\cP_{\sigma_2}$.
\end{enumerate}

For case $(i)$, there is not cuspidal representation in $\mathrm{Rep}_k(\rG')_{[T,\mathds{1}]}$, and $i_{T}^{\rG'}\ind_{T_0}^{T}\cP_{\mathds{1}}$ is a projective generator of $\mathrm{Rep}_k(\rG')_{[T,\mathds{1}]}$.

For case $(ii)$ and $(iii)$,
$$\ind_{K_1}^{\rG'}\cP\oplus\ind_{K_2}^{\rG'}\cP$$
is a projective generator of $\mathrm{Rep}_k(\rG')_{[T,\mathds{1}]}$.

Cuspidal but non-supercuspidal representation can be obtained by considering subquotient of parabolic induction from supercuspidals of smaller Levi subgroups. However, their appearance is accompanied by new morphisms which cannot be obtained from smaller Levi subgroups.
\begin{rem}[case $(ii)$ and $(iii)$]
\begin{itemize}
\item The projective representation $i_{T}^{\rG'}\ind_{T_0}^{T}\cP_{\mathds{1}}$ does not give enough morphisms. There exists cuspidal but non-supercuspidal representations in $\mathrm{Rep}_k(\rG')_{[T,\mathds{1}]}$. Any object induced from $T$ cannot map surjectively to cuspidal representations.

\item The direct sum is necessary. There exist two cuspidal but non-supercuspidal representations, $\tau_1,\tau_2$, in $\mathrm{Rep}_k(\rG')_{[T,\mathds{1}]}$, induced from $K_1$ and $K_2$, respectively. An object induced from $K_1$ cannot map surjectively to $\tau_2$.
\end{itemize}
\end{rem}

\subsubsection{Example 3}
\label{example3}
We have discussed blocks generated by a single supercuspidal class, and now we will consider a block generated by two supercuspidal classes. It has been discussed in \cite{C3}, and we will provide additional details here.

Let $\tau$ be a supercuspidal representation of $\rG'$, equivalent to $\ind_{K_i}^{\rG'}\sigma$ for either $i=1$ or $i=2$, where $\sigma$ is a supercuspidal representation of $\mathrm{SL}_2(\mathbb{F}_p)$. When $\ell\neq2$ and $p\neq2$, the irreducible subquotients of the projective cover $\mathrm{Irr}(\cP_{\sigma})$ may include $\{\sigma,\sigma'\}$ (as seen in the last example in \cite{C3}), where $\sigma\neq\sigma'$.

Let's assume $i=1$. We have four supercuspidal representations:
\begin{itemize}
\item $\tau=\tau_1\cong\ind_{K_1}^{\rG'}\sigma$,
\item $\tau_2:=\ind_{K_1}^{\rG'}\sigma'$,
\item $\tau_3:=\ind_{K_2}^{\rG'}\sigma$,
\item $\tau_4:=\ind_{K_2}^{\rG'}\sigma'$.
\end{itemize}
They are $\mathrm{GL}_2(\mathbb{Q}_p)$-conjugate, and they define four distinct supercuspidal classes.

Define $\mathcal{R}_1:=\mathrm{Rep}_k(\rG')_{[\rG',\tau_1]\cup[\rG',\tau_2]}$ as the full-subcategory containing object $\Pi$ such that the supercuspidal support of any irreducible subquotient of $\Pi$ is contained in $[\rG',\tau_1]\cup[\rG',\tau_2]$. Similarly, we define $\mathcal{R}_2:=\mathrm{Rep}_k(\rG')_{[\rG',\tau_3]\cup[\rG',\tau_4]}$. These are two blocks generated by two supercuspidal classes. 
\begin{itemize}
\item $\ind_{K_1}^{\rG'}\cP_{\sigma}\oplus\cP_{\sigma'}$ is a projective generator of $\mathcal{R}_1$.
\item $\ind_{K_2}^{\rG'}\cP_{\sigma}\oplus\cP_{\sigma'}$ is a projective generator of $\mathcal{R}_2$.
\end{itemize}

\subsection{Notations and the strategy}
Let $\rG$ be a Levi subgroup of $\mathrm{GL}_n(F)$, and $\rG'=\rG\cap\mathrm{SL}_n(F)$ a Levi subgroup of $\mathrm{SL}_n(F)$. Fix a maximal split torus. The intersection with $\rG'$ gives a bijection between the set of standard Levi of $\rG$ and those of $\rG'$. Let $\rL'$ be a standard Levi subgroup of $\rG'$, and $\rL$ a standard Levi of $\rG$ such that $\rL\cap\rG'=\rL'$. Let $\rP=\rL\rN$ be a parabolic subgroup, with unipotent radical $\rN$. We write $\rL\cong\prod_{i\in I}\mathrm{GL}_{n_i}(F),n_i\in\mathbb{N}$ for a finite index set $I$. Let $K$ be a subgroup of $\rG$, we always denote by $K'$ the intersection $K\cap\rG'$.

The strategy to establish the blocks of $\mathrm{Rep}_k(\rG')$ is by applying Morita's method, which is a criterion for decomposing a category:

\begin{thm}[Morita's equivalence]
\label{ThmMorita}
Suppose there are two projective objects $\cP_1$ and $\cP_2$ in an abelian category $\mathcal{A}$, satisfying the following conditions:
\begin{enumerate}
\item The sets of simple subquotients of $\mathrm{Irr}(\cP_1)$ and $\mathrm{Irr}(\cP_2)$ are disjoint.
\item The union of $\mathrm{Irr}(\cP_1)\cup\mathrm{Irr}(\cP_2)$ is equal to the set of simple objects in $\mathcal{A}$.
\item For each $\pi_i\in\mathrm{Irr}(\cP_i)$, there is a surjective morphism $\cP_i\rightarrow \pi_i$, for $i=1,2$.
\end{enumerate}
Then there exists two full-subcategories $\mathcal{A}_1$ and $\mathcal{A}_2$, such that
$$\mathcal{A}\cong\mathcal{A}_1\times\mathcal{A}_2,$$
where $\Pi\in\mathcal{A}_i$ if and only if its irreducible subquotients belong to $\mathrm{Irr}(\cP_i)$ for $i=1,2$. Moreover, when $\cP_i$ are \textbf{faithfully projective} (see \cite{Helm}[Definition 2.2]), the functor
\begin{equation}
\label{equaMoritaHom}
\Pi\rightarrow \mathrm{Hom}(\cP_i,\Pi)
\end{equation}
is an equivalence of the category $\mathcal{A}_i$ to the category of $\mathrm{End}(\cP_i)$-modules. In particular, the centre of $\mathcal{A}_i$ is the centre of $\mathrm{End}(\cP_i)$. In this case, $\cP_i$ is called a projective generator of $\mathcal{A}_i$ for $i=1,2$.
\end{thm}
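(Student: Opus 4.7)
The plan is to reduce to the standard Morita theorem by showing that $\cP := \cP_1 \oplus \cP_2$ is a projective generator whose endomorphism ring decomposes as a product. Define, for $i=1,2$, the full subcategory $\mathcal{A}_i \subseteq \mathcal{A}$ consisting of those $\Pi$ all of whose irreducible subquotients lie in $\mathrm{Irr}(\cP_i)$. Condition (1) immediately yields the orthogonality $\mathrm{Hom}_{\mathcal{A}}(\Pi_1,\Pi_2) = 0$ whenever $\Pi_i \in \mathcal{A}_i$: the image of such a morphism is a subquotient of both, so its irreducible subquotients lie in $\mathrm{Irr}(\cP_1) \cap \mathrm{Irr}(\cP_2) = \emptyset$. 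In particular $\mathrm{Hom}(\cP_1,\cP_2) = 0 = \mathrm{Hom}(\cP_2,\cP_1)$, since $\cP_i \in \mathcal{A}_i$ tautologically, so $\mathrm{End}(\cP)$ splits as $\mathrm{End}(\cP_1) \times \mathrm{End}(\cP_2)$.

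The central step, and what I expect to be the main obstacle, is to show that every $\Pi \in \mathcal{A}$ decomposes as $\Pi = \Pi_1 \oplus \Pi_2$ with $\Pi_i \in \mathcal{A}_i$. I first note that each $\mathcal{A}_i$ is a Serre subcategory closed under arbitrary sums of subobjects, since the set of irreducible subquotients behaves well on short exact sequences and directed unions. Let $\Pi_i \subseteq \Pi$ denote the largest subobject in $\mathcal{A}_i$, namely the sum of all subobjects lying in $\mathcal{A}_i$. I claim $\Pi/\Pi_1 \in \mathcal{A}_2$: otherwise it admits an irreducible subquotient $\pi \in \mathrm{Irr}(\cP_1)$, realised as a quotient of some subobject $N \subseteq \Pi/\Pi_1$; by condition (3) there is a surjection $\cP_1 \twoheadrightarrow \pi$, which by projectivity of $\cP_1$ lifts to a morphism $\cP_1 \to N$ whose image is a nonzero $\mathcal{A}_1$-subobject of $\Pi/\Pi_1$. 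Its preimage in $\Pi$ is an $\mathcal{A}_1$-subobject strictly containing $\Pi_1$ (by the Serre property for the defining extension), contradicting maximality, and condition (2) then forces $\Pi/\Pi_1 \in \mathcal{A}_2$. A symmetric construction yields a maximal $\Pi_2 \in \mathcal{A}_2$ inside $\Pi$; orthogonality forces $\Pi_1 \cap \Pi_2 = 0$, and a symmetric maximality argument shows the composition $\Pi_2 \hookrightarrow \Pi \twoheadrightarrow \Pi/\Pi_1$ is an isomorphism. Thus $\Pi = \Pi_1 \oplus \Pi_2$ naturally in $\Pi$, yielding $\mathcal{A} \cong \mathcal{A}_1 \times \mathcal{A}_2$. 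This is precisely the step where projectivity of $\cP_i$ combined with condition (3) is essential; disjointness of simple subquotients alone would not suffice to split objects.

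For the Morita equivalence under faithful projectivity, the argument becomes standard once the decomposition is in place: $\cP_i$ is a faithfully projective generator of $\mathcal{A}_i$ in the sense of \cite{Helm}[Definition 2.2], and the functor $\mathrm{Hom}(\cP_i,-) : \mathcal{A}_i \to \mathrm{End}(\cP_i)\text{-Mod}$ is then an equivalence with quasi-inverse $- \otimes_{\mathrm{End}(\cP_i)} \cP_i$. The identification of centres follows from the general fact that the centre of a module category agrees with the centre of the underlying ring.
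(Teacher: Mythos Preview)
The paper does not prove this theorem: it is stated in \S1.4 as a standard criterion (``Morita's method'') and used as a black box throughout, so there is no proof in the paper to compare your proposal against.

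Your argument is essentially correct for the categories the paper cares about (smooth $k$-representations of a $p$-adic group, a Grothendieck category in which every nonzero object has an irreducible subquotient). Two small points are worth tightening. First, the existence of a \emph{largest} $\mathcal{A}_i$-subobject $\Pi_i\subseteq\Pi$ uses that arbitrary sums of subobjects exist and that $\mathcal{A}_i$ is closed under such sums; this holds in a Grothendieck category but not in an arbitrary abelian category, so strictly speaking the theorem as stated needs a mild hypothesis of this kind (the paper is informal here). Second, your splitting step ``a symmetric maximality argument shows $\Pi_2\hookrightarrow\Pi\twoheadrightarrow\Pi/\Pi_1$ is an isomorphism'' is a bit compressed; the cleanest way to finish is to observe that $\Pi/(\Pi_1+\Pi_2)$ is simultaneously a quotient of $\Pi/\Pi_1\in\mathcal{A}_2$ and of $\Pi/\Pi_2\in\mathcal{A}_1$, hence lies in $\mathcal{A}_1\cap\mathcal{A}_2=0$, so $\Pi=\Pi_1\oplus\Pi_2$ since you already have $\Pi_1\cap\Pi_2=0$. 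With these clarifications your proof is complete, and the Morita-equivalence part is indeed standard once the decomposition is in place.
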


The idea is to establish a family of projective objects verifying the above $3$ conditions, enabling the decomposition of the category. The second step is to show the involved full-subcategories are non-split.

As we have explained in the preceding section, the representations in the form of $i_{\rL_{\text{max}}'}^{\rG'}\ind_{J_{max,b}'}^{\rL_{max,b}'}\kappa_{max,b}'\otimes\cP(\mathcal{B})$ are building pieces of a projective generator of a block of $\mathrm{Rep}_k(\rG')$. This construction encompasses a moderate part $\cP(\mathcal{B})$ and a wild part $\kappa_{max,b}$. We appoach these components separately.

In Section \ref{sectionfinite} we study the $\ell$-blocks of some special finite reductive groups, which is crucial in the study of the moderate part. We introduce the notion of $\ell$-parablocks, that are unions of $\ell$-blocks compatible with parabolic induction. These $\ell$-parablocks play a key role in defining $\mathcal{B}$. Moving on to $p$-adic groups, we first recall some essential results of type theory of $\rG$ in Section \ref{sectionGcover}, including cuspidal types and cover theory. Subsequently, in Section \ref{sectionG'covers}, we return to $\rG'$. In the initial subsection, we establish cover theory in $\ell$-modular setting. The following subsection is the most technical part of this work, which involves intricate operations on wild pairs. One primary goal is to provide compatibility between cover theory and parabolic induction, allowing us to describe the irreducible components of $i_{\rL_{\text{max}}'}^{\rG'}\ind_{J_{max,b}'}^{\rL_{max,b}'}\kappa_{max,b}'\otimes\cP(\mathcal{B})$, and morphisms mapping from it. After these discussions, we verify that the constructed projective objects satisfy the conditions of Morita's criterion and establish a decomposition in Section \ref{sectiondecomp}. We show it is the block decomposition by proving the involved full-subcategories are non-split. At the end of this work, we give description of supercuspidal classes generating the block containing $\mathrm{Rep}_k(\rG')_{[\rL',\tau']}$.

\subsection{Acknowledgements}
This work originated from the author's PhD project, which has been going on for many years. The author received assistance from various individuals throughout this process. Firstly, I would like to express my gratitude to my PhD supervisor, Anne-Marie Aubert, who guided me through this project and introduced me to various aspects of $\ell$-modular representations. I would like to thank Shaun Stevens for frequent discussions and helpful suggestions on type theory during my stay in Norwich. Without these discussions, this work might have taken much longer to complete. I thank Shaun Stevens and Thomas Lanard for pointing out two mistakes in an early version. I appreciate the support and academic advice provided by Alberto Mínguez, Vincent Sécherre, Dmitry Gourevitch, and Erez Lapid. I would like to thank Guy Henniart and Marie-France Vignéras for their continued interest in this problem, which has been a great motivation for the author. This work was mostly written when I worked in University of East Anglia (funded by EPSRC grant EP/V061739/1) and was finished in Weizmann Institute of Science (funded by the ISF 1781/23 grant BSF 2019724). I would like to thank their nice working environments.

\section{Finite reductive groups}
\label{sectionfinite}

\subsection{$\ell$-parablocks}

Let $\bA$ be the group of $k_F$-rational points of a finite reductive group defined over $k_F$. In this section we introduce some full-subcategories of $\bA$ that we call $\ell$-parablocks, and we study the projective cover of irreducible representations of $\bA$.

Recall that there exists block decomposition of the category $\mathrm{Rep}_k(\bA)$, and the involved full-subcategories are known as blocks of $\mathrm{Rep}_k(\bA)$.  In this work, to emphasis $\bA$ is a finite group, we always add a prefix and call them \textbf{$\ell$-blocks}. Let $R$ be a full-subcategory of $\mathrm{Rep}_k(\bA)$. Denote by $\mathrm{Irr}(R)$ the set of isomorphism classes of irreducible representations in $R$, by $\mathbf{SC}(R)$ the set of supercuspidal support of objects in $\mathrm{Irr}(R)$, and by $\mathbf{Cusp}(R)$ the set of cuspidal support of objects in $\mathrm{Irr}(R)$. For a object $\Pi\in\mathrm{Rep}_k(\bA)$, we denote by $\mathrm{Irr}(\Pi)$ the set of isomorphism classes of irreducible subquotients of $\Pi$, and $\mathbf{SC}(\Pi)$ the set of supercuspidal support of subquotients of $\Pi$.

\begin{lem}
\label{finlem001}
Let $\mathrm{B}$ be an $\ell$-block of $\mathrm{Rep}_k(\bA)$, and $\pi_1,\pi_2\in\mathrm{Irr}(\mathrm{\bA})$. We say they are linked (i.e $\pi_1\leftrightarrow\pi_2$) if them are subquotients of a same indecomposable projective object. Denote by $\sim_{\ell}$ the equivalence relation generated from this link relation. Then $\mathrm{Irr}(\mathrm{B})$ is a connected component of $\sim_{\ell}$.
\end{lem}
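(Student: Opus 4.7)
The plan is to prove both inclusions using the standard fact that each indecomposable projective lies in a single $\ell$-block, together with Theorem \ref{ThmMorita} (Morita's criterion) applied to the block $\mathrm{B}$ itself to rule out a disconnection.

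First I would observe that because $\bA$ is finite, the group algebra $k[\bA]$ is finite-dimensional, hence artinian. Consequently every simple $\pi \in \Irr(\mathrm{Rep}_k(\bA))$ admits a projective cover $P(\pi)$, and every indecomposable projective object is of the form $P(\pi)$. The block decomposition expresses $\mathrm{Rep}_k(\bA)$ as a product of its $\ell$-blocks, so every indecomposable object lies in exactly one $\ell$-block. Applied to any indecomposable projective having both $\pi_1$ and $\pi_2$ as subquotients, this shows that $\pi_1 \leftrightarrow \pi_2$ forces $\pi_1$ and $\pi_2$ into the same $\ell$-block; by transitivity the same holds for $\sim_{\ell}$, so each $\sim_{\ell}$-class is contained in some $\Irr(\mathrm{B})$.

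For the reverse inclusion I would argue by contradiction. Suppose $\Irr(\mathrm{B})$ splits as a disjoint union $S_1 \sqcup S_2$ of two non-empty unions of $\sim_{\ell}$-classes, and set
$$\cP_j := \bigoplus_{\pi \in S_j} P(\pi), \qquad j = 1, 2.$$
Each $\cP_j$ is projective and lies in $\mathrm{B}$ (since each summand does, and blocks are closed under direct sums). By the very definition of the link relation, any subquotient of $P(\pi)$ is linked to $\pi$, hence lies in the same $\sim_{\ell}$-class as $\pi$; this forces $\Irr(\cP_1) \subseteq S_1$ and $\Irr(\cP_2) \subseteq S_2$. Thus $\Irr(\cP_1) \cap \Irr(\cP_2) = \emptyset$, $\Irr(\cP_1) \cup \Irr(\cP_2) = \Irr(\mathrm{B})$, and each canonical surjection $P(\pi) \twoheadrightarrow \pi$ extends to a surjection $\cP_j \twoheadrightarrow \pi$ through the direct sum projection. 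Applying Theorem \ref{ThmMorita} to the abelian category $\mathrm{B}$ with the pair $(\cP_1,\cP_2)$ produces a non-trivial product decomposition $\mathrm{B} \cong \mathrm{B}_1 \times \mathrm{B}_2$, contradicting the fact that an $\ell$-block is indecomposable as a factor of the block decomposition.

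The main potential obstacle is ensuring that subquotients of $P(\pi)$ genuinely stay within the $\sim_{\ell}$-class of $\pi$ (so that the two projectives built from $S_1$ and $S_2$ really have disjoint simple content); but this is immediate from the definition of the link relation together with closure of blocks under subquotients. No further technical difficulty arises, and the argument reduces the lemma to bookkeeping plus one invocation of Morita's criterion.
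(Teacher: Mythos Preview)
Your proof is correct and follows essentially the same approach as the paper: both arguments group the projective covers $P(\pi)$ according to the link relation and invoke Theorem~\ref{ThmMorita} to produce a product decomposition, then use indecomposability of an $\ell$-block to conclude. The paper phrases the link relation on the set of projective covers rather than on $\Irr(\bA)$ and decomposes all of $\mathrm{Rep}_k(\bA)$ at once instead of working inside a fixed block, but the content is the same.
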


\begin{proof}
Let $\cP_{\pi}$ be the projective cover of $\pi$, for $\pi\in\mathrm{Irr}(\bA)$. Denote by $\cP(\bA)$ the set $\{\cP_{\pi}\}_{\pi\in\mathrm{Irr}(\bA)}$. We say $\cP_{\pi_1}$ is linked with $\cP_{\pi_2}$ (i.e. $\cP_{\pi_1}\leftrightarrow\cP_{\pi_2}$) if they contain a same irreducible subquotient. Let $\sim_{\cP}$ be the equivalence relation generated by this link relation. Let $\{\cP^i, i\in I\}$ be a set of connected components of $\cP(\bA)$ via $\sim_{\cP}$. Define $\cP_i=\sum_{\cP\in\cP^i}\cP$ for each $i\in I$. Let $\mathrm{Irr}_i$ be the set of equivalence classes of irreducible subquotients of $\cP_i$. We have:
\begin{itemize}
\item $\cup_{i\in I}\mathrm{Irr}_i=\mathrm{Irr}(\bA)$.
\item $\mathrm{Irr}_i\cap\mathrm{Irr}_j=\emptyset$ if $i\neq j$.
\item For $\pi\in\mathrm{Irr}_i$ there is a surjective map from $\cP_i$ to $\pi$.
\end{itemize}
Hence $\{\cP_i,i\in I\}$ defines a family of full-subcategories $\{\mathrm{R}_i,i\in I\}$ of $\mathrm{Rep}_k(\bA)$ via Morita's method, such that $\mathrm{Rep}_k(\bA)\cong\prod_{i\in I}\mathrm{R}_i$. Hence $\mathrm{Irr}(\mathrm{B})\subset \mathrm{Irr}_i$ for an $i\in I$. On the other hand, for each $i$ and for any non-trivial partition on $\mathrm{Irr}_i=\mathrm{Irr}_{i_1}\sqcup\mathrm{Irr}_{i_2}$ there exists an indecomposable projective object $\cP\in\cP^i$, such that neither $\mathrm{Irr}(\cP)\cap\mathrm{Irr}_{i_1}$ nor $\mathrm{Irr}(\cP)\cap\mathrm{Irr}_{i_2}$ is trivial. Hence there is not finer decomposition for each $\mathrm{R}_i$ and they are blocks of $\mathrm{Rep}_k(\bA)$. Hence $\mathrm{Irr}(\mathrm{B})= \mathrm{Irr}_i$. By definition $\mathrm{Irr}_i$ is a connected component by $\sim_{\ell}$ of $\mathrm{Irr}(\bA)$.
\end{proof}

From now on, we assume that the supercuspidal support of irreducible $k$-representations of Levi subgroups of $\bA$ is \textbf{unique} up to conjugacy.

\begin{defn}
\label{defn2.5}
Consider an equivalence relation on $\ell$-blocks of $\bA$ generated by linking $\mathrm{B}_1$ and $\mathrm{B_2}$ when $\mathbf{SC}(\mathrm{B}_1)\cap\mathbf{SC}(\mathrm{B}_2)\neq\emptyset$. The product of $\ell$-blocks in a connected component is called an $\ell$-parablock.
\end{defn}

\begin{lem}
\label{lemparalblock}
Let $\pi_1,\pi_2$ be two irreducible $k$-representations of $\bA$. Suppose that $(\mathbb{M},\rho_i)$ belongs to the cuspidal support of $\pi_i$ for $i=1,2$, and that $\rho_1,\rho_2$ are in the same $\ell$-parablock. Then $\pi_1$ and $\pi_2$ are in the same $\ell$-parablock.
\end{lem}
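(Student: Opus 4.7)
The plan is to unwind the hypothesis that $\rho_1$ and $\rho_2$ lie in the same $\ell$-parablock of $\mathbb{M}$ into a chain of elementary links in $\mathrm{Irr}(\mathbb{M})$, and then to transport each link to the setting of $\bA$ via Harish-Chandra induction $F := i_{\mathbb{M}}^{\bA}$, using its biadjointness with Harish-Chandra restriction $R$ for finite reductive groups.

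First I would combine Definition \ref{defn2.5} with Lemma \ref{finlem001} to rewrite the hypothesis: there exists a finite sequence of irreducibles $\rho_1 = \mu_0, \mu_1, \ldots, \mu_m = \rho_2$ in $\mathrm{Irr}(\mathbb{M})$ in which each consecutive pair $(\mu_k, \mu_{k+1})$ is linked by one of two elementary relations, namely (a) sharing a supercuspidal support in $\mathbb{M}$, or (b) being composition factors of a common indecomposable projective $\cP_\nu$ of $\mathrm{Rep}_k(\mathbb{M})$. For each $k$ I would choose $\varpi_k \in \mathrm{Irr}(\bA)$ whose $\bA$-supercuspidal support equals the $\mathbb{M}$-supercuspidal support of $\mu_k$, viewed as an $\bA$-conjugacy class; any two such choices lie in the same $\bA$-$\ell$-parablock by Definition \ref{defn2.5}, and we may arrange $\varpi_0 = \pi_1$ and $\varpi_m = \pi_2$. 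It then suffices to show that each elementary link type preserves the ambient $\bA$-$\ell$-parablock of $\varpi_k$.

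Links of type (a) are immediate: the transitivity of supercuspidal support under parabolic induction yields that $\varpi_k$ and $\varpi_{k+1}$ share the same $\bA$-supercuspidal support, hence automatically lie in the same $\bA$-$\ell$-parablock by Definition \ref{defn2.5}. Links of type (b) form the substantive part. Since $F$ is biadjoint to $R$ and both preserve projectives, $F(\cP_\nu)$ is projective in $\mathrm{Rep}_k(\bA)$, and by exactness of $F$ both $\varpi_k$ and $\varpi_{k+1}$ occur as composition factors of $F(\cP_\nu)$. Decomposing $F(\cP_\nu) = \bigoplus_\tau \cP_\tau^{n_\tau}$ into indecomposable summands, Frobenius reciprocity
\[
\mathrm{Hom}_{\bA}(F(\cP_\nu), \cP_\tau) \cong \mathrm{Hom}_{\mathbb{M}}(\cP_\nu, R(\cP_\tau))
\]
forces $\nu$ to be a composition factor of each $R(\cP_\tau)$; since $R(\cP_\tau)$ is itself projective in $\mathrm{Rep}_k(\mathbb{M})$, it must lie in the $\ell$-block $B_{\mathbb{M}}(\nu)$. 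Consequently the $\bA$-supercuspidal support of each index $\tau$ arises as the image of an element of $\mathbf{SC}(B_{\mathbb{M}}(\nu))$.

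The hard part will be concluding from this information that all the summands $\cP_\tau$ actually lie in a single $\bA$-$\ell$-parablock, equivalently that the image of $\mathbf{SC}(B_{\mathbb{M}}(\nu))$ in the set of $\bA$-conjugacy classes is confined to a single $\bA$-$\ell$-parablock. I would approach this via a downward induction on the Loewy length of $\cP_\nu$, reducing to the base case in which $B_{\mathbb{M}}(\nu)$ possesses only one supercuspidal support class. The uniqueness assumption on supercuspidal support stated before Definition \ref{defn2.5} is crucial for coherently identifying $\mathbb{M}$-conjugacy classes with their $\bA$-conjugacy images along this chain. Morally, the lemma is the assertion that Harish-Chandra induction transports an $\ell$-parablock of $\mathrm{Rep}_k(\mathbb{M})$ into a single $\ell$-parablock of $\mathrm{Rep}_k(\bA)$, and this compatibility between induction and $\ell$-parablock decomposition is the technical core of the proof.
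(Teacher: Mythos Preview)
Your overall architecture---unwind the $\ell$-parablock hypothesis on $\mathbb{M}$ into a chain of elementary links, handle ``same supercuspidal support'' links by transitivity, and for ``same indecomposable projective'' links push $\cP_\nu$ up via $F=i_{\mathbb{M}}^{\bA}$ and analyse the indecomposable projective summands of $F(\cP_\nu)$---is exactly the paper's strategy.

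The gap is in what you call the ``hard part''. You have already done the work but failed to notice it, and your proposed Loewy-length induction is unnecessary. From your Frobenius reciprocity step you know that for every summand $\cP_\tau$ of $F(\cP_\nu)$ the irreducible $\nu$ occurs as a composition factor of $R(\cP_\tau)$. By exactness of $R$ and a filtration of $\cP_\tau$ by its composition factors, there is some irreducible subquotient $\sigma_\tau$ of $\cP_\tau$ with $\nu$ a composition factor of $R(\sigma_\tau)$. But then the $\bA$-supercuspidal support of $\sigma_\tau$ coincides with the $\bA$-image of the $\mathbb{M}$-supercuspidal support of $\nu$, which is \emph{the same class for every $\tau$}. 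Hence each $\cP_\tau$ contains an irreducible with this one fixed supercuspidal support, and by Definition~\ref{defn2.5} all the $\cP_\tau$ lie in the single $\bA$-$\ell$-parablock determined by it. This is precisely the paper's argument: it uses the opposite parabolic restriction $\bar r_{\mathbb{M}}^{\bA}$ and the adjunction $\Hom(\cP_{\rho_1},\bar r\,\cP_j)\neq 0$ to produce $\sigma\leq\cP_j$ with $\rho_1\leq\bar r\,\sigma$, and concludes in one line.

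A minor correction: your assertion that $R(\cP_\tau)$ ``must lie in the $\ell$-block $B_{\mathbb{M}}(\nu)$'' is not justified and need not hold---$R(\cP_\tau)$ is projective and has a nonzero $B_{\mathbb{M}}(\nu)$-component, but may also have summands in other $\mathbb{M}$-blocks. This does not affect the argument once you use the direct route above.
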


\begin{proof}
Let $\cP_{\rho_1}$ be the projective cover of $\rho_1$, hence $\pi_1$ is a quotient of $i_{\mathbb{M}}^{\mathbb{A}}\cP_{\rho_1}$. First we show that the irreducible subquotients of $i_{\mathbb{M}}^{\bA}\cP_{\rho_1}$ are in the same $\ell$-parablock. We have
\begin{equation}
\label{finindproj}
i_{\mathbb{M}}^{\bA}\cP_{\rho_1}\cong\oplus_{j=1 }^m\cP_j,
\end{equation}
where $\cP_j$ are indecomposable projective, and one of them is the projective cover of $\pi_1$. For each $j$, there is a non-trivial morphism from $\cP_{\rho_1}$ to $\bar{r}_{\mathbb{M}}^{\bA}\cP_j$, where $\bar{r}_{\mathbb{M}}^{\bA}$ is the opposite parabolic restriction functor. Hence there is an irreducible subquotient $\sigma$ of $\cP_j$, such that $(\mathbb{M},\rho_1)$ is an irreducible subquotient of $\bar{r}_{\mathbb{M}}^{\bA}\sigma$, which implies that the supercuspidal support of $\sigma$ contains that of $\rho_1$. We deduce that all the irreducible subquotients of $\cP_j$ are in the same $\ell$-parablock of $i_{\mathbb{M}}^{\bA}\rho_1$.

The proof of Lemma \ref{finlem001} and the definition of $\ell$-parablock imply that there exists a series of indecomposable projective objects
$$\{\cP_{\rho_1}=\cP_0,\cdots,\cP_a,\cdots,\cP_n=\cP_{\rho_2}, n\in \mathbb{N}\},$$
such that either $\cP_a$ and $\cP_{a-1}$ are in a same $\ell$-block, or $\mathbf{SC}(\cP_a)\cap\mathbf{SC}(\cP_{a-1})\neq\emptyset$.
By the analysis above, we conclude that the irreducible subquotients of $i_{\mathbb{M}}^{\bA}\cP_a$ are in the same $\ell$-parablock for each $1\leq a\leq n$, so the same for $\pi_1$ and $\pi_2$.
\end{proof}


\begin{prop}
\label{fin003}
Let $\mathcal{B}$ be an $\ell$-parablock. We fix an element for each $\bA$-conjugacy class in $\mathbf{Cusp}(\mathcal{B})$. Denote by $\mathrm{Cusp}(\mathcal{B})$ the set of these representatives. Then the projective object
$$\cP(\mathcal{B})=\oplus_{(\mathbb{M},\rho)\in\mathrm{Cusp}(\mathcal{B})}i_{\mathbb{M}}^{\bA}\cP_{\rho}$$
is a projective generator of $\mathcal{B}$. 
\end{prop}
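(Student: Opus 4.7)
The plan is to verify, for each $\ell$-parablock $\mathcal{B}$, the hypotheses of Morita's criterion (Theorem~\ref{ThmMorita}) applied globally to the family $\{\cP(\mathcal{B})\}$ indexed by all $\ell$-parablocks of $\bA$. This requires showing: (i) each $\cP(\mathcal{B})$ is projective; (ii) the sets $\mathrm{Irr}(\cP(\mathcal{B}))$ are pairwise disjoint and jointly cover $\mathrm{Irr}(\bA)$; and (iii) $\cP(\mathcal{B})$ surjects onto every $\pi\in\mathrm{Irr}(\cP(\mathcal{B}))$. The ensuing Morita equivalence then identifies $\mathcal{B}$ with the subcategory generated by $\cP(\mathcal{B})$.

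For (i), I would note that parabolic induction $i_{\mathbb{M}}^{\bA}$ preserves projectives: it factors as inflation from $\mathbb{M}$ to the parabolic $P$ followed by induction from $P$ to $\bA$, and both steps preserve projectivity because the unipotent radical of $P$ has order a power of $p$, hence invertible in $k$. The heart of the argument lies in proving (iii) together with the inclusion $\mathrm{Irr}(\mathcal{B})\subset\mathrm{Irr}(\cP(\mathcal{B}))$. Given $\pi\in\mathrm{Irr}(\mathcal{B})$, I would choose a cuspidal support $(\mathbb{M},\rho)$, which up to $\bA$-conjugacy lies in $\mathrm{Cusp}(\mathcal{B})$. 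Whether $\pi$ is a quotient or a subrepresentation of $i_{\mathbb{M}}^{\bA}\rho$, Frobenius reciprocity for Harish-Chandra induction (which is self-adjoint when $\ell\neq p$, since the unipotent invariants and coinvariants agree) yields that $\rho$ appears in the head or socle of $r_{\mathbb{M}}^{\bA}\pi$; lifting through $\cP_{\rho}\twoheadrightarrow\rho$ by projectivity of $\cP_{\rho}$ then produces a nonzero, hence surjective, morphism $i_{\mathbb{M}}^{\bA}\cP_{\rho}\to\pi$. In particular $\pi$ is a quotient of $\cP(\mathcal{B})$, handling both the inclusion and condition (iii) at once.

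For the reverse inclusion $\mathrm{Irr}(\cP(\mathcal{B}))\subset\mathrm{Irr}(\mathcal{B})$, I would invoke the mechanism from the first part of the proof of Lemma~\ref{lemparalblock}: decomposing $i_{\mathbb{M}}^{\bA}\cP_{\rho}=\bigoplus_{j}\cP_{j}$ into indecomposable projectives, each $\cP_j$ contains an irreducible whose supercuspidal support contains that of $\rho$, so every irreducible subquotient of $\cP_j$ lies in the same $\ell$-block of $\bA$, and hence in the $\ell$-parablock $\mathcal{B}$ containing any $\pi_0\in\mathcal{B}$ with cuspidal support $(\mathbb{M},\rho)$. Disjointness between distinct parablocks is immediate from Definition~\ref{defn2.5}. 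The main delicate point is the adjointness step in the central argument, where both possibilities in the definition of cuspidal support must uniformly produce a nonzero map $i_{\mathbb{M}}^{\bA}\cP_{\rho}\to\pi$; this relies on the Frobenius self-adjointness of Harish-Chandra induction in the finite reductive setting with $\ell\neq p$, together with the lifting property of the projective cover $\cP_\rho$.
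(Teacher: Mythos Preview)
Your proposal is correct and follows essentially the same route as the paper: the two key points are that every simple $\pi\in\mathrm{Irr}(\mathcal{B})$ receives a surjection from some $i_{\mathbb{M}}^{\bA}\cP_{\rho}$ with $(\mathbb{M},\rho)\in\mathrm{Cusp}(\mathcal{B})$, and that all irreducible subquotients of each $i_{\mathbb{M}}^{\bA}\cP_{\rho}$ lie in $\mathcal{B}$ (via the argument of Lemma~\ref{lemparalblock}). The only difference is one of packaging: you invoke Theorem~\ref{ThmMorita} globally across all $\ell$-parablocks, whereas the paper uses that $\mathcal{B}$, being a product of $\ell$-blocks, is already a direct factor of $\mathrm{Rep}_k(\bA)$, so it suffices to check directly that $\cP(\mathcal{B})$ is projective, lies in $\mathcal{B}$, and surjects onto every simple of $\mathcal{B}$. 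Your more careful treatment of the two cases in the definition of cuspidal support (via the biadjunction between $i_{\mathbb{M}}^{\bA}$ and $r_{\mathbb{M}}^{\bA}$, and the fact that $\dim\Hom(\cP_{\rho},X)=[X:\rho]$) is a welcome clarification of what the paper leaves implicit in the phrase ``by definition''.
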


\begin{proof}
By definition, for each $\pi\in\mathrm{Irr}(\mathcal{B})$ there is a pair $(\mathbb{M}_0,\rho_0)\in\mathrm{Cusp}_{\mathcal{B}}$ and surjective morphism from $i_{\mathbb{M}_0}^{\bA}\cP_{\rho_0}$ to $\pi$. Meanwhile, by the proof of Lemma \ref{lemparalblock}, the irreducible subquotients of $i_{\mathbb{M}_0}^{\bA}\cP_{\rho_0}$ are in the same $\ell$-parablock of $\pi$.

\end{proof}

\subsection{Restriction of scalars groups}
\label{RSG}

Let $\mathbb{F}_q$ be the finite field of $q$-elements, where $q$ is a power of $p$, and $k_0\slash \F_q$ a finite field extension. Denote by $\overline{\mathbb{F}}_q$ an algebraic closure of $\mathbb{F}_q$. Let $\mathbf{A}(k_0)$ be the group of $k_0$-rational points of a reductive group $\mathbf{A}$ defined over $k_0$. \textbf{The restriction of scalars group} $\rR(\mathbf{A})$ of $\mathbf{A}$ to $\F_q$ is reductive defined over $\F_q$, such that $\rR(\mathbf{A}_0)(O)\cong\mathbf{A}(O\otimes_{\F_q}k_0)$ for any $\F_q$-algebra $O$. In particular, when $O=k_0$, we have $\rR(\mathbf{A})(\F_q)\cong\mathbf{A}(k_0)$. For any $\alpha:\mathbf{A}_1\rightarrow\mathbf{A}_2$, there is a morphism $\rR(\alpha):\rR(\mathbf{A}_1)\rightarrow\rR(\mathbf{A}_2)$ such that $\rR(\alpha)\cong\alpha$ as group morphism from $\rR(\mathbf{A}_1)(\mathbb{F}_q)$ to $\rR(\mathbf{A}_2)(\mathbb{F}_q)$.

The restriction of scalars group $\mathbf{G}$ of $\mathrm{GL}_n\slash k_0$ exists and is connected for all $n\in\mathbb{N}$. We list some basic properties:
\begin{itemize}
\item The canonical determinant morphism $\det_n:\mathrm{GL}_n\rightarrow\mathbb{G}_m$ gives a morphism $\rR(\det_n): \mathbf{G}\rightarrow\rR(\mathbb{G}_m)$, where $\mathbb{G}_m$ is the multiplicative group over $k_0^{\times}$. To be more precise, we have $\rR(\mathbb{G}_n)(\F_q)\cong k_0^{\times}$. Denote by $\iota:\rR(\mathbf{G})(\F_q)\cong\mathrm{GL}_n(k_0)$, then $\rR(\det_n)(x)=\det_n(\iota(x))$ for $x\in \rR(\mathbf{G})(\F_q)$. The kernel $\mathrm{ker}(\rR(\det_n))$ is the restriction of scalars group of $\mathrm{SL}_n$ from $k_0$.
\item There is a morphism $N_{k_0\slash \F_q}:\rR(\mathbb{G}_m)\rightarrow \mathbb{G}_{m}\slash\F_q$, such that its action on $k_0^{\times}$ is equivalent with norm mapping of $k_0^{\times}\slash\F_q^{\times}$. The kernel $\mathrm{ker}(N_{k_0\slash\F_q})$ is a connected reductive group.
\item There is an isomorphism: $\mathbf{G}(\overline{\F}_q)\cong\mathrm{GL}_n(\overline{\F}_q)^{[k_0:\F_q]}$. The elements in $\mathrm{Gal}(k_0\slash\F_q)$ act on $\mathbf{G}(\overline{\F}_q)$, which is a $1$-cocyle.  Embedding $\mathrm{GL}_n(\overline{\F}_q)^{[k_0:\F_q]}$ as a subgroup diagonal by blocks of $\mathrm{GL}_{n[k_0:\F_q]}(\overline{\F}_q)$, we have
$$N_{k_0\slash\F_q}\circ\rR(\det_n)(x)\cong\det_{n[k_0:\F_q]}(x),$$
for $x\in \mathbf{G}(\overline{\F}_q)$.
\item The Levi subgroups of $\mathbf{G}$ are the restriction of scalars group of Levi subgroups of $\mathrm{GL}_{n}$ to $\F_q$. In particular, the Levi subgroups of $\mathbf{G}(\F_q)$ are Levi subgroups of $\mathrm{GL}_n(k_0)$.

\end{itemize}

Now we consider a finite index set $I$. For each $i\in I$ we fix integers $n_i, e_i\in \mathbb{N}$ and a finite field extension $k_i\slash\F_q$. Let $\mathbf{G}_i$ be the restriction of scalars group of $\mathrm{GL}_{n_i}$ from $k_i$,  and denote by $\mathbf{G}_I=\prod_{i\in I}\mathbf{G}_i$. 
\begin{itemize}
\item We denote by $\iota_i: \mathbf{G}_i(\F_q)\cong\mathrm{GL}_{n_i}(k_i)$ the canonical isomorphism.
\item Define $\det_i:=N_{k_i\slash\F_q}\circ\rR(\det_{n_i})$, where $\det_{n_i}$ the canonical determinant morphism on $\mathrm{GL}_{n_i}$. Hence $\det_i(x)=N_{k_i\slash\F_q}\circ\det_{n_i}(\iota_i(x))$ for $x\in\mathbf{G}_i(\F_q)$.
\item Denote by $\det_{I}^{e}=\otimes_{i\in I}\det_i^{e_i}:\prod_{i\in I}\mathbf{G}_i\rightarrow\mathbb{G}_m$, and by $\mathbf{G}_{I}^{e}=\mathrm{ker}(\det_{I}^{e})$, which is a reductive subgroup of $\mathbf{G}_I$ but \textbf{disconnected} in general (a disconnected algebraic group is \textbf{reductive} if its identity component is reductive as in the usual sense). 
\item Define $\rR(\det)_I:=(\rR(\det_{n_i}))_{i\in I}$. Denote by $\mathbf{G}_I^0=\mathrm{ker}(\rR(\det)_I)$, which is the product of restriction of scalars group of $\mathrm{SL}_{n_i}\slash k_i$, hence is connected. It is obvious that $\mathbf{G}_I^{0}$ is the derived group of $\mathbf{G}_I$, and
$$\mathbf{G}_I^0\subset\mathbf{G}_I^{e}\subset\mathbf{G}_I.$$
\end{itemize}

\begin{defn}
A parabolic subgroup of $\mathbf{G}_{I}^{e}$ is the intersection of a parabolic subgroup of $\mathbf{G}_I$ with $\mathbf{G}_{I}^{e}$. 
\end{defn}


We have the following properties.
\begin{prop}
\label{propLevibij}
\begin{itemize}
\item The Levi subgroups of $\mathbf{G}_I(\F_q)$ are the products of Levi subgroups of $\mathrm{GL}_{m_i}(k_i)$.
\item Intersection with $\mathbf{G}_I^0$ (resp. $\mathbf{G}_I^e$) gives a bijection between parabolic subgroups of $\mathbf{G}_I$ and $\mathbf{G}_I^0$ (resp. $\mathbf{G}_I^e$). In particular, the unipotent radical of a parabolic subgroup of $\mathbf{G}_I$ is the unipotent radical of the corresponding parabolic subgroups of $\mathbf{G}_I^0$ and $\mathbf{G}_I^e$.
\end{itemize}
\end{prop}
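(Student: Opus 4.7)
The plan is to treat the two bullets separately. The first is a direct application of the preliminaries on restriction of scalars recalled in the bullet list preceding the proposition; the second requires exploiting that $\mathbf{G}_I^0$ and $\mathbf{G}_I^e$ both contain the derived group of $\mathbf{G}_I$, so only a central part is forgotten on passing to the intersection.

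For the first item, I would use that the Levi subgroups of $\mathbf{G}_i=\rR(\mathrm{GL}_{n_i}/k_i)$ are precisely the restrictions of scalars of the Levi subgroups of $\mathrm{GL}_{n_i}/k_i$ (the last bullet before the proposition). Taking $\F_q$-points and transporting via $\iota_i$ identifies them with the Levi subgroups of $\mathrm{GL}_{n_i}(k_i)$, and combined with the product decomposition $\mathbf{G}_I(\F_q)=\prod_i\mathbf{G}_i(\F_q)$ together with a compatible choice of maximal torus, this yields the claim immediately.

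For the second item, the key observation is that $\mathbf{G}_I^0$ is the derived group of $\mathbf{G}_I$ (recalled above the proposition), so in the connected reductive group $\mathbf{G}_I$ one has the almost-direct product decomposition $\mathbf{G}_I=Z^0\cdot \mathbf{G}_I^0$ where $Z^0:=Z(\mathbf{G}_I)^0$ is the connected center. Any parabolic subgroup $P$ of $\mathbf{G}_I$ contains $Z^0$, and since $\mathbf{G}_I^0\subset\mathbf{G}_I^e$ one gets
\[
P \;=\; (P\cap \mathbf{G}_I^0)\cdot Z^0 \;=\; (P\cap \mathbf{G}_I^e)\cdot Z^0,
\]
which recovers $P$ from either intersection and gives injectivity of both maps. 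Surjectivity onto parabolics of $\mathbf{G}_I^e$ is the definition given just above the proposition; for $\mathbf{G}_I^0$, I would start from a parabolic $Q$ of $\mathbf{G}_I^0$, write its Levi--unipotent decomposition, adjoin $Z^0$ to the Levi to obtain a Levi of $\mathbf{G}_I$, and check that the associated parabolic $\widetilde{Q}=Q\cdot Z^0$ of $\mathbf{G}_I$ satisfies $\widetilde{Q}\cap\mathbf{G}_I^0=Q$.

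The statement about unipotent radicals then falls out: the unipotent radical $\rN$ of $P$ lies in $[\mathbf{G}_I,\mathbf{G}_I]=\mathbf{G}_I^0\subset\mathbf{G}_I^e$, so $\rN\subset P\cap\mathbf{G}_I^0\subset P\cap\mathbf{G}_I^e$, and the displayed decomposition shows that passing back to $P$ only adjoins the central torus $Z^0$, which is reductive, so $\rN$ remains the unipotent radical of the intersection. The main obstacle is conceptual rather than technical: one must reconcile the possible disconnectedness of $\mathbf{G}_I^e$ (flagged explicitly in the preceding text) with standard structure theory for parabolic subgroups, but this is precisely why the paper adopts the ad hoc definition of parabolic subgroups of $\mathbf{G}_I^e$ via intersection, after which the entire argument reduces to bookkeeping around the central torus $Z^0$.
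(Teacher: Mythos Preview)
Your argument is correct, and in fact considerably more detailed than the paper's own proof, which consists of the single sentence ``These statements can be checked by definition.'' Your use of the almost-direct product $\mathbf{G}_I=Z^0\cdot\mathbf{G}_I^0$ and the observation that unipotent radicals lie in the derived group is exactly the kind of routine verification the paper is gesturing at, so there is nothing to compare: you have simply written out what the paper leaves implicit.
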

\begin{proof}
These statements can be checked by definition.
\end{proof}

Denote by $\mathbb{G}=\mathbf{G}_I(\F_q)$ and by $\mathbb{G}'=\mathbf{G}_I^{e}(\F_q)$, by $\mathbb{G}^0=\mathbf{G}_I^0(\F_q)$.

\begin{cor}
\label{coruniqsupcuspL'}
Let $\rho'$ be an irreducible $k$-representation of $\mathbb{G}'$, then its supercuspidal support is unique up to $\mathbb{G}'$-conjugation.
\end{cor}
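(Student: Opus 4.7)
The plan is to bootstrap the uniqueness of supercuspidal support from $\mathbb{G} = \prod_i \mathrm{GL}_{n_i}(k_i)$, which is classical in the modular setting (Dipper--James for each factor, tensor product of irreducibles for the product), to $\mathbb{G}' = \mathbf{G}_I^e(\mathbb{F}_q)$, exploiting that $\mathbb{G}/\mathbb{G}'$ embeds in $\mathbb{F}_q^\times$ via $\det_I^e$ and is therefore finite abelian.

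First I would establish a Mackey-type compatibility: for a standard parabolic $\mathbb{P} = \mathbb{M}\mathbb{N}$ of $\mathbb{G}$ with Levi $\mathbb{M}$ and $\mathbb{M}' = \mathbb{M}\cap\mathbb{G}'$, we have $\mathbb{G} = \mathbb{M}\mathbb{G}'$ because the central torus of $\mathbb{M}$ already surjects onto $\mathbb{G}/\mathbb{G}'$ via $\det_I^e$. Combined with Proposition \ref{propLevibij} (the unipotent radical $\mathbb{N}$ is shared between the corresponding parabolics of $\mathbb{G}$ and $\mathbb{G}'$), Mackey's decomposition reduces to a single $\mathbb{P}$--$\mathbb{G}'$ double coset and yields
$$(i_{\mathbb{M}}^{\mathbb{G}}\sigma)\big|_{\mathbb{G}'} \cong i_{\mathbb{M}'}^{\mathbb{G}'}(\sigma|_{\mathbb{M}'}).$$
A direct consequence is that if $\sigma$ is supercuspidal in $\mathbb{M}$, then every irreducible subquotient $\tau'$ of $\sigma|_{\mathbb{M}'}$ is supercuspidal in $\mathbb{M}'$: otherwise $\tau'$ would appear in $i_{\mathbb{L}'}^{\mathbb{M}'}\eta'$ for a proper Levi $\mathbb{L}' \subsetneq \mathbb{M}'$, and the analogous Mackey identity applied at the level of $\mathbb{M}$ would make $\sigma$ appear in a proper parabolic induction, contradicting its supercuspidality.

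Given $\rho'$ irreducible in $\mathbb{G}'$, I would choose an irreducible $\rho$ of $\mathbb{G}$ with $\rho' \hookrightarrow \rho|_{\mathbb{G}'}$ (obtained from the canonical embedding $\rho' \hookrightarrow \mathrm{Ind}_{\mathbb{G}'}^{\mathbb{G}}(\rho')|_{\mathbb{G}'}$). Let $(\mathbb{M},\sigma)$ be the supercuspidal support of $\rho$, unique up to $\mathbb{G}$-conjugacy by the known case. The displayed identity forces every supercuspidal pair $(\mathbb{M}',\tau')$ in the supercuspidal support of $\rho'$ to have $\tau'$ an irreducible subquotient of $\sigma|_{\mathbb{M}'}$. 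Since any other admissible $\rho$ differs from the original by a twist by a character of $\mathbb{G}/\mathbb{G}'$, which restricts trivially to $\mathbb{M}' \subset \mathbb{G}'$, the representation $\sigma|_{\mathbb{M}'}$ is well defined up to $\mathbb{M}$-conjugation. Its irreducible subquotients form a single $\mathbb{M}$-orbit by Clifford theory for $\mathbb{M}'\triangleleft\mathbb{M}$, so any two supercuspidal supports of $\rho'$ are at least $\mathbb{G}$-conjugate.

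The main obstacle is upgrading this $\mathbb{G}$-conjugation to the desired $\mathbb{G}'$-conjugation. For that, one must use that $\rho'$ is not merely contained in $\rho|_{\mathbb{G}'}$ but picks out a specific $\mathbb{G}/\mathbb{G}'$-orbit of irreducible constituents, and correspondingly pins down a specific $\mathbb{M}'$-orbit of irreducible subquotients of $\sigma|_{\mathbb{M}'}$ via the Mackey-induced inclusion $\rho|_{\mathbb{G}'} \hookrightarrow i_{\mathbb{M}'}^{\mathbb{G}'}(\sigma|_{\mathbb{M}'})$. I expect the argument to proceed by bookkeeping of stabilizers, applying Clifford theory twice (for $\mathbb{G}'\triangleleft\mathbb{G}$ and for $\mathbb{M}'\triangleleft\mathbb{M}$) and matching the two decompositions through parabolic induction; this is the most delicate step.
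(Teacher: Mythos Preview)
Your reduction to $\mathbb{G}$-conjugacy is essentially the paper's: lift $\rho'$ to an irreducible $\rho$ of $\mathbb{G}$, use the Mackey identity (equivalently, the paper uses $\ind_{\mathbb{G}'}^{\mathbb{G}}i_{\mathbb{M}'}^{\mathbb{G}'}\pi'\cong i_{\mathbb{M}}^{\mathbb{G}}\ind_{\mathbb{M}'}^{\mathbb{M}}\pi'$) together with $\det_I^e(\mathbb{M})=\det_I^e(\mathbb{G})$, and invoke the known uniqueness of supercuspidal support for $\mathbb{G}=\prod_i\mathrm{GL}_{n_i}(k_i)$. So far so good.

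The genuine gap is the last step, which you yourself flag as only a sketch. The Clifford bookkeeping you describe does not, by itself, rule out that $\rho'$ sits in both $i_{\mathbb{M}'}^{\mathbb{G}'}\pi'$ and $i_{\mathbb{M}'}^{\mathbb{G}'}g(\pi')$ for some $g\in\mathbb{M}$ with $g(\pi')\not\cong\pi'$: conjugating shows $g(\rho')$ also lies in $i_{\mathbb{M}'}^{\mathbb{G}'}g(\pi')$, but nothing prevents that induction from containing several constituents of $\rho|_{\mathbb{G}'}$. Matching stabilisers for $\mathbb{G}'\triangleleft\mathbb{G}$ and $\mathbb{M}'\triangleleft\mathbb{M}$ through $\mathbb{G}/\mathbb{G}'\cong\mathbb{M}/\mathbb{M}'$ gives no contradiction without an extra rigidity input.

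The paper supplies exactly that input by descending one level further, to $\mathbb{G}^0=\prod_i\mathrm{SL}_{n_i}(k_i)$, the derived subgroup. Two facts are used: (i) $\rho|_{\mathbb{G}^0}$ is multiplicity-free, hence so is $\pi|_{\mathbb{M}^0}$, which forces the $\mathbb{M}^0$-constituents of $\pi'$ and of $g(\pi')$ to be \emph{disjoint} whenever $\pi'\not\cong g(\pi')$; and (ii) uniqueness of supercuspidal support is already known for the connected group $\mathbb{G}^0$. One then picks a constituent $\rho^0$ of $\rho'|_{\mathbb{G}^0}$ and applies Mackey ($\res_{\mathbb{G}^0}^{\mathbb{G}'}i_{\mathbb{M}'}^{\mathbb{G}'}\cong i_{\mathbb{M}^0}^{\mathbb{G}^0}\res_{\mathbb{M}^0}^{\mathbb{M}'}$) to both inductions: $\rho^0$ would have supercuspidal support meeting two disjoint families of supercuspidals of $\mathbb{M}^0$, contradicting (ii). Your outline is missing both the passage to $\mathbb{G}^0$ and the multiplicity-freeness; adding these is what closes the argument.
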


\begin{proof}
The quotient $\mathbb{G}\slash\mathbb{G}'$ is a finite abelian group, hence each irreducible representation $\rho'$ of $\mathbb{G}'$ is a direct component of $\rho\vert_{\mathbb{G}'}$ where $\rho$ is irreducible of $\mathbb{G}_I$. We call such $\rho$ a lifting of $\rho'$. Since $\rho\vert_{\mathbb{G}^{0}}$ is multiplicity-free, so is $\rho\vert_{\mathbb{G}'}$. The direct components of $\rho\vert_{\mathbb{G}'}$ are $\mathbb{G}$-conjugate. Let $(\mathbb{M}',\pi')$ be an element in the supercuspidal support of $\rho'$, where $\mathbb{M}'$ is a Levi subgroup, and $\mathbb{M}^{0},\mathbb{M}$ be the Levi subgroups of $\mathbb{G}^{0}$ and $\mathbb{G}$ under the bijection in Proposition \ref{propLevibij}. Then $\rho'$ is a subquotient of $i_{\mathbb{M}'}^{\mathbb{G}'}\pi'$, and we write $\rho'\textless i_{\mathbb{M}'}^{\mathbb{G}'}\pi'$. Let $\rho^{0}$ be an irreducible direct component of $\rho'\vert_{\mathbb{G}^0}$, and $\pi$ be a lifting of $\pi'$ to $\mathbb{M}$. Since $\ind_{\mathbb{G}'}^{\mathbb{G}}i_{\mathbb{M}'}^{\mathbb{G}'}\pi'\cong i_{\mathbb{M}}^{\mathbb{G}}\ind_{\mathbb{M}'}^{\mathbb{M}}\pi'$, we have 
$$\ind_{\mathbb{G}'}^{\mathbb{G}}\rho'\textless i_{\mathbb{M}}^{\mathbb{G}}\ind_{\mathbb{M}'}^{\mathbb{M}}\pi'.$$
The irreducible subquotients of the lefthand side is equivalent to $\rho\otimes\chi$ where $\chi$ is a character. By the uniqueness of supercuspidal support of $\rho\otimes\chi$ and the fact that $\rR(\det_I)(\mathbb{G}')=\rR(\det_I)(\mathbb{M}')$, we deduce that the supercuspidal support of $\rho'$ is contained in $[\mathbb{M}',\pi']_{\mathbb{G}}$ which is the $\mathbb{G}$-conjugacy class of $(\mathbb{M}',\pi')$. Suppose there exists $g\in \mathbb{M}$ such that $g(\pi')\neq \pi'$ and $\rho'\textless i_{\mathbb{M}'}^{\mathbb{G}'}g(\pi')$. By Mackey's theory we have 
$$\rho^{0}\textless \res_{\mathbb{G}^{0}}^{\mathbb{G}'}i_{\mathbb{M}'}^{\mathbb{G}'}\pi'\cong i_{\mathbb{M}^{0}}^{\mathbb{G}^{0}}\res_{\mathbb{M}^{0}}^{\mathbb{M}^{'}}\pi',$$
and
$$\rho^{0}\textless \res_{\mathbb{G}^{0}}^{\mathbb{G}'}i_{\mathbb{M}'}^{\mathbb{G}'}g(\pi')\cong i_{\mathbb{M}^{0}}^{\mathbb{G}^{0}}\res_{\mathbb{M}^{0}}^{\mathbb{M}^{'}}g(\pi').$$
Since the direct components of $\res_{\mathbb{M}^{0}}^{\mathbb{M}^{'}}\pi'$ and $\res_{\mathbb{M}^{0}}^{\mathbb{M}^{'}}g(\pi')$ are never equivalent, which contradicts with the uniqueness of supercuspidal support of $\rho^0$. Hence we conclude that the supercuspidal support of $\rho'$ is  the $\mathbb{G}'$-conjugacy class of $(\mathbb{M}',\pi')$.

\end{proof}

\begin{cor}
\label{cor2.3}
Let $\rho'$ be irreducible and cuspidal of $\mathbb{G}'$, and $\cP_{\rho'}$ its projective cover. Then the supercuspidal support of the irreducible subquotients of $\cP_{\rho'}$ is contained in the $\mathbb{G}$-conjugacy class $[\mathbb{M}',\pi']_{\mathbb{G}}$. 
\end{cor}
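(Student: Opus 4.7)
Let $\tau'$ be an irreducible subquotient of $\cP_{\rho'}$. The plan is to show that the supercuspidal support of $\tau'$ lies in $[\mathbb{M}',\pi']_{\mathbb{G}}$ by lifting the situation to the ambient group $\mathbb{G}$, applying the analogous statement there, and descending via Clifford-Mackey theory. Following the setup from the proof of Corollary \ref{coruniqsupcuspL'}, I would begin by choosing an irreducible lifting $\rho\in\mathrm{Irr}(\mathbb{G})$ with $\rho'$ a direct summand of $\rho|_{\mathbb{G}'}$, together with a lifting $\pi\in\mathrm{Irr}(\mathbb{M})$ of $\pi'$; then $(\mathbb{M},\pi)$ represents the supercuspidal support of $\rho$. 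The key geometric fact underlying the descent is that $\mathbb{M}\cdot\mathbb{G}'=\mathbb{G}$: indeed $\mathbb{M}$ contains a maximal torus of $\mathbb{G}$, which already surjects onto $\mathbb{G}/\mathbb{G}'\hookrightarrow\mathbb{F}_q^{\times}$ under $\det^e_I$.

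Next, I would show that $\cP_{\rho'}$ is a direct summand of $\cP_{\rho}|_{\mathbb{G}'}$. The restriction functor $\mathrm{Res}^{\mathbb{G}}_{\mathbb{G}'}$ preserves projectivity, being right adjoint to the exact functor $\mathrm{Ind}^{\mathbb{G}}_{\mathbb{G}'}$; the surjection $\cP_{\rho}\twoheadrightarrow\rho$ restricts to a surjection onto $\rho|_{\mathbb{G}'}$, which, composed with the projection onto the $\rho'$-summand, yields a surjection $\cP_{\rho}|_{\mathbb{G}'}\twoheadrightarrow\rho'$; since $\cP_{\rho}|_{\mathbb{G}'}$ is projective, the projective cover $\cP_{\rho'}$ splits off. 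Hence $\tau'$ is an irreducible subquotient of $\tau|_{\mathbb{G}'}$ for some irreducible subquotient $\tau$ of $\cP_{\rho}$. I would then invoke the $\mathbb{G}$-analog of the corollary, available because $\mathbb{G}$ is, via restriction of scalars, a product of general linear groups over finite fields whose $\ell$-blocks are known to respect the $\mathbb{G}$-conjugacy class of supercuspidal support: the supercuspidal support of $\tau$ is $\mathbb{G}$-conjugate to $(\mathbb{M},\pi)$, and can be written as $(g\mathbb{M}g^{-1},g\pi)$ for some $g\in\mathbb{G}$.

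Finally, since $g\mathbb{M}g^{-1}\cdot\mathbb{G}'=\mathbb{G}$ as well, Mackey's decomposition collapses to a single double coset and yields
$$\bigl(i_{g\mathbb{M}g^{-1}}^{\mathbb{G}}g\pi\bigr)\big|_{\mathbb{G}'}\cong i_{g\mathbb{M}'g^{-1}}^{\mathbb{G}'}\bigl(g\pi|_{g\mathbb{M}'g^{-1}}\bigr),$$
so $\tau'$ has supercuspidal support of the form $(g\mathbb{M}'g^{-1},\pi'')$ with $\pi''$ a summand of $g\pi|_{g\mathbb{M}'g^{-1}}$, i.e.\ a $g$-translate of a summand of $\pi|_{\mathbb{M}'}$. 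Since by Clifford theory the summands of $\pi|_{\mathbb{M}'}$ are all $\mathbb{M}$-conjugate to $\pi'$, the pair $(g\mathbb{M}'g^{-1},\pi'')$ is $\mathbb{G}$-conjugate to $(\mathbb{M}',\pi')$, placing the supercuspidal support of $\tau'$ in $[\mathbb{M}',\pi']_{\mathbb{G}}$ as required. The main subtlety I anticipate is invoking cleanly the $\mathbb{G}$-analog of the corollary; the remainder is formal Clifford-Mackey bookkeeping.
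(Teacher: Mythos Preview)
Your proposal is correct and follows essentially the same route as the paper: lift $\rho'$ to $\rho$, use that every irreducible subquotient of $\cP_{\rho}$ has supercuspidal support $[\mathbb{M},\pi]$, embed $\cP_{\rho'}$ as a summand of $\cP_{\rho}|_{\mathbb{G}'}$, and descend. The paper is terser in the last step, simply invoking Corollary~\ref{coruniqsupcuspL'} rather than rerunning the Mackey--Clifford computation; one small slip in your write-up is that $\mathrm{Res}$ preserves projectives because it is \emph{left} adjoint to the exact functor $\mathrm{coInd}\cong\mathrm{Ind}$ (right adjoint to an exact functor gives preservation of injectives), though the conclusion is unaffected.
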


\begin{proof}
Let $\mathbb{M}$ be the Levi subgroup of $\mathbb{G}$ as above. Let $\rho$ be a lifting of $\rho'$ to $\mathbb{G}$, of which the supercuspidal support is the $\mathbb{G}$-conjugacy class $[\mathbb{M},\pi]$, where $\pi$ is a lifting of $\pi'$ to $\mathbb{M}$. Denote by $\cP_{\rho}$ the projective cover of $\rho$. The supercuspidal support of an irreducible subquotient of $\cP_{\rho}$ is $[\mathbb{M},\pi]$ as well. Since $\cP_{\rho'}\hookrightarrow \cP_{\rho}\vert_{\mathbb{G}'}$, the supercuspidal support of irreducible subquotients of $\cP_{\rho'}$ are $\mathbb{G}$-conjugate to that of $\rho'$, hence they are contained in $[\mathbb{M}',\pi]'_{\mathbb{G}}$ by Corollary \ref{coruniqsupcuspL'}.
\end{proof}

\subsection{$\ell$-parablocks of $\mathbb{G}'$}

Thanks to the uniqueness of supercuspidal support, we can apply Definition \ref{defn2.5} and study the $\ell$-parablocks of $\mathbb{G}'$. Let $\rho'$ be an irreducible representation of $\mathbb{G}'$, and $\mathcal{B}_{\rho'}$ be the $\ell$-parablock containing $\rho'$.


\begin{prop}
\label{propconjparabl}
Let $\mathcal{B}_1$ and $\mathcal{B}_2$ be two $\ell$-parablocks of $\mathbb{G}'$. Suppose there exist irreducible representations $\rho_i\in\mathcal{B}_i$ for $i=1,2$ that are $\mathbb{G}$-conjugate. Let $\mathcal{P}(\mathcal{B}_1)$ be a projective generator. There exists $x\in\mathbb{M}$ such that $x(\mathcal{P}(\mathcal{B}_1))$ is a projective generator of $\mathcal{B}_2$.
\end{prop}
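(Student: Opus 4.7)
The plan is to exhibit an auto-equivalence of $\mathrm{Rep}_k(\mathbb{G}')$ coming from conjugation by a suitable element of $\mathbb{G}$, and to show that this auto-equivalence permutes the $\ell$-parablocks of $\mathbb{G}'$. The key structural input is that $\mathbb{G}' = \mathbf{G}_I^e(\mathbb{F}_q)$ is normal in $\mathbb{G}$, being the kernel of the character $\det_I^e$. Hence for any $x \in \mathbb{G}$, the map $c_x : g \mapsto xgx^{-1}$ is an automorphism of $\mathbb{G}'$, and pullback along $c_x$ gives a functor
\[
\mathrm{Rep}_k(\mathbb{G}') \longrightarrow \mathrm{Rep}_k(\mathbb{G}'), \qquad \Pi \mapsto x(\Pi),
\]
which is an auto-equivalence of categories. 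In particular, it sends irreducibles to irreducibles, projective covers to projective covers, and projective generators of a full subcategory to projective generators of its image.

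Next I verify that $c_x$ permutes the $\ell$-parablocks. By Proposition \ref{propLevibij}, the Levi and parabolic subgroups of $\mathbb{G}'$ are exactly the intersections with $\mathbb{G}'$ of those of $\mathbb{G}$; thus $c_x$ permutes them, and since conjugation transports parabolic induction via the natural isomorphism $x(i_{\mathbb{M}'}^{\mathbb{G}'}\tau) \cong i_{c_x(\mathbb{M}')}^{\mathbb{G}'} c_x(\tau)$, the functor $c_x$ preserves (super)cuspidality and takes supercuspidal supports to supercuspidal supports. Using the uniqueness of supercuspidal support (Corollary \ref{coruniqsupcuspL'}), this transport descends to the level of $\mathbb{G}'$-conjugacy classes. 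It follows that $c_x$ sends every $\ell$-block of $\mathbb{G}'$ to an $\ell$-block, and preserves the linking relation $\mathbf{SC}(\mathrm{B}_1) \cap \mathbf{SC}(\mathrm{B}_2) \neq \emptyset$ from Definition \ref{defn2.5}. Therefore $c_x$ permutes the $\ell$-parablocks of $\mathbb{G}'$.

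To conclude, I choose $x \in \mathbb{G}$ (which in the situation of the statement can be taken in the relevant Levi $\mathbb{M}$) such that $x(\rho_1) \cong \rho_2$. Then $x(\mathcal{B}_1)$ is an $\ell$-parablock of $\mathbb{G}'$ containing $x(\rho_1) \cong \rho_2$, so by Lemma \ref{finlem001} (extended to $\ell$-parablocks via Definition \ref{defn2.5}) it equals $\mathcal{B}_2$. Since $c_x$ is an auto-equivalence of $\mathrm{Rep}_k(\mathbb{G}')$ restricting to an equivalence $\mathcal{B}_1 \xrightarrow{\sim} \mathcal{B}_2$, it sends the projective generator $\mathcal{P}(\mathcal{B}_1)$ of $\mathcal{B}_1$ to a projective generator $x(\mathcal{P}(\mathcal{B}_1))$ of $\mathcal{B}_2$, which is the claim.

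The only point requiring any care, rather than being formal, is the compatibility of $c_x$ with parabolic induction and with the linking defining $\ell$-parablocks. This compatibility is however automatic: conjugation is an automorphism of $\mathbb{G}'$ carrying parabolic subgroups to parabolic subgroups and their Levi decompositions to Levi decompositions, and all the structures involved in the definition of $\ell$-parablocks are intrinsic to $\mathrm{Rep}_k(\mathbb{G}')$.
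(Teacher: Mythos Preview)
Your proof is correct and follows essentially the same approach as the paper: both argue that conjugation by an element of $\mathbb{G}$ is an auto-equivalence of $\mathrm{Rep}_k(\mathbb{G}')$ which transports projective covers, parabolic induction, and supercuspidal supports, hence permutes the $\ell$-parablocks. The one point you gloss over is why $x$ can be taken in $\mathbb{M}$ rather than just in $\mathbb{G}$; the paper handles this by observing that, up to a further $\mathbb{G}'$-conjugation (which acts trivially on isomorphism classes of $\mathbb{G}'$-representations), any $\mathbb{G}$-conjugacy can be realised by an element of $\mathbb{M}$, since $\det_I^e(\mathbb{M}) = \det_I^e(\mathbb{G})$ forces $\mathbb{G} = \mathbb{G}'\mathbb{M}$.
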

\begin{proof}
Up to a conjugation in $\mathbb{G}'$, we can assume that $\rho_1$ and $\rho_2$ are conjugate by an element $x\in\mathbb{M}$. Let $(\mathbb{M}',\tau_1)$ be in the supercuspidal support of $\rho_1$. Recall that $\mathbb{M}$ is a Levi of $\mathbb{G}$ as above. We have the following two facts:
\begin{enumerate}
\item Let $\mathcal{P}_{1}$ be a projective cover of $\rho_1$, we have $x(\mathcal{P}_{1})$ is the projective cover of $\rho_2$.
\item $\rho_2\leq i_{\mathbb{M}'}^{\mathbb{G}'}x(\tau_1)$.
\end{enumerate}
We conclude that the conjugation of $x$ sends the $\ell$-block $\mathcal{B}_{1}$ containing $\rho_1$ to the $\ell$-block $\mathcal{B}_2$ containing $\rho_2$, and $x(\mathbf{SC}(\mathcal{B}_1))=\mathbf{SC}(\mathcal{B}_2)$. By the equivalence relation in Definition \ref{defn2.5}, the above analysis implies that the conjugation of $x$ gives a bijection between the $\ell$-blocks equivalent to $\mathcal{B}_1$ to those equivalent to $\mathcal{B}_2$, which gives the result.
\end{proof}




The last example in \cite{C3} shows that the supercuspidal representations in an $\ell$-block of $\mathrm{SL}_2(\F_q)$ can be different, but are conjugate in $\mathrm{GL}_2(\F_q)$. Now we consider a more general setting. We give a description on the supercuspidal pairs appearing in the supercuspidal support of irreducible representations in an $\ell$-parablock.

\begin{prop}
\label{finprop004}
Let $\mathcal{B}$ be an $\ell$-parablock of $\mathbb{G}'$. The supercuspidal supports of irreducible representations in $\mathcal{B}$ are contained in the $\mathbb{G}$-conjugacy class of a supercuspidal pair of $\mathbb{G}'$.
\end{prop}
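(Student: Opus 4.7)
The plan is to reduce the statement to a question about a single indecomposable projective of $\mathbb{G}'$ via Definition \ref{defn2.5} and Lemma \ref{finlem001}, and then to handle that question by lifting to $\mathbb{G}$ and invoking the classical block theory of general linear groups over finite fields. By the uniqueness of supercuspidal support (Corollary \ref{coruniqsupcuspL'}), the link coming from ``$\mathbf{SC}(\mathrm{B}_1)\cap\mathbf{SC}(\mathrm{B}_2)\neq\emptyset$'' in the definition of $\ell$-parablock automatically preserves the $\mathbb{G}$-conjugacy class of the supercuspidal support, so it suffices to prove the assertion within a single $\ell$-block of $\mathbb{G}'$. Using Lemma \ref{finlem001} and its link relation $\sim_\ell$, two irreducibles in a common $\ell$-block are connected by a chain in which each pair of consecutive terms are both subquotients of one common indecomposable projective $\cP_\pi$, $\pi \in \mathrm{Irr}(\mathbb{G}')$. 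Hence the proposition reduces to the claim: for every $\pi \in \mathrm{Irr}(\mathbb{G}')$, every irreducible subquotient of $\cP_\pi$ has supercuspidal support in the $\mathbb{G}$-conjugacy class $[\mathbb{N}',\nu']_{\mathbb{G}}$, where $(\mathbb{N}',\nu')$ denotes the supercuspidal support of $\pi$.

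To establish this reduced claim, I would lift $\pi$ to an irreducible representation $\tilde\pi$ of $\mathbb{G}$ whose restriction to $\mathbb{G}'$ contains $\pi$. By Clifford's theorem, $\tilde\pi|_{\mathbb{G}'}$ is semisimple and contains $\pi$ as a direct summand. Since $k[\mathbb{G}]$ is free over $k[\mathbb{G}']$, restriction preserves projectives, so $\cP_{\tilde\pi}|_{\mathbb{G}'}$ is projective; the composed surjection $\cP_{\tilde\pi}|_{\mathbb{G}'} \twoheadrightarrow \tilde\pi|_{\mathbb{G}'} \twoheadrightarrow \pi$ (the last map being projection onto the $\pi$-isotypic summand) then forces $\cP_\pi$ to be a direct summand of $\cP_{\tilde\pi}|_{\mathbb{G}'}$. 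Since $\mathbb{G} \cong \prod_{i\in I}\mathrm{GL}_{n_i}(k_i)$, the classical block theory of general linear groups over finite fields (the same input underlying Corollary \ref{cor2.3}) shows that every irreducible subquotient $\sigma$ of $\cP_{\tilde\pi}$ has supercuspidal support in $[\mathbb{N},\tilde\nu]_{\mathbb{G}}$, where $(\mathbb{N},\tilde\nu)$ is a lift of $(\mathbb{N}',\nu')$ through Proposition \ref{propLevibij}.

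Finally, to transport the claim from $\mathbb{G}$ back to $\mathbb{G}'$, I would invoke the Mackey-type identity $i_{\mathbb{N}}^{\mathbb{G}}V|_{\mathbb{G}'} \cong i_{\mathbb{N}'}^{\mathbb{G}'}(V|_{\mathbb{N}'})$, which holds because any Levi $\mathbb{N}$ of $\mathbb{G}$ contains a maximal torus and hence surjects onto $\mathbb{G}/\mathbb{G}'$, giving the single double coset decomposition $\mathbb{G} = \mathbb{N}\mathbb{G}'$. Applied to the relation $\sigma \leq i_{\mathbb{N}}^{\mathbb{G}}(g\tilde\nu)$ and combined with the semisimplicity of $g\tilde\nu|_{\mathbb{N}'}$ (again by Clifford), this shows that any irreducible subquotient of $\sigma|_{\mathbb{G}'}$ has supercuspidal support of the form $(\mathbb{N}',\nu'')$ with $\nu''$ $\mathbb{N}$-conjugate to $\nu'$, hence in $[\mathbb{N}',\nu']_{\mathbb{G}}$. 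Applying this to all subquotients of $\cP_{\tilde\pi}|_{\mathbb{G}'}$, and in particular to those of the direct summand $\cP_\pi$, finishes the argument. The main obstacle will be keeping the Mackey decomposition and the Clifford-semisimple restriction clean in the disconnected setting of $\mathbb{G}'$, where parabolic subgroups are defined by intersection as in Proposition \ref{propLevibij}; this is precisely the point where the product-of-$\mathrm{GL}$ structure of $\mathbb{G}$ coming from Section \ref{RSG} has to be exploited to identify lifts and compare supercuspidal supports.
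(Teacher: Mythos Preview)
Your argument is correct and follows essentially the same outline as the paper: reduce to a single $\ell$-block via Definition~\ref{defn2.5}, then to a single indecomposable projective via Lemma~\ref{finlem001}, then control the supercuspidal supports of its subquotients by lifting to $\mathbb{G}$ and using that blocks of a product of finite general linear groups are determined by supercuspidal support.

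The one genuine difference is in how the last step is organised. The paper does not lift an arbitrary $\pi\in\mathrm{Irr}(\mathbb{G}')$ directly. Instead it first passes to the cuspidal support $(\mathbb{M}',\tau')$ of $\rho'$, observes that $\cP_{\rho'}$ is a direct summand of $i_{\mathbb{M}'}^{\mathbb{G}'}\cP_{\tau'}$, and then invokes Corollary~\ref{cor2.3} (already proved for \emph{cuspidal} $\tau'$) together with Corollary~\ref{coruniqsupcuspL'}. You instead bypass the cuspidal intermediary and reprove the analogue of Corollary~\ref{cor2.3} for arbitrary $\pi$, via $\cP_\pi\hookrightarrow\cP_{\tilde\pi}\vert_{\mathbb{G}'}$ and the Mackey identity $i_{\mathbb{N}}^{\mathbb{G}}(\,\cdot\,)\vert_{\mathbb{G}'}\cong i_{\mathbb{N}'}^{\mathbb{G}'}((\,\cdot\,)\vert_{\mathbb{N}'})$. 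The paper's route is more economical because it reuses Corollary~\ref{cor2.3} verbatim rather than rerunning its proof; your route is more self-contained and makes explicit the single-coset Mackey step $\mathbb{G}=\mathbb{N}\mathbb{G}'$, which in the paper is hidden inside the proofs of Corollaries~\ref{coruniqsupcuspL'} and~\ref{cor2.3}. Either way the content is the same.
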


\begin{proof}
Let $\mathrm{B}$ be an $\ell$-block contained in $\mathcal{B}$. For $\rho'\in\mathrm{Irr}(\mathrm{B})$, let $(\mathbb{M}',\tau')$ be in the cuspidal support of $\rho'$, and $(\mathbb{L}',\sigma')$ in the supercuspidal support of $\tau'$. The projective cover $\mathcal{P}_{\rho'}$ is an indecomposable direct summand of $i_{\mathbb{M}'}^{\mathbb{G}'}\cP_{\tau'}$. By Corollary \ref{coruniqsupcuspL'} and Corollary \ref{cor2.3}, the supercuspidal support of irreducible subquotients of $\cP_{\rho'}$ is contained in $[\mathbb{L}',\sigma']_{\mathbb{G}}$. We deduce from Lemma \ref{finlem001} that the supercuspidal support of an element in $\mathrm{Irr}(\mathrm{B})$ is contained in $[\mathbb{L}',\sigma']_{\mathbb{G}}$. Then the result can be obtained directly from Definition \ref{defn2.5}.

\end{proof}

\section{The group $\rL_{\text{max}}$ and cover theory}
\label{sectionGcover}
Recall that $F$ is a non-archimedean local field, of which $\mathfrak{o}_F$ is the integer ring, $\mathfrak{p}_F$ is the maximal ideal of the integer ring, and $\varpi_{F}$ is a uniformiser. From now on, we denote by $\mathrm{G}$ a Levi subgroup of $\mathrm{GL}_n(F)$, and by $\mathrm{G}'$ the intersection of $\rG$ and $\mathrm{SL}_n(F)$. Without loss of generality, we may assume that $\rG$ contains a maximal split torus of diagonal matrices. We fix a Borel subgroup of upper triangular matrices. A classical way to study maximal simple types (also known as cuspidal types) of $\mathrm{G}'$ is by constructing them from $\mathrm{G}$. In this section, we recall some background knowledgement of type theory of $\mathrm{G}$ in \cite{BuKuI}, \cite{BuKu99}, \cite{MS} and \cite{SS}. Let $\rP$ be a standard parabolic subgroup with Levi subgroup $\rL$. After recalling the structure of a supercuspidal type of $\mathrm{L}$, we introduce homogeneous Levi subgroup $\mathrm{L}_{\text{max}}$, which is bigger than $\mathrm{L}$ in general and is determined by fixing a supercuspidal type. Then we recall the structure of $\rL_{\text{max}}$-cover and $(\rL_{\text{max}},\alpha)$-cover theory.  We then generalise cover theory to $\mathrm{G}'$ in the next section.

\subsection{Maximal simple $k$-types}
\label{subsecmaxsimpG}
A \textbf{maximal simple $k$-type} of $\mathrm{GL}_n(F)$ is a pair $(J,\lambda)$ with technical conditions. In particular, each maximal simple $k$-type is defined from a \textbf{maximal simple stratum} $(\fA,0,\beta)$ for $\beta\in\mathrm{GL}_n(F)$. It gives a field extension $E=F[\beta]$ and a maximal $\mathfrak{o}_{E}$-hereditary order $\mathfrak{B}$ with respect to the same lattice chain in the definition of $\fA$, where $\mathfrak{o}_E$ is the integer ring of $E$. Sometimes, we also denote it as $\mathfrak{B}_{\beta}$. A maximal simple stratum also gives compact open subgroups $J,H$ and $\rU(\fA)$ such that $H\subset J\subset \rU(\fA)$, and it associates an integer $\nu_{\fA}(\beta)$, called the valuation of $\beta$ with respect to $\fA$ (see \cite[1.1.3]{BuKuI}). Let $H^1, J^1$ be the  pro-$p$ radical of $H$ and $J$. A simple character $\theta$ defined from $(\fA,0,\beta)$ is a $k$-character of $H^1$ with special conditions. Denote by $\mathcal{C}(\fA,0,\beta)$ the set of \textbf{simple characters} defined from $(\fA,0,\beta)$, and by $\mathcal{C}_{n}$ the union of simple characters defined from simple stratum of $\mathrm{GL}_n(F)$. For each $\theta$, there is a unique irreducible $k$-representation $\eta$ of $J^1$ of which the restriction to $H^1$ is a multiple of $\theta$, which is called the \textbf{Heisenberg representation} of $\theta$. We know $\eta$ can be extended to $J$, and part of them verify some technical conditions, which we call \textbf{wild-extensions} of $\theta$. Meanwhile, the quotient $J\slash J^1\cong\mathrm{GL}_m(\mathbb{F}_q)$, where $m$ divides $n$ and $\mathbb{F}_q$ is the residual field $k_E$ of $E$, hence is a finite extension of the residual field $k_F$ of $F$. There is a wild-extension $\kappa$ of a simple character $\theta$, such that:
$$\lambda\cong\kappa\otimes\sigma,$$
where $\sigma$ is inflated from an irreducible cuspidal $k$-representation of $\mathrm{GL}_m(\mathbb{F}_q)$.

Now for a standard Levi subgroup $\rL$ of $\rG$, a maximal simple stratum $(\fA_{\rL},0,\beta_{\rL})$ is defined to be $\fA_{\rL}\cong\prod_{i\in I}\fA_i$ and $\beta_{\rL}=(\beta_i)_{i\in I}$ where each $(\fA_i,0,\beta_i)$ is a maximal simple stratum of $\mathrm{GL}_{n_i}(F)$. It gives compact open subgroups $J_{\rL}\cong\prod_{i\in I}J_i, H_{\rL}\cong\prod_{i\in I}H_i$. Denote by $E_{\rL}$ the product $\prod_{i\in I}E_i$ where $E_i=F[\beta_i]$, and $\mathfrak{B}_{\rL}:=\prod_{i\in I}\mathfrak{B}_{i}$ where $\mathfrak{B}_i$ is the associated maximal $\mathfrak{o}_{E_i}$-hereditary order. We also call $\mathfrak{B}_{\rL}$ an $\mathfrak{o}_{E_{\rL}}$-hereditary order. A simple character $\theta_{\rL}\cong\prod_{i\in I}\theta_i$, where $\theta_i\in\mathcal{C}(\fA_i,0,\beta_i)$. It gives compact open subgroups $J_{\rL}\cong\prod_{i\in I}J_i,\lambda_{\rL}\cong\otimes_{i\in I}\lambda_i$. Then for a $\theta_{\rL}$, we define $\eta_{\rL}\cong\prod_i\eta_i$ and we call it the \textbf{Heisenberg representation} of $\theta_{\rL}$. We call an extension $\kappa_{\rL}$ a \textbf{wild-extension} (it is also called a $\beta$-extension in \cite{BuKuI}) of $\theta_{\rL}$ if $\kappa_{\rL}\cong\prod_i\kappa_i$ where each $\kappa_i$ is a wild-extension of $\eta_i$. We call $(J_{\rL},\kappa_{\rL})$ a \textbf{wild pair} of $\rL$. Denote by $\mathbb{L}$ the quotient $J_{\rL}\slash J_{\rL}^1$ where $J_{\rL}^1$ is the pro-$p$ radical, which is equivalent to $\prod_i\mathrm{GL}_{m_i}(\mathbb{F}_{q_i})$ and a cuspidal representation of $\mathbb{L}$ is a tensor product of cuspidals of $\mathrm{GL}_{m_i}(\mathbb{F}_{q_i})$ for each $i$. A maximal simple $k$-types of $\rL$ is a pair $(J_{\rL},\lambda_{\rL})$ such that 
$$\lambda_{\rL}\cong\kappa_{\rL}\otimes\sigma_{\rL},$$
where $(J_{\rL},\kappa_{\rL})$ is a wild pair and $\sigma_{\rL}$ is inflated from a cuspidal of $\mathbb{L}$. We also call a maximal simple $k$-type a \textbf{cuspidal $k$-type}. Notice that in $\ell$-modular setting, cuspidality is not equivalent to supercuspidality. We call $(J_{\rL},\lambda_{\rL})$ a \textbf{supercuspidal $k$-type} when $\sigma_{\rL}$ is supercuspidal. Denote by $[J_{\rL},\lambda_{\rL}]$ its $\mathrm{G}$-conjugacy class. 

A maximal simple $k$-type of $\rL'$ is defined from one of $\rL$. Let 
$$\tilde{J}_{\rL}:=\{g\in\rU(\fA_{\rL}), g(J_{\rL})=J_{\rL},g(\lambda_{\rL})\cong\lambda_{\rL}\otimes\chi\circ\det, \textit{ where }\chi \textit{ is a }k\textit{-quasicharacter of }F^{\times}\}.$$
It is the group of projective normaliser of $(J_{\rL},\lambda_{\rL})$ defined in \cite{BuKuII} (also in \cite{C1} for modular setting), which is open compact containing $J_{\rL}$. The induced representation $\tilde{\lambda}_{\rL}:=\ind_{J_{\rL}}^{\tilde{J}_{\rL}}\lambda_{\rL}$ is irreducible. Now let $\tilde{J}_{\rL}'=\tilde{J}_{\rL}\cap\rL'$ and $\tilde{\lambda}_{\rL}'$ be an irreducible direct component of the semisimple representation $\tilde{\lambda}_{\rL}\vert_{\tilde{J}_{\rL}'}$. A pair in the form of $(\tilde{J}_{\rL}',\tilde{\lambda}_{\rL}')$ is a \textbf{maximal simple $k$-type} or equivalently a \textbf{cuspidal $k$-type} of $\rL'$. We call it a \textbf{supercuspidal $k$-type} when $\sigma_{\rL}$ is supercuspidal. Write $[\tilde{J}_{\rL}',\tilde{\lambda}_{\rL}']$ its $\rG'$-conjugacy class and $[\tilde{J}_{\rL}',\tilde{\lambda}_{\rL}']_{\rG}$ its $\rG$-conjugacy class. Denote by $\mathcal{ST}_{\rG'}$ the set of $[\tilde{J}_{\rL}',\tilde{\lambda}_{\rL}']$.

For $\rL'=\rL\cap\rG'$, let $(\rL',\tau')$ be a cuspidal pair of $\rG'$, and $[\rL',\tau']$ its cuspidal class of $\rG'$. By \cite{C1}, there exists a cuspidal $k$-type $(\tilde{J}_{\rL}',\tilde{\lambda}_{\rL}')$ such that the restriction of $\tau'\vert_{\tilde{J}'}$ contains $\tilde{\lambda}_{\rL}'$ as a sub-quotient, which gives a mapping from $[\rL',\tau']$ to $[\tilde{J}_{\rL}',\tilde{\lambda}_{\rL}']$. Let $\mathcal{SC}_{\rG'}$ be the set of supercuspidal classes of $\rG'$. The above mapping is a bijection between $\mathcal{SC}_{\rG'}$ and $\mathcal{ST}_{\rG'}$.

\begin{lem}
\label{lem 001}
Let $(\tilde{J}_{\rL}',\tilde{\lambda}_{\rL}')$ be a cuspidal $k$-type of $\rL'$, then the supercuspidal supports of irreducible subquotients of $\ind_{\tilde{J}_{\rL}'}^{\rL'}\tilde{\lambda}_{\rL}'$ are in the same supercuspidal class of $\rL'$.
\end{lem}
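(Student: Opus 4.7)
The plan is to lift the statement from $\rL'$ to $\rL$, where uniqueness of supercuspidal support is available from the general linear case, and then transfer the conclusion back using the structure of the projective normaliser $\tilde{J}_{\rL}$. First, by transitivity of induction,
$$\ind_{\tilde{J}_{\rL}}^{\rL}\tilde{\lambda}_{\rL} \cong \ind_{J_{\rL}}^{\rL}\lambda_{\rL},$$
and the $\rL$-theory of maximal simple types recalled in Section~\ref{subsecmaxsimpG} tells us that every irreducible subquotient of this $\rL$-representation is cuspidal, with supercuspidal support lying in a single $\rL$-conjugacy class $[\rM,\rho]_{\rL}$. The point is that $\sigma_{\rL}$, as a cuspidal representation of the finite quotient $\mathbb{L}\cong\prod_i\GL_{m_i}(\mathbb{F}_{q_i})$, has a unique supercuspidal support, and this propagates to the $\rL$-side through the $J_{\rL}$-cover construction.

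Second, given an irreducible subquotient $\pi'$ of $\ind_{\tilde{J}_{\rL}'}^{\rL'}\tilde{\lambda}_{\rL}'$, I would lift it to an irreducible $\rL$-representation $\pi$ whose restriction to $\rL'$ contains $\pi'$ and whose restriction to $\tilde{J}_{\rL}$ contains $\tilde{\lambda}_{\rL}$: since $\tilde{\lambda}_{\rL}$ is the irreducible $\tilde{J}_{\rL}$-representation induced from $\lambda_{\rL}$, and $\pi|_{\tilde{J}_{\rL}'}$ contains $\tilde{\lambda}_{\rL}'$ by construction of $\pi$, one can arrange that $\pi|_{\tilde{J}_{\rL}}$ contains $\tilde{\lambda}_{\rL}$. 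By Frobenius reciprocity, $\pi$ is then a subquotient of $\ind_{\tilde{J}_{\rL}}^{\rL}\tilde{\lambda}_{\rL}$, so its supercuspidal support lies in $[\rM,\rho]_{\rL}$. Consequently, the supercuspidal support of $\pi'$ is $\rL$-conjugate to a pair $(\rM',\tau')$ with $\rM'=\rM\cap\rL'$ and $\tau'$ an irreducible component of $\rho|_{\rM'}$.

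The main obstacle is the last step: the $\rL$-conjugacy class of $(\rM',\tau')$ can split into several $\rL'$-conjugacy classes, and one must show that fixing the component $\tilde{\lambda}_{\rL}'$ (rather than $\tilde{\lambda}_{\rL}$) selects exactly one. The plan is to exploit the defining property of the projective normaliser, namely that $\tilde{J}_{\rL}/J_{\rL}$ parameterises precisely the characters $\chi\circ\det$ modifying $\lambda_{\rL}$ within $\rU(\fA_{\rL})$, and combine it with a Mackey decomposition of $\res_{\rL'}^{\rL}\ind_{\tilde{J}_{\rL}}^{\rL}\tilde{\lambda}_{\rL}$ together with the finite-group uniqueness result of Section~\ref{sectionfinite} (especially Corollary~\ref{coruniqsupcuspL'}) applied at the moderate level $\mathbb{L}'=\tilde{J}_{\rL}'/\tilde{J}_{\rL}^{1'}$. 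This should force the supercuspidal supports of the various subquotients $\pi_1',\pi_2'$ to all lie in one and the same $\rL'$-class.
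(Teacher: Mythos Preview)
The paper does not actually supply a proof of this lemma; it is stated at the end of \S\ref{subsecmaxsimpG} as a background fact, presumably imported from the author's earlier work \cite{C1} and \cite{C2} on maximal simple $k$-types and uniqueness of supercuspidal support for $\mathrm{SL}_n$. So there is no in-paper argument to compare against directly.

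Your plan is along reasonable lines, and the lifting step is sound once stated precisely: since $\tilde\lambda_{\rL}'$ is a direct summand of $\tilde\lambda_{\rL}\vert_{\tilde J_{\rL}'}$, Mackey theory gives $\ind_{\tilde J_{\rL}'}^{\rL'}\tilde\lambda_{\rL}'$ as a direct summand of $\res^{\rL}_{\rL'}\ind_{\tilde J_{\rL}}^{\rL}\tilde\lambda_{\rL}$, so every irreducible subquotient $\pi'$ lies below some irreducible subquotient $\pi$ of $\ind_{J_{\rL}}^{\rL}\lambda_{\rL}$, and the $\rL$-theory then pins down the supercuspidal support of $\pi$ up to $\rL$-conjugacy and unramified twist. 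However, your step~2 overclaims when you say one can arrange that $\pi\vert_{\tilde J_{\rL}}$ \emph{contains} $\tilde\lambda_{\rL}$: being a subquotient of a compact induction does not, in the $\ell$-modular setting, immediately give containment upon restriction; what you actually need (and have) is that $\pi$ is a subquotient of the induced representation.

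The genuine gap is exactly the one you name: passing from an $\rL$-class to a single $\rL'$-class. Your proposed resolution via the projective normaliser and Mackey is too vague as written, and invoking Corollary~\ref{coruniqsupcuspL'} at the level of $\mathbb{L}'$ does not by itself control which $\rL'$-conjugacy class one lands in, because different subquotients $\pi'$ might a priori lift to $\pi$'s twisted by different unramified characters, hence giving different components of $\rho\vert_{\rM'}$. The route taken in \cite{C1},\cite{C2} is more direct: one shows that every irreducible subquotient $\pi'$ of $\ind_{\tilde J_{\rL}'}^{\rL'}\tilde\lambda_{\rL}'$ is \emph{weakly intertwined} with $(\tilde J_{\rL}',\tilde\lambda_{\rL}')$ in the sense recalled just before Proposition~\ref{proptamecond}, and then the unicity results of \cite{C1} (cf.\ the use of \cite[Theorem~2.11]{C1} in the proof of Proposition~\ref{prop2.11} here) identify the $\rL'$-conjugacy class of the type, hence the supercuspidal class via the bijection $\mathcal{SC}_{\rG'}\leftrightarrow\mathcal{ST}_{\rG'}$. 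If you want to complete your argument, that weak-intertwining step is what you should prove and use, rather than trying to descend from $\rL$-conjugacy.
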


\subsection{$\rL_{\text{max}}$-covers and $(\rL_{\text{max}},\alpha)$-covers}
\label{Lmaxcovers}
Let $(J_{\rL},\lambda_{\rL})$ a cuspidal type of $\rL$. We introduce the homogeneous Levi subgroup $\rL_{\text{max}}$ determined by $\theta_{\rL}$, which is the simple character contained in $\kappa_{\rL}$. As we have explained, the wild extension of $\theta_{\rL}$ is not unique, so neither is the decomposition $\lambda_{\rL}\cong\kappa_{\rL}\otimes\sigma_{\rL}$. In this section, we determine a good choice of $\kappa_{\rL}$. We start from the endo-class determined by $\theta_{\rL}$ (see \cite{BH} or \cite{BuKuI} for endo-class of general simple characters), which gives a wild pair $(J_{\text{max}},\kappa_{\text{max}})$ on $\rL_{\text{max}}$. There is a unique wild extension of $\theta_{\rL}$ that verifies a compatibility property with $\kappa_{\text{max}}$. With this choice of $\kappa_{\rL}$, there exist two pairs of open compact subgroups and their irreducible representations. One is an $\rL_{\text{max}}$-cover of $(J_{\rL},\lambda_{\rL})$, the other verifies an induction equivalence (Equation \ref{alphaindequa}), and we call it an $(\rL_{\text{max}},\alpha)$-cover. 

\begin{rem}[Warning] We use different notations comparing to \cite[\S 5]{MS}. For an example, our pair $(J_{\rL},\kappa_{\rL})$ corresponds to their $(J_{\max,\alpha},\kappa_{\max,\alpha})$. We will see in Section \ref{sectionK'Levi} that notation system here is more convenient for our use. 
\end{rem}

A $\rG$-cover of $(J_{\rL},\lambda_{\rL})$ is a pair $(J_{\rP},\lambda_{\rP})$ where $J_{\rP}$ is open compact in $\rG$ and $\lambda_{\rP}$ is an irreducible representation of $J_{\rP}$, that verify the following conditions (see the last corollary in \cite{Blon05}):
\begin{crit}[for $\rG$-covers]
\label{critcover}
Let $\rN$ be the unipotent radical of $\rP$ and $\bar{\rN}$ the unipotent radical of the opposite parabolic with Levi subgroup $\rL$. 
\begin{enumerate}
\item $J_{\rP}\cap\rL=J_{\rL}$ and $J_{\rP}=J_{\rP}\cap\bar{\rN}\cdot J_{\rL}\cdot J_{\rP}\cap\rN$.
\item $\lambda_{\rP}\vert_{J_{\rL}}\cong\lambda_{\rL}$, and both of $\lambda_{\rP}\vert_{J_{\rP}\cap\rN},\lambda_{\rP}\vert_{J_{\rP}\cap\bar{\rN}}$ are trivial. 
\item Denote by $r_{\rL}^{\rG}$ the parabolic restriction with respect to $\rP$. For any irreducible representation $\pi$ of $\rG$, the mapping
$$\Hom_{J_{\rP}}(\lambda_{\rP},\pi)\rightarrow \Hom_{J_{\rL}}(\lambda_{\rL},r_{\rL}^{\rG}(\pi)),$$
sends $f\in\Hom_{J_{\rP}}(\lambda_{\rP},\pi)$ to $r_{\rL}^{\rG}\circ f$ is injective.
\item For every irreducible representation $\pi$ of $\rG$, the composition with $r_{\rL}^{\rG}$ gives an injective mapping from the isotypic part $(\pi)^{\lambda_{\rP}}$ to $(r_{\rL}^{\rG}\pi)^{\lambda}$.
\end{enumerate}
\end{crit}

\begin{rem}
\begin{itemize}
\item When $(J_{\rP},\lambda_{\rP})$ is a cover, we have
$$\ind_{J_{\rP}}^{\rG}\lambda_{\rP}\cong i_{\rL}^{\rG}\ind_{J_{\rL}}^{\rL}\lambda_{\rL}.$$
\item The above criterion is also valid to check $\rG'$-cover of cuspidal types of $\rL'$, via replacing $(J_{\rL},\lambda_{\rL})$ by $(\tilde{J}_{\rL}',\tilde{\lambda}_{\rL}')$, and considering homorphism set with irreducible representations of $\rG'$.
\end{itemize}
\end{rem}

Denote by $\mathcal{C}_{\mathrm{GL}}$ the union $\bigcup_{n\in \mathbb{N}}\mathcal{C}_{n}$. \textbf{Endo-equivalence} is an equivalent relation defined on $\mathcal{C}_{\mathrm{GL}}$, and we call a connected component under this relation an endo-equivalent class (see \cite{BH96} for definiton). In other words, it defines an equivalence between simple characters of $\mathrm{GL}$-group of different ranks. Suppose $(J_{\rL},\lambda_{\rL})$ defined from a maximal simple stratum $(\fA_{\rL},0,\beta_{\rL})$ on $\rL$ and a simple character $\theta_{\rL}\cong\prod_i\theta_i$. We can give a partition $I=\cup_{s\in S}I_s$ with respect to endo-classes. In particular, $([\fA_i,0,\beta_i],\theta_i),([\fA_j,0,\beta_j],\theta_j)$ are in the same endo-class if and only if $i,j$ are in the same part under the partition. We may assume that $\beta_i,i\in I_s$ are all equal to a single element $\beta_s$. In particular, $E_i$'s are identical among $i\in I_s$. From now on we always simplify $\beta_{\rL}$ as $\beta$. 

Denote by $\rL_{\text{max}}$ the standard Levi subgroup $\prod_{s\in S}\mathrm{GL}_{n_s}(F)$ where $n_s=\sum_{i\in I_s}n_i$, and $\rP=\rL_{\text{max}}\rN$ be the standard parabolic subgroup with Levi $\rL_{\text{max}}$. With the notation in \cite{MS}, $\rL_{max}$ is the maximal standard Levi subgroup such that $(J_{\rL},\lambda_{\rL})$ is  ``homogène" in $\rL_{\text{max}}$. In this work, we call $\rL_{\text{max}}$ the \textbf{homogeneous Levi subgroup} of the pair $([\fA_{\rL},0,\beta_{\rL}],\theta_{\rL})$. The following properties are studied in \cite{BuKu99} (see Common approximation). On $\rL_{\text{max}}$, there are two simple strata $[\fA,0,\beta]$ and $[\fA_{max},0,\beta]$, where the latter is maximal and we have $\rU(\fA)\subset\rU(\fA_{max})$ (see more details in \cite[\S 5.1]{SS}). Write $\beta=(\beta_s)_{s\in S}$, then $\fA\cong\prod_{s\in S}\fA_s$ and $\fA_{max}\cong\prod_{s\in S}\fA_{max,s}$. They give two families of open compact subgroups $\{H,H^1,J,J^1\}$,  and $\{H_{max},H_{max}^1,J_{max},J_{max}^1\}$ respectively. Denote by $\mathfrak{B}_{\rL}$ the $\mathfrak{o}_{E_{\rL}}$-hereditary order defined from $(\fA_{\rL},0,\beta)$ (resp. $(\fA,0,\beta)$), and by $\mathfrak{B}$ (resp. $\mathfrak{B}_{max}$) the $\mathfrak{o}_{E_{\rL}}$-hereditary order defined from $(\fA,0,\beta)$ (resp. $(\fA_{max},0,\beta)$). In particular, $\fA_{max}$ being maximal is equivalent to $\fB_{max}$ being maximal, which means that by writing $\fB_{max}\cong \prod_{s\in S}\fB_{max,s}$ we have $\rU(\fB_{max,s})\slash\rU^1(\fB_{max,s})\cong\mathrm{GL}_{m_s}(k_{E_i})$ where $m_s=n_s\slash[E_i:F]$ for each $i\in I_s$. We list the following properties and definitions:
\begin{enumerate}
\item There is a bijection between any two of the set $\mathcal{C}(\fA_{s},0,\beta_s),\mathcal{C}(\fA_i,0,\beta_i)$ and $\mathcal{C}(\fA_{max,s},0,\beta_s)$ for $i\in I_s$. These bijections determine the unique simple character in $\mathcal{C}(\fA_{s},0,\beta_s)$ and $\mathcal{C}(\fA_{max,s},0,\beta_s)$ that endo-equivalent to $\theta_i$ for $i\in I_s$. Let $\theta_{max,s}$ and $\theta_s$ be the image under the above bijections (or in the language of \cite{SS} we say $\theta_{max,s},\theta_s$ are transferred from $\theta_i$). In particular, $\theta_s$ is the common approximation of $\otimes_{i\in I_s}\theta_{i}$ (see Main Theorem of \cite{BuKu99}). Define $\theta=\otimes_{s\in S}\theta_s$, and $\theta_{max}=\otimes_{s\in S}\theta_{max,s}$.
\item Let $J_{\rP}:=(H^1\cap\bar{\rN})(J\cap\rL)(J^1\cap\rN)=(J\cap\rP)H^1$, where $J_{\rP}\cap\rL=J_{\rL}$. There is an equivalence $J_{\rP}\slash J_{\rP}^1\cong J_{\rL}\slash J_{\rL}^1$, where $(\cdot)^1$ denotes the pro-$p$ radical.
\item Define $J_{max,\alpha}:=U(\mathfrak{B})J_{\text{max}}^1$.
\end{enumerate}

Now we determine a wild extension for each of $\theta_{\rL},\theta_{max},\theta$ in the following manner. We start by fixing a wild-extension $\kappa_{\text{max}}$ of $\theta_{\text{max}}$. There is a unique wild-extension $\kappa$ of $\theta$ such that
\begin{equation}
\label{equakappaG}
\ind_{J}^{U(\mathfrak{B})U^1(\mathfrak{A})}\kappa\cong\ind_{U(\fB)J_{\text{max}}^1}^{U(\fB)U^1(\fA)}\kappa_{\text{max}}.
\end{equation}
Notice that $U(\fB_{\rL})J_{\text{max}}^1\subset J_{\text{max}}$. There is a unique wild-extension $\kappa_{\rL}$ of $\theta_{\rL}$ such that after extending to $J_{\rP}$ via acting trivially on $H_{\rP}^1\cap\bar{\rN}$ and $J_{\rP}^1\cap\rN$, we have
$$\ind_{J_{\rP}}^{J}\kappa_{\rL}\cong\kappa.$$
Denote by $\kappa_{\rP}$ the above extension of $\kappa_{\rL}$. We decompose $\lambda$ with respect to this choice of $\kappa_{\rL}$:
$$\lambda_{\rL}\cong\kappa_{\rL}\otimes\sigma_{\rL}.$$
Denote by $\lambda_{\rP}$ a representation of $J_{\rP}$ that extends $\lambda_{\rL}$ trivially to $H_{\rP}^1\cap\bar{\rN}$ and $J_{\rP}^1\cap\rN$. We have 
$$\lambda_{\rP}\cong\kappa_{\rP}\otimes\sigma_{\rL}.$$
The pair $(J_{\rP},\lambda_{\rP})$ is a \textbf{$\rL_{\text{max}}$-cover}  of $(J_{\rL},\lambda_{\rL})$:

Now we introduce $(J_{max,\alpha})$-cover. We have $J_{max,\alpha}\slash J_{max,\alpha}^1\cong U(\fB_{\rL})\slash U^1(\fB_{\rL})\cong J_{\rL}\slash J_{\rL}^1$. Denote by $\kappa_{max,\alpha}$ the restriction $\kappa_{\text{max}}\vert_{J_{max,\alpha}}$. Put $\lambda_{max,\alpha}=\kappa_{max,\alpha}\otimes\sigma_{\rL}$, and $\lambda_{\rP}=\kappa_{\rP}\otimes\sigma_{\rL}$. Then we deduce from equivalences
$$\ind_{J_{\rP}}^{U(\fB)U^1(\fA)}\lambda_{\rP}\cong (\ind_{J_{\rP}}^{U(\fB)U^1(\fA)}\kappa_{\rP})\otimes\sigma_{\rL},$$
and
$$\ind_{U(\fB)J_{\text{max}}^1}^{U(\fB)U^1(\fA)}\lambda_{max,\alpha}\cong(\ind_{U(\fB)J_{\text{max}}^1}^{U(\fB)U^1(\fA)}\kappa_{max,\alpha})\otimes\sigma_{\rL},$$
that
\begin{equation}
\label{equaGalpha}
\ind_{J_{\rP}}^{U(\fB)U^1(\fA)}\lambda_{\rP}\cong\ind_{U(\fB)J_{\text{max}}^1}^{U(\fB)U^1(\fA)}\lambda_{max,\alpha}.
\end{equation}
Meanwhile, we have
\begin{equation}
\label{alphaindequa}
\ind_{J_{max,\alpha}}^{J_{\text{max}}}\lambda_{max,\alpha}\cong\kappa_{\text{max}}\otimes\ind_{J_{max,\alpha}}^{J_{\text{max}}}\sigma_{\rL},
\end{equation}
and
$$\ind_{J_{max,\alpha}}^{J_{\text{max}}}\sigma_{\rL}\cong i_{\mathbb{L}}^{\mathbb{L}_{\text{max}}}\sigma_{\rL},$$
where $\mathbb{L}\cong J_{\rL}\slash J_{\rL}^1$ and $\mathbb{L}_{\text{max}}\cong J_{\text{max}}\slash J_{\text{max}}^1$. The pair $(J_{max,\alpha},\lambda_{max,\alpha})$ has been studied in \cite{SS} and \cite{BuKuI}, but not named. Thanks to Equation \ref{equaGalpha} and \ref{alphaindequa}, this pair will be mentioned repeatedly in this work, so we call it an \textbf{$(\rL_{\text{max}},\alpha)$-cover} of $(J_{\rL},\lambda_{\rL})$.

\begin{rem}
\label{remtypesupcuspsupp}
\begin{itemize}
\item We have
$$i_{\rL_{\text{max}}}^{\rG}\ind_{J_{\rP,\alpha}}^{\rL_{\text{max}}}\lambda_{max,\alpha}\cong i_{\rL_{\text{max}}}^{\rG}\ind_{J_{\rP}}^{\rL_{\text{max}}}\lambda_{\rP}\cong i_{\rL}^{\rG}\ind_{J_{\rL}}^{\rL}\lambda_{\rL};$$
\item Suppose that $\sigma_{\rL}$ is supercuspidal. Let $\rho$ be an irreducible representation of $\mathbb{L}_{\text{max}}$ of which the supercuspidal support contains $(\mathbb{L},\sigma_{\rL})$. Let $\kappa_{\text{max}},\kappa_{\rL}$ be as above. The first part and Equation \ref{alphaindequa} imply that any element in the supercuspidal support of an irreducible subquotient $\pi$ of $i_{\rL_{\text{max}}}^{\rG}\ind_{J_{\text{max}}}^{\rL_{\text{max}}}\kappa_{\text{max}}\otimes\rho$ must contain $(J_{\rL},\lambda_{\rL})$. 
\begin{itemize}
\item This compatibility between the supercuspidal support of $\pi$ and $\rho$ can be regarded as a kind of maintains of $(\mathbb{L},\sigma_{\rL})$ after induction to $\rG$, which strictly relies on the choice of $\kappa_{\text{max}},\kappa_{\rL}$.
\item Denote by $\gamma:=\kappa_{\text{max}}\otimes\rho$. Thanks to the above, we say the $\rL_{\text{max}}$-conjugacy class $[J_{\rL},\lambda_{\rL}]$ is \textbf{the supercuspidal support} of $(J_{\text{max}},\gamma)$.
\end{itemize}
\end{itemize}

\end{rem}

\section{$\rL_{\text{max}}'$-covers of cuspidal types}
\label{sectionG'covers}
From now on, we assume that \textbf{$p$ verifies the tameness condition}, that \textbf{$p$ does not divide the order of Weyl group} $\vert W_{\rG}\vert$ of $\rG$ ($W_{\rG'}=W_{\rG}$). Let $(J_{\rL},\lambda_{\rL})$ be a cuspidal $k$-type of $\rL$, with $\lambda_{\rL}\cong\kappa_{\rL}\otimes\sigma_{\rL}$. We introduce the cuspidal types of $\rL'$ coming from $(J_{\rL},\lambda_{\rL})$, and we establish $\rL_{\text{max}}'$-cover and $(\rL_{\text{max}}',\alpha)$-cover of them and generalise the properties of the above section.

\subsection{The condition: $p$ does not divide $\vert W_{\rG'} \vert$}
Under tameness condition, we know from \cite{C1} that $\tilde{J}_{\rL}=J_{\rL}$ and for any irreducible direct component $\lambda_{\rL}'$ of $\lambda_{\rL}\vert_{J_{\rL}'}$, the pair $(J_{\rL}',\lambda_{\rL}')$ is a cuspidal $k$-type of $\rL'$. It shows that this condition greatly simplifies the structure of cuspidal types of $\rL'$, making them more directly related to those of $\rL$, so that we can utilise some results of the latter. This is the reason we require it in this work.

For two pairs $(K_j,\rho_j),j=1,2$ that consist of compact open subgroups $K_j$ and their irreducible representations $\rho_j$, we say $(K_1,\rho_1)$ is \textbf{weakly intertwined} with $(K_2,\rho_2)$, if $\rho_1$ is a subquotient of $\res_{K_1}^{\rL'}\ind_{K_2}^{\rL'}\rho_2$. Since $\ell$=modular representations of a compact group is not always semisimple, the relation of weakly intertwining is different from intertwining in the usual sense. This relation has been firstly studied in \cite{V1}, and also in \cite{C1} for cuspidal types of $\rL'$.

Recall that for a subgroup $K$ of $\rG$, we always denote by $K'$ the intersection $K\cap\rG'$. We denote by $\kappa_{\rL}'$ the irreducible restriction $\kappa_{\rL}\vert_{J_{\rL}'}$. Hence $\lambda_{\rL}'\cong\kappa_{\rL}'\otimes\sigma_{\rL}'$ where $\sigma_{\rL}'$ is an irreducible direct component of $\sigma_{\rL}\vert_{J_{\rL}'}$.

\begin{prop}
\label{proptamecond}
Define $N_{\lambda_{\rL}'}:=\{u\in\rU(\mathfrak{B}_{\rL})\vert \det(u)\in\det(E_{\rL}^{\times})$. Then the set of direct components of $\lambda_{\rL}\vert_{J_{\rL}'}$ which are weakly intertwined with $\lambda_{\rL}'$ is equal to the $N_{\lambda_{\rL}'}$-conjugacy class of $\lambda_{\rL}'$. In particular, let $S_{\lambda_{\rL}'}$ be the subset of $N_{\lambda_{\rL}'}$ which stabilises $\lambda_{\rL}'$, then $S_{\lambda_{\rL}'}$ is independent of the choice of direct component of $\lambda_{\rL}\vert_{J_{\rL}'}$. 
\end{prop}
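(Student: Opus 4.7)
The plan is to translate the weak intertwining condition into an ordinary intertwining condition inside $\rL'$, and then combine the known intertwining set of the cuspidal type $\lambda_{\rL}$ in $\rL$ with the constraint $\det(g)=1$ to identify the orbit with the $N_{\lambda_{\rL}'}$-orbit. Throughout the argument the tameness hypothesis $p \nmid \vert W_{\rG}\vert$ plays a decisive role by ensuring, as recalled at the start of the section, that $\tilde{J}_{\rL} = J_{\rL}$ and that the restriction $\lambda_{\rL}\vert_{J_{\rL}'}$ is multiplicity-free.

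First I would apply Clifford theory. Since $\rL'$ is normal in $\rL$, the subgroup $J_{\rL}'$ is normal in $J_{\rL}$, so $\lambda_{\rL}\vert_{J_{\rL}'}$ decomposes as a multiplicity-free direct sum of $J_{\rL}$-conjugates of $\lambda_{\rL}'$. A Mackey decomposition applied to $\mathrm{Res}_{J_{\rL}'}^{\rL'}\mathrm{Ind}_{J_{\rL}'}^{\rL'}\lambda_{\rL}'$ over the double cosets $J_{\rL}'\backslash \rL'/ J_{\rL}'$ then shows that a component $\mu'$ of $\lambda_{\rL}\vert_{J_{\rL}'}$ is weakly intertwined with $\lambda_{\rL}'$ if and only if there exists $g \in \rL'$ which intertwines $\lambda_{\rL}'$ with $\mu'$ in the Bushnell--Kutzko sense.

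For the inclusion $N_{\lambda_{\rL}'}\cdot\lambda_{\rL}' \subseteq \{\text{weakly intertwined}\}$, take $u \in N_{\lambda_{\rL}'}$ and pick $e \in E_{\rL}^{\times}$ with $\det(e) = \det(u)$, so that $u e^{-1} \in \rL'$. Since $E_{\rL}^{\times}$ is contained in the intertwining set of $\lambda_{\rL}$ (a standard property of cuspidal types of $\rL$ recalled in Section~\ref{subsecmaxsimpG}), the element $u e^{-1} \in \rL'$ intertwines $\lambda_{\rL}'$ with $u \lambda_{\rL}'$. Conversely, if $g \in \rL'$ intertwines $\lambda_{\rL}'$ with $\mu'$, then $g$ also intertwines $\lambda_{\rL}$ itself, hence lies in the intertwining set $I_{\rL}(\lambda_{\rL}) = J_{\rL} E_{\rL}^{\times} J_{\rL}$. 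Writing $g = j_1 e j_2$, the condition $\det(g) = 1$ gives $\det(j_1 j_2) \in \det(E_{\rL}^{\times})$, and using the decomposition $J_{\rL} = \rU(\mathfrak{B}_{\rL})\, J_{\rL}^1$ together with the fact that $J_{\rL}^1 \cap \rL' = J_{\rL}'^1$ acts trivially on the components of $\lambda_{\rL}\vert_{J_{\rL}'}$, one produces a single $u \in N_{\lambda_{\rL}'}$ with $u \lambda_{\rL}' \cong \mu'$.

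For the final assertion on $S_{\lambda_{\rL}'}$, the above shows that the permutation action of $N_{\lambda_{\rL}'}$ on the set of weakly intertwined components factors through the abelian quotient $\det(N_{\lambda_{\rL}'})/\det(E_{\rL}^{\times}\cap \rU(\mathfrak{B}_{\rL}))$; therefore $S_{\lambda_{\rL}'}$ is normal in $N_{\lambda_{\rL}'}$, and every component in the $N_{\lambda_{\rL}'}$-orbit has the same stabiliser, proving independence. The main obstacle will be the clean execution of the reverse inclusion: transferring the raw product $g = j_1 e j_2 \in \rL$ into a single conjugating element of $\rU(\mathfrak{B}_{\rL})$ with the required determinant requires careful use of the $E_{\rL}^{\times}$-invariance of $\lambda_{\rL}$ (where the tameness condition absorbs the potential unramified-character twists) and of the triviality of the pro-$p$ tails $J_{\rL}'^1$ on the component set.
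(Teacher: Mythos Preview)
Your overall strategy coincides with the paper's: both reduce to the fact that the $\rL$-intertwining set of $\lambda_{\rL}$ is $J_{\rL}E_{\rL}^{\times}J_{\rL}$, and then extract from an intertwiner an element of $\rU(\mathfrak{B}_{\rL})$ whose determinant lies in $\det(E_{\rL}^{\times})$. The paper short-circuits your Mackey step by quoting \cite{C1} directly for the statement that the weakly intertwined components form exactly the $(E_{\rL}^{\times}J_{\rL})'$-orbit of $\lambda_{\rL}'$, and then passes from $(E_{\rL}^{\times}J_{\rL})'$ to $N_{\lambda_{\rL}'}$.

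Where your sketch diverges, and where it has a gap, is in that passage. The paper exploits the factorisation $\lambda_{\rL}'\cong\kappa_{\rL}'\otimes\sigma_{\rL}'$ together with the fact that $E_{\rL}^{\times}$ \emph{centralises} $\rU(\mathfrak{B}_{\rL})'$: writing an intertwiner as $x=\alpha u$ with $\alpha\in E_{\rL}^{\times}$ and $u\in\rU(\mathfrak{B}_{\rL})$, one has $x(\kappa_{\rL}')\cong\kappa_{\rL}'$ automatically, while $\alpha$ acts trivially on the inflated representation $\sigma_0'$ because it commutes with $\rU(\mathfrak{B}_{\rL})'$; hence $u$ alone already conjugates $\lambda_0'$ to $\lambda_{\rL}'$, and $\det(u)=\det(\alpha)^{-1}\in\det(E_{\rL}^{\times})$ places $u$ in $N_{\lambda_{\rL}'}$. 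This centrality argument is what makes both inclusions clean.

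Your argument does not invoke this. In the forward inclusion you assert that $ue^{-1}\in\rL'$ intertwines $\lambda_{\rL}'$ with $u\lambda_{\rL}'$, but $ue^{-1}$ actually conjugates $\lambda_{\rL}'$ to $u(e^{-1}\lambda_{\rL}')$, and you need $e^{-1}\lambda_{\rL}'\cong\lambda_{\rL}'$ --- i.e.\ that $E_{\rL}^{\times}$ fixes each individual component, not merely that it normalises $\lambda_{\rL}$. In the reverse inclusion, the statement that ``$J_{\rL}^1\cap\rL'=J_{\rL}'^{1}$ acts trivially on the components'' is tautological (inner conjugation); what you need is that the \emph{full} pro-$p$ radical $J_{\rL}^1$ fixes each component, so that in $j_i=u_ip_i$ the tails $p_i\in J_{\rL}^1$ can be discarded. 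Both of these missing facts follow immediately from the paper's observation that $\sigma_{\rL}$ is inflated from $\rU(\mathfrak{B}_{\rL})/\rU^1(\mathfrak{B}_{\rL})$ (so $J_{\rL}^1$ acts trivially on it and $E_{\rL}^{\times}$, being central in the $E_{\rL}$-centraliser, commutes with it), but they are not consequences of the ingredients you list.
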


\begin{proof}


The direct components of $\lambda_{\rL}\vert_{J_{\rL}'}$ that are weakly intertwined with $\lambda_{\rL}'$ belong to the $(E_{\rL}^{\times}J_{\rL})'$-conjugacy class of $\lambda_{\rL}'$. Recall that $E_{\rL}\cong\prod_i E_i$ (see Section \ref{subsecmaxsimpG}). The index $[E_i:F]$ is coprime to $p$ under tameness condition for all $i\in I$. Let $N_{E_i\slash F}$ be the norm map. We have $N_{E_i\slash F}(1+\mathfrak{p}_{E_i})=1+\mathfrak{p}_{F}$. Hence $\det(J_{\rL})=\prod_{i\in I}N_{E_i\slash F}(\mathfrak{o}_{E_i})$, and $(E_{\rL}^{\times}J_{\rL})'\slash J_{\rL}'$ is a quotient group of $(E_{\rL}^{\times}\rU(\mathfrak{B}_{\rL}))'$. Let $\lambda_0'$ be a direct component of $\lambda_{\rL}\vert_{J_{\rL}'}$. Suppose that it is weakly intertwined with $\lambda_{\rL}'$ in $\rL'$. Since $J_{\rL}'$ normalises $\lambda_0'$, we can take $x\in(E_{\rL}^{\times}\rU(\mathfrak{B}_{\rL}))'$ such that $x(\lambda_0')\cong\lambda_{\rL}'$. Write 
$$\lambda_0'\cong\kappa_{\rL}'\otimes\sigma_0',$$
Since $x$ normalises $\kappa_{\rL}'$, we have
$$x(\sigma_0')\cong\sigma_{\rL}'.$$
We can write $x=\alpha u$ where $\alpha\in E_{\rL}^{\times}$ and $u\in\rU(\mathfrak{B}_{\rL})$. Since $\sigma_0'$ is inflated from $\rU(\mathfrak{B}_{\rL})'\slash\rU^1(\mathfrak{B}_{\rL})'$ and $E_{\rL}^{\times}$ commute with elements in $\rU(\mathfrak{B}_{\rL})'$ hence $u(\sigma_0')\cong\sigma_{\rL}'$ and $u(\lambda_0')\cong\lambda_{\rL}'$. We conclude that $\lambda_0'$ is weakly intertwined with $\lambda_{\rL}'$ if and only if it is conjugate to $\lambda_{\rL}'$ by an element in $\rU(\mathfrak{B}_{\rL})$ of which the determinant belongs to $\det(E_{\rL}^{\times})$, as desired.
\end{proof}

\begin{rem}
\label{defnofT}
We denote by $T=N_{\lambda_{\rL}'}\slash S_{\lambda_{\rL}'}$. For each coset we fix a representative $t$, and denote by $\lambda_t'$ the conjugation $t(\lambda_{\rL}')$. Denote  by $\sigma_t'=t(\sigma_{\rL}')$, we can write $\lambda_t'$ as $\kappa_{\rL}'\otimes\sigma_t'$.
\end{rem}

\subsubsection{The finite reductive quotient}

Recall that $\mathfrak{B}_{\rL}\cong\prod_{i\in I}\mathfrak{B}_i$ where $\mathfrak{B}_i$ is the $E_i$-hereditary order with the same lattice chain of $\fA_i$. The group $\mathbb{L}\cong\prod_{i\in I}\mathrm{GL}_{m_i}(k_{E_i})$, where $m_i[E_i:F]=n_i$ (hence $\sum_{i\in I}m_i[E_i:F]=n$), and $[E_i:F]=e_if_i$ where $e_i$ is the ramification index and $f_i=[k_{E_i}:k_{F}]$ is the index of residual field extension. Hence by embedding to $\mathrm{GL}_{m_if_i}(k_{F})$, we regard an element in $\mathrm{GL}_{m_i}(k_{E_i})$ as a matrix with coefficients in $\mathrm{GL}_{f_i}(k_F)$. Let $N_{E_i\slash F}$ be the norm map from $E_i^{\times}$ to $F^{\times}$, and $\det_{(\cdot)}$ the determinant function of matrices with coefficients in $(\cdot)$. Since $N_{E_i\slash F}(1+\mathfrak{p}_{E_i})=1+\mathfrak{p}_{F}$, we have 
$$\det_F(\rU^1(\mathfrak{B}_{\rL})=\prod_{i\in I}N_{E_i\slash F}(1+\mathfrak{p}_{E_i})=1+\mathfrak{p}_F,$$
which implies
\begin{itemize}
\item the quotient map of $\det_F$ on $J_{\rL}\slash J_{\rL}^1$ is equal to $\prod_{i\in I}(\det_{k_{F}})^{e_i}=(N_{k_{E_i}\slash k_F}\circ\det_{k_{E_i}})^{e_i}$;
\item $\mathbb{L}':=J_{\rL}'\slash J_{\rL}^{1'}$ is the subgroup of $\mathbb{L}$, and $\mathbb{L}'=\mathrm{ker}(\prod_{i\in I}\det_{k_F}(x_i)^{e_i})$.
\end{itemize}

By putting $k_i=k_{E_i},\F_q=k_F$, the group $\mathbb{L}'$ identifies with $\mathbb{G}'$ and $\mathbb{L}$ with $\mathbb{G}$ in Section \ref{RSG}. Hence $\mathbb{L}'$ is the $k_F$-rational points of a reductive group over $k_F$, which is disconnected in general.

\subsubsection{$\rL_{\text{max}}'$-cover of supercuspidal $k$-types}
\label{sectionG'cover}
Let $(J_{\rL},\lambda_{\rL})$ and $(J_{\rL}',\lambda_{\rL}')$ be as above. Let $\rP=\rL_{\text{max}}\rN$ be a standard parabolic subgroup, where $\rL_{max}$ is the homogeneous Levi subgroup, and $(J_{\rP},\lambda_{\rP})$ a $\rL_{\text{max}}$-cover as in Section \ref{Lmaxcovers}. In this section, we give an $\rL_{\text{max}}'$-cover of $(J_{\rL}',\lambda_{\rL}')$. Now taking $J_{\rP}'= J_{\rP}\cap\rL_{\text{max}}'$, by definition (see Section \ref{sectionGcover}) we have
$$J_{\rP}'=(H^1\cap\bar{\rN})(J\cap\rL')(J^1\cap\rN)=(J\cap\rP')H^{1'}.$$
Denote by $\kappa_{\rP}'$ (resp. $\kappa_{max}'$) the irreducible restriction $\kappa_{\rP}\vert_{J_{\rP}'}$ (resp. $\kappa_{max}\vert_{J_{max}'}$). Let $\lambda_{\rP}'$ be the direct component of $\lambda_{\rP}\vert_{J_{\rP}'}$, such that $\lambda_{\rP}'\cong\kappa_{\rP}'\otimes\sigma_{\rL}'$.


\begin{thm}
\label{G'cover}
The pair $(J_{\rP}',\lambda_{\rP}')$ is an $\rL_{\text{max}}'$-cover of $(J_{\rL}',\lambda_{\rL}')$.
\end{thm}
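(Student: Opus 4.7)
The plan is to verify the four conditions of Criterion \ref{critcover}, adapted from the $\rG$-cover statement to the $\rG'$-cover statement, by exploiting the fact that $(J_{\rP},\lambda_{\rP})$ is already known to be an $\rL_{\text{max}}$-cover of $(J_{\rL},\lambda_{\rL})$ from Section \ref{Lmaxcovers}.

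First, I would dispatch conditions (1) and (2). The Iwahori-type decomposition $J_{\rP}=(J_{\rP}\cap\bar{\rN})\cdot J_{\rL}\cdot(J_{\rP}\cap\rN)$ holds on the $\rG$-side by construction of the cover. Since $\bar{\rN},\rN$ consist of unipotent elements, they lie in $\rG'$, so $J_{\rP}\cap\bar{\rN}=J_{\rP}'\cap\bar{\rN}$ and $J_{\rP}\cap\rN=J_{\rP}'\cap\rN$. Intersecting the Iwahori decomposition with $\rL_{\text{max}}'$ then yields $J_{\rP}'=(J_{\rP}'\cap\bar{\rN})\cdot J_{\rL}'\cdot(J_{\rP}'\cap\rN)$ and $J_{\rP}'\cap\rL'=J_{\rL}'$. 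For condition (2), the representation $\lambda_{\rP}'$ was defined as the direct component of $\lambda_{\rP}\vert_{J_{\rP}'}$ of the form $\kappa_{\rP}'\otimes\sigma_{\rL}'$, whose restriction to $J_{\rL}'$ manifestly contains $\lambda_{\rL}'=\kappa_{\rL}'\otimes\sigma_{\rL}'$; the triviality of $\lambda_{\rP}'$ on the unipotent parts is inherited from the corresponding property of $\lambda_{\rP}$.

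For conditions (3) and (4), which encode injectivity of the Hom maps induced by parabolic restriction, the most natural route is via Blondel's criterion: it suffices to produce an invertible element of the Hecke algebra $\mathcal{H}(\rL_{\text{max}}',J_{\rP}',\lambda_{\rP}')$ supported on a strongly positive double coset. On the $\rG$-side, such an invertible element already exists in $\mathcal{H}(\rL_{\text{max}},J_{\rP},\lambda_{\rP})$, supported on $J_{\rP}\zeta J_{\rP}$ for a strongly positive $\zeta\in Z(\rL_{\text{max}})$; after replacing $\zeta$ by a sufficiently large power, one may assume $\zeta\in Z(\rL_{\text{max}})\cap\rG'\subset Z(\rL_{\text{max}}')$. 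The restriction of this Hecke operator to $J_{\rP}'\zeta J_{\rP}'$ then gives the desired invertible element on the $\rG'$-side, once one checks that restriction respects the intertwining that controls invertibility.

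The main obstacle will be exactly this last verification: comparing $\mathcal{H}(\rL_{\text{max}}',J_{\rP}',\lambda_{\rP}')$ to $\mathcal{H}(\rL_{\text{max}},J_{\rP},\lambda_{\rP})$ via Mackey/Clifford decomposition of $\lambda_{\rP}\vert_{J_{\rP}'}$, because the restriction is no longer irreducible and the components are permuted by the group $N_{\lambda_{\rL}'}$. This is precisely where the tameness hypothesis $p\nmid|W_{\rG'}|$ becomes essential: by Proposition \ref{proptamecond} the weakly-intertwined components of $\lambda_{\rL}\vert_{J_{\rL}'}$ form a single $N_{\lambda_{\rL}'}$-orbit, and the analogous statement on $J_{\rP}'$ reduces (via the Iwahori decomposition and triviality on unipotents) to the statement already proved on $J_{\rL}'$. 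With this multiplicity control, the transfer of the invertible Hecke element from the $\rG$-side to the $\rG'$-side becomes bookkeeping, and the cover criterion closes up.
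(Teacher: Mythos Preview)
Your route via Blondel's invertible-element criterion is a genuine alternative to what the paper does, but it contains one error and one underestimated difficulty, and the paper's own argument proceeds quite differently.

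\textbf{The error.} The strongly positive element must lie in $Z(\rL)$, not $Z(\rL_{\text{max}})$ (the latter is too small to be strongly positive relative to $\rP$). More importantly, the step ``replace $\zeta$ by a sufficiently large power so that $\zeta\in\rG'$'' is false as written: if $\det(\zeta)$ has nonzero valuation (which is the generic situation), no power of $\zeta$ has determinant $1$. The correct fix is to choose from the outset a strongly positive $\zeta\in Z(\rL)\cap\rL_{\text{max}}'$; this is always possible when $\rL\subsetneq\rL_{\text{max}}$, since the hyperplane $\sum n_i v(\zeta_i)=0$ meets the open positivity cone in the cocharacter lattice.

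\textbf{The gap.} The claim that transferring invertibility from $\mathcal{H}(\rL_{\text{max}},\lambda_{\rP})$ to $\mathcal{H}(\rL_{\text{max}}',\lambda_{\rP}')$ is ``bookkeeping'' is optimistic in the $\ell$-modular setting. It is true that $\phi_\zeta(\zeta)$ is a scalar (since $\zeta$ centralises $J_{\rL}$ and $\lambda_{\rL}$ is irreducible), so one obtains a well-defined element $\phi'_\zeta$ supported on $J_{\rP}'\zeta J_{\rP}'$. But invertibility requires computing $\phi'_\zeta*\phi'_{\zeta^{-1}}$ in $\mathcal{H}(\rL_{\text{max}}',\lambda_{\rP}')$, and this Hecke algebra is in general strictly larger than the image of $\mathcal{H}(\rL_{\text{max}},\lambda_{\rP})$ under restriction: extra intertwiners arise from the $N_{\lambda_{\rL}'}$-orbit of components. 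Proposition~\ref{proptamecond} tells you what the orbit looks like, but it does not by itself compute the convolution on the $\rG'$-side or rule out that $\phi'_\zeta$ becomes a zero-divisor. In the complex case (Bushnell--Kutzko for $\mathrm{SL}_n$) this analysis has been carried out; in the modular case it would need a genuine argument, not a hand-wave.

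\textbf{What the paper does instead.} The paper bypasses Hecke algebras entirely and verifies Conditions~(3) and~(4) of Criterion~\ref{critcover} directly. For~(3), given irreducible $\pi'$ of $\rL_{\text{max}}'$, one picks $\pi$ irreducible on $\rL_{\text{max}}$ with $\pi'\hookrightarrow\pi|_{\rL_{\text{max}}'}$ and embeds, via Frobenius reciprocity,
\[
\Hom_{J_{\rP}'}(\lambda_{\rP}',\pi')\hooklongrightarrow\bigoplus_{\chi}\Hom_{J_{\rP}}(\lambda_{\rP}\otimes\chi,\;\pi\otimes\ind_{\{1\}}^{\det_\ell}\mathds{1}),
\]
where $\chi$ runs over the $\ell'$-part of $(\det(J_{\rP})/\det(\ker\lambda_{\rP}))^\vee$. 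One then runs a filtration argument on $\pi\otimes\ind_{\{1\}}^{\det_\ell}\mathds{1}$ with successive quotients $\pi$, and appeals to the known injectivity for the $\rL_{\text{max}}$-cover on each graded piece. Condition~(4) is handled analogously by embedding the $\lambda_{\rP}'$-isotypic subspace of $\pi'$ into the $\lambda_{\rP}$-isotypic subspace of $\pi\otimes\gamma$ for a suitable auxiliary $\gamma$, and invoking \cite[Theorem~7.9]{BuKu98} on the $\rG$-side. This direct approach avoids any comparison of modular Hecke algebras and is robust against the failure of semisimplicity.
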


\begin{proof}
We only need to check Condition 3 and 4 in Criterion \ref{critcover}. For Condition 3: Let $\pi'$ be irreducible of $\rL'$, and $\pi$ an irreducible of $\rL$ such that $\pi'\hookrightarrow\pi\vert_{\rL'}$. Let $O$ be the kernel of $\lambda_{\rP}$. By Frobenius reciprocity, we have
\begin{equation}
\label{equaembedding}
\mathrm{Hom}_{J_{\rP}'}(\lambda_{\rP}',\pi')\hookrightarrow\mathrm{Hom}_{J_{\rP}}(\lambda_{\rP},\pi\otimes\ind_{J_{\rL}'O}^{J_{\rL}} \mathds{1}).
\end{equation}
Let $\det_{\ell}$ (resp. $\det_{\ell'}$) be the $\ell$-part (resp. $\ell$-prime part) of the finite abelian group $\det(J_{\rP})\slash\det(O)$, which consists of elements with an $\ell$-power order (resp. with an order prime to $\ell$). The latter is equivalent to $\oplus_{\chi\in(\det_{\ell'})^{\vee}}\Hom_{J_{\rP}}(\lambda_{\rP}\otimes\chi,\pi\otimes\ind_{\{1\}}^{\det_{\ell}}\mathds{1}).$ By a similar manner there is an injection from $\Hom_{J_{\rL}'}(\lambda_{\rL}',\pi'_{\rN})$, where $\pi_{\rN}'=r_{\rL'}^{\rL_{max}'}(\pi')$. We have:

\begin{tikzpicture}
\node at(-1,0) {$\Hom_{J_{\rP}'}(\lambda_{\rP}',\pi')$};
\node at(7,0) {$\Hom_{J_{\rL}'}(\lambda_{\rL}',\pi_{\rN}')$};
\node at(-1,-2) {$\oplus_{\chi\in(\det_{\ell'})^{\vee}}\Hom_{J_{\rP}}(\lambda_{\rP}\otimes\chi,\pi\otimes\ind_{\{1\}}^{\det_{\ell}}\mathds{1})$};
\node at(7,-2) {$\oplus_{\chi\in(\det_{\ell'})^{\vee}}\Hom_{J_{\rL}}(\lambda_{\rL}\otimes\chi,\pi_{\rN}\otimes\ind_{\{1\}}^{\det_{\ell}}\mathds{1})$};
\node at(3,0.5) {$r_{\rL'}^{\rL_{max}'}\circ\{\cdot\}$};
\node at(3,-1.5) {$r_{\rL}^{\rL_{max}}\circ\{\cdot\}$};
\draw[->] (1,0)--(5,0);
\draw[->] (2.3,-2)--(3.7,-2);
\draw[->] (-1,-0.5)--(-1,-1.5);
\draw[->] (7,-0.5)--(7,-1.5);
\end{tikzpicture}

As we explained above, the two vertical arrows are injective. To show the upper horizontal arrow is injective, it is sufficient to show that $r_{\rL}^{\rL_{max}}\circ\{\cdot\}$ is injective. Now we assume that $\chi$ is trivial. Let $V$ be the representation space of $\pi$. There is a filtration:
$$V:=V_0\subset V_1\subset\cdots\subset V_m := V\otimes\ind_{\{1\}}^{\det_{\ell}}\mathds{1},$$
such that $V_j\slash V_{j-1}\cong V$ for $j=1,\cdots,m$ and  $m\in\mathbb{N}$. Therefore, by applying $r_{\rL}^{\rL_{max}}$ it gives a filtration:
\begin{equation}
\label{equarNV}
r_{\rL}^{\rL_{max}}(V):=r_{\rL}^{\rL_{max}}(V_0)\subset r_{\rL}^{\rL_{max}}(V_1)\subset\cdots\subset r_{\rL}^{\rL_{max}}(V_m) := r_{\rL}^{\rL_{max}}(V)\otimes\ind_{\{1\}}^{\det_{\ell}}\mathds{1},
\end{equation}
where $r_{\rL}^{\rL_{max}}(V_j)\slash r_{\rL}^{\rL_{max}}(V_{j-1})\cong r_{\rL}^{\rL_{max}}(V)$ for each $j$.

For a non-trivial morphism $f\in\Hom_{J_{\rP}}(\lambda_{\rP},\pi\otimes\ind_{\{1\}}^{\det_{\ell}}\mathds{1})$. Since $\lambda_{\rP}$ is irreducible, there exists a unique $j$ such that 
$$V_j\cap f(\lambda_{\rP})\neq\{0\}.$$ 
Denote it by $j_f$, we have $V_{j_f}\cap f(\lambda_{\rP})=f(\lambda_{\rP})$.
Since $\lambda_{\rP}\otimes\chi$ is a $\rL_{\text{max}}$-cover of $\lambda_{\rL}\otimes\chi$, hence 
$$\Hom_{J_{\rP}}(\lambda_{\rP}\otimes\chi,\pi)\rightarrow\Hom_{J_{\rL}}(\lambda_{\rL}\otimes\chi,r_{\rL}^{\rL_{max}}(\pi)),$$
is injective, which implies that $r_{\rL}^{\rL_{max}}(\pi)\neq 0$. Hence the filtration \ref{equarNV} is non-trivial has length $m$. We deduce furthermore that $j_f=j_{r_{\rL}^{\rL_{max}}(f)}$. Now for two $f_1\neq f_2\in\Hom_{J_{\rP}}(\lambda_{\rP}\otimes\chi,\pi\otimes\ind_{\{1\}}^{\det_{\ell}}\mathds{1})$. Suppose first that $j_{f_1}=j_{f_2}$. Then 
$$f_1\neq f_2\in\Hom_{J_{\rP}}(\lambda_{\rP},V_{i+1}\slash V_i\cong V)\cong\Hom_{J_{\rP}}(\lambda_{\rP},\pi).$$
Hence $r_{\rL}^{\rL_{max}}(f_1)\neq r_{\rL}^{\rL_{max}}(f_2)$ as explained above. When $j_{f_1}\neq j_{f_2}$, then we deduce the result by $j_{r_{\rL}^{\rL_{max}}(f_1)}\neq j_{r_{\rL}^{\rL_{max}}(f_2)}$.

For Condition 4: Let $V'$ be the representation space of $\pi'$. Denote by $W_0'$ be the subspace of $V'$, on which $J_{\rP}'$ acts as a direct sum of $\lambda_{\rP}'$. Let $W'$ be the kernel of $r_{\rL'}^{\rL_{\max}'}$ on $V'$, to verify Condition 4 is equivalent to show that $W_0'\cap W'=\{0\}$. We know that $W_0'=\sum_{f'\in\Hom(\lambda_{\rP}',\pi')}\mathrm{Im}(f')$ where $\mathrm{Im}$ denotes the image. Equation \ref{equaembedding} sends $f'$ to an element in $\Hom(\lambda_{\rP},\pi\otimes\ind_{J_{\rP}'O}^{J_{\rP}}\mathds{1})$. Notice that $\ind_{J_{\rP}'O}^{J_{\rP}}\mathds{1}$ is a subrepresentation of $\res_{\det(J_{\rP})}^{\mathfrak{o}_{F}^{\times}}\ind_{\det(O)}^{\mathfrak{o}_F^{\times}}\mathds{1}$, and the latter can be regarded as a representation of $F^{\times}$ by extending to $\varpi_{F}$ trivially. Hence can be inflated to $\rL_{\max}$, and we denote it by $\gamma$. We have
$$\Hom(\lambda_{\rP}',\pi')\hookrightarrow\Hom(\lambda_{\rP},\pi\otimes\gamma).$$
Let $f\in\Hom(\lambda_{\rP},\pi\otimes\gamma)$ that corresponds to $f'$. We have $\mathrm{Im}(f')\subset\mathrm{Im}(f)$. Denote by $W$ the kernel of $r_{\rL}^{\rL_{\max}}$ on $\pi\otimes\gamma$, and $W_0:=\sum_{f\in\Hom(\lambda_{\rP},\pi\otimes\gamma)}\mathrm{Im}(f)$. By \cite[Theorem 7.9]{BuKu98}, we know that $(\pi\otimes\gamma)^{\lambda_{\rP}}\cong (r_{\rL}^{\rL_{\max}}\pi\otimes\gamma)^{\lambda}$, which implies that $W\cap W_0=\{0\}$. Notice that $\mathrm{Im}(f')\subset\mathrm{Im}(f)$. Hence $W_0'\subset W_0$. Meanwhile $\pi'$ is a subrepresentation of $\res_{\rL_{\max}'}^{\rL_{\max}}\pi\otimes\gamma$, hence $W'\subset W$. We conclude that $W'\cap W_0'$ is trivial.
\end{proof}

\begin{cor}
\label{iG'cover}
We have an equivalence
$$\ind_{J_{\rP}'}^{\rL_{\text{max}}'}\lambda_{\rP}'\cong i_{\rL'}^{\rL_{\text{max}}'}\ind_{J_{\rL}'}^{\rL'}\lambda_{\rL}'.$$
\end{cor}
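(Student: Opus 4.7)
The plan is to derive this corollary from Theorem \ref{G'cover} via the general induction equivalence for covers, recorded in the remark after Criterion \ref{critcover}: once $(J_{\rP}', \lambda_{\rP}')$ is known to be an $\rL_{\text{max}}'$-cover of $(J_{\rL}', \lambda_{\rL}')$, the desired isomorphism follows by the same formal argument that produces the corresponding statement for $\rG$-covers of $(J_{\rL}, \lambda_{\rL})$.

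Concretely, I would proceed in two steps. First, I construct a canonical $\rL_{\text{max}}'$-equivariant morphism
\[
\ind_{J_{\rP}'}^{\rL_{\text{max}}'} \lambda_{\rP}' \longrightarrow i_{\rL'}^{\rL_{\text{max}}'} \ind_{J_{\rL}'}^{\rL'} \lambda_{\rL}'
\]
by Frobenius reciprocity. Using the Iwahori-type decomposition $J_{\rP}' = (J_{\rP}' \cap \bar{\rN})\,J_{\rL}'\,(J_{\rP}' \cap \rN)$ together with the fact that $\lambda_{\rP}'$ is trivial on both $J_{\rP}' \cap \bar{\rN}$ and $J_{\rP}' \cap \rN$ (built into the construction of $\lambda_{\rP}'$ in Section \ref{sectionG'cover}), the problem reduces to producing a $J_{\rL}'$-equivariant map $\lambda_{\rL}' \to \ind_{J_{\rL}'}^{\rL'} \lambda_{\rL}'$, which is just the tautological embedding. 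Second, to verify that this morphism is an isomorphism, I would compute the Jacquet module $r_{\rL'}^{\rL_{\text{max}}'}$ of the left-hand side via a Mackey-type decomposition and match it with $\ind_{J_{\rL}'}^{\rL'} \lambda_{\rL}'$; the conclusion then follows by a comparison of generators, using Conditions (3) and (4) of Criterion \ref{critcover}, already verified in Theorem \ref{G'cover}.

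The main potential obstacle is the technical fact that $J_{\rP}'$ is not a subgroup of the parabolic (its $\bar{\rN}$-component sits in $H^{1'} \cap \bar{\rN}$), so the Mackey reduction requires controlling that this piece acts trivially — exactly the observation used through \cite[Theorem 7.9]{BuKu98} in the proof of Theorem \ref{G'cover}. Since this is precisely what the cover construction guarantees, and since the $\ell$-modular subtleties (non-semisimplicity of the restriction) are already absorbed into the injective formulation of Criterion \ref{critcover}, the $\rG$-cover argument of \cite{BuKu98} transports verbatim to the $\rL_{\text{max}}'$-cover setting, yielding the claimed equivalence.
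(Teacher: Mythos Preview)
Your proposal is correct and follows essentially the same approach as the paper: once Theorem \ref{G'cover} establishes that $(J_{\rP}',\lambda_{\rP}')$ is an $\rL_{\text{max}}'$-cover of $(J_{\rL}',\lambda_{\rL}')$, the induction equivalence is a formal consequence of general cover theory. The paper simply invokes \cite[Theorem 2]{Blon05} for this step, whereas you sketch the underlying argument (Frobenius reciprocity, Iwahori decomposition, Jacquet-module comparison); your sketch is accurate and is effectively a reproof of Blondel's result in this setting, so there is no substantive difference.
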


\begin{proof}
It is deduced from \cite[Theorem 2]{Blon05}.
\end{proof}

\subsubsection{$(\rL_{\text{max}}',\alpha)$-cover of supercuspidal $k$-types}
\label{sectionG'alphacover}
Let $(J_{max,\alpha},\lambda_{max,\alpha})$ be an $(\rL_{\text{max}},\alpha)$-cover as in Section \ref{sectionGcover}. Denote by $\kappa_{max,\alpha}'$ the irreducible restriction $\kappa_{\text{max}}\vert_{J_{max,\alpha}'}$. Under the tameness condition, we have a decomposition $J_{max,\alpha}'=U(\mathfrak{B})'J_{\text{max}}^{1'}$. In particular, $J_{\text{max}}^{1'} \subset J_{max,\alpha}^{1'}=U^1(\mathfrak{B})'J_{\text{max}}^{1'}$. We have $J_{max,\alpha}'\slash J_{max,\alpha}^{1'}\cong\mathbb{L}'$, and $J_{max,\alpha}'\slash J_{\text{max}}^{1'}\cong\mathbb{P}'$, where the latter is a parabolic subgroup of $\mathbb{L}_{\text{max}}'$ with Levi subgroup $\mathbb{L}'$. We fix a Borel subgroup in $\mathbb{L}_{\text{max}}'$ that consists of upper triangular matrices.  Recall that $\rP$ is standard with respect to the Borel subgroup of upper triangular matrices in $\rG$. There is a natural decomposition that $\rU(\fB)=(\rU(\fB)\cap \bar{\rN})(\rU(\fB)\cap\rL) (\rU(\fB)\cap\rN)$, which identifies $\mathbb{P}'$ with a standard parabolic subgroup of $\mathbb{L}_{max}'$. We denote by $\lambda_{max,\alpha}'=\kappa_{\rP,\alpha}'\otimes\sigma_{\rL}'$. 

\begin{prop}
\label{prop2.6}
The pair $(J_{max,\alpha}',\lambda_{max,\alpha}')$ verifies the following properties:
\begin{itemize}
\item $\ind_{J_{max,\alpha}'}^{(U(\fB)U^1(\fA))'}\lambda_{max,\alpha}'\cong\ind_{J_{\rP}'}^{(U(\fB)U^1(\fA))'}\lambda_{\rP}'$.
\item $\ind_{J_{max,\alpha}'}^{J_{\text{max}}'}\lambda_{max,\alpha}'\cong\kappa_{\text{max}}'\otimes i_{\mathbb{L}'}^{\mathbb{L}_{\text{max}}'}\sigma_{\rL}'$, where $\mathbb{L}_{\text{max}}'\cong J_{\text{max}}'\slash J_{\text{max}}^{1'}$.
\end{itemize}
We call it \textbf{an $(\rL_{\text{max}}',\alpha)$-cover} of $(J_{\rL}',\lambda_{\rL}')$.
\end{prop}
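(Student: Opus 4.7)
\medskip

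The plan is to derive both equivalences by descending their $\rG$-level analogs, namely Equations (3.2) and (3.3) from Section \ref{sectionGcover}, to the primed subgroups, using the tameness condition to control the restriction procedure.

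For item (1), I would start from the identity
$$\ind_{J_{\rP}}^{U(\fB)U^1(\fA)}\lambda_{\rP}\;\cong\;\ind_{U(\fB)J_{\text{max}}^1}^{U(\fB)U^1(\fA)}\lambda_{max,\alpha}$$
and restrict both sides to the subgroup $(U(\fB)U^1(\fA))'$. Applying Mackey's formula, the key input is that under the tameness assumption the determinants of $J_{\rP}$ and of $U(\fB)J_{\text{max}}^1$ both exhaust $\det(U(\fB)U^1(\fA))$ (this is essentially the surjectivity $N_{E_i/F}(1+\mathfrak{p}_{E_i})=1+\mathfrak{p}_F$ used already in the proof of Proposition \ref{proptamecond}), so there is only one double coset and the restrictions become
$$\ind_{J_{\rP}'}^{(U(\fB)U^1(\fA))'}\bigl(\lambda_{\rP}\vert_{J_{\rP}'}\bigr)\;\cong\;\ind_{(U(\fB)J_{\text{max}}^1)'}^{(U(\fB)U^1(\fA))'}\bigl(\lambda_{max,\alpha}\vert_{(U(\fB)J_{\text{max}}^1)'}\bigr).$$
By Proposition \ref{proptamecond} and Remark \ref{defnofT}, both restrictions decompose as direct sums indexed by the same set $T=N_{\lambda_{\rL}'}/S_{\lambda_{\rL}'}$: on the left the $t$-summand is $\kappa_{\rP}'\otimes\sigma_t'$ and on the right it is $\kappa_{max,\alpha}'\otimes\sigma_t'$. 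I would then match summands sharing the same $\sigma_t'$-type (which labels each summand canonically) to extract the desired isomorphism; alternatively, one can construct it directly by noting that both sides realize the $\sigma_{\rL}'$-isotypic component of the Mackey restriction of Equation (3.2), and that this component is preserved under the $\rG$-level isomorphism.

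For item (2), I would invoke the projection formula. Writing $\lambda_{max,\alpha}'\cong\kappa_{max,\alpha}'\otimes\sigma_{\rL}'$ with $\kappa_{max,\alpha}'=\kappa_{\text{max}}'\vert_{J_{max,\alpha}'}$ (a genuine restriction from $J_{\text{max}}'$), Frobenius--projection gives
$$\ind_{J_{max,\alpha}'}^{J_{\text{max}}'}\lambda_{max,\alpha}'\;\cong\;\kappa_{\text{max}}'\otimes\ind_{J_{max,\alpha}'}^{J_{\text{max}}'}\sigma_{\rL}'.$$
It remains to identify the second factor with finite-group parabolic induction. Since $\sigma_{\rL}'$ is trivial on $J_{\rL}^{1'}$ and a fortiori on $J_{\text{max}}^{1'}\subset J_{max,\alpha}^{1'}$, it factors through the quotient $J_{max,\alpha}'/J_{\text{max}}^{1'}\cong \mathbb{P}'$ stated just before the proposition. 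Inducing through $J_{\text{max}}'/J_{\text{max}}^{1'}\cong\mathbb{L}_{\text{max}}'$ then yields exactly $i_{\mathbb{L}'}^{\mathbb{L}_{\text{max}}'}\sigma_{\rL}'$, completing the identification.

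The main obstacle is the matching step in part (1). The restriction of the $\rG$-level isomorphism gives an equality of direct sums, but isolating one summand on each side requires the canonical labeling by $T$-orbits (via the moderate parts $\sigma_t'$) to be preserved by the isomorphism. This is where Proposition \ref{proptamecond} and the tameness hypothesis are indispensable: they guarantee that the restriction $\sigma_{\rL}\vert_{J_{\rL}'}$ is multiplicity-free, so that the $\sigma_t'$-isotypic components are unambiguously defined on both sides and the matching is forced.
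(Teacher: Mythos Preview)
Your argument for item (2) is essentially the paper's: the projection formula pulls $\kappa_{\text{max}}'$ out, and the remaining compact induction is identified with parabolic induction on the finite quotient via the parabolic structure $J_{max,\alpha}'/J_{\text{max}}^{1'}\cong\mathbb{P}'$ described just before the statement.

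For item (1) the paper takes a different and cleaner route. Rather than descending the $\lambda$-level identity (Equation~\ref{equaGalpha}) and then trying to match summands of $\lambda_{\rP}\vert_{J_{\rP}'}$ with summands of $\lambda_{max,\alpha}\vert_{(U(\fB)J_{\text{max}}^1)'}$, the paper descends the \emph{$\kappa$-level} identity (Equation~\ref{equakappaG}) together with $\ind_{J_{\rP}}^{J}\kappa_{\rP}\cong\kappa$. Since the restrictions $\kappa_{\rP}',\kappa',\kappa_{\text{max}}'$ remain irreducible, Mackey (with the same determinant equality you invoke) gives directly
$$\ind_{J_{\rP}'}^{(U(\fB)U^1(\fA))'}\kappa_{\rP}'\;\cong\;\ind_{(U(\fB)J_{\text{max}}^1)'}^{(U(\fB)U^1(\fA))'}\kappa_{max,\alpha}'.$$
The extra observation is that $\det_F(\rU^1(\fB))=\det_F(\rU^1(\fA))=1+\mathfrak{p}_F$, so $(\rU(\fB)\rU^1(\fA))'=\rU(\fB)'\rU^1(\fA)'$ and $\sigma_{\rL}'$ extends to the top group $(U(\fB)U^1(\fA))'$ by letting $\rU^1(\fA)'$ act trivially. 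The projection formula then yields
$$\ind\lambda_{\rP}'\cong(\ind\kappa_{\rP}')\otimes\sigma_{\rL}',\qquad \ind\lambda_{max,\alpha}'\cong(\ind\kappa_{max,\alpha}')\otimes\sigma_{\rL}',$$
and the result follows with no matching step at all.

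Your matching step, by contrast, is not justified as written. An isomorphism of the full direct sums $\bigoplus_t\ind(\kappa_{\rP}'\otimes\sigma_t')\cong\bigoplus_t\ind(\kappa_{max,\alpha}'\otimes\sigma_t')$ does not by itself force a summand-by-summand isomorphism; you would need the induced representations for distinct $t$ to share no common constituents, and multiplicity-freeness of $\sigma_{\rL}\vert_{J_{\rL}'}$ \emph{before} induction does not give that. (Also, $\lambda_{\rP}\vert_{J_{\rP}'}$ is indexed by \emph{all} components of $\sigma_{\rL}\vert_{J_{\rL}'}$, not just the $N_{\lambda_{\rL}'}$-orbit parametrised by $T$.) The way to close this gap is precisely the paper's: observe that $\sigma_{\rL}'$ extends to the top group and factor it out via projection, reducing to the irreducible $\kappa$-level where no matching issue arises.
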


\begin{proof}
We recall the two equations in Section \ref{sectionGcover}:
\begin{itemize}
\item $\ind_{J}^{U(\mathfrak{B})U^1(\mathfrak{A})}\kappa\cong\ind_{U(\fB)J_{\text{max}}^1}^{U(\fB)U^1(\fA)}\kappa_{\text{max}}.$
\item $\ind_{J_{\rP}}^{J}\kappa_{\rP}\cong\kappa.$
\end{itemize}
It is worth noticing that under the condition $p$ does not divide $\vert W_{\rG'}\vert$, we have an equation
$$N_{E\slash F}(\mathfrak{o}_{E}^{\times})=\det_{F}(U(\mathfrak{B})U^1(\mathfrak{A}))=\det_{F}(J)=\det_F(J_{\rP})=\det_F(U(\fB)J_{\text{max}}^1).$$
Hence by applying restriction to the intersection with $\rL_{\text{max}}'$ and Mackey's formula, we have
\begin{itemize}
\item $\ind_{J'}^{(U(\mathfrak{B})U^1(\mathfrak{A}))'}\kappa'\cong\ind_{(U(\fB)J_{\text{max}}^1)'}^{(U(\fB)U^1(\fA))'}\kappa_{\text{max}}'.$
\item $\ind_{J_{\rP}'}^{J'}\kappa_{\rP}'\cong\kappa'.$
\end{itemize}
On the other hand, since $\det_F(\rU^1(\fB))=\det_F(\rU^1(\fA))=1+\mathfrak{p}_F$, we have $(\rU(\fB)\rU^1(\fA))'=\rU(\fB)'\rU^1(\fA)'$. Hence $\sigma_{\rL}'$ extends to $(\rU(\fB)\rU^1(\fA))'$ by acting $\rU^1(\fA)'$ trivially. Then we have
$$\ind_{J'}^{(U(\fB)U^1(\fA))'}\lambda_{\rP}'\cong (\ind_{J'}^{(U(\fB)U^1(\fA))'}\kappa_{\rP}')\otimes\sigma_{\rL}';$$
$$\ind_{(U(\fB)J_{\text{max}}^1)'}^{(U(\fB)U^1(\fA))'}\lambda_{max,\alpha}'\cong(\ind_{(U(\fB)J_{\text{max}}^1)'}^{(U(\fB)U^1(\fA))'}\kappa_{max,\alpha}')\otimes\sigma_{\rL}'.$$
They imply the following equivalences that generalise those of $(\rL_{\text{max}},\alpha)$-covers in Section \ref{sectionGcover}.
\begin{equation}
\label{equaG'alpha}
\ind_{J'}^{(U(\fB)U^1(\fA))'}\lambda_{\rP}'\cong\ind_{(U(\fB)J_{\text{max}}^1)'}^{(U(\fB)U^1(\fA))'}\lambda_{max,\alpha}'.
\end{equation}
Meanwhile,
$$\ind_{J_{max,\alpha}'}^{J_{\text{max}}'}\lambda_{max,\alpha}'\cong\kappa_{\text{max}}'\otimes\ind_{J_{max,\alpha}'}^{J_{\text{max}}'}\sigma_{\rL}'.$$
In particular, 
$$\ind_{J_{max,\alpha}'}^{J_{\text{max}}'}\sigma_{\rL}'\cong i_{\mathbb{L}'}^{\mathbb{L}_{\text{max}}'}\sigma_{\rL}',$$

\end{proof}

\begin{cor}
\label{corJ'supcuspsupp}
Let $\rho'$ be an irreducible representation of $\mathbb{L}_{\text{max}}'$ of which the supercuspidal support is $[\mathbb{L}',\sigma_{\rL}']$. Denote by $\gamma':=\kappa_{\text{max}}'\otimes\rho'$. Then the supercuspidal support of irreducible subquotients of $\ind_{J_{\text{max}}'}^{\rL_{\text{max}}'}\gamma'$ belong to the supercuspidal class determined by $[J_{\rL}',\lambda_{\rL}']$.
\end{cor}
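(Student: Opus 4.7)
The plan is to chain together the two equivalences of Proposition \ref{prop2.6}, Corollary \ref{iG'cover}, and Lemma \ref{lem 001}, trading the representation $\gamma' = \kappa_{\text{max}}' \otimes \rho'$ for a parabolically induced representation of $\rL_{\text{max}}'$ built out of the cuspidal type $(J_{\rL}',\lambda_{\rL}')$, whose supercuspidal support is already controlled.

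First, by definition of supercuspidal support, the hypothesis on $\rho'$ says that $\rho'$ is an irreducible subquotient of $i_{\mathbb{L}'}^{\mathbb{L}_{\text{max}}'}\sigma_{\rL}'$. Since tensoring with $\kappa_{\text{max}}'$ is an exact functor on representations of $J_{\text{max}}'$, the representation $\gamma'$ is an irreducible subquotient of $\kappa_{\text{max}}' \otimes i_{\mathbb{L}'}^{\mathbb{L}_{\text{max}}'}\sigma_{\rL}'$, which by the second equivalence of Proposition \ref{prop2.6} is isomorphic to $\ind_{J_{max,\alpha}'}^{J_{\text{max}}'}\lambda_{max,\alpha}'$. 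Compact induction from the open subgroup $J_{\text{max}}'$ of $\rL_{\text{max}}'$ is exact, so applying $\ind_{J_{\text{max}}'}^{\rL_{\text{max}}'}$ and invoking transitivity of induction shows that $\ind_{J_{\text{max}}'}^{\rL_{\text{max}}'}\gamma'$ is a subquotient of $\ind_{J_{max,\alpha}'}^{\rL_{\text{max}}'}\lambda_{max,\alpha}'$.

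Next, I would use the first equivalence of Proposition \ref{prop2.6}, namely $\ind_{J_{max,\alpha}'}^{(U(\fB)U^1(\fA))'}\lambda_{max,\alpha}' \cong \ind_{J_{\rP}'}^{(U(\fB)U^1(\fA))'}\lambda_{\rP}'$, and induce both sides up to $\rL_{\text{max}}'$ by transitivity. Combined with Corollary \ref{iG'cover}, this yields the identification
\[
\ind_{J_{max,\alpha}'}^{\rL_{\text{max}}'}\lambda_{max,\alpha}' \;\cong\; \ind_{J_{\rP}'}^{\rL_{\text{max}}'}\lambda_{\rP}' \;\cong\; i_{\rL'}^{\rL_{\text{max}}'}\ind_{J_{\rL}'}^{\rL'}\lambda_{\rL}'.
\]
Therefore $\ind_{J_{\text{max}}'}^{\rL_{\text{max}}'}\gamma'$ is a subquotient of $i_{\rL'}^{\rL_{\text{max}}'}\ind_{J_{\rL}'}^{\rL'}\lambda_{\rL}'$, so every irreducible subquotient of the former is also an irreducible subquotient of the latter.

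Finally, Lemma \ref{lem 001} tells us that all irreducible subquotients of $\ind_{J_{\rL}'}^{\rL'}\lambda_{\rL}'$ lie in a single supercuspidal class of $\rL'$, namely the class determined by $[J_{\rL}',\lambda_{\rL}']$ under the bijection $\mathcal{SC}_{\rL'} \leftrightarrow \mathcal{ST}_{\rL'}$; parabolic induction then preserves the supercuspidal support (any element of the supercuspidal support of a subquotient of $i_{\rL'}^{\rL_{\text{max}}'}\pi'$ equals an element of the supercuspidal support of $\pi'$), giving the conclusion. The only mildly delicate point I anticipate is ensuring that exactness is preserved when transporting the subquotient relation through successive inductions; this is automatic because compact induction from open subgroups and parabolic induction are both exact functors, so the computation should be entirely formal once the equivalences above are lined up.
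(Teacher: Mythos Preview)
Your proof is correct and follows essentially the same route as the paper: both combine Corollary~\ref{iG'cover} with the two equivalences of Proposition~\ref{prop2.6} to identify $i_{\rL'}^{\rL_{\text{max}}'}\ind_{J_{\rL}'}^{\rL'}\lambda_{\rL}'$ with $\ind_{J_{\text{max}}'}^{\rL_{\text{max}}'}\bigl(\kappa_{\text{max}}'\otimes i_{\mathbb{L}'}^{\mathbb{L}_{\text{max}}'}\sigma_{\rL}'\bigr)$, then use exactness to see $\ind_{J_{\text{max}}'}^{\rL_{\text{max}}'}\gamma'$ as a subquotient, and finish via Lemma~\ref{lem 001} (the paper phrases this last step as ``an irreducible subquotient of $\ind_{J_{\rL}'}^{\rL'}\lambda_{\rL}'$ must contain $(J_{\rL}',\lambda_{\rL}')$''). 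Your version is simply more explicit about the intermediate steps.
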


\begin{proof}
By \cite{Blon05}, we have $i_{\rL'}^{\rL_{\text{max}}'}\ind_{J_{\rL}'}^{\rL'}\lambda_{\rL}'\cong\ind_{J_{\rP}'}^{\rL_{\text{max}}'}\lambda_{\rP}'$. Proposition \ref{prop2.6} implies that 
$$i_{\rL'}^{\rL_{\text{max}}'}\ind_{J_{\rL}'}^{\rL'}\lambda_{\rL}'\cong\ind_{J_{\text{max}}'}^{\rL_{\text{max}}'}\kappa_{\text{max}}'\otimes i_{\mathbb{L}'}^{\mathbb{L}_{\text{max}}'}\sigma_{\rL}'.$$
Since $\rho'$ is an irreducible subquotient of $i_{\mathbb{L}'}^{\mathbb{L}_{\text{max}}'}\sigma_{\rL}'$. We conclude the result by the fact that an irreducible subquotient of $\ind_{J_{\rL}'}^{\rL'}\lambda_{\rL}'$ must contain $(J_{\rL}',\lambda_{\rL}')$.
\end{proof}

\begin{defn}
\label{defnJ'supcuspsupp}
We define the $\rL_{\text{max}}'$-conjugacy class $[J_{\rL}',\lambda_{\rL}']$ \textbf{the supercuspidal support} of the pair $(J_{\text{max}}',\kappa_{\text{max}}'\otimes\rho')$.
\end{defn}

\subsection{The functor $\mathbf{K}'$}
In this section, we show a compatibility between parabolic induction of the finite reductive quotient groups and that of $p$-adic groups (Proposition \ref{propK'}), which relies on our choice of $\kappa_{max},\kappa,\kappa_{\rL}$ in Section \ref{sectionGcover}. However, we need this compatibility not only with the supercuspidal support, but also with the cuspidal support. In other words, we want to show this compatibility respects the transitivity of parabolic induction. To do that, we need to determine wild pairs of a Levi subgroup $\rM$ in between of $\rL_{max}$ and $\rL$ and prove all these choices meet our requirements.

\subsubsection{Wild pairs on different Levi subgroups}
\label{sectionK'Levi}

Recall that  $J_{\text{max}},J_{max,\alpha},J_{\rP}$ are open compact subgroups, and $[\fA_{\text{max}},0,\beta],[\fA,0,\beta]$ are simple strata as in Section \ref{sectionGcover}, which are determined by the endo-equivalence class of $\theta_{\rL}$. Suppose that $\rM$ is a Levi subgroup, that contains $\rL$ and is contained in $\rL_{max}$, and that has a simple character $\theta_{\rM}$ endo-equivalent to $\theta_{\rL}$. We will determine a wild pair of $\rM$, which are endo-equivalent to $\kappa_{\rL}$ and $\kappa_{\text{max}}$, then we show it is compatible with both of them in the sense of Equation \ref{equakappaG}. This choice is unique after fixing $\kappa_{max}$, and the compatibility can be deduced from \cite[\S 5]{SS}, and regarded as a missing piece. We \textbf{warn} again that our notations are different from those in \cite[\S 5]{SS}.


In order to simplify the notation, \textbf{we assume that $\rL_{\text{max}}=\rG=\mathrm{GL}_n(F)$} (only in this section). That is to say we only consider the homogeneous cases (the general cases can be regarded as a product of homogeneous ones), in other words, we are in the case that $E=F[\beta_{\rL}]$ is a field extension, meaning that $\beta_{\rL}=(\beta,\cdots,\beta)$ with $\beta\in E$. In order to discuss wild pairs in three different groups, we need to refine our notation system. Denote by
$$[\fA_{\rL}^{\rG},0,\beta]:=[\fA,0,\beta],$$
$$[\fA_{\text{max}}^{\rG},0,\beta]:=[\fA_{\text{max}},0,\beta]$$
$$J_{\max}^{\rG}:=J_{\text{max}}$$
$$J_{\rP,\rL}^{\rG}:=J_{\rP}.$$
The idea of this notation system is to represent $\fA_{\rL}^{\rG}$ that it comes from a maximal hereditary order of $\rL$ and is defined on $\rG$ by the lower and upper index.

Denote by $\Lambda_{\text{max}}^{\rG},\Lambda_{\rL}^{\rG}$ the $\mathfrak{o}_F$-lattice chains (resp. $\Lambda_{\rL}$ a product of $\mathfrak{o}_{F}$-lattice chain), which are $\mathfrak{o}_{E}$-lattice chains (resp. a product of $\mathfrak{o}_E$-lattice chain) as well. In particular, by writing $\Lambda_{\rL}\cong\prod_{i\in I}\Lambda_{\rL,i}$ as a direct sum of lattice chains, each component $\Lambda_{\rL,i}$ is an $\mathfrak{o}_{E}$-lattice chain in $E^{m_i}$ where $m_i=n_i\slash [E:F]$. 
Define $\fA_{\text{max}}^{\rG},\fA_{\rL}^{\rG},\fA_{\rL}$ accordingly. Denote by $\mathfrak{B}_{\text{max}}^{\rG},\mathfrak{B}_{\rL}^{\rG},\mathfrak{B}_{\rL}$ the corresponding $\mathfrak{o}_{E}$-hereditary orders. Recall that by definition,
$$\rU(\fA_{\rL}^{\rG})\subset\rU(\fA_{\text{max}}^{\rG});$$
$$\rU(\mathfrak{B}_{\rL}^{\rG})\cap\rL=\rU(\mathfrak{B}_{\rL}).$$
In additional, let $\rP_{\rL}^{\rG}=\rL\rN_{\rL}^{\rG}$ be a standard parabolic subgroup. Up to an $\rL$-conjugation of $\lambda_{\rL}$ we can assume that the lattice chains verify the following equations:
\begin{equation}
\label{equa9}
\rU(\mathfrak{B}_{\rL}^{\rG})=(\rU^1(\fA_{\text{max}}^{\rG})\cap \rU(\mathfrak{B}_{\rL}^{\rG}))(\rU(\mathfrak{B}_{\rL}^{\rG})\cap\rP_{\rL}^{\rG}).
\end{equation}
\begin{equation}
\label{equa10}
\rU^1(\fA_{\rL}^{\rG})\cap\bar{\rN}_{\rL}^{\rG}=\rU^1(\fA_{\text{max}}^{\rG})\cap\bar{\rN}_{\rL}^{\rG}.
\end{equation}
They are in \cite[Equation 5.1]{SS}. In \cite[\S 5.1]{SS}, an example of such kind of $\Lambda_{\rL}^{\rG},\Lambda_{\text{max}}^{\rG}$ has been established in the proof of Lemma 5.1 of \cite{SS}.

Now we consider the compatibility with a Levi subgroup $\rM\cong\prod_{s\in S}\mathrm{GL}_{n_s}(F)$ containing $\rL$. We start by assuming that $\Lambda_{\text{max}}^{\rG}$ is in the same form of the example in Section 5.1 of \cite{SS}, which is constructed with respect to a fixed standard basis. To be more precise, let $V$ be a $F$-vector space, such that $\mathrm{GL}_n(F)\cong\mathrm{GL}(V)$. Fix a basis $\mathcal{E}=(\mathfrak{e}_1,\cdots,\mathfrak{e}_n)$ for $V$ such that \textbf{the standard Borel subgroup} consists of upper triangular matrices with respect to this basis. For $\rL$, there is a partition $\mathcal{E}=\sqcup_{i\in I}\mathcal{E}_i$ which gives a family of sub-spaces $V_{\rL}:=(V_i,i\in I)$ where $V_i$ is generated by $\mathcal{E}_i$ such that $\rL\cong\prod_{i\in I}\mathrm{GL}(V_i)$. We also write $\rL$ as $\mathrm{GL}(V_{\rL})$. 
Similarly taking the partition $\mathcal{E}=\sqcup_{s\in S}\mathcal{E}_s$, we get $V_{\rM}$ such that $\mathrm{GL}(V_{\rM})=\rM$. The group $\rM$ is diagonal by blocks, so is the projection to $\mathrm{GL}_{n_s}(F)$. When $\rL\subset\rM$, there is an embedding of $E$ to $\rM$ and we can write $\beta=(\beta_s)_{s\in S}$. Furthermore, by removing the repetitions of lattice chain $\Lambda_{\rL}^{\rG}\cap V_{\rM}$ and $\Lambda_{\text{max}}^{\rG}\cap V_{\rM}$, we obtain lattice chains $\Lambda_{\rL}^{\rM}$ and $\Lambda_{\text{max}}^{\rM}$ respectively, which are also $\mathfrak{o}_{E}$-lattice chains in the above sense under our assumption on $\rM$. Let $[\fA_{\rL}^{\rM},n,0,\beta]$ and $[\fA_{\text{max}}^{\rM},n,0,\beta]$ be the corresponding simple strata, and the latter is maximal. Denote by $\mathfrak{B}_{X}^{\rM}$ the $\mathfrak{o}_{E}$-orders in $\rM$ with respect to $\Lambda_{X}^{\rM}$ ($X=\max,\rL$). There exists a maximal lattice chain $\Lambda_{\text{max}}^{\rG}\subset\Lambda_{\rM}^{\rG}$, which gives the lattice chain $\Lambda_{\text{max}}^{\rM}$ by removing the repetition of $\Lambda_{\rM}^{\rG}\cap V_{\rM}$, and which defines  a simple stratum $[\fA_{\rM}^{\rG},n,0,\beta]$ on $\rG$.

\begin{rem}
\label{remlatticedecomp}
We regard $\Lambda_{\rL}^{\rG}$ as an $\mathfrak{o}_{E}$-lattice chain, which gives a decomposition of $V\cong\oplus_{i\in I}V_i$ as in \cite[5.5.2]{BuKuI}. This decomposition is subordinate to the $\mathfrak{o}_F$-lattice chain $\Lambda_{\rL}^{\rG}$ as defined in \cite[7.1.1]{BuKuI}. Our Levi subgroup $\rL$ is identified with $\prod_{i\in I}\mathrm{GL}(V_i)$, and the standard parabolic subgroup $\rP_{\rL}^{\rG}$ coincides with the parabolic subgroup defined in \cite[7.1.13]{BuKuI}. In particular, by \cite[Theorem 7.1.14]{BuKuI} the equations \ref{equa9} and \ref{equa10} are satisfied.
\end{rem}

Denote by $\rP_{X}^{Y}$ the standard parabolic subgroup of $Y$ with Levi part $X$, and $\rN_{X}^{Y},\bar{\rN}_{X}^{Y}$ the unipotent radical of of $\rP_{X}^{Y}$ as well as the opposite unipotent radical. Let $H_{X}^{Y}, J_{X}^{Y}$ be defined from the simple stratum with the same upper and lower index (see \cite[3.1.4]{BuKuI}). For an open compact subgroup $K$, we denote by $K^1$ its pro-$p$ radical. We say a subgroup $K\subset\rG$ is \textbf{decomposed with respect to a parabolic subgroup} $\rP=\rL\rN$, if
$$K=(K\cap\bar{\rN})(K\cap\rL)(K\cap\rN_{\rL}^{\rM}),$$
where $\bar{\rN}$ is the opposite unipotent radical. 

Inside $\rM$, we define with respect to $\rP_{\rL}^{\rM}$:
$$J_{\rP,\rL}^{\rM}=(H_{\rL}^{\rM,1}\cap\bar{\rN}_{\rL}^{\rM})(J_{\rL}^{\rM}\cap\rL)(J_{\rL}^{\rM,1}\cap\rN_{\rL}^{\rM}).$$

Inside $\rG$, we define $J_{\rP,\rM}^{\rG},J_{\rP,\rL}^{\rG},J_{\rP,\rL}^{\rG,\rM}$ with respect to $\rP_{\rM}^{\rG}$ and $\rP_{\rL}^{\rG}$:
 $$J_{\rP,\rM}^{\rG}=(H_{\rM}^{\rG,1}\cap\bar{\rN}_{\rM}^{\rG})(J_{\rM}^{\rG}\cap\rM)(J_{\rM}^{\rG,1}\cap\rN_{\rM}^{\rG});$$
 $$J_{\rP,\rL}^{\rG}=(H_{\rL}^{\rG,1}\cap\bar{\rN}_{\rL}^{\rG})(J_{\rL}^{\rG}\cap\rL)(J_{\rL}^{\rG,1}\cap\rN_{\rL}^{\rG}).$$
The groups $J_{\rP,X}^{Y}$ are defined in a similar manner as $J_{\rP}$ in Section \ref{Lmaxcovers} to construct a cover of $(J_{\rL},\lambda_{\rL})$. 

We emphasis that we have fixed a family of lattice-chains and $\beta$. Now we define $\kappa_{\text{max}}^{\rG},\kappa_{\rM}^{\rG},\kappa_{\rL}^{\rG},\kappa_{\text{max}}^{\rM},\kappa_{\rL}^{\rM},\kappa_{\rL}$ as following:
\begin{enumerate}
\item On $\rL$, the simple stratum $[\fA_{\rL},n,0,\beta]$ has been determined with respect to $\Lambda_{\rL}$. We start by fixing $\theta_{\rL}\in\mathcal{C}(\fA_{\rL},n,0,\beta)$.
\item On $\rG$, we define $\kappa_{\text{max}}^{\rG},\kappa_{\rM}^{\rG},\kappa_{\rL}^{\rG}$: The simple strata $[\fA_{\text{max}}^{\rG},n,0,\beta],[\fA_{\rM}^{\rG},n,0,\beta],[\fA_{\rL}^{\rG},n,0,\beta]$ has been fixed as above. As in Section \ref{sectionGcover}, let $\theta_{\text{max}}^{\rG}\in\mathcal{C}(\fA_{\text{max}}^{\rG},n, 0,\beta)$ be endo-equivalent to $\theta_{\rL}$, we fix a wild-extension $\kappa_{\text{max}}^{\rG}$ of $\theta_{\text{max}}^{\rG}$. 

Again, for $X=\rM$ or $\rL$, let $\theta_{X}^{\rG}$ be the unique simple character in $\mathcal{C}(\fA_{X}^{\rG},n,0,\beta)$ that is endo-equivalent to $\theta_{\rL}$. Let $\kappa_{X}^{\rG}$ be the unique wild-extension of $\theta_{X}^{\rG}$ which verifies the following equation (Equation \ref{equakappaG}):

\begin{equation}
\label{equationcompat10}
\ind_{J_{X}^{\rG}}^{U(\fB_{X}^{\rG})U^1(\fA_{X}^{\rG})}\kappa_{X}^{\rG}\cong\ind_{U(\fB_{X}^{\rG})J_{\text{max}}^{1,\rG}}^{U(\fB_{X}^{\rG})U^1(\fA_{X}^{\rG})}\kappa_{\text{max}}^{\rG}.
\end{equation}

\item For $\rM$, we define $\kappa_{\text{max}}^{\rM}$ and $\kappa_{\rL,0}^{\rM}$: The simple strata $[\fA_{\text{max}}^{\rM},n,0,\beta],[\fA_{\rL}^{\rM},n,0,\beta]$ and open compact subgroups $J_{\text{max}}^{\rM},J_{\rL}^{\rM}$ are determined as above. For $X=\max$ or $\rL$, let $\theta_{X}^{\rM}$ be the unique simple character in $\mathcal{C}(\fA_{X}^{\rM},n,0,\beta)$ which is endo-equivalent to $\theta_{\rL}$. There is a unique wild-extension $\kappa_{\text{max}}^{\rM}$ of $\theta_{\text{max}}^{\rM}$ on $J_{\text{max}}^{\rM}$, such that after extending to $J_{\rP,\rM}^{\rG}$ as in Section \ref{Lmaxcovers} we have
\begin{equation}
\label{equationcompt11}
\ind_{J_{\rP,\rM}^{\rG}}^{J_{\rM}^{\rG}}\kappa_{\text{max}}^{\rM}\cong\kappa_{\rM}^{\rG}.
\end{equation}
Meanwhile there is a unique wild-extension $\kappa_{\rL,0}^{\rM}$ of $\theta_{\rL}^{\rM}$, such that
\begin{equation}
\label{equationcompt12}
\ind_{J_{\rL}^{\rM}}^{U(\fB_{\rL}^{\rM})U^1(\fA_{\rL}^{\rM})}\kappa_{\rL,0}^{\rM}\cong\ind_{U(\fB_{\rL}^{\rM})J_{\text{max}}^{\rM,1}}^{U(\fB_{\rL}^{\rM})U^1(\fA_{\rL}^{\rM})}\kappa_{\text{max}}^{\rM}.
\end{equation}

\item For $\rL$, we define $\kappa_{\rL}$ the unique wild extension of $\theta_{\rL}$, such that
\begin{equation}
\label{equationcompt14}
\kappa_{\rL}^{\rG}\cong\ind_{J_{\rP,\rL}^{\rG}}^{J_{\rL}^{\rG}}\kappa_{\rL}.
\end{equation}

Meanwhile, we define $\kappa_{\rL,0}$ the unique wild extension of $\theta_{\rL}$, such that
\begin{equation}
\label{equationcompt15}
\kappa_{\rL,0}^{\rM}\cong\ind_{J_{\rP,\rL}^{\rM}}^{J_{\rL}^{\rM}}\kappa_{\rL,0}.
\end{equation}

\end{enumerate}

\begin{lem}
\label{lemequapart2}
The above construction verifies:
$$\ind_{J_{\rL}^{\rG}}^{U(\fB_{\rL}^{\rG})U^1(\fA_{\rL}^{\rG})}\kappa_{\rL}^{\rG}\cong\ind_{U(\fB_{\rL}^{\rG})J_{\rM}^{\rG,1}}^{U(\fB_{\rL}^{\rG})U^1(\fA_{\rL}^{\rG})}\kappa_{\rM}^{\rG}.$$
\end{lem}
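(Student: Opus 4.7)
The plan is to combine Equation~\ref{equationcompat10} applied in two instances, $X=\rL$ and $X=\rM$, using $\kappa_{\text{max}}^{\rG}$ as a common pivot together with transitivity of induction. Applying that equation for $X=\rL$ immediately rewrites the left-hand side of the lemma as
\[
\ind_{U(\fB_{\rL}^{\rG})J_{\text{max}}^{1,\rG}}^{U(\fB_{\rL}^{\rG})U^1(\fA_{\rL}^{\rG})}\kappa_{\text{max}}^{\rG},
\]
reducing the problem to identifying this expression with the right-hand side of the lemma.

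I would then insert the intermediate subgroup $U(\fB_{\rL}^{\rG})J_{\rM}^{\rG,1}$, which sits between $U(\fB_{\rL}^{\rG})J_{\text{max}}^{1,\rG}$ and $U(\fB_{\rL}^{\rG})U^1(\fA_{\rL}^{\rG})$ (the first inclusion from $J_{\text{max}}^{\rG,1}\subset J_{\rM}^{\rG,1}$, via the refinement $\fA_{\text{max}}^{\rG}\subset\fA_{\rM}^{\rG}$; the second via the Iwahori-type decompositions of Equations~\ref{equa9} and~\ref{equa10}), and which is furthermore contained in $J_{\rM}^{\rG}$. Transitivity of induction then reduces the lemma to the intermediate identity
\[
\ind_{U(\fB_{\rL}^{\rG})J_{\text{max}}^{1,\rG}}^{U(\fB_{\rL}^{\rG})J_{\rM}^{\rG,1}}\kappa_{\text{max}}^{\rG}\cong\kappa_{\rM}^{\rG}\vert_{U(\fB_{\rL}^{\rG})J_{\rM}^{\rG,1}}.
\]
This intermediate identity is deduced from Equation~\ref{equationcompat10} for $X=\rM$ by restricting both sides of that equation from $J_{\rM}^{\rG}=U(\fB_{\rM}^{\rG})U^1(\fA_{\rM}^{\rG})$ down to $U(\fB_{\rL}^{\rG})J_{\rM}^{\rG,1}$, invoking Mackey's formula, and exploiting the Iwahori-type decomposition of $J_{\rM}^{\rG}$ with respect to the parabolic $\rP_{\rL}^{\rM}$ to force a single double coset to contribute so that the restricted pieces match cleanly.

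The main obstacle is this final step. The wild extensions $\kappa_{\text{max}}^{\rG}$ and $\kappa_{\rM}^{\rG}$ are irreducible on their natural domains but cease to be so after restriction to $U(\fB_{\rL}^{\rG})J_{\rM}^{\rG,1}$, so one must carefully track how the induced and restricted pieces fit together. The essential inputs are the uniqueness of the Heisenberg representation extending a simple character, the matching of the simple characters $\theta_{\text{max}}^{\rG}$ and $\theta_{\rM}^{\rG}$ on the relevant pro-$p$ subgroups via the common-approximation property of \cite{BuKu99}, and a dimension count closed off by Frobenius reciprocity. Once these coincidences are in place, the intermediate identity follows and the lemma is obtained.
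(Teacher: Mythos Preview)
Your high-level strategy---pivot through $\kappa_{\text{max}}^{\rG}$ by applying Equation~\ref{equationcompat10} once for $X=\rL$ and once for $X=\rM$---is exactly what the paper does. Where you diverge is in the second half: you aim to prove the stronger intermediate identity
\[
\ind_{U(\fB_{\rL}^{\rG})J_{\text{max}}^{1,\rG}}^{U(\fB_{\rL}^{\rG})J_{\rM}^{\rG,1}}\kappa_{\text{max}}^{\rG}\;\cong\;\kappa_{\rM}^{\rG}\big\vert_{U(\fB_{\rL}^{\rG})J_{\rM}^{\rG,1}},
\]
whereas the paper bypasses this and works directly at the top level $U(\fB_{\rL}^{\rG})U^1(\fA_{\rL}^{\rG})$: it extends both sides of Equation~\ref{equationcompat10} (case $X=\rM$) to the \emph{set} $U(\fB_{\rM}^{\rG})U^1(\fA_{\rL}^{\rG})$---explicitly noting that this is not a group, only a $J_{\rM}^{\rG}$--$U(\fB_{\rL}^{\rG})U^1(\fA_{\rL}^{\rG})$ double coset on which the latter group acts by right translation---and then uses two explicit index equalities together with Mackey to identify the right-hand side of the lemma with the pivot expression. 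No Heisenberg-uniqueness or dimension-matching argument is needed.

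There is a concrete error in your sketch: you write $J_{\rM}^{\rG}=U(\fB_{\rM}^{\rG})U^1(\fA_{\rM}^{\rG})$, but in general $J_{\rM}^{\rG}=U(\fB_{\rM}^{\rG})J_{\rM}^{\rG,1}$ is a \emph{proper} subgroup of $U(\fB_{\rM}^{\rG})U^1(\fA_{\rM}^{\rG})$. Consequently, restricting the left side of Equation~\ref{equationcompat10} (case $X=\rM$) down to $U(\fB_{\rL}^{\rG})J_{\rM}^{\rG,1}$ does \emph{not} collapse to a single Mackey term: since $U(\fB_{\rL}^{\rG})J_{\rM}^{\rG,1}\subset J_{\rM}^{\rG}$, the relevant double-coset space is $U(\fB_{\rM}^{\rG})U^1(\fA_{\rM}^{\rG})/J_{\rM}^{\rG}$, which has $[U^1(\fA_{\rM}^{\rG}):J_{\rM}^{\rG,1}]>1$ elements in general. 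Your intermediate identity may well be true, but your proposed mechanism (``force a single double coset to contribute'') fails as stated, and the Heisenberg-uniqueness/dimension-count repair you allude to is left entirely unspecified. The paper's double-coset-plus-index approach is precisely the clean workaround for this obstacle.
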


\begin{proof}
Since $U(\fB_{\rM}^{\rG})U^1(\fA_{\rM}^{\rG})\subset U(\fB_{\rM}^{\rG})U^1(\fA_{\rL}^{\rG})$ we deduce from Equation \ref{equationcompat10} an isomorphism of $k$-vector space:
$$\ind_{J_{\rM}^{\rG}}^{U(\fB_{\rM}^{\rG})U^1(\fA_{\rL}^{\rG})}\kappa_{\rM}^{\rG}\cong\ind_{U(\fB_{\rM}^{\rG})J_{\text{max}}^{\rG,1}}^{U(\fB_{\rM}^{\rG})U^1(\fA_{\rL}^{\rG})}\kappa_{\text{max}}^{\rG}.$$
Notice that $U(\fB_{\rM}^{\rG})U^1(\fA_{\rL}^{\rG})$ is not a group. We regard it as a double coset $J_{\rM}^{\rG}\cdot U(\fB_{\rL}^{\rG})U^1(\fA_{\rL}^{\rG})$ of $J_{\rM}^{\rG}$ and $U(\fB_{\rL}^{\rG})U^1(\fA_{\rL}^{\rG})$. The group $U(\fB_{\rL}^{\rG})U^1(\fA_{\rL}^{\rG})$ acts as right translation on the space of functions defined on this double coset. The above isomorphism induces an equivalence of representations of $U(\fB_{\rL}^{\rG})U^1(\fA_{\rL}^{\rG})$.
Since 
$$[U(\fB_{\rM}^{\rG})U^1(\fA_{\rL}^{\rG}):J_{\rM}^{\rG}]=[U(\fB_{\rL}^{\rG})U^1(\fA_{\rL}^{\rG}):U(\fB_{\rL}^{\rG})J_{\rM}^{\rG,1}],$$
and
$$[U(\fB_{\rM}^{\rG})U^1(\fA_{\rL}^{\rG}):U(\fB_{\rM}^{\rG})J_{\text{max}}^{\rG,1}]=[U(\fB_{\rL}^{\rG})U^1(\fA_{\rL}^{\rG}):U(\fB_{\rL}^{\rG})J_{\text{max}}^{\rG,1}],$$
by Mackey's formula we have 
$$\ind_{U(\fB_{\rL}^{\rG})J_{\rM}^{\rG,1}}^{U(\fB_{\rL}^{\rG})U^1(\fA_{\rL}^{\rG})}\kappa_{\rM}^{\rG}\cong\ind_{U(\fB_{\rL}^{\rG})J_{\text{max}}^{\rG,1}}^{U(\fB_{\rL}^{\rG})U^1(\fA_{\rL}^{\rG})}\kappa_{\text{max}}^{\rG}.$$
The latter is equivalent to $\ind_{J_{\rL}^{\rG}}^{U(\fB_{\rL}^{\rG})U^1(\fA_{\rL}^{\rG})}\kappa_{\rL}^{\rG}$ after Equation \ref{equationcompat10}.
\end{proof}

By Mackey's theory the above isomorphism is equivalent to
\begin{equation}
\ind_{J_{\rP,\rL}^{\rG}}^{U(\fB_{\rL}^{\rG})U^1(\fA_{\rL}^{\rG})}\kappa_{\rL}\cong\ind_{U(\fB_{\rL}^{\rG})J_{\rM}^{\rG,1}}^{U(\fB_{\rL}^{\rG})U^1(\fA_{\rL}^{\rG})}\res_{J_{\rP,\rM}^{\rG}\cap U(\fB_{\rL}^{\rG})J_{\rM}^{\rG,1}}^{J_{\rP,\rM}^{\rG}}\kappa_{\max}^{\rM}.
\end{equation}

\begin{lem}
\label{lemchartriv}
For any simple character $\theta$ in $\mathcal{C}(\fA_{\rL}^{\rG},0,\beta),\mathcal{C}(\fA_{\rM}^{\rG},0,\beta),\mathcal{C}(\fA_{\max}^{\rG},0,\beta)$, the restrictions $\theta\vert_{\rN_{\rL}^{\rG}}$ and $\theta\vert_{\bar{\rN}_{\rL}^{\rG}}$ are trivial.
\end{lem}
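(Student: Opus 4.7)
The plan is to reduce the lemma to a computation with the character $\psi_\beta$ via the recursive construction of simple characters, exploiting that $\beta$ lies in the Levi $\rL$ in all three cases so it is block-diagonal with respect to the decomposition $V=\oplus_{i\in I}V_i$.

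First, I would record that for each of the three simple strata $[\fA_X^Y,n,0,\beta]$ appearing in the statement (with $X\in\{\rL,\rM,\max\}$), the defining lattice chain is subordinate to the decomposition $V=\oplus_{i\in I}V_i$ (by our choice in Section \ref{sectionK'Levi} and Remark \ref{remlatticedecomp}, following \cite[7.1.14]{BuKuI}). This gives the Iwahori-type decomposition
$$H^1(\beta,\fA_X^Y)=(H^1(\beta,\fA_X^Y)\cap\bar{\rN}_{\rL}^{\rG})\cdot(H^1(\beta,\fA_X^Y)\cap\rL)\cdot(H^1(\beta,\fA_X^Y)\cap\rN_{\rL}^{\rG}),$$
so the statement reduces to showing that the restrictions of $\theta$ to the unipotent factors $H^1\cap\rN_{\rL}^{\rG}$ and $H^1\cap\bar{\rN}_{\rL}^{\rG}$ are trivial.

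Next, I would invoke the recursive construction of a simple character (see \cite[\S 3.2]{BuKuI}): there is a defining sequence $[\fA,n,r_i,\beta_i]$ of approximations with $\beta_0=\beta$, and $\theta$ is determined by data of the form $\theta_0\cdot\psi_c$ on certain step-subgroups $H^{k+1}(\beta,\fA)$, where $\theta_0$ is a simple character attached to the next approximation and $\psi_c(1+x)=\psi_F(\mathrm{tr}(c\,x))$ for an element $c$ obtained from differences of successive $\beta_i$'s. Proceeding by induction on the length of this defining sequence, the key computation is the following: for $u=1+x\in H^1\cap\rN_{\rL}^{\rG}$, the element $x$ is strictly upper block-triangular with respect to $V=\oplus_{i}V_i$, and since $\beta$ (hence every $\beta_i$ and every relevant $c$) lies in $\rL$ and is therefore block-diagonal, the product $cx$ is strictly upper block-triangular and has zero trace. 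Hence $\psi_c|_{H^1\cap\rN_{\rL}^{\rG}}$ is trivial, and by the inductive hypothesis applied to $\theta_0$ on the corresponding intersection for the smaller stratum, $\theta_0|_{H^1\cap\rN_{\rL}^{\rG}}$ is trivial as well; symmetrically for $\bar{\rN}_{\rL}^{\rG}$.

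The main obstacle will be bookkeeping: one must check that the recursive construction really does restrict coherently to the intersection with $\rN_{\rL}^{\rG}$, which requires the intersections $H^{k+1}(\beta_i,\fA)\cap\rN_{\rL}^{\rG}$ to be compatible across the defining sequence. This follows from the subordination of the lattice chain (all subgroups in the construction being $\fA$-lattices) together with Equations \ref{equa9} and \ref{equa10}, so that the inductive step does not leave the framework. Once this is verified, the lemma follows uniformly for $X\in\{\rL,\rM,\max\}$, since the argument uses only that $\beta\in\rL$, which holds in all three cases.
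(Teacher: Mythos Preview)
Your approach is essentially the same as the paper's: induction along a defining sequence for $\beta$, using that every element $\gamma_j$ (hence every difference $c=\alpha_j$) lies in $\rL$ and is therefore block-diagonal, so the twist characters $\psi_c$ vanish on $\rN_{\rL}^{\rG}$ and $\bar{\rN}_{\rL}^{\rG}$ by the trace computation you describe. The paper runs the same induction but, instead of citing \cite[\S 3.2]{BuKuI} directly, invokes the tame description from \cite[Propositions~7.3,~7.5]{AM}, which gives the decomposition $H^1(\fA,0,\gamma_j)=U^1(\fB_{\gamma_j})H^{d_j}(\fA,0,\gamma_{j+1})$ and the corresponding factorisation $\theta_j=\theta_{j+1}\cdot\psi_{\alpha_j}$ on the second factor, together with the fact that $\theta_j$ factors through determinant on $U^1(\fB_{\gamma_j})$.

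One point you should make explicit: in the recursive construction of $\theta$ there is, at each step, a piece of $H^1$ on which $\theta$ is given not by $\theta_0\cdot\psi_c$ but by a character factoring through the determinant (or reduced norm) of the centraliser algebra---this is the $U^1(\fB_{\gamma_j})$ factor in the paper's formulation, or the $H^1\cap B_\gamma^\times$ piece in \cite[3.2.3]{BuKuI}. Your proposal jumps straight to the $\psi_c$ computation and does not treat this factor. It is harmless (any unipotent element of $\rN_{\rL}^{\rG}$ lying in $B_\gamma^\times$ has determinant~$1$ there), but it should be said; otherwise the induction does not quite cover all of $H^1\cap\rN_{\rL}^{\rG}$.
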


\begin{proof}
We take $\theta\in \mathcal{C}(\fA,0,\beta_{\rL})$ where $\fA=\fA_{\rL}^{\rG}$ (resp. $\fA_{\rM}^{\rG}$ or $\fA_{\max}^{\rG}$). Suppose $\beta$ is minimal over $F$ (see definition of minimal element in \cite[1.4.14]{BuKuI}). Then $\theta\vert_{\rN_{\rL}^{\rG}\cap H^1(\fA,0,\beta)}=\theta\vert_{\bar{\rN}_{\rL}^{\rG}\cap H^1(\fA,0,\beta)}$ is trivial from the definition (see \cite[Definition 3.2.1]{BuKuI}). Now suppose $\beta$ is not minimal. Then we obtain a series of elements $\gamma_j\in\rL$ where $0\leq j \leq m$ for $m\in\mathbb{N}$ as in \cite[2.4.2]{BuKuI}, such that $\gamma_0=\beta_{\rL}$, and the projection of $\gamma_m$ to each block of $\rL$ is either minimal over $F$ or belonging to $F$. Moreover, $[\fA,0,\gamma_j]$ is a simple stratum for each $j$, from which we obtain $\fB_{\gamma_j}=\fB_{\gamma_j,\rL}^{\rG}$ (resp. $\fB_{\gamma_j,\rM}^{\rG}$ or $\fB_{\gamma_j,\max}^{\rG}$ accordingly). Denote by $\alpha_j=\gamma_j-\gamma_{j+1}$. Notice that we are in the case the $p$ verifies the tameness condition. We summarise two properties from \cite[Proposition 7.3, Proposition 7.5]{AM}: For each $j\leq m-1$, define $d_j=[\frac{-\nu_{\fA}(\alpha_j)}{2}]+1$.
\begin{itemize}
\item $H^1(\fA,0,\gamma_j)=U^1(\fB_{\gamma_j})H^{d_j}(\fA,0,\gamma_{j+1})$, where $H^{d_j}(\fA,0,\gamma_{j+1})$ is a subgroup of $H^{1}(\fA,0,\gamma_{j+1})$ (see \cite[3.1.14]{BuKuI}).
\item For any $\theta_i\in\mathcal{C}(\fA,0,\gamma_i)$, there is a character $\theta_{j+1}\in\mathcal{C}(\fA,0,\gamma_{j+1})$ such that $\theta_i$ factor through determinant on $U^1(\fB_{\gamma_j})$ and is equal to $\theta_{j+1}\circ\psi_j$ on $H^{d_j}(\fA,0,\gamma_{j+1})$, where $\psi_j$ is the character of Pontrjagin dual of $\alpha_j$ (see for example \cite[1.1.6]{BuKuI}).
\end{itemize}
Now we do induction along $j$. We have already shown that any $\theta_m\in\mathcal{C}(\fA,0,\gamma_{m})$ is trivial on $\rN_{\rL}^{\rG}$ and $\bar{\rN}_{\rL}^{\rG}$. Suppose for each $\theta_{j+1}\in\mathcal{C}(\fA,0,\gamma_{j+1})$, it is trivial on $\rN_{\rL}^{\rG}$ and $\bar{\rN}_{\rL}^{\rG}$, then we conclude the same property for $\theta_j\in\mathcal{C}(\fA,0,\gamma_{j})$ by the above two properties and the fact that $\psi_j$ is also trivial on $\rN_{\rL}^{\rG}$ and $\bar{\rN}_{\rL}^{\rG}$.
\end{proof}

\begin{cor}
\label{cor4.10}
We have
\begin{enumerate}
\item $\Hom_{J_{\rP,\rL}^{\rM}\cap U(\fB_{\rL}^{\rM})J_{\max}^{\rM,1}}(\kappa_{\max}^{\rM},\kappa_{\rL,0})\neq\{0\}$, and
\item $\Hom_{J_{\rP,\rL}^{\rG}\cap J_{\rP,\rM}^{\rG}\cap U(\fB_{\rL}^{\rG})J_{\rM}^{\rG,1}}(\kappa_{\max}^{\rM},\kappa_{\rL})\neq \{0\}.$
\end{enumerate}
\end{cor}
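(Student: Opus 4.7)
The strategy for both parts is the same: start from one of the explicit induction identities built into the construction of the wild extensions in Section \ref{sectionK'Levi}, apply Frobenius reciprocity to convert it into a nonzero map between a representation and the restriction of an induced representation, and then isolate the Mackey summand corresponding to the trivial double coset via an Iwahori-type factorisation.

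For part (1), the plan is to combine Equation \ref{equationcompt12} with Equation \ref{equationcompt15} using transitivity of induction, obtaining
\[
\ind_{J_{\rP,\rL}^{\rM}}^{\rU(\fB_{\rL}^{\rM})\rU^1(\fA_{\rL}^{\rM})}\kappa_{\rL,0} \;\cong\; \ind_{\rU(\fB_{\rL}^{\rM})J_{\max}^{\rM,1}}^{\rU(\fB_{\rL}^{\rM})\rU^1(\fA_{\rL}^{\rM})}\kappa_{\max}^{\rM}.
\]
This isomorphism, viewed as a nonzero element of the $\Hom$-space between the two sides, transforms by Frobenius reciprocity into a nonzero morphism
\[
\Hom_{\rU(\fB_{\rL}^{\rM})J_{\max}^{\rM,1}}\!\bigl(\kappa_{\max}^{\rM},\;\res\,\ind_{J_{\rP,\rL}^{\rM}}^{\rU(\fB_{\rL}^{\rM})\rU^1(\fA_{\rL}^{\rM})}\kappa_{\rL,0}\bigr)\neq 0.
\]
I would then apply Mackey's formula to the restriction, decomposing it along the double coset space $\rU(\fB_{\rL}^{\rM})J_{\max}^{\rM,1}\backslash \rU(\fB_{\rL}^{\rM})\rU^1(\fA_{\rL}^{\rM})/J_{\rP,\rL}^{\rM}$, and argue that the Iwahori-type factorisation analogous to Equation \ref{equa9} (but inside $\rM$, available because our lattice-chain construction in Remark \ref{remlatticedecomp} is subordinate to $\rP_{\rL}^{\rM}$) collapses this space to a single trivial double coset. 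The resulting single summand is $\ind_{J_{\rP,\rL}^{\rM}\cap \rU(\fB_{\rL}^{\rM})J_{\max}^{\rM,1}}^{\rU(\fB_{\rL}^{\rM})J_{\max}^{\rM,1}}\kappa_{\rL,0}$, and one more application of Frobenius reciprocity identifies it with the target Hom-space.

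For part (2), the argument is parallel, now starting from the displayed isomorphism immediately following Lemma \ref{lemequapart2}, which already expresses $\ind \kappa_{\rL}$ on $\rG$ as an induction of $\res\kappa_{\max}^{\rM}$ to a prescribed subgroup. The same Frobenius-then-Mackey manoeuvre, this time using Equation \ref{equa9} directly on $\rG$ together with the fact that $J_{\rP,\rM}^{\rG}$ decomposes along $\rP_{\rL}^{\rG}$, reduces the Mackey decomposition to the trivial double coset and yields the nonzero Hom-space on the triple intersection $J_{\rP,\rL}^{\rG}\cap J_{\rP,\rM}^{\rG}\cap \rU(\fB_{\rL}^{\rG})J_{\rM}^{\rG,1}$.

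The main obstacle I expect is the single-double-coset reduction. Specifically, I will need to verify that the Iwahori-type factorisations of Equation \ref{equa9} are available on $\rM$ in the precise form required, and that the intersections of compact open subgroups interact correctly with the parabolic decomposition; this is essentially a matter of checking that the $\mathfrak{o}_{E}$-lattice chains constructed in Section \ref{sectionK'Levi} remain subordinate after restricting to $V_{\rM}$ and removing repetitions. Once this bookkeeping is in place, the rest of the argument is a formal combination of Frobenius reciprocity and Mackey's formula.
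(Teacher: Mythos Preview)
Your broad strategy---start from the induction identities \ref{equationcompt12}, \ref{equationcompt15} and Lemma \ref{lemequapart2}, then apply Frobenius reciprocity and Mackey---is the same one the paper uses. The gap is in the step you flag as the ``main obstacle'': you hope the double coset space
\[
\rU(\fB_{\rL}^{\rM})J_{\max}^{\rM,1}\big\backslash \rU(\fB_{\rL}^{\rM})\rU^1(\fA_{\rL}^{\rM})\big/ J_{\rP,\rL}^{\rM}
\]
collapses to a single class via an Iwahori factorisation, but this is not what happens, and Equation \ref{equa9} (which concerns $\rU(\fB_{\rL}^{\rG})$, not $\rU(\fB_{\rL}^{\rM})\rU^1(\fA_{\rL}^{\rM})$) does not give it. The group $\rU^1(\fA_{\rL}^{\rM})$ is strictly larger than what the product $\rU(\fB_{\rL}^{\rM})J_{\max}^{\rM,1}\cdot J_{\rP,\rL}^{\rM}$ can cover, so there are genuinely several Mackey summands, and the nonzero $\Hom$ coming from the isomorphism could a priori land in any of them.

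The paper resolves this not by collapsing the coset space but by \emph{identifying the correct summand}. It restricts further to the pro-$p$ level and tracks the simple character: using \cite[7.2.4]{BuKuI}, the induction $\ind_{H_{\rL}^{\rM,1}}^{J_{\rP,\rL}^{\rM,1}}\theta_{\rL}^{\rM}$ decomposes as a direct sum of pieces $\eta_{\rL,\phi}$ which agree with $\eta_{\rL}$ on the Levi part but carry a character $\phi$ on the unipotent part. One then shows $\theta_{\max}^{\rM}$ embeds into the induction of some $\eta_{\rL,\phi}$, and the key input is Lemma \ref{lemchartriv} (simple characters are trivial on $\rN_{\rL}^{\rG}$ and $\bar{\rN}_{\rL}^{\rG}$), which forces the relevant $\phi$ to be trivial---so the nonzero $\Hom$ really does sit in the summand corresponding to the trivial coset. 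Part (2) is parallel, with an extra layer for $J_{\rP,\rM}^{\rG}$. Your ``bookkeeping'' characterisation undersells this: the substantive content is precisely Lemma \ref{lemchartriv}, which you have not invoked.
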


\begin{proof}
We start from the first part. By \ref{equationcompt12} we know $\kappa_{\max}^{\rM}$ is the unique irreducible subquotient of $\res_{U(\fB_{\rL}^{\rM})J_{\max}^{\rM,1}}^{U(\fB_{\rL}^{\rM})U^1(\fA_{\rL}^{\rM})}\ind_{J_{\rP,\rL}^{\rM}}^{U(\fB_{\rL}^{\rM})U^1(\fA_{\rL}^{\rM})}\kappa_{\rL,0}$ which contains $\theta_{\max}^{\rM}$. Denote by $J_{\rP,\rL}^{\rM,1}:=H_{\rL}^{\rM,1}(J_{\rL}^{\rM,1}\cap \rP_{\rL}^{\rM})$. Since $J_{\rP,\rL}^{\rM}\cap J_{\max}^{\rM,1}=J_{\rP,\rL}^{\rM,1}\cap J_{\max}^{\rM,1}$, by Mackey's theory, it is sufficient to show that 
$$\theta_{\max}^{\rM}\hookrightarrow\res_{J_{\rP,\rL}^{\rM,1}\cap H_{\max}^{\rM,1}}^{J_{\rP,\rL}^{\rM,1}}\eta_{\rL},$$
where $\eta_{\rL}$ is inflated to $J_{\rP,\rL}^{\rM,1}$. By \cite[7.2.4]{BuKuI}, we know $\ind_{H_{\rL}^{\rM,1}}^{J_{\rP,\rL}^{\rM,1}}\theta_{\rL}^{\rM}$ is semisimple and each direct component is isomorphism to $\eta_{\rL,\phi}$, which is $\eta_{\rL}$ on $J_{\rL}^1=J_{\rP,\rL}^{\rM,1}\cap\rL$, and is a multiple of a character $\phi$ on $J_{\rP,\rL}^{\rM,1}\cap\rN_{\rL}^{\rM}$. Notice that $\theta_{\max}^{\rM}$ is the unique simple character in $\mathcal{C}(\fA_{\max}^{\rM},0,\beta)$ such that 
$$\theta_{\max}^{\rM}\vert_{H_{\max}^{\rM,1}\cap H_{\rL}^{\rM,1}}\cong\theta_{\rL}^{\rM}\vert_{H_{\max}^{\rM,1}\cap H_{\rL}^{\rM,1}}.$$
Hence $\theta_{\max}^{\rM}\hookrightarrow\ind_{H_{\rL}^{\rM,1}\cap H_{\max}^{\rM,1}}^{H_{\max}^{\rM,1}}\theta_{\rL}^{\rM}$. We deduce that $\theta_{\max}^{\rM}\hookrightarrow\ind_{H_{\max}^{\rM,1}\cap J_{\rP,\rL}^{\rM,1}}^{H_{\max}^{\rM,1}}\eta_{\rL,\phi}$ for some $\phi$ as above. We deduce by Lemma \ref{lemchartriv} that $\theta_{\max}^{\rM}\hookrightarrow\ind_{H_{\max}^{\rM,1}\cap J_{\rP,\rL}^{\rM,1}}^{H_{\max}^{\rM,1}}\eta_{\rL}$.

Now we look at the second assertion. We deduce from Lemma \ref{lemequapart2} that $\kappa_{\max}^{\rM}$ is the unique irreducible subquotient of $\res_{J_{\rP,\rM}^{\rG}\cap U(\fB_{\rL}^{\rG})J_{\rM}^{\rG,1}}^{U(\fB_{\rL}^{\rG})U^1(\fA_{\rL}^{\rG})}\ind_{J_{\rP,\rL}^{\rG}}^{U(\fB_{\rL}^{\rG})U^1(\fA_{\rL}^{\rG})}\kappa_{\rL}$ which contains $\theta_{\rM}^{\rG}$ and is $J_{\rP,\rM}^{\rG,1}\cap\rN_{\rM}^{\rG}$-invariant. Hence by inflating $\eta_{\max}^{\rM}$ to $J_{\rP,\rM}^{\rG,1}$, the desired result is equivalent to
$$\Hom_{J_{\rP,\rL}^{\rG,1}\cap J_{\rP,\rM}^{\rG,1}}(\eta_{\max}^{\rM},\eta_{\rL})\neq \{0\}$$
By applying a similar argument as above, we know that
\begin{equation}
\label{equathetaMG}
\theta_{\rM}^{\rG}\hookrightarrow\ind_{J_{\rP,\rL}^{\rG,1}\cap H_{\rM}^{\rG,1}}^{J_{\rP,\rL}^{\rG,1}}\eta_{\rL}.
\end{equation}
Again by \cite[7.2.4]{BuKuI}, we know $\ind_{H_{\rM}^{\rG,1}}^{J_{\rP,\rM}^{\rG,1}}\theta_{\rM}^{\rG}$ is semisimple and each direct component is isomorphism to $\eta_{\rM,\varphi}$, which is $\eta_{\max}^{\rM}$ on $J_{\max}^{\rM,1}=J_{\rP,\rM}^{\rG,1}\cap\rM$, and is a multiple of a character $\varphi$ on $J_{\rP,\rM}^{\rG,1}\cap\rN_{\rM}^{\rG}$. We deduce by \ref{equathetaMG} that $\Hom_{J_{\rP,\rL}^{\rG,1}\cap J_{\rP,\rM}^{\rG,1}}(\eta_{\rM,\varphi},\eta_{\rL})\neq\{0\}$ for some $\varphi$. Since $\eta_{\rL}$ is trivial on $J_{\rP,\rL}^{\rG,1}\cap\rN_{\rL}^{\rG}$, we conclude that
$$\Hom_{J_{\rP,\rL}^{\rG,1}\cap J_{\rP,\rM}^{\rG,1}}(\eta_{\max}^{\rM},\eta_{\rL})\neq \{0\}.$$
\end{proof}

Denote by $J_{\rG,\rL}^{\rM}:=J_{\rP,\rL}^{\rG}\cap\rM$, which is decomposable with respect to $\rP_{\rL}^{\rM}$, and $J_{\rP,\rL}^{\rG}\cap\rL=J_{\rP,\rL}^{\rM}\cap\rL=J_{\rL}$. We denote by $\kappa_{\rG,\rL}^{\rM}$ the restriction $\kappa_{\rP,\rL}^{\rG}\vert_{J_{\rG,\rL}^{\rM}}$, which is an inflation of $\kappa_{\rL}$ to $J_{\rG,\rL}^{\rM}$. To show the compatibility property, we need to understand the relation between $J_{\rP,\rL}^{\rG}\cap\rM$ and $J_{\rP,\rL}^{\rM}$. They are different in general, but they verify the following equation.  

\begin{lem}
\label{lemcompatinclu}
For $J=J_{\rG,\rL}^{\rM}$ or $J_{\rP,\rL}^{\rM}$, we have:
\begin{itemize}
\item  $J\cap \bar{\rN}_{\rL}^{\rM}\subset \mathrm{ker}(\theta_{\max}^{\rM})$;
\item $J\cap U(\fB_{\rL}^{\rM}) J_{\max}^{\rM,1}=(J\cap\bar{\rN}_{\rL}^{\rM})(J\cap\rL)(U(\fB_{\rL}^{\rM})J_{\max}^{\rM,1}\cap\rN_{\rL}^{\rM})$; and
\item $(J_{\rP,\rL}^{\rM}\cap U(\fB_{\rL}^{\rM})J_{\max}^{\rM,1})\subset (J_{\rG,\rL}^{\rM}\cap U(\fB_{\rL}^{\rM})J_{\max}^{\rM,1}).$
\end{itemize}
\end{lem}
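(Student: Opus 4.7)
The plan is to handle the three assertions in order, leveraging the Iwahori-type decompositions with respect to $\rP_{\rL}^{\rM}$ together with the transfer compatibility between $\rG$ and $\rM$ built into the construction of the lattice chains at the start of Section \ref{sectionK'Levi}.

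First I would address the kernel assertion. In both cases the intersection $J \cap \bar{\rN}_{\rL}^{\rM}$ reduces to $H_{\rL}^{\rM,1} \cap \bar{\rN}_{\rL}^{\rM}$. For $J = J_{\rP,\rL}^{\rM}$ this is immediate from the definition. For $J = J_{\rG,\rL}^{\rM} = J_{\rP,\rL}^{\rG} \cap \rM$, the inclusion $\bar{\rN}_{\rL}^{\rM} \subset \bar{\rN}_{\rL}^{\rG}$ and the defining decomposition of $J_{\rP,\rL}^{\rG}$ give $J \cap \bar{\rN}_{\rL}^{\rM} = H_{\rL}^{\rG,1} \cap \bar{\rN}_{\rL}^{\rM}$; intersecting further with $\rM$ and applying the transfer identity $H_{\rL}^{\rG,1} \cap \rM = H_{\rL}^{\rM,1}$ (ensured by the way $\Lambda_{\rL}^{\rM}$ is cut out of $\Lambda_{\rL}^{\rG}$, cf.\ Remark \ref{remlatticedecomp}) lands us back at $H_{\rL}^{\rM,1} \cap \bar{\rN}_{\rL}^{\rM}$. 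I then invoke the $\rM$-analogue of Equation \ref{equa10} to obtain $H_{\rL}^{\rM,1} \cap \bar{\rN}_{\rL}^{\rM} \subset H_{\max}^{\rM,1}$ and conclude by Lemma \ref{lemchartriv} applied to $\theta_{\max}^{\rM} \in \mathcal{C}(\fA_{\max}^{\rM},0,\beta)$.

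Second I would establish the Iwahori-type decomposition. Both $J$ and $U(\fB_{\rL}^{\rM})J_{\max}^{\rM,1}$ are decomposable with respect to $\rP_{\rL}^{\rM}$: for $J$ this is its definition (and for $J_{\rG,\rL}^{\rM}$ it is inherited from the decomposition of $J_{\rP,\rL}^{\rG}$ via $\cap \rM$), and for $U(\fB_{\rL}^{\rM})J_{\max}^{\rM,1}$ it follows from the $\rM$-analogue of Equation \ref{equa9}. Using the standard fact that the intersection of two subgroups both decomposable with respect to a common parabolic is itself decomposable into the product of its pieces, and noting that $J \cap \rL = J_{\rL}$ automatically lies inside $U(\fB_{\rL}^{\rM})J_{\max}^{\rM,1}$, the middle factor simplifies to $J \cap \rL$, the $\bar{\rN}$-factor simplifies to $J \cap \bar{\rN}_{\rL}^{\rM}$ (by the first bullet, an element in the latter automatically sits in $U(\fB_{\rL}^{\rM})J_{\max}^{\rM,1}$), and the $\rN$-factor absorbs into $U(\fB_{\rL}^{\rM})J_{\max}^{\rM,1} \cap \rN_{\rL}^{\rM}$, yielding the claimed identity.

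Third, the inclusion in the last bullet follows at once: applying the second bullet to both $J$'s, the $\bar{\rN}$-factor equals $H_{\rL}^{\rM,1} \cap \bar{\rN}_{\rL}^{\rM}$ in both cases (by the argument of the first bullet), the $\rL$-factor equals $J_{\rL}$ in both cases, and the $\rN$-factor $U(\fB_{\rL}^{\rM})J_{\max}^{\rM,1} \cap \rN_{\rL}^{\rM}$ is independent of $J$; hence the two intersections in fact coincide, which is stronger than the stated inclusion. The main obstacle is technical rather than conceptual: one must verify the $\rM$-analogues of Equations \ref{equa9} and \ref{equa10}, the Levi-transfer $H^1, J^1$ identities, and the Iwahori decomposability of $U(\fB_{\rL}^{\rM})J_{\max}^{\rM,1}$. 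All these rest on the specific choice of lattice chains made at the start of Section \ref{sectionK'Levi} (obtained by removing repetitions inside $V_{\rM}$), which is precisely arranged so that these compatibilities transfer verbatim from $\rG$ to $\rM$; once these are in hand, the rest is bookkeeping on Iwahori factors.
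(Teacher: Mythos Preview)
Your approach has a genuine gap at the very first step, and it propagates through the whole argument. You claim the ``transfer identity'' $H_{\rL}^{\rG,1}\cap\rM = H_{\rL}^{\rM,1}$, but this is false in general: the lattice chain $\Lambda_{\rL}^{\rM}$ is obtained from $\Lambda_{\rL}^{\rG}\cap V_{\rM}$ by \emph{removing repetitions}, so the hereditary orders $\fA_{\rL}^{\rG}$ and $\fA_{\rL}^{\rM}$ have different $\mathfrak{o}_E$-periods, and therefore the filtration subgroups $H^1$ built from them are genuinely different even after intersecting with $\rM$. Concretely, what the paper establishes (Equation \ref{equation17}) is only the chain of \emph{strict-in-general} inclusions
\[
H_{\rL}^{\rM,1}\cap\bar\rN_{\rL}^{\rM}\;\subset\;H_{\rL}^{\rG,1}\cap\bar\rN_{\rL}^{\rM}\;\subset\;H_{\max}^{\rM,1}\cap\bar\rN_{\rL}^{\rM},
\]
so $J_{\rG,\rL}^{\rM}\cap\bar\rN_{\rL}^{\rM}$ need not coincide with $H_{\rL}^{\rM,1}\cap\bar\rN_{\rL}^{\rM}$. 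This immediately invalidates your conclusion in the third bullet that the two intersections are \emph{equal}: only the stated inclusion holds, coming precisely from the $\bar\rN$-factor being larger on the $J_{\rG,\rL}^{\rM}$ side. A second, related gap: even for $J=J_{\rP,\rL}^{\rM}$, invoking the $\rM$-analogue of Equation~\ref{equa10} gives only $H_{\rL}^{\rM,1}\cap\bar\rN\subset U^1(\fA_{\max}^{\rM})$, not the needed containment in $H_{\max}^{\rM,1}$ (the domain of $\theta_{\max}^{\rM}$); Lemma~\ref{lemchartriv} cannot be applied until that is secured.

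The paper's proof supplies exactly the missing ingredient: it runs the approximation sequence $\beta=\gamma_0,\gamma_1,\dots,\gamma_m$ and, using the valuation formula $\nu_{\fA}(\alpha_j)=e(\fB)\,e_j$ from \cite{AM} together with $e(\fB_{\rL}^{\rG})\ge e(\fB_{\rL}^{\rM})\ge e(\fB_{\max}^{\rM})=1$, compares the cut-off levels $d_{j,\rG},d_{j,\rL},d_{j,\rM}$ directly. This yields the three explicit relations \eqref{equation16}--\eqref{equation18} on the $\rN$-, $\bar\rN$- and $\rL$-pieces, from which all three bullets follow by bookkeeping. In other words, the heart of the lemma is a quantitative filtration comparison across the three orders, not an abstract Iwahori-decomposition argument; your sketch would go through only if the periods all agreed, which they do not.
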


\begin{proof}
We deduce from the proof of Corollary \ref{cor4.10}, that
$$J_{\max}^{\rM}\cap\bar{\rN}_{\rL}^{\rM}\subset \mathrm{ker}(\theta_{\max}^{\rM}).$$


We occupy the notations in the proof of Lemma \ref{lemchartriv}, and define 
\begin{itemize}
\item $d_{j,\rG}=[\frac{-\nu_{\fA_{\rL}^{\rG}}(\alpha_j)+1}{2}],d_{j,\rG}^{+}=[\frac{-\nu_{\fA_{\rL}^{\rG}}(\alpha_j)}{2}]+1$;
\item $d_{j,\rM}=[\frac{-\nu_{\fA_{\max}^{\rM}}(\alpha_j)+1}{2}],d_{j,\rM}^{+}=[\frac{-\nu_{\fA_{\max}^{\rM}}(\alpha_j)}{2}]+1$;
\item $d_{j,\rL}=[\frac{-\nu_{\fA_{\rL}^{\rM}}(\alpha_j)+1}{2}],d_{j,\rL}^{+}=[\frac{-\nu_{\fA_{\rL}^{\rM}}(\alpha_j)}{2}]+1$.
\end{itemize}
Define $e_j=e(F[\beta]\vert F[\alpha_j])\nu_{F[\alpha_j](\alpha_j)}$, where the former is the ramification index of a field extension. This integer depends only on $\beta$ and $\alpha_j$. Denote by $e(\fA)$ the period of a hereditary order as defined in \cite[\S 1.1]{BuKuI}. We deduce from \cite[Lemma 2.1]{AM} that
\begin{itemize}
\item $\nu_{\fA_{\rL}^{\rG}}(\alpha_j)=e(\fB_{\rL}^{\rG})e_j$;
\item $\nu_{\fA_{\max}^{\rM}}(\alpha_j)=e(\fB_{\max}^{\rM})e_j=e_j$;
\item $\nu_{\fA_{\rL}^{\rM}}(\alpha_j)=e(\fB_{\rL}^{\rM})e_j$.
\end{itemize}
We compute directly, and obtain the following inclusions:
\begin{itemize}
\item $U^{d_{j,\rM}}(\fB_{\gamma_{j+1},\max}^{\rM})\cap\rN_{\rL}^{\rM}\subset U^{d_{j,\rG}}(\fB_{\gamma_{j+1},\rL}^{\rG})\cap\rN_{\rL}^{\rM}\subset U^{d_{j,\rL}}(\fB_{\gamma_{j+1},\rL}^{\rM})\cap\rN_{\rL}^{\rM}.$
\item $U^{d_{j,\rL}^+}(\fB_{\gamma_{j+1},\rL}^{\rM})\cap\bar{\rN}_{\rL}^{\rM}\subset U^{d_{j,\rG}^+}(\fB_{\gamma_{j+1},\rL}^{\rG})\cap\bar{\rN}_{\rL}^{\rM}\subset U^{d_{j,\rM}^+}(\fB_{\gamma_{j+1},\max}^{\rM})\cap\bar{\rN}_{\rL}^{\rM}.$
\item $U^{d_{j,\rM}}(\fB_{\gamma_{j+1},\max}^{\rM})\cap\rL= U^{d_{j,\rG}}(\fB_{\gamma_{j+1},\rL}^{\rG})\cap\rL= U^{d_{j,\rL}}(\fB_{\gamma_{j+1},\rL}^{\rM})\cap\rL.$
\end{itemize}
From \cite[Proposition 7.3, Proposition 7.5]{AM}, we deduce that
\begin{equation}
\label{equation16}
U(\fB_{\rL}^{\rM})J_{\max}^{\rM,1}\cap\rN_{\rL}^{\rM}\subset J_{\rG,\rL}^{\rM}\cap\rN_{\rL}^{\rM}\subset J_{\rP,\rL}^{\rM}\cap\rN_{\rL}^{\rM};
\end{equation}
\begin{equation}
\label{equation17}
J_{\rP,\rL}^{\rM}\cap\bar{\rN}_{\rL}^{\rM}\subset J_{\rG,\rL}^{\rM}\cap\bar{\rN}_{\rL}^{\rM}\subset H_{\max}^{\rM,1}\cap\bar{\rN}_{\rL}^{\rM};
\end{equation}
and
\begin{equation}
\label{equation18}
U(\fB_{\rL}^{\rM})J_{\max}^{\rM,1}\cap\rL= J_{\rG,\rL}^{\rM}\cap\rL=J_{\rP,\rL}^{\rM}\cap\rL.
\end{equation}
We deduce Part $(1)$ from the proof of Corollary \ref{cor4.10} and \ref{equation17}. Moreover, it is implied that the intersections $J_{\rG,\rL}^{\rM}\cap U(\fB_{\rL}^{\rM})J_{\max}^{\rM}$ and $J_{\rP,\rL}^{\rM}\cap U(\fB_{\rL}^{\rM})J_{\max}^{\rM}$ are decomposable with respect to $\rP_{\rL}^{\rM}$. Then Part $(2),(3)$ follows.

\end{proof}

Now we show the compatibility property:

\begin{prop}[Compatibility]
\label{propcomparab}
We have:
$$\kappa_{\rL}\cong\kappa_{\rL,0}.$$
\end{prop}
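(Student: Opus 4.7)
The goal is to show that the two wild extensions $\kappa_{\rL}$ and $\kappa_{\rL,0}$ of $\theta_{\rL}$ coincide on $J_{\rL}$. Since both restrict to the Heisenberg representation $\eta_{\rL}$ on $J_{\rL}^{1}$, they differ by twisting by a $k$-character $\chi$ of $\mathbb{L} \cong J_{\rL}/J_{\rL}^{1}$, and it suffices to show $\chi$ is trivial. The natural way to pin $\chi$ down is via uniqueness: by Equation~\ref{equationcompt14}, $\kappa_{\rL}$ is the unique wild extension of $\theta_{\rL}$ whose trivial-on-unipotents extension $\tilde{\kappa}_{\rL}$ to $J_{\rP,\rL}^{\rG}$ satisfies $\ind_{J_{\rP,\rL}^{\rG}}^{J_{\rL}^{\rG}} \tilde{\kappa}_{\rL} \cong \kappa_{\rL}^{\rG}$. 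The plan is therefore to check that $\kappa_{\rL,0}$ also satisfies this equation.

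I will trace the induction $\ind_{J_{\rP,\rL}^{\rG}}^{J_{\rL}^{\rG}} \tilde{\kappa}_{\rL,0}$ via $\rM$. Within $\rM$, Equation~\ref{equationcompt15} gives $\ind_{J_{\rP,\rL}^{\rM}}^{J_{\rL}^{\rM}} \kappa_{\rL,0} \cong \kappa_{\rL,0}^{\rM}$, and Equation~\ref{equationcompt12} then lifts this, on $U(\fB_{\rL}^{\rM})U^{1}(\fA_{\rL}^{\rM})$, to $\ind_{U(\fB_{\rL}^{\rM})J_{\max}^{\rM,1}}^{U(\fB_{\rL}^{\rM})U^{1}(\fA_{\rL}^{\rM})} \kappa_{\max}^{\rM}$. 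Passing to $\rG$, Equation~\ref{equationcompt11} recognises further induction of $\kappa_{\max}^{\rM}$ as arising from $\kappa_{\rM}^{\rG}$, and Lemma~\ref{lemequapart2} identifies the corresponding induced representation on $U(\fB_{\rL}^{\rG})U^{1}(\fA_{\rL}^{\rG})$ with $\ind_{J_{\rL}^{\rG}}^{U(\fB_{\rL}^{\rG})U^{1}(\fA_{\rL}^{\rG})} \kappa_{\rL}^{\rG}$, which by Equation~\ref{equationcompt14} equals $\ind_{J_{\rP,\rL}^{\rG}}^{U(\fB_{\rL}^{\rG})U^{1}(\fA_{\rL}^{\rG})} \kappa_{\rL}$. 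Restricting this entire chain to the $J_{\rL}^{\rG}$-isotypic component containing $\theta_{\rL}^{\rG}$ then yields $\ind_{J_{\rP,\rL}^{\rG}}^{J_{\rL}^{\rG}} \tilde{\kappa}_{\rL,0} \cong \kappa_{\rL}^{\rG}$, which is the defining relation of $\kappa_{\rL}$.

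The main obstacle is that the group natural to $\rM$, namely $J_{\rP,\rL}^{\rM}$, is not equal to $J_{\rG,\rL}^{\rM} = J_{\rP,\rL}^{\rG} \cap \rM$, so the inductions carried out ``inside $\rM$'' and the restrictions coming ``from $\rG$'' do not match up automatically. Lemma~\ref{lemcompatinclu} has been set up precisely to bridge this gap: part~(2) gives the common decomposition along $\rP_{\rL}^{\rM}$ of their respective intersections with $U(\fB_{\rL}^{\rM})J_{\max}^{\rM,1}$, and part~(3) provides the key inclusion $J_{\rP,\rL}^{\rM} \cap U(\fB_{\rL}^{\rM})J_{\max}^{\rM,1} \subset J_{\rG,\rL}^{\rM} \cap U(\fB_{\rL}^{\rM})J_{\max}^{\rM,1}$, while Corollary~\ref{cor4.10} supplies the non-vanishing Hom spaces of $\kappa_{\max}^{\rM}$ into both $\kappa_{\rL}$ and $\kappa_{\rL,0}$ on the compatible subgroups. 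Together these ensure that Mackey's formula applies cleanly at the junction where we switch from $\rM$-level to $\rG$-level induction, and the chain above closes up; consequently the twist $\chi$ is trivial, giving $\kappa_{\rL} \cong \kappa_{\rL,0}$.
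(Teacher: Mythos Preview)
Your proposal uses essentially the same ingredients as the paper---the Hom-space identifications of Corollary~\ref{cor4.10} together with the subgroup comparisons of Lemma~\ref{lemcompatinclu}, followed by Frobenius--Mackey and a uniqueness principle for wild extensions. The paper runs the argument in the reverse direction from yours: rather than showing that $\kappa_{\rL,0}$ satisfies the defining relation~\eqref{equationcompt14} of $\kappa_{\rL}$, it shows that $\kappa_{\rL}$ satisfies the $\rM$-level characterization of $\kappa_{\rL,0}$ as the unique subquotient of $\res_{J_{\rP,\rL}^{\rM}}\ind_{U(\fB_{\rL}^{\rM})J_{\max}^{\rM,1}}^{U(\fB_{\rL}^{\rM})U^1(\fA_{\rL}^{\rM})}\kappa_{\max}^{\rM}$ containing $\theta_{\rL}^{\rM}$ and trivial on $J_{\rL}^{\rM,1}\cap\rN_{\rL}^{\rM}$. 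This direction is slightly more economical because the uniqueness is already sitting at the $\rM$-level and is fed directly by Corollary~\ref{cor4.10}(2).

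One point worth sharpening in your write-up: the ``chain'' of your second paragraph does not literally concatenate. Steps involving Equations~\eqref{equationcompt15} and~\eqref{equationcompt12} produce a representation on $U(\fB_{\rL}^{\rM})U^1(\fA_{\rL}^{\rM})$ inside $\rM$, whereas the steps involving Equation~\eqref{equationcompt11} and Lemma~\ref{lemequapart2} live on $U(\fB_{\rL}^{\rG})U^1(\fA_{\rL}^{\rG})$ inside $\rG$; there is no transitivity-of-induction statement that stitches these together directly. The genuine content is entirely in your third paragraph---the nonvanishing Hom on the common subgroup, then Frobenius--Mackey---and that is precisely how the paper argues. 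So your outline is correct provided one reads the ``chain'' as motivation and recognises that the actual mechanism is the Hom-space step, not a composite of inductions.
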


\begin{proof}


By \cite[5.1.16]{BuKuI} we know $\kappa_{\rL,0}^{\rM}$ is the unique subquotient of $\res_{J_{\rL}^{\rM}}^{U(\fB_{\rL}^{\rM})U^1(\fA_{\rL}^{\rM})}\ind_{U(\fB_{\rL}^{\rM})J_{\max}^{\rM,1}}^{U(\fB_{\rL}^{\rM})U^1(\fA_{\rL}^{\rM})}\kappa_{\max}^{\rM}$ which contains $\eta_{\rL}^{\rM}$ (the Heisenberg representation of $\theta_{\rL}^{\rM}$) after restricting to $J_{\rL}^{\rM,1}$. Hence $\kappa_{\rL,0}^{\rM}$ is the unique irreducible subquotient of which the restriction on $H_{\rL}^{\rM,1}$ contains $\theta_{\rL}^{\rM}$. By \cite[7.2.4,7.2.15]{BuKuI}, $\kappa_{\rL,0}$ is the unique subquotient of $\res_{J_{\rP,\rL}^{\rM}}^{U(\fB_{\rL}^{\rM})U^1(\fA_{\rL}^{\rM})}\ind_{U(\fB_{\rL}^{\rM})J_{\max}^{\rM,1}}^{U(\fB_{\rL}^{\rM})U^1(\fA_{\rL}^{\rM})}\kappa_{\max}^{\rM}$ which contains $\theta_{\rL}^{\rM}$ and is $J_{\rL}^{\rM,1}\cap\rN_{\rL}^{\rM}$-invariant. By Part $(1),(2)$ of Lemma \ref{lemcompatinclu}, we have 
$$\Hom_{J_{\rP,\rL}^{\rM}\cap U(\fB_{\rL}^{\rM})J_{\max}^{\rM,1}}(\kappa_{\max}^{\rM},\kappa_{\rL,0})=\Hom_{J_{\rP,\rL}^{\rM}\cap U(\fB_{\rL}^{\rM})J_{\max}^{\rM,1}\cap\rP_{\rL}^{\rM}}(\kappa_{\max}^{\rM},\kappa_{\rL,0})\neq 0.$$
By the second part of Corollary \ref{cor4.10} and \ref{equation18}, we have
$$\Hom_{J_{\rG,\rL}^{\rM}\cap U(\fB_{\rL}^{\rM})J_{\max}^{\rM,1}\cap\rP_{\rL}^{\rM}}(\kappa_{\max}^{\rM},\kappa_{\rL})=\Hom_{J_{\rP,\rL}^{\rM}\cap U(\fB_{\rL}^{\rM})J_{\max}^{\rM,1}\cap\rP_{\rL}^{\rM}}(\kappa_{\max}^{\rM},\kappa_{\rL})\neq\{0\}.$$
Again, by Lemma \ref{lemcompatinclu} and \ref{equation18} we have
$$\Hom_{J_{\rP,\rL}^{\rM}\cap U(\fB_{\rL}^{\rM})J_{\max}^{\rM,1}\cap\rP_{\rL}^{\rM}}(\kappa_{\max}^{\rM},\kappa_{\rL})=\Hom_{J_{\rP,\rL}^{\rM}\cap U(\fB_{\rL}^{\rM})J_{\max}^{\rM,1}}(\kappa_{\max}^{\rM},\kappa_{\rL})\neq\{0\}.$$
By Frobenius reciprocity, it implies that $\kappa_{\rL}$ is an irreducible quotient of $\ind_{J_{\rP,\rL}^{\rM}\cap U(\fB_{\rL}^{\rM})J_{\max}^{\rM,1}}^{J_{\rP,\rL}^{\rM}}\res_{J_{\rP,\rL}^{\rM}\cap U(\fB_{\rL}^{\rM})J_{\max}^{\rM,1}}^{U(\fB_{\rL}^{\rM})J_{\max}^{\rM,1}}\kappa_{\max}^{\rM}$. By Mackey's theory, the latter is a direct component of $\res_{J_{\rP,\rL}^{\rM}}^{U(\fB_{\rL}^{\rM})U^1(\fA_{\rL}^{\rM})}\ind_{U(\fB_{\rL}^{\rM})J_{\max}^{\rM,1}}^{U(\fB_{\rL}^{\rM})U^1(\fA_{\rL}^{\rM})}\kappa_{\max}^{\rM}$. By the unicity property explained above, we have 
$$\kappa_{\rL}\cong\kappa_{\rL,0}.$$

\end{proof}

The triple $((J_{\max}^{\rG},\kappa_{\max}^{\rG}),(J_{\max}^{\rM},\kappa_{\max}^{\rM}),(J_{\rL},\kappa_{\rL}))$ verifies Equation \ref{equationcompat10},\ref{equationcompt11}, \ref{equationcompt12}, \ref{equationcompt14} \ref{equationcompt15}. We call any triple of wild pairs \textbf{a compatible system}. Let $g\in\rL$, we consider a wild pair $(g(J_{\rL}^{\rG}),g(\kappa_{\rL}^{\rG})$, which is defined from a simple stratum $(g(\fA_{\rL}^{\rG}),0,g(\beta))$ with lattice chain $g(\Lambda_{\rL}^{\rG})$. Here $g(\beta)$ means we apply $g$-conjugation to the image of $\beta$ of the simple stratum $(\fA_{\rL}^{\rG},0,\beta)$. The decomposition of $V$ subordinate to $g(\Lambda_{\rL}^{\rG})$ in Remark \ref{remlatticedecomp} is also $V\cong\oplus_{i\in I}V_i$. Hence $g(J_{\rL}^{\rG})$ is decomposable with respect to $\rP_{\rL}^{\rG}$ as well. Moreover, we replace $((J_{\max}^{\rG},\kappa_{\max}^{\rG}),(J_{\max}^{\rM},\kappa_{\max}^{\rM}),(J_{\rL},\kappa_{\rL}))$ by their $g$-conjugation, then we obtain the same results in this section by repeating the same procedure. In particular we have:
\begin{itemize}
\item $(g(J_{X}^{\rG}),g(\kappa_{X}^{\rG}))$ and $(g(J_{\max}^{\rG}),g(\kappa_{\max}^{\rG}))$ satisfy \ref{equationcompat10} for $X=\rL,\rM$.
\item $(g(J_{\max}^{\rM}),g(\kappa_{\max}^{\rM}))$ satisfies \ref{equationcompt11} (resp. \ref{equationcompt12}) with $(g(J_{\rM}^{\rG}),g(\kappa_{\rM}^{\rG}))$ (resp. $(g(J_{\rL}^{\rM}),g(\kappa_{\rL}^{\rM}))$).
\item $(g(J_{\rL}),g(\kappa_{\rL}))$ satisfies \ref{equationcompt14} with $(g(J_{\rL}^{\rG}),g(\kappa_{\rL}^{\rG}))$, and satisfies \ref{equationcompt15} with $(g(J_{\rL}^{\rM}),g(\kappa_{\rL}^{\rM}))$.
\end{itemize}
We conclude that:

\begin{rem}
\label{remkappaconj}
Let $g\in\rL$, the triple $(g(J_{\max}^{\rG},\kappa_{\max}^{\rG}),g(J_{\max}^{\rM},\kappa_{\max}^{\rM}),g(J_{\rL},\kappa_{\rL}))$ is also a compatible system
\end{rem}

\subsubsection{Parabolic induction}
When $(J_{\rL},\lambda_{\rL})$ is of depth zero, there is a well-known property of compatibility of parabolic and parahoric induction. For general cases, to study this property there is a family of functors $\mathbf{K}$ to map a representation of a Levi subgroup of $\rG$ to a representation of a finite reductive group, via taking invariant with respect to wild pairs. In this section, we introduce $\mathbf{K}$ functors of representations of Levi subgroups of $\rG'$.

\begin{defn}
\label{defK'functor}
 Define
 \begin{itemize}
\item We define $\mathbf{K}_{\rL}'$ with respect to $\kappa_{\rL}'$.
$$\mathbf{K}_{\rL}':\mathrm{Rep}_k(J_{\rL}')\rightarrow\mathrm{Rep}_k(\mathbb{L}'),$$
which maps $\pi\in\mathrm{Rep}_k(J_{\rL}')$ to $\Hom_{J_{\rL}^{1'}}(\kappa_{\rL}',\pi)$, such that for $x\in J_{\rL}', f\in\Hom_{J_{\rL}^{1'}}(\kappa_{\rL}',\pi)$, $x\cdot f=\pi(x)\circ f\circ \kappa_{\rL}'(x)^{-1}$. It is a representation of $J_{\rL}'$ and is $J_{\rL}^{1'}$-invariant, hence can be regarded as an $\mathbb{L}'$ representation.
\item Let $(\fA_{\text{max}},0,\beta)$ and $\kappa_{\text{max}},\kappa_{\max}'$ be as in Section \ref{sectionG'cover}. We define $\mathbf{K}_{\text{max}}'$ with respect to $\kappa_{\text{max}}'$
$$\mathbf{K}_{\text{max}}' : \mathrm{Rep}_k(J_{\text{max}}')\rightarrow\mathrm{Rep}_k(\mathbb{L}_{\text{max}}'),$$ 
such that for $x\in J_{\text{max}}', f\in\Hom_{J_{\text{max}}^{1'}}(\kappa_{\text{max}}',\pi)$, $x\cdot f=\pi(x)\circ f\circ \kappa_{\text{max}}'(x)^{-1}$.
\item Define $\mathbf{K}_{max,\alpha}'$ with respect to $\kappa_{max,\alpha}'=\kappa_{\max}'\vert_{J_{\max,\alpha}'}$.
$$\mathbf{K}_{max,\alpha}': \mathrm{Rep}_k(J_{\text{max},\alpha}')\rightarrow\mathrm{Rep}_k(\mathbb{L}'),$$
such that for $x\in J_{\text{max},\alpha}', f\in\Hom_{J_{\text{max},\alpha}^{1'}}(\kappa_{\text{max},\alpha}',\pi)$, $x\cdot f=\pi(x)\circ f\circ \kappa_{\text{max},\alpha}'(x)^{-1}$.
\end{itemize}
\end{defn}

\begin{rem}
\label{exactK}
The functors $\mathbf{K}_{\rL}',\mathbf{K}_{\text{max}}'$ and $\mathbf{K}_{max,\alpha}'$ are exact, since $J_{\rL}^{1'},J_{\text{max}}^{1'}$ and $J_{\max,\alpha}^{1'}$ are pro-$p$ and open. For a representation of $\rL'$, we apply $\mathbf{K}_{\rL}'$ to it by restricting to $J_{\rL}'$ first.
\end{rem}


\begin{lem}
\label{lem2.10}
Recall that $\eta_{\rL}$ is the Heisenberg representation of $\theta_{\rL}$. When $p$ does not divide $\vert W_{\rL}\vert$, for any non-trivial $k$-quasicharacter $\chi$ of $\det(J_{\rL}^1)$, the tensor product $\eta_{\rL}\otimes\chi\circ\det$ is never intertwined with $\eta_{\rL}$.
\end{lem}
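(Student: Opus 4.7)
The strategy is to reduce the intertwining question at the Heisenberg level to one about the underlying simple character, then to invoke the Bushnell--Kutzko intertwining theorem for simple characters together with the behaviour of simple characters under tame determinant twists.

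First I would reduce to the case $\rL \cong \mathrm{GL}_n(F)$. The decomposition $\rL \cong \prod_{i\in I} \mathrm{GL}_{n_i}(F)$ induces factorwise decompositions $\eta_{\rL} \cong \bigotimes_i \eta_i$ and $\chi \circ \det \cong \bigotimes_i (\chi\vert_{\det(J_i^1)} \circ \det_i)$, and intertwining in $\rL$ reduces to intertwining in each $\mathrm{GL}_{n_i}(F)$. Since $\chi$ is nontrivial on $\det(J_{\rL}^1) = \prod_i \det(J_i^1)$, its restriction to $\det(J_{i_0}^1)$ is nontrivial for some $i_0\in I$, and it suffices to treat this single factor.

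Next, assume for contradiction that some $g \in \rL$ intertwines $\eta_{\rL}$ with $\eta_{\rL} \otimes \chi \circ \det$. Using that $\eta_{\rL}\vert_{H_{\rL}^1}$ is $\theta_{\rL}$-isotypic (by the uniqueness of the Heisenberg representation attached to a simple character), this intertwining descends at the $H_{\rL}^1$-level to an intertwining by $g$ of the characters $\theta_{\rL}$ and $\theta_{\rL}' := \theta_{\rL} \cdot (\chi \circ \det)\vert_{H_{\rL}^1}$ on $H_{\rL}^1 \cap gH_{\rL}^1 g^{-1}$. A preliminary step is to verify that under the tameness hypothesis $(\chi\circ\det)\vert_{H_{\rL}^1}$ is itself nontrivial: this uses the tame norm relation $\det U^1(\fB_{\rL}) = N_{E_{\rL}/F}(1+\mathfrak{p}_{E_{\rL}}) = 1+\mathfrak{p}_F$ combined with the explicit description of $H_{\rL}^1$, giving $\det(H_{\rL}^1) = \det(J_{\rL}^1)$, whence $\theta_{\rL}' \neq \theta_{\rL}$.

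The tame twist $\theta_{\rL}'$ is itself a simple character, attached to a modified simple stratum $[\fA_{\rL}, 0, \beta']$ whose endo-class differs from that of $\theta_{\rL}$; this shift is established via the Pontryagin-duality interpretation of determinant twists and the endo-class parametrization developed in \cite{BH} and \cite{BuKuI}. The Bushnell--Kutzko intertwining theorem for simple characters with distinct endo-classes then precludes the existence of such a $g$, yielding the desired contradiction. The main obstacle is precisely this last step, i.e.\ identifying the nontrivial determinant twist with a genuine endo-class shift on the simple character side; once this is in hand, the classical intertwining theorem closes the argument.
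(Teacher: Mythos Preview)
Your reduction to the simple-character level (steps 1--4) is sound and parallels the paper's opening move. The gap is step 5, where you assert that $\theta_{\rL}'=\theta_{\rL}\cdot(\chi\circ\det)\vert_{H_{\rL}^1}$ is a simple character whose endo-class differs from that of $\theta_{\rL}$. Even granting that $\theta_{\rL}'$ is simple for some stratum $[\fA_{\rL},0,\beta']$ on the same $H_{\rL}^1$, the endo-class claim is precisely what is at stake: for two simple characters defined on a common $H^1$, having the same endo-class is equivalent (via \cite[3.5.11]{BuKuI} together with the definition of endo-equivalence) to intertwining in $\rL$. So ``distinct endo-classes'' is logically the same as ``$\theta_{\rL}$ and $\theta_{\rL}'$ do not intertwine'', which is your target conclusion. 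The appeal to Pontryagin duality and \cite{BH} does not break this circularity; indeed the claim is \emph{false} without tameness (when $p\mid n_i$ there can exist nontrivial $\chi$ with $\theta_{\rL}\chi$ conjugate to $\theta_{\rL}$), so any correct argument must invoke $p\nmid n_i$ at exactly this point, and you have not indicated how.

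The paper proceeds differently after reaching the simple-character level. By \cite[3.5.11]{BuKuI}, intertwining of $\theta_{\rL}$ with $\theta_{\rL}\chi$ yields an element $x\in\rU(\fA_{\rL})$ with $x(\theta_{\rL})=\theta_{\rL}\chi$, hence also $x(\eta_{\rL})\cong\eta_{\rL}\otimes\chi\circ\det$. Since conjugation by $x$ fixes the centre $Z(\rL)$ pointwise, restricting to $Z(\rL)\cap J_{\rL}^1$ forces $\chi\circ\det$ to be trivial there; as $\det(Z_i\cap J_i^1)$ contains the $n_i$-th powers in $1+\mathfrak{p}_F$, one gets $\chi^{n_i}=\mathds{1}$. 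But $\chi$, being a smooth character of the pro-$p$ group $\det(J_{\rL}^1)$, has $p$-power order, and now the tameness hypothesis $p\nmid n_i$ forces $\chi=\mathds{1}$. This centre-plus-order argument is where the hypothesis actually enters, and it replaces your unproven endo-class shift.
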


\begin{proof}
Suppose $\eta_{\rL}\otimes\chi\circ\det$ is intertwined with $\eta_{\rL}$, then $\theta_{\rL}\otimes\chi\circ\det$ is intertwined with $\theta_{\rL}$. By 3.5.11 of \cite{BuKuI}, there exists $x\in\rU(\fA_{\rL})$ such that $x(\theta_{\rL})\cong\theta_{\rL}\otimes\chi\circ\det$, which implies that the set of such $\chi$ forms a finite subgroup of $(\det(J_{\rL}^1))^\vee$, hence the order of $\chi$ is a power of $p$. Meanwhile, for such a $\chi$ we also have $\eta_{\rL}\otimes\chi\circ\det\cong x(\eta_{\rL})$, which means $\chi\circ\det\vert_{Z(\rL)\cap J_{\rL}^1}$ is trivial, where $Z(\rL)$ is the centre of $\rL$. Write $\rL\cong\prod_{i\in I}\mathrm{GL}_{n_i}(F)$ and $J_{\rL}\cong\prod_{i\in I}J_i$. Let $Z_i$ be the centre of $\mathrm{GL}_{n_i}(F)$. Then by the fact that $\rU(\mathfrak{o}_F)^{n_i}\subset\det(Z_i\cap J_i^1)$, we have $\chi^{n_i}=\mathds{1}$. Since $p$ does not divide $n_i$ for each $i$ by the tameness condition, we have $\chi=\mathds{1}$.
\end{proof}

\begin{prop}
\label{prop2.11}
An irreducible subquotient of $\mathbf{K}_{\rL}'(\ind_{J_{\rL}'}^{\rL'}\lambda_{\rL}')$ is isomorphic to $\sigma_t'$ for a $t\in T$ (see Remark \ref{defnofT}).
\end{prop}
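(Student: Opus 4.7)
The plan is to combine a Clifford-theory reduction on the type side with a Mackey-style decomposition of the restriction to $J_{\rL}'$, and then invoke Proposition \ref{proptamecond} to identify the surviving moderate parts.

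First, I would observe that a simple $J_{\rL}'$-representation $\tau$ has $\mathbf{K}_{\rL}'(\tau)\neq 0$ if and only if $\tau|_{J_{\rL}^{1'}}$ contains $\eta_{\rL}':=\kappa_{\rL}'|_{J_{\rL}^{1'}}$, in which case Clifford theory (applicable since $J_{\rL}^{1'}\triangleleft J_{\rL}'$ is pro-$p$ and $\eta_{\rL}'$ is irreducible under the tameness hypothesis) yields a unique decomposition $\tau\cong\kappa_{\rL}'\otimes\tau_0$ with $\tau_0$ an irreducible $\mathbb{L}'$-representation, and then $\mathbf{K}_{\rL}'(\tau)\cong\tau_0$. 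Combined with the exactness of $\mathbf{K}_{\rL}'$ (Remark \ref{exactK}), the composition factors of $\mathbf{K}_{\rL}'(\ind_{J_{\rL}'}^{\rL'}\lambda_{\rL}')$ are precisely the $\tau_0$'s arising from composition factors $\kappa_{\rL}'\otimes\tau_0$ of $\res_{J_{\rL}'}^{\rL'}\ind_{J_{\rL}'}^{\rL'}\lambda_{\rL}'$. So it is enough to show every such $\tau_0$ is isomorphic to $\sigma_t'$ for some $t\in T$.

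Next, I would apply Mackey's formula to obtain
$$\res_{J_{\rL}'}^{\rL'}\ind_{J_{\rL}'}^{\rL'}\lambda_{\rL}'\;\cong\;\bigoplus_{g\in J_{\rL}'\backslash\rL'/J_{\rL}'}\ind_{J_{\rL}'\cap gJ_{\rL}'g^{-1}}^{J_{\rL}'}g(\lambda_{\rL}').$$
For a simple subquotient $\tau=\kappa_{\rL}'\otimes\tau_0$ of the summand indexed by $g$, Frobenius reciprocity forces $\tau$ to be weakly intertwined with $g(\lambda_{\rL}')$, hence in particular $\eta_{\rL}'$ must be weakly intertwined with $g(\eta_{\rL}')$ in $\rL'$. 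By the classical intertwining theorem for the Heisenberg representation (\cite[3.5.11]{BuKuI} applied to $\theta_{\rL}$) combined with Lemma \ref{lem2.10}, the set of such $g$ reduces modulo $J_{\rL}'$ to representatives in $(E_{\rL}^\times\rU(\mathfrak{B}_{\rL}))\cap\rL'$; the tameness hypothesis is used here precisely to rule out, via Lemma \ref{lem2.10}, twists $\eta_{\rL}\otimes\chi\circ\det$ with $\chi\neq\mathds{1}$.

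Finally, for such $g$, Proposition \ref{proptamecond} asserts that the direct components of $\lambda_{\rL}|_{J_{\rL}'}$ weakly intertwined with $\lambda_{\rL}'$ are exactly the $N_{\lambda_{\rL}'}$-conjugates $\lambda_t'=\kappa_{\rL}'\otimes\sigma_t'$ for $t\in T$; tracking the wild part $\kappa_{\rL}'$ through each summand then forces the moderate part $\tau_0$ to be $\sigma_t'$ for some $t\in T$. The main obstacle is in the second step: carrying out the intertwining analysis in $\rL'$ rather than in $\rL$ (where the classical theory is available) and showing that the contributions of all double cosets $g\in J_{\rL}'\backslash\rL'/J_{\rL}'$ producing a $\kappa_{\rL}'$-isotypic piece already come from $N_{\lambda_{\rL}'}$-translates. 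The tameness condition $p\nmid|W_{\rG}|$ is exactly what makes Lemma \ref{lem2.10} and Proposition \ref{proptamecond} available, and what reduces the apparently infinite intertwining $J_{\rL}^1B_{\rL}^\times J_{\rL}^1$ to the finite-group data encoded in~$T$.
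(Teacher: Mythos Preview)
Your approach is genuinely different from the paper's, and it has a real gap in the second step. The paper does \emph{not} work with Mackey's formula directly in $\rL'$. Instead it embeds $\ind_{J_{\rL}'}^{\rL'}\lambda_{\rL}'$ into $\res_{\rL'}^{\rL}\ind_{J_{\rL}}^{\rL}\lambda_{\rL}$ and then invokes a structural decomposition from \cite[Theorem~2.11]{C1}:
\[
\res_{J_{\rL}'}^{\rL}\ind_{J_{\rL}}^{\rL}\lambda_{\rL}\;\cong\;\bigl(\res_{J_{\rL}'}^{J_{\rL}}\Lambda(\lambda_{\rL})\bigr)\oplus\bigl(\res_{J_{\rL}'}^{J_{\rL}}W\bigr),
\]
where $\Lambda(\lambda_{\rL})$ is a multiple of $\lambda_{\rL}$ and $W$ has trivial $\eta_{\rL}$-coinvariants. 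One then uses \cite[Proposition~2.6]{C1} plus Lemma~\ref{lem2.10} to conclude that $\kappa_{\rL}'\otimes\rho$ cannot sit inside $W\vert_{J_{\rL}'}$, hence it is a direct component of $\lambda_{\rL}\vert_{J_{\rL}'}$; being also weakly intertwined with $\lambda_{\rL}'$ (tautologically), Proposition~\ref{proptamecond} finishes. The point is that the delicate Mackey analysis in $\rL$ is already packaged in the cited results of \cite{C1}.

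Your direct approach runs into trouble because \cite[3.5.11]{BuKuI} computes the intertwining of $\theta_{\rL}$ (hence of $\eta_{\rL}$) as $J_{\rL}^1 B_{\rL}^{\times} J_{\rL}^1$, with $B_{\rL}^{\times}\cong\prod_i\mathrm{GL}_{m_i}(E_i)$ the full centraliser of $\beta$. This is \emph{strictly larger} than $E_{\rL}^{\times}\rU(\mathfrak{B}_{\rL})\cdot J_{\rL}$; the latter is the intertwining set of the maximal simple type $\lambda_{\rL}$, not of $\eta_{\rL}$. So even after lifting from $\eta_{\rL}'$-intertwining to $\eta_{\rL}$-intertwining up to a twist (a step you gloss over, though it can be filled in via the pro-$p$ quotient $J_{\rL}^1/J_{\rL}^{1'}$), and even after Lemma~\ref{lem2.10} kills the twist, you are left with double cosets represented by $g\in B_{\rL}^{\times}\cap\rL'$ that do \emph{not} normalise $J_{\rL}$. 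For those $g$ the summand $\ind_{J_{\rL}'\cap gJ_{\rL}'g^{-1}}^{J_{\rL}'}g(\lambda_{\rL}')$ is not simply a conjugate $g(\lambda_{\rL}')$, and you have not shown that its $\kappa_{\rL}'$-isotypic moderate parts are among the $\sigma_t'$. Proposition~\ref{proptamecond} is of no help here: it only classifies direct components of $\lambda_{\rL}\vert_{J_{\rL}'}$, and you have not established that your $\tau$ is one of those. Eliminating these extra cosets is exactly what the $\Lambda\oplus W$ decomposition of \cite{C1} accomplishes, and it is the missing ingredient in your argument.
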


\begin{proof}
Let $\rho$ be an irreducible subquotient of $\mathbf{K}_{\rL}'(\ind_{J_{\rL}'}^{\rL'}\lambda_{\rL}')$. By the definition, $\kappa_{\rL}'\otimes\rho$ is an irreducible subquotient of $\res_{J_{\rL}'}^{\rL'}\ind_{J_{\rL}'}^{\rL'}\lambda_{\rL}'$. In particular, $\kappa_{\rL}'\otimes\rho$ is a subquotient of $\res_{J_{\rL}'}^{\rL}\ind_{J_{\rL}}^{\rL}\lambda_{\rL}$. By \cite[Theorem 2.11]{C1}, we have an equivalence
$$\res_{J_{\rL}'}^{\rL}\ind_{J_{\rL}}^{\rL}\lambda_{\rL}\cong(\res_{J_{\rL}'}^{J_{\rL}}\Lambda(\lambda_{\rL}))\oplus (\res_{J_{\rL}'}^{J_{\rL}}W),$$
where $\Lambda(\lambda_{\rL})$ is a multiple of $\lambda_{\rL}$, and the $\eta_{\rL}$-coinvariant $(\res_{J_{\rL}'}^{J_{\rL}}W)^{\eta_{\rL}}$ is null, where $\eta_{\rL}=\kappa_{\rL}\vert_{J_{\rL}^{1}}$. Now we suppose there exists a $k$-quasicharacter $\chi$, such that $(\res_{J_{\rL}'}^{J_{\rL}}W)^{\eta_{\rL}\otimes\chi\circ\det}$ is non-trivial, which implies that $\eta_{\rL}$ is intertwined with $\eta_{\rL}\otimes\chi\circ\det$ by the fact that $J_{\rL}^1$ is normal in $J_{\rL}$. Hence by Lemma \ref{lem2.10} we deduce that $(\res_{J_{\rL}'}^{J_{\rL}}W)^{\eta_{\rL}\otimes\chi\circ\det}=0$. Suppose the image of $\kappa_{\rL}'\otimes\rho$ is a subquotient of $\res_{J_{\rL}'}^{J_{\rL}}W$. Then by \cite[Proposition 2.6]{C1}, there exists $\chi$ such that $\eta_{\rL}\otimes\chi\circ\det\hookrightarrow W$, which contradicts with the analysis above. Hence $\kappa_{\rL}'\otimes\rho$ is a subquotient of $\res_{J_{\rL}'}^{J_{\rL}}\Lambda(\lambda_{\rL})$ and is intertwined with $\lambda_{\rL}'$, which implies that it belongs to $\{\kappa_{\rL}'\otimes\sigma_t',t\in T\}$.

\end{proof}

The key result in this section is the equivalence in Proposition \ref{propK'}. We apply the same strategy in \cite{SS}, which requires Proposition 3.2, Corollary 3.3 and Corollary 5.5 in \cite{SS}. In particular, Proposition \ref{propK'} is a generalised version to $\rL_{\max}'$ of Proposition 5.6 of \cite{SS}.

\begin{lem}
$\eta_{\rL}'$ is the unique irreducible $k$-representation of $J_{\rL}^{1'}$ such that the restriction $\eta_{\rL}'\vert_{H_{\rL}^{1'}}$ contains $\theta_{\rL}'$.
\end{lem}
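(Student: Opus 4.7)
The plan is to show that $\eta_{\rL}'$, defined as the restriction of the Heisenberg representation $\eta_{\rL}$ from $J_{\rL}^1$ to $J_{\rL}^{1'}$, is irreducible, contains $\theta_{\rL}'$ on restriction to $H_{\rL}^{1'}$, and is uniquely characterised by that containment property. The key idea for uniqueness is to lift any candidate to an irreducible of $J_{\rL}^1$ and then invoke the already established uniqueness of the Heisenberg representation for $\theta_{\rL}$ itself.

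I would first establish that $\eta_{\rL}\vert_{J_{\rL}^{1'}}$ is irreducible. Under the tameness condition, the surjectivity $N_{E_i/F}(1+\mathfrak{p}_{E_i})=1+\mathfrak{p}_F$ yields $\det(J_{\rL}^1)=1+\mathfrak{p}_F$, and since $J_{\rL}^{1'}$ is by definition the kernel of $\det$ on $J_{\rL}^1$, the determinant induces an isomorphism $J_{\rL}^1/J_{\rL}^{1'}\cong 1+\mathfrak{p}_F$. Every $k$-quasicharacter of $J_{\rL}^1$ trivial on $J_{\rL}^{1'}$ therefore has the form $\chi\circ\det$ for a $k$-quasicharacter $\chi$ of $1+\mathfrak{p}_F$, and by Lemma \ref{lem2.10} none of the non-trivial twists $\eta_{\rL}\otimes(\chi\circ\det)$ is isomorphic to $\eta_{\rL}$. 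The standard Clifford--Mackey criterion for abelian finite quotients then yields the irreducibility of $\eta_{\rL}\vert_{J_{\rL}^{1'}}$, and its restriction to $H_{\rL}^{1'}$ is clearly a multiple of $\theta_{\rL}'$ since $\eta_{\rL}\vert_{H_{\rL}^1}$ is a multiple of $\theta_{\rL}$.

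For uniqueness, take any irreducible $\xi$ of $J_{\rL}^{1'}$ with $\theta_{\rL}'\hookrightarrow\xi\vert_{H_{\rL}^{1'}}$. Since $J_{\rL}^1/J_{\rL}^{1'}$ is a finite $p$-group and $\ell\neq p$, induction from $J_{\rL}^{1'}$ to $J_{\rL}^1$ is semisimple and provides an irreducible $\pi$ of $J_{\rL}^1$ with $\xi\hookrightarrow\pi\vert_{J_{\rL}^{1'}}$. The group $H_{\rL}^1$ being pro-$p$, $\pi\vert_{H_{\rL}^1}$ decomposes as a direct sum of $k$-characters, one of which, say $\psi$, restricts to $\theta_{\rL}'$ on $H_{\rL}^{1'}$. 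Hence $\psi\cdot\theta_{\rL}^{-1}$ is trivial on $H_{\rL}^{1'}$ and factors through $H_{\rL}^1/H_{\rL}^{1'}\cong 1+\mathfrak{p}_F$ (again by the tameness computation applied to $H_{\rL}^1$, using that central scalars force $\det(H_{\rL}^1)=1+\mathfrak{p}_F$), giving $\psi=\theta_{\rL}\otimes(\chi\circ\det)$ for some $\chi$. Since both $\theta_{\rL}$ and $\chi\circ\det$ are $J_{\rL}^1$-invariant, so is $\psi$, and Clifford theory at the $(J_{\rL}^1,H_{\rL}^1)$-level forces $\pi\vert_{H_{\rL}^1}$ to be a multiple of $\psi$.

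To conclude, $\pi\otimes(\chi\circ\det)^{-1}$ is an irreducible $k$-representation of $J_{\rL}^1$ whose restriction to $H_{\rL}^1$ is a multiple of $\theta_{\rL}$; by the uniqueness of the Heisenberg representation of $\theta_{\rL}$ this forces $\pi\otimes(\chi\circ\det)^{-1}\cong\eta_{\rL}$, that is $\pi\cong\eta_{\rL}\otimes(\chi\circ\det)$. Since $\chi\circ\det$ is trivial on $J_{\rL}^{1'}$, we obtain $\pi\vert_{J_{\rL}^{1'}}\cong\eta_{\rL}'$, so $\xi\hookrightarrow\eta_{\rL}'$, and the irreducibility established in the first step forces $\xi\cong\eta_{\rL}'$. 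The main obstacle is ensuring that every $k$-character of $H_{\rL}^1/H_{\rL}^{1'}$ is the restriction of some $\chi\circ\det$ on $J_{\rL}^1$, so that the twist-and-untwist step can be carried out; this is exactly the content of the tameness hypothesis, via the surjectivity of the local norm maps on principal units, without which additional ``non-determinantal'' characters could escape the scope of Lemma \ref{lem2.10}.
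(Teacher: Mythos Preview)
Your proof is correct and follows essentially the same route as the paper: lift an arbitrary candidate $\xi$ to an irreducible $\pi$ of $J_{\rL}^{1}$, twist by a determinantal character to bring $\theta_{\rL}$ into $\pi\vert_{H_{\rL}^{1}}$, and then invoke the uniqueness of the Heisenberg representation. The paper's proof is a three-line version of your uniqueness argument, whereas you additionally spell out the irreducibility of $\eta_{\rL}\vert_{J_{\rL}^{1'}}$ via Lemma~\ref{lem2.10} and the Clifford--Mackey criterion, which the paper leaves implicit.
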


\begin{proof}
Suppose $\eta_1'$ is an irreducible $k$-representation of $J_{\rL}^{1'}$ such that $\theta_{\rL}'\hookrightarrow \eta_1'$. Let $\eta_1$ be irreducible of $J_{\rL}^{1}$ such that $\eta_1'\hookrightarrow \eta_1\vert_{J_{\rL}^{1'}}$. Up to twist a $k$-character which factors through determinant, we can assume that $\eta_1\vert_{H_{\rL}^1}$ contains $\theta_{\rL}$. Hence $\eta_{\rL}\cong\eta_1$, which implies that $\eta_1'\cong\eta_{\rL}'$.
\end{proof}

\begin{prop}
Recall that $\rP'=\rL'\rN$. Let $g\in\rL_{\max}'$. The following are equivelant:
\begin{itemize}
\item The functor $\mathbf{K}_{\text{max}}'\circ\ind_{\rP'}^{\rP'gJ_{\text{max}}'}$ is non zero on $\mathrm{Rep}_k(\rL')$;
\item The functor $\mathbf{K}_{\text{max}}'\circ\ind_{\rP'}^{\rP'gJ_{\text{max}}'}$ is non zero on $\mathrm{Irr}_k(\rL')$;
\item $\Hom_{J_{\text{max}}'\cap g(\rN)}(\kappa_{\text{max}}',\mathds{1})\neq 0$.
\item Let $\theta_{\text{max}}'$ be the simple character contained in $\kappa_{\text{max}}'$, then $\theta_{\text{max}}'$ is trivial on $H_{\text{max}}^{1'}\cap g(\rN)=H_{\text{max}}^1\cap g(\rN)$.
\end{itemize}
\end{prop}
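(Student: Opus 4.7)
The plan is to establish the cycle $(2) \Rightarrow (1) \Rightarrow (3) \Leftrightarrow (4) \Rightarrow (2)$, following the strategy of Proposition~5.6 of \cite{SS}. The tameness condition ensures that $\kappa_{\max}' = \kappa_{\max}|_{J_{\max}'}$ is irreducible, which lets the $\rG'$-statements be derived from the $\rG$-side structural inputs in \cite[\S3,\S5]{SS}. The implication $(2) \Rightarrow (1)$ is immediate.

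For $(1) \Rightarrow (3)$, I would first rewrite $\ind_{\rP'}^{\rP'gJ_{\max}'}\pi$ as a $J_{\max}'$-module, via the standard Mackey translation, as $\ind_{J_{\max}'\cap g^{-1}\rP'g}^{J_{\max}'}({}^g\pi)$, where ${}^g\pi$ denotes the twist $h \mapsto \pi(ghg^{-1})$. Combining a Mackey decomposition of $\res_{J_{\max}^{1'}}$ with Frobenius reciprocity, non-vanishing of $\mathbf{K}_{\max}'$ produces, after replacing $g$ by a suitable representative of the same double coset, a non-zero element of $\Hom_{J_{\max}^{1'} \cap g^{-1}\rP'g}(\kappa_{\max}', {}^g\pi)$. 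Since $\pi$ is inflated trivially from $\rL'$ to $\rP' = \rL'\rN$, the twist ${}^g\pi$ is trivial on $J_{\max}^{1'} \cap g^{-1}\rN g$, so restriction yields a non-zero $\Hom_{J_{\max}^{1'} \cap g^{-1}\rN g}(\kappa_{\max}', \mathds{1})$; conjugation by $g$ delivers (3).

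For $(3) \Leftrightarrow (4)$, the key observation is that $J_{\max}' \cap g(\rN)$ is a compact subgroup of the unipotent radical $g(\rN)$, hence pro-$p$, hence contained in the pro-$p$-radical $J_{\max}^{1'}$; thus $\kappa_{\max}'|_{J_{\max}'\cap g(\rN)} = \eta_{\max}'|_{J_{\max}'\cap g(\rN)}$. Proposition~3.2 and Corollary~3.3 of \cite{SS}, intersected with $\rG'$ (the tameness condition guaranteeing that $\det_F$ is trivial on the unipotent intersections so no shrinking occurs), yield the equality $H_{\max}^{1'} \cap g(\rN) = J_{\max}^{1'} \cap g(\rN)$. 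Since $\eta_{\max}'|_{H_{\max}^{1'}}$ is a multiple of the character $\theta_{\max}'$, any $(J_{\max}' \cap g(\rN))$-invariant vector in $\kappa_{\max}'$ forces $\theta_{\max}'$ to be trivial on $H_{\max}^{1'} \cap g(\rN)$, giving $(3) \Rightarrow (4)$; conversely, triviality of $\theta_{\max}'$ on this Heisenberg intersection propagates through the Heisenberg structure (using that the restriction to a pro-$p$ group is semisimple because $\ell \neq p$) to triviality of $\eta_{\max}'$ on all of $J_{\max}^{1'} \cap g(\rN)$, yielding invariant vectors and $(3)$.

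For $(4) \Rightarrow (2)$, I would exhibit an explicit irreducible $\pi \in \mathrm{Irr}_k(\rL')$ realizing the non-vanishing. The natural candidate is an irreducible subquotient of $\ind_{J_{\rL}'}^{\rL'}\lambda_{\rL}'$, so that $\mathbf{K}_{\rL}'(\pi)$ contains $\sigma_{\rL}'$ by Proposition~\ref{prop2.11}; then the $\rL_{\max}'$-cover theory of Section~\ref{sectionG'cover} together with Corollary~\ref{corJ'supcuspsupp} ensures that the corresponding Mackey summand of $\mathbf{K}_{\max}'\circ\ind_{\rP'}^{\rP'gJ_{\max}'}(\pi)$ is non-zero, since (4) removes the only obstruction identified in the $(1) \Rightarrow (3)$ step. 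The main obstacle is the equivalence $(3) \Leftrightarrow (4)$: carrying the Iwahori-type decomposition $H_{\max}^{1'} \cap g(\rN) = J_{\max}^{1'} \cap g(\rN)$ and the Heisenberg propagation argument from the $\rG$-setting of \cite[\S3,\S5]{SS} to $\rG'$, and verifying that the tameness condition guarantees the $\rG'$-intersections do not shrink the relevant pro-$p$ subgroups inside the unipotent radical. Everything else reduces to routine Mackey-and-Frobenius manipulations once this structural decomposition is in hand.
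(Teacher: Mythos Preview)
Your proposal takes essentially the same route as the paper, which simply records that the proof of Proposition~3.2 and Corollary~3.3 of \cite{SS} carries over verbatim to the $\rG'$-setting (the relevant \cite{SS} reference is 3.2/3.3, not 5.6). One correction worth noting: the group equality $H_{\max}^{1'}\cap g(\rN)=J_{\max}^{1'}\cap g(\rN)$ you invoke for $(3)\Leftrightarrow(4)$ is neither what those \cite{SS} results assert nor true for arbitrary $g$ --- the actual mechanism in \cite{SS} is that $\eta_{\max}$ restricted to $J_{\max}^{1}\cap g(\rN)$ is a multiple of a single character extending $\theta_{\max}\vert_{H_{\max}^1\cap g(\rN)}$, so the ``Heisenberg propagation'' you describe is the whole argument and the group equality should be dropped. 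Your $(4)\Rightarrow(2)$ step via a specific $\pi$ and Corollary~\ref{corJ'supcuspsupp} is more elaborate than necessary; reversing the Mackey/Frobenius computation from $(1)\Rightarrow(3)$ shows directly that any $\pi$ with $\mathbf{K}_{\rL}'(\pi)\neq 0$ witnesses non-vanishing.
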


\begin{proof}
The proof of Proposition 3.2 and Corollary 3.3 in \cite{SS} can be applied here.
\end{proof}

\begin{prop}[Corollary 5.5 in \cite{SS}]
\label{cor5.5SS}
Let $\pi\in\mathrm{Rep}_k(\rL')$, we have:
$$\mathbf{K}_{\text{max}}'(\ind_{\rP'}^{\rP'J_{\text{max}}'}\pi)\cong i_{\mathbb{L}'}^{\mathbb{G}'}\mathbf{K}_{\rL'}(\pi).$$
\end{prop}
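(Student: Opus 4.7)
The plan is to follow the template of the proof of Corollary 5.5 in \cite{SS}, replacing each ingredient by its $\rG'$-counterpart established in the preceding sections. The key tool is the preceding proposition, which characterises when $\mathbf{K}_{\max}' \circ \ind_{\rP'}^{\rP' g J_{\max}'}$ is non-zero on $\mathrm{Rep}_k(\rL')$ by the condition that $\theta_{\max}'$ is trivial on $H_{\max}^{1'} \cap g(\rN)$.

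The first step is to apply Mackey's formula to $\res_{J_{\max}'}^{\rP' J_{\max}'} \ind_{\rP'}^{\rP' J_{\max}'} \pi$ and decompose it as a direct sum over a set $R$ of double-coset representatives for $\rP' \backslash \rP' J_{\max}' / J_{\max}'$. Using the Iwahori-type decomposition $J_{\max}' = (J_{\max}' \cap \bar{\rN})(J_{\max}' \cap \rL_{\max}')(J_{\max}' \cap \rN)$ together with $\rN \subset \rP'$, one may refine $R$ to representatives in $J_{\max}' \cap \bar{\rN}$, so that each Mackey summand takes the form $\ind_{J_{\max}' \cap g^{-1}\rP' g}^{J_{\max}'} g^{-1}(\pi\vert_{\rP' \cap g J_{\max}' g^{-1}})$ for $g \in J_{\max}' \cap \bar{\rN}$.

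Next, apply $\Hom_{J_{\max}^{1'}}(\kappa_{\max}', -)$ to each summand. By the preceding proposition, the summand indexed by $g$ survives precisely when $\theta_{\max}'$ is trivial on $H_{\max}^{1'} \cap g(\rN)$; the tameness hypothesis together with Lemma \ref{lemchartriv} then show that the surviving $g$ correspond exactly to coset representatives for $\mathbb{L}_{\max}' / \mathbb{P}'$, where $\mathbb{P}' = J_{\max,\alpha}'/J_{\max}^{1'}$ is the standard parabolic of $\mathbb{L}_{\max}'$ with Levi part $\mathbb{L}'$ identified in Section \ref{sectionG'alphacover}. For each surviving coset, the identity $\ind_{J_{\max,\alpha}'}^{J_{\max}'} \lambda_{\max,\alpha}' \cong \kappa_{\max}' \otimes i_{\mathbb{L}'}^{\mathbb{L}_{\max}'} \sigma_{\rL}'$ of Proposition \ref{prop2.6}, combined with $\kappa_{\max,\alpha}' = \kappa_{\max}'\vert_{J_{\max,\alpha}'}$ and the compatibility of $\kappa_{\rL}'$ with $\kappa_{\max,\alpha}'$ from Proposition \ref{propcomparab}, identifies the $\kappa_{\max}'$-isotypic contribution of that summand with the $\mathbb{P}'$-coset piece of $i_{\mathbb{L}'}^{\mathbb{L}_{\max}'} \mathbf{K}_{\rL}'(\pi)$.

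Summing over representatives and checking $\mathbb{L}_{\max}'$-equivariance (which follows from the definition of $\mathbf{K}_{\max}'$ and the standard-parabolic structure of $\mathbb{P}' \subset \mathbb{L}_{\max}'$) yields the claimed isomorphism (reading $\mathbb{G}'$ in the statement as $\mathbb{L}_{\max}'$, which is the codomain of $\mathbf{K}_{\max}'$). The main obstacle will be the bookkeeping of the second step: matching the surviving double cosets to $\mathbb{L}_{\max}'/\mathbb{P}'$ requires the wild-pair compatibility analysis of Section \ref{sectionK'Levi}, notably Lemma \ref{lemcompatinclu}, to ensure the decomposition of $J_{\max,\alpha}'$ with respect to $\rP_{\rL}^{\rL_{\max}}$ is compatible with restriction from $\rG$ to $\rG'$ under the disconnectedness of $\mathbb{L}_{\max}'$.
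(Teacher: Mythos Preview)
Your plan has a structural error and a misidentification of the key tool. The double-coset set $\rP'\backslash \rP' J_{\max}'/J_{\max}'$ consists of a single element: any $pj\in\rP' J_{\max}'$ satisfies $\rP'(pj)J_{\max}'=\rP' J_{\max}'$. So the Mackey decomposition you invoke is trivial, and there is no family of summands indexed by $g\in J_{\max}'\cap\bar\rN$ to ``survive'' or not. Consequently the step where you use the preceding proposition to kill summands and match the remaining ones with $\mathbb{L}_{\max}'/\mathbb{P}'$ never gets off the ground. Relatedly, the preceding proposition is not the engine for this statement: in the paper it is used only for the next result (Proposition~\ref{propK'}), where $g$ ranges over all of $\rL_{\max}'$ and one needs to show that the only contributing coset in $i_{\rL'}^{\rL_{\max}'}\pi$ is $\rP' J_{\max}'$.

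The paper's proof does not attack $\ind_{\rP'}^{\rP' J_{\max}'}\pi$ directly; instead it factors through the intermediate group $J_{\max,\alpha}'=\rU(\fB)'J_{\max}^{1'}$ via three separate isomorphisms
\[
\mathbf{K}_{\rL}'(\pi)\;\cong\;\mathbf{K}_{\max,\alpha}'\bigl(\ind_{\rP'}^{\rP' J_{\max,\alpha}'}\pi\bigr)\;\cong\;\mathbf{K}_{\max}'\bigl(\ind_{\rP'}^{\rP' J_{\max,\alpha}'}\pi\bigr),\qquad
\mathbf{K}_{\max}'\bigl(\ind_{J_{\max,\alpha}'}^{J_{\max}'}\tau\bigr)\;\cong\;i_{\mathbb{L}'}^{\mathbb{L}_{\max}'}\mathbf{K}_{\max,\alpha}'(\tau),
\]
which are the $\rG'$-analogues of \cite[Proposition~5.2, Lemma~5.3, Lemma~5.4]{SS}. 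The tameness condition enters precisely to guarantee the decompositions $J_{\rP}'=J_{\rP}^{1'}(J_{\rP}'\cap\rP')$ and $J_{\max,\alpha}'=\rU(\fB)'J_{\max}^{1'}$, so that the $\rG$-arguments restrict cleanly. Proposition~\ref{prop2.6}, which you cite, is indeed the content of the third isomorphism, but the first two are what you are missing; Lemma~\ref{lemcompatinclu} plays no role here.
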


\begin{proof}
We follow the strategy in \cite{SS}, by first showing that
\begin{equation}
\label{equa0011}
\mathbf{K}_{\rL}'(\pi)\cong \mathbf{K}_{max,\alpha}'(\ind_{\rP'}^{\rP'J_{max,\alpha}'}\pi).
\end{equation}
Then
\begin{equation}
\label{equa0022}
\mathbf{K}_{max,\alpha}'(\ind_{\rP'}^{\rP'J_{max,\alpha}'}\pi)\cong\mathbf{K}_{\text{max}}'(\ind_{\rP'}^{\rP'J_{max,\alpha}'}\pi).
\end{equation}
Finally we show that for a representation $\tau$ of $J_{max,\alpha}'$, we have
\begin{equation}
\label{equa0033}
\mathbf{K}_{\text{max}}'(\ind_{J_{max,\alpha}'}^{J_{\text{max}}'}\tau)\cong i_{\mathbb{L}'}^{\mathbb{L}_{\text{max}}'}\mathbf{K}_{\text{max},\alpha}'(\tau).
\end{equation}

The above three equations are generalisation of \cite[Proposition 5.2, Lemma 5.3 and Lemma 5.4]{SS}, of which the proof can be generalised to our case. In particular, the proof of Lemma 5.4 in \cite{SS} can be directly applied to show Equation \ref{equa0033}. Now we sketch the proof for Equation \ref{equa0011} and \ref{equa0022}. We have 
$$\mathbf{K}_{\rL}'(\pi)\cong\mathrm{Hom}_{J_{\rL}^{1'}\cap\rP'}(\kappa_{\rP}',\pi).$$
By the condition $p$ does not divide $\vert W_{\rG'}\vert$, we have $\det(J_{\rP}^1)=1+\mathfrak{p}_{F}$, and $\det(J_{\rL}^1)=1+\mathfrak{p}_{F}$. Hence $J_{\rP}'=J_{\rP}^{1'}(J_{\rP}'\cap\rP')$. Hence by Frobenius reciprocity and the Mackey's formula we have
$$\mathrm{Hom}_{J_{\rP}^{1'}\cap\rP'}(\kappa_{\rP}',\pi)\cong\mathrm{Hom}_{J_{\rP}^{1'}}(\kappa_{\rP}',\res_{J_{\rP}^{1'}}^{J_{\rP}'}\ind_{J_{\rP}'\cap\rP'}^{J_{\rP}'}\pi).$$ 
Then the proof of Proposition 5.2 in \cite{SS} can be applied here, and we obtain Equation \ref{equa0011}. For Equation \ref{equa0022}, we applied the proof of Lemma 5.3 in \cite{SS} by noticing that under the condition $p$ does not divide $\vert W_{\rG'}\vert$, we have $J_{max,\alpha}'=U(\fB)'J_{\text{max}}^{1'}$, hence $\rP'J_{max,\alpha}'=\rP' J_{\text{max}}^{1'}.$
\end{proof}

\begin{prop}[Compatibility of parabolic and parahoric induction]
\label{propK'}
Let $\pi\in\mathrm{Rep}_k(\rL')$, there is an equivalence:
$$\mathbf{K}_{\text{max}}'(i_{\rL'}^{\rL_{\text{max}}'}\pi)\cong i_{\mathbb{L}'}^{\mathbb{L}_{\text{max}}'}\mathbf{K}_{\rL}'(\pi).$$
\end{prop}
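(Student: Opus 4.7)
The plan is to deduce this from Proposition \ref{cor5.5SS} by bridging the normalized parabolic induction $i_{\rL'}^{\rL_{\text{max}}'}$ with the smaller induction $\ind_{\rP'}^{\rP' J_{\text{max}}'}$ considered there. The key ingredient is the Iwasawa-type decomposition $\rL_{\text{max}}' = \rP' J_{\text{max}}'$: this holds at the $\rL_{\text{max}}$-level by the standard Bruhat--Tits/Iwasawa argument, and the tameness hypothesis $p\nmid |W_{\rG'}|$ (which forces $\det_F(J_{\text{max}}) = \mathfrak{o}_F^\times$) ensures that it descends to the derived group, exactly as in the closing line of the proof of Proposition \ref{cor5.5SS}.

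Granted this, I would write $i_{\rL'}^{\rL_{\text{max}}'}\pi \cong \Ind_{\rP'}^{\rL_{\text{max}}'}(\pi \otimes \delta_{\rP}^{1/2})$, where $\delta_{\rP}$ is the modulus character of $\rP'$, and apply Mackey's restriction formula along the single double coset $\rL_{\text{max}}' = \rP' J_{\text{max}}'$ to obtain
\[
\res_{J_{\text{max}}'}^{\rL_{\text{max}}'}\bigl(i_{\rL'}^{\rL_{\text{max}}'}\pi\bigr) \;\cong\; \ind_{\rP' \cap J_{\text{max}}'}^{J_{\text{max}}'}\bigl((\pi\otimes\delta_{\rP}^{1/2})\vert_{\rP' \cap J_{\text{max}}'}\bigr).
\]
The character $\delta_{\rP}^{1/2}$ is unramified, hence trivial on the open compact subgroup $\rP'\cap J_{\text{max}}'$, so the right-hand side coincides with $\res_{J_{\text{max}}'}^{\rP' J_{\text{max}}'}\bigl(\ind_{\rP'}^{\rP' J_{\text{max}}'}\pi\bigr)$. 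Because the functor $\mathbf{K}_{\text{max}}'$ depends only on the restriction to $J_{\text{max}}'$ (Definition \ref{defK'functor}, Remark \ref{exactK}), this identification lifts to
\[
\mathbf{K}_{\text{max}}'\bigl(i_{\rL'}^{\rL_{\text{max}}'}\pi\bigr) \;\cong\; \mathbf{K}_{\text{max}}'\bigl(\ind_{\rP'}^{\rP' J_{\text{max}}'}\pi\bigr),
\]
and Proposition \ref{cor5.5SS} then identifies the right-hand side with $i_{\mathbb{L}'}^{\mathbb{L}_{\text{max}}'}\mathbf{K}_{\rL}'(\pi)$, completing the argument.

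The main subtlety to verify carefully is the single-double-coset Iwasawa identity $\rL_{\text{max}}' = \rP' J_{\text{max}}'$: the $\rL_{\text{max}}$-level version is classical, but descending to the derived subgroup requires controlling the determinant image of $J_{\text{max}}$, which is precisely where the tameness hypothesis enters (just as in the last line of the proof of Proposition \ref{cor5.5SS}). The triviality of $\delta_{\rP}^{1/2}$ on compact open subgroups is routine, so once the Iwasawa step is settled the remainder of the argument is a formal combination of Mackey's formula with Proposition \ref{cor5.5SS}.
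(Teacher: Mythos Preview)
Your argument hinges on the Iwasawa-type identity $\rL_{\text{max}}' = \rP' J_{\text{max}}'$, but this is false in positive depth. The group $J_{\text{max}} = J(\beta,\fA_{\text{max}}) = \rU(\fB_{\text{max}})J_{\text{max}}^1$ is in general a \emph{proper} open subgroup of the maximal compact $\rU(\fA_{\text{max}})$, so although $\rP\cdot\rU(\fA_{\text{max}}) = \rL_{\text{max}}$ by the classical Iwasawa decomposition, one does \emph{not} have $\rP\cdot J_{\text{max}} = \rL_{\text{max}}$. (Your parenthetical that tameness forces $\det_F(J_{\text{max}}) = \fo_F^\times$ is also incorrect: what the paper uses is $\det(J_{\text{max}}) = N_{E/F}(\fo_E^\times) = \det(\rU(\fB_{\rL}))$, which is typically a proper subgroup of $\fo_F^\times$.) Consequently the Mackey decomposition of $\res_{J_{\text{max}}'} i_{\rL'}^{\rL_{\text{max}}'}\pi$ has many double cosets $\rP' g J_{\text{max}}'$, not just one, and your single-coset computation does not apply.

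The missing idea is precisely what the paper supplies: one must show that $\mathbf{K}_{\text{max}}'$ \emph{annihilates} the contribution of every double coset with $g\notin \rP J_{\text{max}}$. This is the content of the four-condition equivalence preceding the proposition, together with the input from \cite[Proposition~5.6]{SS} that the simple character $\theta$ is non-trivial on $H^1\cap g(\rN)$ for such $g$; since $H^1\cap g(\rN)$ lies in the derived group, the same non-triviality holds for $\theta'$. Only after this vanishing is established does one reduce to $\mathbf{K}_{\text{max}}'(\ind_{\rP'}^{(\rP J_{\text{max}})\cap\rG'}\pi)$, and the tameness/determinant computation is then used to identify $(\rP J_{\text{max}})\cap\rG'$ with $\rP' J_{\text{max}}'$ so that Proposition~\ref{cor5.5SS} applies. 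Your final two steps are correct once one gets there, but the reduction to a single coset requires the simple-character argument, not an Iwasawa decomposition.
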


\begin{proof}
By the proof of Proposition 5.6 of \cite{SS}, $\theta_{\rL}$ is non-trivial on $H_{\rL}^1\cap g(\rN)$ for all $g\notin \rP J_{\rL}$. Since $H_{\rL}^1\cap g(\rN)= H_{\rL}^{1'}\cap g(\rN)$, $\theta_{\rL}'$ is non-trivial on $H_{\rL}^{1'}\cap g(\rN)$ for all $g\in \rG'\backslash(\rP J_{\rL}\cap\rG')$. Hence 
$$\mathbf{K}_{\text{max}}'(i_{\rL'}^{\rG'}\pi)\cong \mathbf{K}_{\text{max}}'(\ind_{\rP'}^{(\rP J_{\text{max}})\cap\rG'}\pi).$$
By Proposition \ref{cor5.5SS}, we only need to show that $\rP J_{\text{max}}\cap\rG'=\rP' J_{\rL}'$. Write $\rL_{\text{max}}\cong\prod_{s\in S}\mathrm{GL}_{n_s}(F)$, and $J_{\text{max}}\cong\prod_{s\in S}J_s$, where $J_s$ is defined from a maximal simple stratum $[\fA_s,0,\beta]$ and $E=F[\beta]$. We have 
$$\det(J_{\text{max}})=\prod_{s\in S}N_{E\slash F}(\mathcal{O}_{E}^{\times})=\det(\rU(\fB_{\text{max}})),$$
 where $N_{E\slash F}$ denotes the norm mapping. Meanwhile we deduce from the equation $\fB_{\text{max}}\cap\rL=\fB_{\rL}$ that $\det(\rU(\fB_{\rL}))=\det(\rU(\fB_{\text{max}}))$. Since $\rU(\fB_{\rL})\subset J_{\text{max}}\cap \rL$, for $px\in\rP J_{\text{max}}\cap\rG'$ such that $p\in\rP,x\in J_{\text{max}}$ there exists $y\in\rU(\fB_{\rL})$ such that $\det(y)=\det(x)$. Hence $px=pyy^{-1}x\in\rP' J_{\text{max}}'$.
\end{proof}

\begin{cor}
\label{corsupcuspK'}
The supercuspidal support of an irreducible subquotient $\mathbf{K}_{\text{max}}'(i_{\rL'}^{\rL_{\max}'}\ind_{J_{\rL}'}^{\rL'}\lambda_{\rL}')$ belongs to $\{(\mathbb{L}',\sigma_t'),t\in T\}$. In particular, by Remark \ref{defnofT} that $\sigma_t'$ are $\rL'$-conjugate to $\sigma_{\rL}'$.
\end{cor}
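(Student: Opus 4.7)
The plan is to chain together Proposition \ref{propK'} and Proposition \ref{prop2.11}. First I would apply the compatibility statement of Proposition \ref{propK'} with $\pi=\ind_{J'_{\rL}}^{\rL'}\lambda'_{\rL}$, which gives the isomorphism
$$\mathbf{K}'_{\max}\bigl(i_{\rL'}^{\rL'_{\max}}\ind_{J'_{\rL}}^{\rL'}\lambda'_{\rL}\bigr)\;\cong\; i_{\mathbb{L}'}^{\mathbb{L}'_{\max}}\mathbf{K}'_{\rL}\bigl(\ind_{J'_{\rL}}^{\rL'}\lambda'_{\rL}\bigr).$$
Because $\mathbf{K}'_{\rL}$ is exact (Remark \ref{exactK}) and parabolic induction of finite reductive groups is exact, any irreducible subquotient of the left-hand side appears as an irreducible subquotient of $i_{\mathbb{L}'}^{\mathbb{L}'_{\max}}\rho$, where $\rho$ is some irreducible subquotient of $\mathbf{K}'_{\rL}(\ind_{J'_{\rL}}^{\rL'}\lambda'_{\rL})$.

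Next I would invoke Proposition \ref{prop2.11} to see that every such $\rho$ is isomorphic to $\sigma'_{t}$ for some $t\in T$. Since we are working in the supercuspidal-type setting, $\sigma'_{\rL}$ is supercuspidal, and hence each conjugate $\sigma'_{t}=t(\sigma'_{\rL})$ is also a supercuspidal representation of $\mathbb{L}'$. Consequently $(\mathbb{L}',\sigma'_{t})$ is the (super)cuspidal support of every irreducible subquotient of $i_{\mathbb{L}'}^{\mathbb{L}'_{\max}}\sigma'_{t}$, and the uniqueness of supercuspidal support in the restriction-of-scalars setting that governs $\mathbb{L}'_{\max}$, namely Corollary \ref{coruniqsupcuspL'}, guarantees that this cuspidal support really is the supercuspidal support. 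This delivers the desired inclusion.

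The in-particular assertion is then a direct appeal to Remark \ref{defnofT}: $\sigma'_{t}$ is by construction the conjugate $t(\sigma'_{\rL})$ with $t$ ranging over representatives of $T=N_{\lambda'_{\rL}}/S_{\lambda'_{\rL}}$, and Proposition \ref{proptamecond} has already identified these conjugates with the direct components of $\lambda_{\rL}\vert_{J'_{\rL}}$ weakly intertwined with $\lambda'_{\rL}$, which is the sense in which they are $\rL'$-conjugate to $\sigma'_{\rL}$. Given how much structural work has been done beforehand, I do not expect a serious obstacle; the subtle point to watch is that the supercuspidality hypothesis on $\sigma'_{\rL}$ must actually be invoked so that the cuspidal-support computation on the finite group side upgrades to a statement about the supercuspidal support.
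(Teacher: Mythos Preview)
Your proposal is correct and follows exactly the route the paper takes: the paper's own proof is a single line, ``It is directly deduced from Proposition \ref{prop2.11} and Proposition \ref{propK'},'' and you have faithfully unpacked precisely that deduction. Your added remarks on exactness (Remark \ref{exactK}), on the need for $\sigma_{\rL}'$ to be supercuspidal so that the cuspidal-support computation really yields the supercuspidal support, and your invocation of Corollary \ref{coruniqsupcuspL'} are all implicit in the paper's one-line argument and make the reasoning explicit without deviating from it.
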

\begin{proof}
It is directly deduced from Proposition \ref{prop2.11} and Proposition \ref{propK'}.
\end{proof}

\begin{cor}
\label{corKmax'KM'}
Assume that $\rG'=\rL_{\max}'$. Let $(\kappa_{\text{max}}^{\rG},\kappa_{\text{max}}^{\rM},\kappa_{\rL})$ be the compatible system defined in Section \ref{sectionK'Levi} or its $g$-conjugation where $g\in\rL$ (see Remark \ref{remkappaconj}).  Their restriction $(\kappa_{\text{max}}^{\rG,'},\kappa_{\text{max}}^{\rM,'},\kappa_{\rL}')$ to $\rG',\rM'$ and $\rL'$ accordingly define functors $\mathbf{K}_{\text{max}}^{\rG,'},\mathbf{K}_{\text{max}}^{\rM,'},\mathbf{K}_{\rL}'$ in a same manner as in Definition \ref{defK'functor}. Let $\pi_1,\pi_2$ be a representation of $\rM'$ and $\rL'$ respectively. Denote by $\mathbb{G}'$ the quotient $J_{\max}^{\rG'}\slash J_{\max}^{\rG,1'}$. We have
\begin{itemize}
\item $\mathbf{K}_{\text{max}}^{\rG,'}(i_{\rM'}^{\rG'}\pi_1)\cong i_{\mathbb{M}'}^{\mathbb{G}'}\mathbf{K}_{\text{max}}^{\rM,'}(\pi_1);$
\item $\mathbf{K}_{\text{max}}^{\rM,'}(i_{\rL'}^{\rM'}\pi_2)\cong i_{\mathbb{L}'}^{\mathbb{M}'}\mathbf{K}_{\rL}'(\pi_2).$
\end{itemize}
\end{cor}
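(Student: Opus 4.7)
The plan is to deduce both equivalences from Proposition \ref{propK'}, exploiting the compatible systems of wild pairs on intermediate Levi subgroups that were set up in Section \ref{sectionK'Levi}. For the second equivalence, I would apply Proposition \ref{propK'} directly to the Levi pair $(\rL,\rM)$, treating $\rM$ in the role played by $\rL_{\max}$ for the cuspidal type on $\rL$: the required inputs are Equation \ref{equationcompt12} (the analog of Equation \ref{equakappaG} internal to $\rM$) together with Equation \ref{equationcompt15} (the cover formula $\ind_{J_{\rP,\rL}^{\rM}}^{J_{\rL}^{\rM}}\kappa_{\rL,0}\cong\kappa_{\rL,0}^{\rM}$), and Proposition \ref{propcomparab} identifies $\kappa_{\rL,0}$ with $\kappa_{\rL}$ so that the functor $\mathbf{K}_{\rL}'$ defined in the corollary coincides with the one obtained from this system. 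Proposition \ref{propK'} then yields the assertion $\mathbf{K}_{\max}^{\rM,'}(i_{\rL'}^{\rM'}\pi_2)\cong i_{\mathbb{L}'}^{\mathbb{M}'}\mathbf{K}_{\rL}'(\pi_2)$.

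For the first equivalence, I would rerun the proof of Proposition \ref{propK'} for the pair $(\rM,\rG)$, now letting $\kappa_{\max}^{\rM}$ play the role that $\kappa_{\rL}$ plays in the original statement. The essential ingredients are Equation \ref{equationcompat10} with $X=\rM$ (providing the analog of Equation \ref{equakappaG}) and Equation \ref{equationcompt11} (providing the cover formula). With these, the three intermediate isomorphisms \ref{equa0011}, \ref{equa0022}, \ref{equa0033} go through verbatim after replacing $J_{\max,\alpha}'$ by $(\rU(\fB_{\rM}^{\rG})J_{\max}^{\rG,1})'$ and $\mathbb{L}'$ by $\mathbb{M}'$, because the \cite{SS} arguments that underlie these isomorphisms depend only on the shape of the compatibility relations, not on the Levi being minimal. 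The final step, namely $\rP_{\rM}^{\rG}J_{\max}^{\rG}\cap\rG'=(\rP_{\rM}^{\rG})'J_{\max}^{\rM,'}$, reduces under the tameness condition $p\nmid\vert W_{\rG'}\vert$ to the determinant identity $\det(J_{\max}^{\rG})=\det(\rU(\fB_{\rM}^{\rG}))=\det(J_{\max}^{\rM})$, which follows from the surjectivity of the norm map on units and the coherent choice of lattice chains in Section \ref{sectionK'Levi}.

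For arbitrary $g\in\rL$, Remark \ref{remkappaconj} guarantees that the $g$-conjugate triple is again a compatible system in the same sense, so the above arguments apply verbatim after replacing each wild pair by its $g$-conjugate (the subgroups and characters transform consistently, and the determinant identity is preserved since $g\in\rL\subset\rM$). The main technical obstacle I anticipate is precisely this last verification of the determinant identity in the first equivalence, because $\fB_{\rM}^{\rG}$ is not a maximal $\mathfrak{o}_E$-hereditary order in $\rG$ and so the argument from the end of the proof of Proposition \ref{propK'} needs to be reorganised to compare $\det(J_{\max}^{\rG})$ with $\det(J_{\max}^{\rM})$ through the intermediate order $\fB_{\rM}^{\rG}$ rather than through $\fB_{\max}^{\rG}$.
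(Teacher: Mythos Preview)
Your proposal is correct and follows essentially the same approach as the paper: the paper's proof simply says to obtain the two assertions by replacing $\rL'$ by $\rM'$ (for the first bullet) and replacing $\rG'$ by $\rM'$ (for the second bullet), applying Proposition~\ref{propcomparab} and repeating the proof of Proposition~\ref{propK'}. Your anticipated obstacle about the determinant identity is not a real difficulty: one does not need to route through $\fB_{\rM}^{\rG}$, since $\fB_{\max}^{\rG}\cap\rM=\fB_{\max}^{\rM}$ already gives $\det(J_{\max}^{\rG})=\det(\rU(\fB_{\max}^{\rG}))=\det(\rU(\fB_{\max}^{\rM}))=\det(J_{\max}^{\rM})$ directly, exactly as in the original proof.
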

\begin{proof}
By replacing $\rL'$ by $\rM'$ and replacing $\rG'$ by $\rM'$, we obtain the above two arguments by applying Proposition \ref{propcomparab} and by repeating the same proof of Proposition \ref{propK'}.
\end{proof}

\subsubsection{Compatibility with parabolic induction}
\label{sectionKb'}
Without loss of generality and to simplify the notations, \textbf{we assume again that $\rL_{\text{max}}=\rG$} in this section. We use the notations in Section \ref{sectionK'Levi}. In general, there exist two different $(\rG',\alpha)$-covers for $(J_{\rL}',\lambda_{\rL}')$, of which their maximal simple strata in $\rG$ are $\rG$-conjugate but not $\rG'$-conjugate, and the same for wild pairs appearing in these $(\rG',\alpha)$-covers. 
We have two goals in this section. First, we determine the $\rG'$-conjugacy classes (finitely many) that need to be considered. Then for each $\rG'$-conjugacy class of maximal simple strata of $\rG$ and each Levi subgroups $\rM$ in between, we fix a triple of wild pairs of $\rG',\rM'$ and $\rL'$ that forms a compatible system and verifies Corollary \ref{corKmax'KM'}. 

Recall that in Section \ref{sectionK'Levi}, we fix the supercuspidal type $(J_{\rL},\lambda_{\rL})$, hence we fix the simple character $\theta_{\rL}$. Also we fix a wild pair $(J_{\max}^{\rG},\kappa_{\max}^{\rG})$. For each Levi subgroup $\rM$ of $\rG$ such that $\rL\subset\rM\subset\rG$, when $\rM\neq\rL$ (resp. $\rM=\rL$) let $(J_{\text{max}}^{\rM},\kappa_{\text{max}}^{\rM})$ (resp. $(J_{\rL},\kappa_{\rL})$) be as in Equation \ref{equationcompt11} (resp. Proposition \ref{propcomparab}). Denote by $\mathrm{det}(\cdot)$ the determinant of a subgroup of $\rG$, and by $\mathrm{Stab}_{\rL}(\kappa_{\text{max}}^{\rG,'})$ the subgroup of $\rL$ contains elements $x$ such that $x(J_{\text{max}}^{\rG,'})=J_{\text{max}}^{\rG,'}$ and $x(\kappa_{\text{max}}^{\rG,'})\cong\kappa_{\text{max}}^{\rG,'}$. We have

\begin{lem}
\label{lemStabkappa'}
There is an inclusion $\det(\mathrm{Stab}_{\rL}(\kappa_{\text{max}}^{\rG,'}))\subset \det(\mathrm{Stab}_{\rL}(\kappa_{\rL}'))$, where $\mathrm{Stab}_{\rL}(\ast)$ indicates the normaliser subgroup in $\rL$. Furthermore, $\det(\mathrm{Stab}_{\rL}(\kappa_{\text{max}}^{\rG,'}))$ has finite index in $\det(\mathrm{Stab}_{\rL}(\kappa_{\rL}'))$.
\end{lem}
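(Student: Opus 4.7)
The plan is to exploit the compatible system from Section \ref{sectionK'Levi}, and in particular the twist-equivariance of the induction formulas \ref{equationcompat10} and \ref{equationcompt14}, to propagate normalizer data between $\kappa_{\text{max}}^{\rG,'}$ and $\kappa_\rL'$; the finite-index statement will then follow from a standard Bushnell-Kutzko count of admissible character twists.

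For the inclusion, I will take $x \in \mathrm{Stab}_\rL(\kappa_{\text{max}}^{\rG,'})$. After adjusting $x$ by an element of $J_{\max}^{\rG,'}$, one may assume that $x$ normalizes $J_{\max}^\rG$ itself, using that under the tameness condition the abelian quotient $J_{\max}^\rG / J_{\max}^{\rG,'}$ identifies with the image of $\det_F$ restricted to $J_{\max}^\rG$, which sits in $F^\times$. Clifford theory applied to $\res_{J_{\max}^{\rG,'}}^{J_{\max}^\rG}\kappa_{\max}^\rG$ then supplies a $k$-character $\chi$ of $\det_F(J_{\max}^\rG)$ with
$$x(\kappa_{\max}^\rG)\;\cong\;\kappa_{\max}^\rG\otimes(\chi\circ\det_F).$$
By Remark \ref{remkappaconj}, the $x$-conjugated triple $(x(\kappa_{\max}^\rG),x(\kappa_{\max}^\rM),x(\kappa_\rL))$ is again a compatible system, so by uniqueness in the defining relations \ref{equationcompat10}, \ref{equationcompt11} and \ref{equationcompt14} the twist $\chi\circ\det_F$ propagates step by step, yielding
$$x(\kappa_\rL)\;\cong\;\kappa_\rL\otimes(\chi\circ\det_F).$$
Restricting to $J_\rL'$ (on which $\det_F$ is trivial) gives $x(\kappa_\rL')\cong\kappa_\rL'$, so $x\in\mathrm{Stab}_\rL(\kappa_\rL')$, and applying $\det$ yields the inclusion.

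For finite index, I will factor through the unprimed stabilizers via the chain
$$\mathrm{Stab}_\rL(\kappa_{\max}^\rG)\;\subset\;\mathrm{Stab}_\rL(\kappa_{\max}^{\rG,'})\;\subset\;\mathrm{Stab}_\rL(\kappa_\rL')\;\supset\;\mathrm{Stab}_\rL(\kappa_\rL),$$
and control each step separately. The gap between a primed and its unprimed stabilizer is parametrized exactly by the admissible twisting characters $\chi$ above: these form a subgroup of the finite character group of $\det_F(J)/\det_F(O)$ used in the proof of Theorem \ref{G'cover}, and the tameness assumption together with Lemma \ref{lem2.10} ensures their number is finite. The gap between $\mathrm{Stab}_\rL(\kappa_\rL)$ and $\mathrm{Stab}_\rL(\kappa_{\max}^\rG)$ is controlled by standard Bushnell-Kutzko normalizer theory: both contain $E_\rL^\times (J_{\max}^\rG\cap\rL)$, whose determinant is commensurable with $N_{E_\rL/F}(E_\rL^\times)$, and they differ only by finite ``Weyl-group'' data. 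Projecting the resulting commensurable subgroups via $\det$ then gives the finite-index bound.

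The main obstacle will be the rigorous step-by-step propagation of the twist $\chi\circ\det_F$ through the compatible system: while Remark \ref{remkappaconj} guarantees that the $x$-conjugated triple is again compatible, extracting the exact twist at each stage requires carefully comparing the two compatible systems under the defining induction isomorphisms and handling the hybrid subsets $U(\fB)U^1(\fA)$ and $J_{\rP,\rL}$, which are not themselves stable in an obvious way under $x$-conjugation. A detailed version would proceed inductively along $\rL\subset\rM\subset\rG$, invoking Lemma \ref{lemequapart2} at the intermediate level.
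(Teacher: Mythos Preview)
Your propagation strategy has a structural problem. The uniqueness in the compatible-system relations (\ref{equationcompat10}), (\ref{equationcompt14}) is uniqueness \emph{for a fixed stratum}: given $(\fA_\rL,0,\beta)$ and $\kappa_{\max}^\rG$, there is a unique $\kappa_\rL$ on $J_\rL$ satisfying the relation. When you conjugate by $x\in\rL$, Remark \ref{remkappaconj} tells you $x(\kappa_\rL)$ is compatible with $x(\kappa_{\max}^\rG)$ --- but as a representation of $x(J_\rL)$, attached to the stratum $(x(\fA_\rL),0,x(\beta))$. Even granting $x(\kappa_{\max}^\rG)\cong\kappa_{\max}^\rG\otimes\chi\circ\det$, you only learn that $x(\kappa_\rL)$ on $x(J_\rL)$ and $\kappa_\rL\otimes\chi\circ\det$ on $J_\rL$ are each compatible with the same maximal datum; uniqueness does not force $x(J_\rL)=J_\rL$, so you cannot conclude $x(\kappa_\rL')\cong\kappa_\rL'$, let alone $x\in\mathrm{Stab}_\rL(\kappa_\rL')$. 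Your preliminary adjustment step has the same flavour: multiplying $x$ by an element of $J_{\max}^{\rG,'}$ preserves $\det(x)$ but throws $x$ out of $\rL$, after which Remark \ref{remkappaconj} no longer applies to the adjusted element. The obstacle you flag at the end is therefore not a technicality but the heart of the matter.

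The paper avoids comparing $\kappa_{\max}^\rG$ and $\kappa_\rL$ directly. It fixes a supercuspidal $\pi'$ of $\rL'$ containing $(J_\rL',\kappa_\rL'\otimes\sigma_\rL')$, applies Proposition \ref{propK'} to the $x$-conjugated system (Remark \ref{remkappaconj} with $g=x\in\rL$), and uses $x(\mathbf{K}_{\max}')=\mathbf{K}_{\max}'$ to deduce $x(\mathbf{K}_\rL')(\pi')\neq0$. Thus $\pi'$ contains some $x(J_\rL',\kappa_\rL'\otimes\sigma_0')$; comparing with the type $(J_\rL',\lambda_\rL')$ already contained in $\pi'$ produces an element of $\rL'$ conjugating one to the other, and the intertwining of $\lambda_\rL$ under tameness forces $\det(x)\in\det(E_\rL^\times J_\rL)=\det(\mathrm{Stab}_\rL(\kappa_\rL'))$. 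Note the conclusion is only about $\det(x)$, not $x$ itself --- which is all the lemma asserts. For the finite-index statement the paper is also much shorter than your outline: $\det(\mathrm{Stab}_\rL(\kappa_{\max}^{\rG,'}))$ is open in $F^\times$ and contains $\det(Z_\rL)$, while $\mathrm{Stab}_\rL(\kappa_\rL')=E_\rL^\times J_\rL$ is compact modulo $Z_\rL$, so the determinant quotient is finite.
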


\begin{proof}
Let $\pi'$ be an irreducible supercuspidal representation of $\rL'$ which contains a supercuspidal type $(J_{\rL}',\kappa_{\rL}'\otimes\sigma_{\rL}')$. For $x\in\mathrm{Stab}_{\rL}(\kappa_{\text{max}}^{\rG,'})$, by Proposition \ref{propK'} we have $x(\mathbf{K}_{\rL}')(\pi')\neq 0$ (here $x(\mathbf{K}_{\rL}')$ is defined with respect to $x(J_{\rL}',\kappa_{\rL}')$), which implies that there is an irreducible component $\sigma_0'$ of $\sigma_{\rL}\vert_{J_{\rL}'}$, such that $\pi'$ contains $x(\kappa_{\rL}'\otimes\sigma_0')$, and there exists $j\in J_{\rL}$ such that 
$$xj(\kappa_{\rL}'\otimes\sigma_{\rL}')\cong x(\kappa_{\rL}'\otimes\sigma_0').$$
Hence there exists $y\in\rL'$ such that $y^{-1}xj(\kappa_{\rL}'\otimes\sigma_{\rL}')\cong\kappa_{\rL}'\otimes\sigma_{\rL}'$, which implies that $y^{-1}xj$ intertwines $\lambda_{\rL}$ with $\lambda_{\rL}\otimes\chi\circ\det$ for a $k$-character $\chi$ of $F^{\times}$. Since $p$ does not divide $\vert W_{\rG}\vert$, the element $y^{-1}xj$ normalises $\lambda_{\rL}$ hence it belongs to $E_{\rL}^{\times}J_{\rL}$, hence $\det(x)\in\det(E_{\rL}^{\times}J_{\rL})=\det(\mathrm{Stab}_{\rL}(\kappa_{\rL}'))$. The second part of the lemma is obtained by the fact that $\det(\mathrm{Stab}_{\rL}(\kappa_{\text{max}}^{\rG,'}))$ is open and contains the determinant of $Z_{\rL}$ which is the centre of $\rL$, and $\det(\mathrm{Stab}_{\rL}(\kappa_{\rL}'))$ is compact modulo the centre.
\end{proof}

Furthermore, for each Levi subgroup $\rM$ containing $\rL$, we have

\begin{prop}
\label{propdetinclus}
$$\det(\mathrm{Stab}_{\rL}(\kappa_{\text{max}}^{\rG,'}))\subset\det(\mathrm{Stab}_{\rL}(\kappa_{\text{max}}^{\rM,'}))\subset \det(\mathrm{Stab}_{\rL}(\kappa_{\rL}')).$$
\end{prop}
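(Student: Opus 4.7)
The strategy is to split the chain into its two constituent inclusions and treat each as a variation of Lemma~\ref{lemStabkappa'}, leveraging the compatible systems of wild pairs built in Section~\ref{sectionK'Levi} together with the parabolic--parahoric compatibility supplied by Corollary~\ref{corKmax'KM'}.

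The second inclusion $\det(\mathrm{Stab}_{\rL}(\kappa_{\text{max}}^{\rM,'}))\subset\det(\mathrm{Stab}_{\rL}(\kappa_{\rL}'))$ is obtained by replaying the proof of Lemma~\ref{lemStabkappa'} verbatim with $\rM$ in place of $\rG$. The pair $((J_{\text{max}}^{\rM},\kappa_{\text{max}}^{\rM}),(J_{\rL},\kappa_{\rL}))$ forms a compatible system on $\rM$ by Equation~\ref{equationcompt11} combined with Proposition~\ref{propcomparab}, and the second part of Corollary~\ref{corKmax'KM'} supplies the $\rM$-level analogue of Proposition~\ref{propK'} used in that argument; the tameness hypothesis $p\nmid\vert W_{\rM}\vert$ is inherited from $p\nmid\vert W_{\rG}\vert$.

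For the first inclusion $\det(\mathrm{Stab}_{\rL}(\kappa_{\text{max}}^{\rG,'}))\subset\det(\mathrm{Stab}_{\rL}(\kappa_{\text{max}}^{\rM,'}))$, I adapt the same strategy one level up. Choose a supercuspidal representation $\pi_{\rM}'$ of $\rM'$ containing a supercuspidal type $(J_{\text{max}}^{\rM,'},\kappa_{\text{max}}^{\rM,'}\otimes\sigma_{\rM}')$ for some supercuspidal $\sigma_{\rM}'$ of $\mathbb{M}'$, so that $\mathbf{K}_{\text{max}}^{\rM,'}(\pi_{\rM}')$ contains $\sigma_{\rM}'$. For $x\in\mathrm{Stab}_{\rL}(\kappa_{\text{max}}^{\rG,'})$, Remark~\ref{remkappaconj} shows that the $x$-conjugate triple is again a compatible system; the first part of Corollary~\ref{corKmax'KM'} applied to both systems, combined with the tautology $x(\mathbf{K}_{\text{max}}^{\rG,'})\cong\mathbf{K}_{\text{max}}^{\rG,'}$ (which holds because $x$ lies in the stabiliser), yields
$$i_{\mathbb{M}'}^{\mathbb{G}'}\mathbf{K}_{\text{max}}^{\rM,'}(\pi_{\rM}')\cong\mathbf{K}_{\text{max}}^{\rG,'}(i_{\rM'}^{\rG'}\pi_{\rM}')\cong i_{\mathbb{M}'}^{\mathbb{G}'}x(\mathbf{K}_{\text{max}}^{\rM,'})(\pi_{\rM}').$$
The left-hand side contains $i_{\mathbb{M}'}^{\mathbb{G}'}\sigma_{\rM}'$ and is non-zero, so $x(\mathbf{K}_{\text{max}}^{\rM,'})(\pi_{\rM}')\neq 0$. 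The $\rM$-level analogue of Proposition~\ref{prop2.11} then forces $\pi_{\rM}'$ to contain $xj(\kappa_{\text{max}}^{\rM,'}\otimes\sigma_{\rM}')$ as a subquotient for some $j\in J_{\text{max}}^{\rM}$. Uniqueness of the supercuspidal type inside $\pi_{\rM}'$ produces $y\in\rM'$ such that $y^{-1}xj$ normalises the pair $(\kappa_{\text{max}}^{\rM,'}\otimes\sigma_{\rM}')$ up to unramified twist; under the tameness condition this twist is trivial, and the Bushnell--Kutzko normalisation of a maximal simple type places $y^{-1}xj\in E^{\times}J_{\text{max}}^{\rM}$. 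Taking determinants, using $\det(y)=1$ and $\det(J_{\text{max}}^{\rM})=N_{E\slash F}(\mathfrak{o}_{E}^{\times})\subset N_{E\slash F}(E^{\times})=\det(E^{\times})$, one deduces $\det(x)\in\det(E^{\times})$. Since $E^{\times}\subset\rL$ centralises $\beta$, normalises $J_{\text{max}}^{\rM}$, and $\rM'$ is normal in $\rM$, the inclusion $E^{\times}\subset\mathrm{Stab}_{\rL}(\kappa_{\text{max}}^{\rM,'})$ holds, completing the argument.

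The main obstacle is the $\rM$-level analogue of Proposition~\ref{prop2.11}, identifying the irreducible subquotients of $\mathbf{K}_{\text{max}}^{\rM,'}\bigl(\ind_{J_{\text{max}}^{\rM,'}}^{\rM'}(\kappa_{\text{max}}^{\rM,'}\otimes\sigma_{\rM}')\bigr)$ as being of the form $\sigma_{\rM,t}'$ for $t$ in a suitable indexing set. The proof of Proposition~\ref{prop2.11} transfers with only notational changes: its two ingredients, namely \cite[Theorem~2.11]{C1} on the restriction from a full simple type to its $\rG'$-intersection and Lemma~\ref{lem2.10} on twisting by characters of $\det$, are statements about an arbitrary maximal simple pair and remain valid for $(J_{\text{max}}^{\rM},\eta_{\text{max}}^{\rM})$ under the tameness hypothesis.
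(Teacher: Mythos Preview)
Your proposal is correct and follows essentially the same approach as the paper: both inclusions are obtained by rerunning the proof of Lemma~\ref{lemStabkappa'} with the appropriate substitutions, using Corollary~\ref{corKmax'KM'} in place of Proposition~\ref{propK'}. The paper's proof is a single sentence to this effect; you have spelled out the mechanics more explicitly, including the need for an $\rM$-level analogue of Proposition~\ref{prop2.11}, which the paper leaves implicit in the phrase ``the proof can be applied''.
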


\begin{proof}
By Corollary \ref{corKmax'KM'}, the proof of Lemma \ref{lemStabkappa'} can be applied to obtain the first inclusion by replacing $\kappa_{\rL}'$ by $\kappa_{\text{max}}^{\rM,'}$, and the second inclusion by replacing $\kappa_{\text{max}}^{\rG,'}$ by $\kappa_{\text{max}}^{\rM,'}$.
\end{proof}

\begin{defn}
\label{defnB}
Let $\{b,b\in B\}$ be a finite subset of $\mathrm{Stab}_{\rL}(\kappa_{\rL}')$ (we can choose them in the fixed maximal split torus) whose determinants are representatives of the quotient group 
$$\det(\mathrm{Stab}_{\rL}(\kappa_{\rL}'))\slash\det(\mathrm{Stab}_{\rL}(\kappa_{\text{max}}^{\rG,'})).$$
\end{defn} 

Denote $J_{\text{max}}^{\rG,'}\slash J_{\text{max}}^{\rG,1'}$ by $\mathbb{G}'$. Recall that in general when $\rL_{\text{max}}\neq\rG$, this quotient is denoted by $\mathbb{L}_{\text{max}}'$ in the previous sections.

\begin{rem}[Compatibility after conjugation by $b$]
\label{propconjbK'}
\label{propcompKpara}
\begin{itemize}
\item Denote by $\mathbf{K}_{b}'$ the functor $b(\mathbf{K}_{\text{max}}^{\rG,'})$. For each $b$ and $\pi\in\mathrm{Rep}_k(\rL')$, it normalises $\kappa_{\rL}'$, then by Corollary \ref{corKmax'KM'} we have
$$\mathbf{K}_{b}'(i_{\rL'}^{\rG'}\pi)\cong i_{\mathbb{L}'}^{\mathbb{G}'}\mathbf{K}_{\rL}'(\pi).$$

\item Define $\mathbf{K}_{\rM,b}'$ to be the functor $b(\mathbf{K}_{\text{max}}^{\rM,'})$. For each $b$ and a representation $\rho$ of $\rM'$, by Corollary \ref{corKmax'KM'} we obtain the following equivalences:
\begin{enumerate}
\item $\mathbf{K}_{\rM,b}'(i_{\rL'}^{\rM'}\pi)\cong i_{\mathbb{L}'}^{\mathbb{M}'}\circ\mathbf{K}_{\rL}'(\pi).$
\item $\mathbf{K}_{b}'(i_{\rM'}^{\rG'}\rho)\cong i_{\mathbb{M}'}^{\mathbb{G}'}\circ\mathbf{K}_{\rM,b}'(\rho).$
\item $\mathbf{K}_{b}'(i_{\rL'}^{\rG'}\pi)\cong i_{\mathbb{M}'}^{\mathbb{G}'}\circ\mathbf{K}_{\rM,b}'(i_{\rL'}^{\rM'}\pi).$
\end{enumerate}
\end{itemize}
\end{rem}

\begin{nota}
In Section \ref{sectiondecomp}, we use the simplified notations as below: 
\begin{itemize}

\item We denote $(b(J_{\text{max}}^{\rG,'}),b(\kappa_{\text{max}}^{\rG,'}))$ by $(J_{max,b}',\kappa_{max,b}')$;
\item and $(b(J_{\text{max}}^{\rM,'}),b(\kappa_{\text{max}}^{\rM,'}))$ by $(J_{\rM,b}',\kappa_{\rM,b}')$.
\end{itemize}
\end{nota}

\section{Decomposition}
\label{sectiondecomp}

In this section, we establish blocks decomposition of $\mathrm{Rep}_k(\rG')$. To be more precise, for our fixed supercuspidal pair $(\rL',\tau')$, we establish the block containing the full-subcategory $\mathrm{Rep}_k(\rG')_{[\rL',\tau']}$. 

\subsection{Decomposition of $\mathrm{Rep}_k(\rG')$}
We define full-subcategories of $\mathrm{Rep}_k(\rG')$ which are generated by finitely many supercuspidal classes, then we introduce a non-split property and a decomposition theoreom.  Recall that $(J_{\rL}',\lambda_{\rL}')$ is a supercuspidal type contained in $(\rL',\tau')$, where $\lambda_{\rL}'\cong\kappa_{\rL}'\otimes\sigma_{\rL}'$ and $\kappa_{\rL}'$ is a restriction of a wild pair $(J_{\rL},\kappa_{\rL})$ determined in Proposition \ref{propcomparab}.

\begin{defn}
\label{defnsubgentype}
\begin{itemize}
\item Consider the supercuspidal class $[\rL',\tau']$. Denote by $\mathrm{Rep}_k(\rG')_{[\rL',\tau']}$ the full sub-category generated by $[\rL',\tau']$, which contains $\Pi$ such that the supercuspidal support of any irreducible subquotient of $\Pi$ is contained in $[\rL',\tau']$.
\item Let $\mathcal{I}$ be a union of a family of supercuspidal classes. Define the full sub-category $\mathrm{Rep}_k(\rG')_{\mathcal{I}}$ generated by $\mathcal{I}$, that contains objects $\Pi$ such that the supercuspidal support of any irreducible subquotient of $\Pi$ is contained in $\mathcal{I}$.
\end{itemize}
\end{defn}

\begin{rem}
\label{remcattau'}
We also call $\mathrm{Rep}_k(\rG')_{[\rL',\tau']}$ the full sub-category generated by $[J_{\rL}',\lambda_{\rL}']$, and denote it by $\mathrm{Rep}_k(\rG')_{[J_{\rL}',\lambda_{\rL}']}$ sometimes for the convenience.
\end{rem}

\begin{defn}

\begin{itemize}
\item  Let $\Pi$ be a $k$-representation of $\rG'$. We say $\Pi$ is split if it does not belong to a block  of $\mathrm{Rep}_{k}(\rG')$, otherwise we say it is non-split.
\item We say a full-subcategory is non-split, when it is contained in a block.
\end{itemize}
\end{defn}

Now we introduce two practical results.

\begin{prop}
The full-subcategory $\mathrm{Rep}_k(\rG')_{[\rL',\tau']}$ is non-split.
\end{prop}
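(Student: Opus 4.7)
The plan is to prove that any two irreducible representations $\pi_1',\pi_2'$ in $\mathrm{Rep}_k(\rG')_{[\rL',\tau']}$ lie in a common block of $\mathrm{Rep}_k(\rG')$ by exhibiting a single projective object $\cP\in\mathrm{Rep}_k(\rG')$ such that both $\pi_i'$ are subquotients of $\cP$ and the indecomposable direct summands of $\cP$ all lie in one block. Since $\rG'$-conjugation is an autoequivalence preserving the block decomposition, we may assume that each $\pi_i'$ is a subquotient of $i_{\rL'}^{\rG'}(\tau'\otimes\chi_i)$ for some unramified $k$-character $\chi_i$ of $\rL'$, rather than of a conjugate.

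The projective object is built from type theory. Let $\cP_{\sigma_{\rL}'}$ denote the projective cover of $\sigma_{\rL}'$ in $\mathrm{Rep}_k(\mathbb{L}')$, inflated to $J_{\rL}'$, and put
\[
P:=\ind_{J_{\rL}'}^{\rL'}\bigl(\kappa_{\rL}'\otimes\cP_{\sigma_{\rL}'}\bigr),\qquad \cP:=i_{\rL'}^{\rG'}P.
\]
Since $\mathrm{char}(k)\neq p$ and $J_{\rL}^{1'}$ is pro-$p$, the tensor product $\kappa_{\rL}'\otimes\cP_{\sigma_{\rL}'}$ is projective in $\mathrm{Rep}_k(J_{\rL}')$ (its restriction to $J_{\rL}^{1'}$ is automatically projective, while its $\eta_{\rL}'$-isotypic part is $\cP_{\sigma_{\rL}'}$), so $P$ is projective in $\mathrm{Rep}_k(\rL')$ and $\cP$ is projective in $\mathrm{Rep}_k(\rG')$. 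By Lemma \ref{lem 001} applied to the irreducible components of $\cP_{\sigma_{\rL}'}$, the irreducible subquotients of $P$ all have supercuspidal support in $[\rL',\tau']$. Every unramified $k$-character $\chi$ of $\rL'$ is trivial on $J_{\rL}'$, so Frobenius reciprocity yields $\Hom_{\rL'}(P,\tau'\otimes\chi)\neq 0$, and irreducibility of $\tau'\otimes\chi$ makes every such nonzero map surjective; applying the exact functor $i_{\rL'}^{\rG'}$, the parabolic induction $i_{\rL'}^{\rG'}(\tau'\otimes\chi_i)$ appears as a quotient of $\cP$, and both $\pi_1',\pi_2'$ are therefore subquotients of the single projective $\cP$.

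It remains to verify that the indecomposable direct summands of $\cP$ all lie in one block of $\mathrm{Rep}_k(\rG')$. The compatibility of the $\mathbf{K}_{\max}'$-functor with parabolic induction (Proposition \ref{propK'}) gives $\mathbf{K}_{\max}'(i_{\rL'}^{\rL_{\max}'}P)\cong i_{\mathbb{L}'}^{\mathbb{L}_{\max}'}\mathbf{K}_{\rL}'(P)$, whose irreducible subquotients have cuspidal support $\mathbb{L}_{\max}'$-conjugate to $(\mathbb{L}',\sigma_{\rL}')$ (via Corollary \ref{corsupcuspK'}) and hence sit inside a single $\ell$-parablock of $\mathbb{L}_{\max}'$ by Lemma \ref{lemparalblock}. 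The main obstacle is then to transfer this $\ell$-parablock connectivity at the finite-reductive level back to block connectivity of $\cP$ at the $p$-adic level. Since $\mathbf{K}_{\max}'$ is exact (Remark \ref{exactK}) but not faithful, block-linkage downstairs is not automatic upstairs: one needs Lemma \ref{finlem001} to link the indecomposable summands of $\cP$ through their $\mathbf{K}_{\max}'$-images, combined with the $(\rL_{\max}',\alpha)$-cover structure (Proposition \ref{prop2.6} and Corollary \ref{corJ'supcuspsupp}) to verify that morphisms between summands of $\cP$ can be detected after applying $\mathbf{K}_{\max}'$. The technical heart of the proof is establishing that two indecomposable summands of $\cP$ whose $\mathbf{K}_{\max}'$-images lie in a common $\ell$-parablock must themselves share (possibly via chains) common irreducible subquotients in $\rG'$, which places them in the same block.
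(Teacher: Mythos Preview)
Your proposal is not a complete proof: the final paragraph explicitly identifies ``the technical heart of the proof'' as something that still needs to be established, namely that two indecomposable summands of $\cP$ whose $\mathbf{K}_{\max}'$-images lie in a common $\ell$-parablock must share irreducible subquotients in $\rG'$. You correctly observe that $\mathbf{K}_{\max}'$ is exact but not faithful, so connectivity at the finite level does not automatically lift; however, you then stop short of actually bridging this gap. Saying that one ``needs'' certain lemmas and that morphisms ``can be detected'' is not the same as carrying out the detection. As written, the argument shows that all irreducibles of $\mathrm{Rep}_k(\rG')_{[\rL',\tau']}$ are subquotients of a single projective $\cP$, but it does not show that $\cP$ is non-split, which is precisely the point at issue.

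By contrast, the paper's proof is a two-line citation: the result is already established in \cite[\S III.5]{V2} under the hypothesis of generic irreducibility of parabolic induction along unramified twists, and that hypothesis holds for $\rG'$ by \cite[Theorem 5.1]{Da05} combined with the existence of discrete cocompact subgroups from \cite[Theorem 3.3]{BoHa78}. The mechanism there is quite different from yours: generic irreducibility guarantees that for generic $\chi$ the induced representation $i_{\rL'}^{\rG'}(\tau'\otimes\chi)$ is irreducible, and one links the various specialisations through this generic fibre rather than through finite-group block theory. Your type-theoretic strategy is closer in spirit to how the paper later proves that the larger subcategory $\mathrm{Rep}_k(\rG')_{[\lambda_{\rL}',\sim]}$ is non-split (end of Theorem \ref{thmblocksdecom}), but even there the argument requires Proposition \ref{formprop0016} as input, and that proposition is itself nontrivial.
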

\begin{proof}
This is proved in \cite[\S III.5]{V2} under the condition of generic irreducibility. The latter is proved in \cite[Theorem 5.1]{Da05} for groups containing a discrete cocompact subgroup, which is proved to be existed for $\rG'$ in \cite[Theorem 3.3]{BoHa78}.
\end{proof}

Let $[\rL',\tau']_{\rG}$ be the union of supercuspidal classes which are $\rG$-conjugate to $[\rL',\tau']$. It is always a finite union. Let $\mathrm{Rep}_k(\rG')_{[\rL',\tau']_{\rG}}$ be the full-subcategory generated by this union. Denote by $\mathcal{SC}_{\rG'}^{\rG}$ the set of the unions of supercuspidal classes which are $\rG$-conjugate.  The equivalence below has been proved in \cite{C3}. 
$$\mathrm{Rep}_k(\rG')\cong\prod_{\mathcal{SC}_{\rG'}^{\rG}}\mathrm{Rep}_k(\rG')_{[\rL',\tau']_{\rG}}.$$

After the above two statements, to establish blocks of $\mathrm{Rep}_k(\rG')$ is equivalent to establish those of $\mathrm{Rep}_k(\rG')_{[\rL',\tau']_{\rG}}$, which gives a partition on the set of supercuspidal classes in $[\rL',\tau']_{\rG}$. We will construct a finite family of non-split projective objects in $\mathrm{Rep}_k(\rG')_{[\rL',\tau']_{\rG}}$ from projective cover of irreducible representations of finite reductive groups. They can be viewed as building stones of projective generator of blocks of $\mathrm{Rep}_k(\rG')_{[\rL',\tau']_{\rG}}$. We determine those which are in the same block containing $\mathrm{Rep}_k(\rG')_{[\rL',\tau']}$. Finally we find their direct sum is a projective generator of this block.

\subsection{Non-split projective objects}

Let $\sigma'$ be an irreducible representation of $\mathbb{L}':=J_{\rL}'\slash J_{\rL}^{1'}$, and denote by $\mathcal{P}_{\sigma'}$ its projective cover.

\begin{prop}
\label{formprop0016}
An induced representation of the form $P:=i_{\rL'}^{\rG'}\ind_{J_{\rL}'}^{\rL'}\Pi$ where $\Pi\cong\kappa_{\rL}'\otimes\cP_{\sigma'}$ is contained in a block of $\mathrm{Rep}_{k}(\rG')$.
\end{prop}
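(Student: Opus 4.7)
The plan is first to establish the projectivity of $P$, and then to verify that all irreducible composition factors of $P$ share a single block of $\mathrm{Rep}_k(\rG')$.

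For projectivity, I would observe that the exact functor $\mathbf{K}_{\rL}'$ of Definition \ref{defK'functor} admits $W \mapsto \kappa_{\rL}' \otimes W$ as a left adjoint (via the usual tensor--hom adjunction, noting that the pro-$p$ part $J_{\rL}^{1'}$ acts through the irreducible $\eta_{\rL}'$, so that the relevant $J_{\rL}^{1'}$-invariants compute $\mathbf{K}_{\rL}'$). As a left adjoint to an exact functor, it preserves projectives, so $\kappa_{\rL}' \otimes \cP_{\sigma'}$ is projective in $\mathrm{Rep}_k(J_{\rL}')$. The functors $\ind_{J_{\rL}'}^{\rL'}$ and $i_{\rL'}^{\rG'}$ are likewise left adjoints to the exact restriction and normalised Jacquet functors, hence both preserve projectives, making $P$ projective in $\mathrm{Rep}_k(\rG')$.

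For the analysis of composition factors, Corollary \ref{cor2.3} applied to $\mathbb{L}'$ with the supercuspidal $\sigma'$ ensures that every composition factor of $\cP_{\sigma'}$ is again supercuspidal of $\mathbb{L}'$, with supercuspidal support in the $\mathbb{L}$-conjugacy class of $(\mathbb{L}', \sigma')$. Under the tameness condition, Proposition \ref{proptamecond} together with Remark \ref{defnofT} identifies these as (a subset of) $\{\sigma_t' : t \in T\}$. Consequently, the irreducible subquotients of $\kappa_{\rL}' \otimes \cP_{\sigma'}$ are supercuspidal types $\lambda_t'$; by Lemma \ref{lem 001}, the composition factors of $\ind_{J_{\rL}'}^{\rL'}(\kappa_{\rL}' \otimes \cP_{\sigma'})$ lie in the supercuspidal classes $[\rL', \tau_t']$; and the composition factors of $P$ are therefore irreducibles of $\rG'$ whose supercuspidal supports lie in the finite union $\bigcup_{t} [\rL', \tau_t']$.

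The remaining step is to show that these composition factors all belong to a single block. The key input is the indecomposability of $\cP_{\sigma'}$: by Lemma \ref{finlem001}, its composition factors are pairwise $\sim_\ell$-linked in $\mathbb{L}'$, and the Loewy filtration of $\cP_{\sigma'}$ produces non-split extensions between them. Applying the exact functors $\kappa_{\rL}' \otimes -$, $\ind_{J_{\rL}'}^{\rL'}$ and $i_{\rL'}^{\rG'}$ transports these extensions to $\mathrm{Rep}_k(\rG')$, where a non-split extension forces at least one pair of irreducible subquotients of its two sides to belong to a common block (otherwise the block idempotents would split the extension). Iterating through the Loewy layers of $\cP_{\sigma'}$ should then chain all composition factors of $P$ into a single block of $\rG'$.

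The principal obstacle is precisely this transport step: the functors $\ind_{J_{\rL}'}^{\rL'}$ and $i_{\rL'}^{\rG'}$ are not fully faithful, so a non-split extension in $\mathbb{L}'$ could in principle become split in $\rG'$, and even when it remains non-split one must pin down which specific irreducibles are forced into a common block. Here the compatibility of $\mathbf{K}_{\rL}'$ with parabolic induction (Proposition \ref{propK'} and Corollary \ref{corKmax'KM'}) together with the irreducibility of $\kappa_{\rL}'$ provide the control needed to track the $\kappa_{\rL}'$-isotypic pieces through the induction chain and detect the block-linking on the nose.
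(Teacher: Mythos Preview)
Your outline has a real gap at the step you yourself flag as the ``principal obstacle''. You want to carry non-split extensions in $\cP_{\sigma'}$ forward through $\kappa_{\rL}'\otimes(-)$, $\ind_{J_{\rL}'}^{\rL'}$ and $i_{\rL'}^{\rG'}$, and then argue that non-splitness survives. But none of these functors is fully faithful, and even if a given $2$-step extension remains non-split after induction, you do not control \emph{which} irreducible subquotients of the two induced pieces are actually linked in $\rG'$. Since each $i_{\rL'}^{\rG'}\ind_{J_{\rL}'}^{\rL'}(\kappa_{\rL}'\otimes\pi_i')$ typically has many composition factors, a surviving non-split extension might only link two irreducibles that were already in the same supercuspidal class, giving you no new block information. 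Your appeal to Proposition \ref{propK'} and Corollary \ref{corKmax'KM'} does not fix this: those results compute $\mathbf{K}_{\text{max}}'$ of a parabolically induced representation, they do not give a section of the induction functors that would let you detect non-splitness downstairs.

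The paper's argument avoids this transport problem entirely by working in the opposite direction. It \emph{assumes} $P\cong P_1\oplus P_2$ splits according to a partition of the set of supercuspidal supports, and then pulls this decomposition \emph{back} via adjunction: Frobenius reciprocity (for the pair $(i_{\rL'}^{\rG'}\ind_{J_{\rL}'}^{\rL'},\ \bar r_{\rL'}^{\rG'}\!\!\res_{J_{\rL}'})$) gives an embedding $\Pi\hookrightarrow (\bar r_{\rL'}^{\rG'}P_1)\vert_{J_{\rL}'}\oplus(\bar r_{\rL'}^{\rG'}P_2)\vert_{J_{\rL}'}$. Now one applies the exact functor $\mathbf{K}_{\rL}'$ to the images $\Pi_1,\Pi_2$ and uses Propositions \ref{prop2.11} and \ref{propK'} together with the uniqueness of supercuspidal support (Corollary \ref{coruniqsupcuspL'}) to see that each $\Pi_j$ can only contain those $\kappa_{\rL}'\otimes\pi_i'$ with $[J_{\rM_0}',\lambda_i']\in I_j$. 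Hence $\Pi_1\neq\Pi$, $\Pi_2\neq 0$, and $\Pi$ surjects onto an irreducible quotient of $\Pi_2$ not isomorphic to $\kappa_{\rL}'\otimes\sigma'$, contradicting the indecomposability of the projective cover $\Pi$. The key move is to argue by contradiction and pull the splitting back, rather than push Loewy layers forward.

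Two minor points. The statement does not claim $P$ is projective, so your first paragraph is unnecessary (though the reasoning there is essentially correct). And the proposition is stated for an arbitrary irreducible $\sigma'$ of $\mathbb{L}'$, not a supercuspidal one; accordingly the paper works with a possibly proper Levi $\mathbb{M}_0'\subset\mathbb{L}'$ carrying the supercuspidal support, via Corollary \ref{cor2.3}, rather than assuming the composition factors of $\cP_{\sigma'}$ are themselves supercuspidal.
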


\begin{proof}
Let $\{\pi_i',i=1,\cdots,s\}$ be the set of irreducible subquotients of $\cP_{\sigma'}$. By Corollary \ref{cor2.3}, there is a standard Levi subgroup $\mathbb{M}_0'
$, that for each $i$ we can find $(\mathbb{M}_0',\sigma_i')$ belonging to the supercuspidal support of $\pi_i'$. Then as in Section \ref{sectionK'Levi} there exists a standard Levi subgroup $\rM_0'\subset \rL$, and a maximal simple stratum $(\fA_{\rM_0},0,\beta)$ of $\rM_0$, as well as a wild-pair $(J_{\rM_0},\kappa_{\rM_0})$ which verifies Equation \ref{equationcompat10}, \ref{equationcompt11} (by replacing $\kappa_{\text{max}}^{\rG}$ by $\kappa_{\rL}$ and $\kappa_{\text{max}}^{\rM}$ by $\kappa_{\rM_0}$), such that $(J_{\rM_0}',\lambda_i'\cong\kappa_{\rM_0}'\otimes\sigma_i')$ belongs to the supercuspidal support of $(J_{\rL}',\kappa_{\rL}'\otimes\pi_i')$ (see Definition \ref{defnJ'supcuspsupp}). By Corollary \ref{corJ'supcuspsupp}, the supercuspidal support of any irreducible subquotients of $i_{\rL'}^{\rG'}\ind_{J_{\rL}'}^{\rL'}\kappa_{\rL}'\otimes\cP_{\sigma'}$ contains $(J_{\rM_0}',\lambda_i')$ for an $i$.

Let $I:=\{[J_{\rM_0}',\lambda_i'],i=1,\cdots,s\}$ be the set of $\rM_0'$-conjugacy classes of $(J_{\rM_0}',\lambda_i')$. Now suppose $P$ is split, which means there exists a non-trivial partition of $I=I_1\cup I_2$, such that the full subcategory $\mathrm{Rep}_k(\rG')_I\cong\mathrm{Rep}_k(\rG')_{I_1}\times\mathrm{Rep}_k(\rG')_{I_2}$, with a decomposition $P\cong P_1\oplus P_2$, where $P_1\in\mathrm{Rep}_k(\rG')_{I_1}$ and $P_2\in\mathrm{Rep}_k(\rG')_{I_2}$. We may assume that $[J_{\rM_0}',\lambda_1']\in I_1$ be the supercuspidal support of $[J_{\rL}',\sigma']$, and suppose $[J_{\rM_0}',\lambda_2']\in I_2$.  We have a filtration 
$$P^1\subset\cdots\subset P^s\cong P,$$
such that $P^i\slash P^{i-1}\cong i_{\rL'}^{\rG'}\ind_{J_{\rL}'}^{\rL'}\kappa_{\rL}'\otimes\pi_i'$.  Let $P^{i,1}$ and $P^{i,2}$ be the image of $P^i$ in the quotient space $P_1$ and $P_2$ accordingly. Hence by Corollary \ref{corsupcuspK'}, $P^{i,1}\slash P^{i-1,1}$ is non-trivial if and only if $[J_{\rM_0}',\lambda_i']\in I_1$. By Frobenius reciprocity 
$$\Pi\hookrightarrow \bar{r}_{\rL'}^{\rG'}P_1\vert_{J_{\rL}'}\oplus \bar{r}_{\rL'}^{\rG'}P_2\vert_{J_{\rL}'},$$
where $\bar{r}$ denotes the opposite parabolic restriction. Let $\Pi_j$ be the image of $\Pi$ in $P_j\vert_{J_{\rL}'}$ for $j=1,2$. 

We consider the functor $\mathbf{K}_{\rL}'$ defined with respect to $\kappa_{\rL}'$, which is exact by Remark \ref{exactK}. Let $\rho'$ be an element in the supercuspidal support of irreducible subquotients of $\mathbf{K}_{\rL}'(\ind_{J_{\rL}'}^{\rL'}\kappa_{\rL}'\otimes\pi_i')$. Since $\ind_{J_{\rL}'}^{\rL'}\kappa_{\rL}'\otimes\pi_i'$ is a subquotient of $i_{\rM_0'}^{\rL'}\ind_{J_{\rM_0}'}^{\rM_0'}\lambda_i'$, by Proposition \ref{prop2.11} and Proposition \ref{propK'}, we have $(J_{\rM_0}',\kappa_{\rM_0}'\otimes\rho')\in[J_{\rM_0}',\lambda_i']$. Hence $\Pi_1\neq \Pi$ and it does not contain $\kappa_{\rL}'\otimes\pi_2'$ as a subquotient by the uniqueness of supercuspidal support Corollary \ref{coruniqsupcuspL'}. By the same reason, $\Pi_2$ is non-trivial, and non of its irreducible subquotients is equivalent to $\kappa_{\rL}'\otimes\sigma'$. Hence there is a surjective morphism from $\Pi$ to an irreducible quotient of $\Pi_2$, different from $\kappa_{\rM}'\otimes\sigma$, which is contradicted to the fact that $\Pi$ is indecomposable.
\end{proof}

Now we come back to the supercuspidal $k$-type $(J_{\rL}',\lambda_{\rL}')$. Recall that $\rL_{\text{max}}$ is the homogeneous Levi subgroup of $\kappa_{\rL}$ in $\rG$ and $\rL_{\text{max}}'=\rL_{\text{max}}\cap\rG'$. Recall that $B$  (see Definition \ref{defnB})  and $T$ (see Remark \ref{defnofT}) are two finite set in $\rL$. Let $\{(J_{\rL}',\lambda_t'),t\in T\}$ be the set of supercuspidal $k$-types in Remark \ref{defnofT}, where $\lambda_t'\cong\kappa_{\rL}'\otimes\sigma_t'$. They are $(E_{\rL}^{\times}J_{\rL})'$-conjugate, and also $\mathbb{L}$-conjugate. We occupy the notations in Section \ref{sectionKb'}. 

\begin{defn}
\label{defcPsim}
Let
\begin{itemize}
\item $\mathcal{B}_{\sigma_{\rL}'}$ be the $\ell$-parablock of $\mathbb{L}_{\text{max}}':=J_{max,b}'\slash J_{max,b}^{1'}$ which contains $i_{\mathbb{L}'}^{\mathbb{L}_{\text{max}}'}\sigma_{\rL}'$, and $\mathcal{B}_{t}$ the $\ell$-parablock of $\mathbb{L}_{\text{max}}':=J_{max,b}'\slash J_{max,b}^{1'}$ which contains $i_{\mathbb{L}'}^{\mathbb{L}_{\text{max}}'}\sigma_{t}'$.
\item Let $\cP(\mathcal{B}_{\sigma_{\rL}'})$ be a projective generator of $\mathcal{B}_{\sigma_{\rL}'}$. Define
$$\cP_{[\lambda_{\rL}',\sim],b,0}:=i_{\rL_{\text{max}}'}^{\rG'}\ind_{J_{max,b}'}^{\rL_{\text{max}}'}\kappa_{max,b}'\otimes\cP(\mathcal{B}_{\sigma_{\rL}'});$$
\item Let $\cP(\mathcal{B}_{t})$ be a projective generator of $\mathcal{B}_{t}$. Define
$$\cP_{[\lambda_{\rL}',\sim],b,t}:=i_{\rL_{\text{max}}'}^{\rG'}\ind_{J_{max,b}'}^{\rL_{\text{max}}'}\kappa_{max,b}'\otimes\cP(\mathcal{B}_{t}).$$
Define
$$\cP_{[\lambda_{\rL}',\sim],b}:=\oplus_{t\in T}\cP_{[\lambda_{\rL}',\sim],b,t},$$
and
$$\cP_{[\lambda_{\rL}',\sim]}:=\oplus_{b\in B}\cP_{[\lambda_{\rL}',\sim],b}.$$
\end{itemize}
\end{defn}

Second adjunction has been proved in \cite{DHKM} for $\ell$-modular setting, which implies that parabolic induction preserve projectivity. Hence $\cP_{[\lambda_{\rL}',\sim],b,t}$ is projective.

\begin{lem}
\label{lemconjBt}
For any $\sigma_1'\in\mathbf{SC}(\cP(\mathcal{B}_t))$, there exists $\sigma_0'\in\mathbf{SC}(\cP(\mathcal{B}_{\sigma_{\rL}'}))$, such that the supercuspidal type $\lambda_1'=\kappa_{\rL}'\otimes\sigma_1'$ is $\rL'$-conjugate to $\lambda_0'=\kappa_{\rL}'\otimes\sigma_0'$.
 \end{lem}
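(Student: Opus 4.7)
The plan is to transport the projective generator $\cP(\mathcal{B}_{\sigma_{\rL}'})$ to a projective generator of $\mathcal{B}_t$ by $t$-conjugation, extract a witness $\sigma_0'$ for the given $\sigma_1'$, and then correct $t$ by a suitable element of $E_{\rL}^{\times}$ so that the resulting conjugator lies in $\rL'$. Since $t\in N_{\lambda_{\rL}'}\subset\rU(\mathfrak{B}_{\rL})$ has image $\bar t\in\mathbb{L}\subset\mathbb{L}_{\text{max}}$ and satisfies $\sigma_t'=\bar t(\sigma_{\rL}')$ by Remark~\ref{defnofT}, for any irreducible subquotient $\rho_1$ of $i_{\mathbb{L}'}^{\mathbb{L}_{\text{max}}'}\sigma_{\rL}'\in\mathcal{B}_{\sigma_{\rL}'}$, its $\bar t$-conjugate $\rho_2:=\bar t(\rho_1)$ is an irreducible subquotient of $i_{\mathbb{L}'}^{\mathbb{L}_{\text{max}}'}\sigma_t'\in\mathcal{B}_t$. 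Hence the hypotheses of Proposition~\ref{propconjparabl} hold with conjugator $\bar t\in\mathbb{L}$ (the ``up to $\mathbb{L}_{\text{max}}'$-conjugation'' step being trivial here), so $\bar t(\cP(\mathcal{B}_{\sigma_{\rL}'}))$ is a projective generator of $\mathcal{B}_t$.

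Two projective generators of the same abelian category share the same set of irreducible subquotients, and therefore the same $\mathbf{SC}$; this yields
\[
\mathbf{SC}(\cP(\mathcal{B}_t))\;=\;\bar t\bigl(\mathbf{SC}(\cP(\mathcal{B}_{\sigma_{\rL}'}))\bigr).
\]
Given $\sigma_1'\in\mathbf{SC}(\cP(\mathcal{B}_t))$, there is then $\sigma_0'\in\mathbf{SC}(\cP(\mathcal{B}_{\sigma_{\rL}'}))$ with $(\mathbb{L}',\sigma_1')$ and $(\mathbb{L}',\bar t(\sigma_0'))$ in a common $\mathbb{L}_{\text{max}}'$-conjugacy class. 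Using that $\mathbb{L}_{\text{max}}'$ is normal in $\mathbb{L}_{\text{max}}$, I would replace $\sigma_0'$ by a suitable $\mathbb{L}_{\text{max}}'$-conjugate (which does not leave $\mathbf{SC}(\cP(\mathcal{B}_{\sigma_{\rL}'}))$) so as to arrange the strict isomorphism $\sigma_1'\cong\bar t(\sigma_0')$ of representations of $\mathbb{L}'$.

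It remains to lift this identity to an $\rL'$-conjugation of the supercuspidal types. Since $\det(t)\in\det(E_{\rL}^{\times})$ by the definition of $N_{\lambda_{\rL}'}$, fix $\alpha\in E_{\rL}^{\times}$ with $\det(\alpha)=\det(t)$ and set $u:=\alpha^{-1}t$; then $\det(u)=1$, so $u\in (E_{\rL}^{\times}\rU(\mathfrak{B}_{\rL}))'\subset\rL'$. By the arguments in the proof of Proposition~\ref{proptamecond}, every element of $(E_{\rL}^{\times}\rU(\mathfrak{B}_{\rL}))'$ normalises $\kappa_{\rL}'$, while $\alpha\in E_{\rL}^{\times}$ centralises $\rU(\mathfrak{B}_{\rL})$ and hence acts trivially by conjugation on $\mathbb{L}'$ and on any representation of $\mathbb{L}'$ inflated to $J_{\rL}'$. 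Consequently
\[
u(\lambda_0')\;=\;u(\kappa_{\rL}')\otimes u(\sigma_0')\;\cong\;\kappa_{\rL}'\otimes\bar t(\sigma_0')\;\cong\;\kappa_{\rL}'\otimes\sigma_1'\;=\;\lambda_1',
\]
with $u\in\rL'$, as required. The main obstacle is the compatibility between conjugation at the $p$-adic level (by $t\in\rL$) and on the finite reductive quotient (by $\bar t\in\mathbb{L}$); this matching survives the correction by $\alpha$ precisely because $E_{\rL}^{\times}$ commutes with $\rU(\mathfrak{B}_{\rL})$, exactly the commutation exploited in the proof of Proposition~\ref{proptamecond}.
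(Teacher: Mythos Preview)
Your proof is correct and follows essentially the same approach as the paper: both invoke Proposition~\ref{propconjparabl} to see that $t(\cP(\mathcal{B}_{\sigma_{\rL}'}))$ is a projective generator of $\mathcal{B}_t$, then take $\sigma_0'=t^{-1}(\sigma_1')$ (up to an $\mathbb{L}_{\text{max}}'$-conjugation) and use that $t$ normalises $\kappa_{\rL}'$. The only difference is in the last step: the paper observes that $\lambda_0'$ and $\lambda_1'$, being $N_{\lambda_{\rL}'}$-conjugate direct components of $\lambda_{\rL}\vert_{J_{\rL}'}$, are weakly intertwined by Proposition~\ref{proptamecond} and hence $\rL'$-conjugate, whereas you exhibit the $\rL'$-conjugator explicitly as $u=\alpha^{-1}t$ with $\alpha\in E_{\rL}^{\times}$ correcting the determinant; your version is slightly more direct and avoids the detour through weak intertwining.
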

 \begin{proof}
Up to a conjugation in $\mathbb{L}_{\text{max}}'$, we can assume that $\lambda_1'$ is a direct component of $\lambda_{\rL}\vert_{J_{\rL}'}$. Since $t(\lambda_{\rL}')\cong\lambda_t'$, we know from Proposition \ref{propconjparabl} that $t(\cP(\mathcal{B}_{\sigma_{\rL}'}))$ is a projective generator of $\mathcal{B}_t$. Since $t$ normalises $\kappa_{\rL}'$, the conjugation $t^{-1}(\lambda_1')$ is the desired $\lambda_0'.$
By Part 2 of Proposition \ref{proptamecond}, $\lambda_1'$ is weakly intertwined with $\lambda_0'$, hence they are conjugate in $\rL'$.
 \end{proof}

Denote by $\mathrm{Irr}[J_{\rL}',\lambda_{\rL}']$ the set of isomorphism classes of irreducible representations of $\rG'$ of which the supercuspidal supports contain a supercuspidal type in $[J_{\rL}',\lambda_{\rL}']$. For $\Pi\in\mathrm{Rep}_k(\rG)$, denote by $\mathrm{Irr}(\Pi)$ the set of equivalence classes of irreducible subquotients of $\Pi$.

\begin{prop}
\label{IrrPb}
The set $\mathrm{Irr}(\cP_{[\lambda_{\rL}',\sim],b,t})$ does not depend on the choice of $b$ nor $t$, and we denote it by $\mathrm{Irr}[\lambda_{\rL}',\sim]$. In particular, $\mathrm{Irr}(\cP_{[\lambda_{\rL}',\sim]})=\mathrm{Irr}(\cP_{[\lambda_{\rL}',\sim],b,t})$.
\end{prop}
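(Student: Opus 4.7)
The plan is to characterize $\mathrm{Irr}(\cP_{[\lambda_{\rL}',\sim],b,t})$ via Frobenius reciprocity together with the $\mathbf{K}$-functors, and then to show this characterization is invariant under the choices of $b \in B$ and $t \in T$.

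First, I would combine second adjunction (available in the $\ell$-modular setting by \cite{DHKM}) with the adjunction between tensoring with $\kappa_{max,b}'$ and applying $\mathbf{K}_{\rM,b}'$ at the $\rL_{\text{max}}'$-level to obtain, for any irreducible $\pi$ of $\rG'$, an isomorphism
\begin{equation*}
\mathrm{Hom}_{\rG'}\bigl(\cP_{[\lambda_{\rL}',\sim],b,t},\pi\bigr) \cong \mathrm{Hom}_{\mathbb{L}_{\text{max}}'}\bigl(\cP(\mathcal{B}_t),\, \mathbf{K}_{\rM,b}'(r_{\rL_{\text{max}}'}^{\rG'}\pi)\bigr).
\end{equation*}
Since $\cP(\mathcal{B}_t)$ is a projective generator of $\mathcal{B}_t$ (Proposition \ref{fin003}), the right-hand side is non-zero precisely when $\mathbf{K}_{\rM,b}'(r_{\rL_{\text{max}}'}^{\rG'}\pi)$ admits an irreducible subquotient in $\mathcal{B}_t$. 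Because $\cP_{[\lambda_{\rL}',\sim],b,t}$ is projective, $\pi \in \mathrm{Irr}(\cP_{[\lambda_{\rL}',\sim],b,t})$ exactly when $\pi$ lies in the same block of $\rG'$ as some irreducible quotient satisfying the above condition, so the set of irreducible subquotients is determined by which $\rG'$-blocks meet this condition.

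Next, for independence in $t$ with $b$ fixed, I would invoke Proposition \ref{propconjparabl} to identify $\mathcal{B}_t$ with the $t$-conjugate of $\mathcal{B}_{\sigma_{\rL}'}$ inside $\mathbb{L}_{\text{max}}'$. Since $t \in \rU(\mathfrak{B}_{\rL}) \subset J_{\text{max}}$, conjugation by $t$ normalizes the wild pair up to an inner automorphism, so the $\mathbf{K}$-characterization translates into a $t$-equivariant condition; combined with Lemma \ref{lemconjBt} (which identifies the supercuspidal types associated to $\mathcal{B}_t$ as $\rL'$-conjugates of those from $\mathcal{B}_{\sigma_{\rL}'}$) and with Proposition \ref{prop2.11} together with Remark \ref{propcompKpara}, this yields independence of $t$. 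The analogous argument for independence in $b$ uses Proposition \ref{propdetinclus} and the fact that distinct $b \in B$ correspond to wild pairs that are $\rG$-conjugate but not $\rG'$-conjugate in general; the non-uniqueness of supercuspidal support in $\rG'$ captured by the quotient $\det(\mathrm{Stab}_{\rL}(\kappa_{\rL}'))/\det(\mathrm{Stab}_{\rL}(\kappa_{\text{max}}^{\rG,'}))$ that indexes $B$ ensures that an irreducible $\pi$ satisfying the condition for one $b$ also satisfies it for every $b \in B$.

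The main obstacle is the careful bookkeeping needed to verify that $b$- and $t$-conjugation (typically not inner to $\rG'$) preserve the set of irreducible subquotients while possibly permuting individual irreducibles inside it; the compatibility between parabolic and parahoric induction (Proposition \ref{propK'} and Remark \ref{propcompKpara}) is the essential technical input that transfers these conjugations between the $p$-adic and finite reductive settings, and the unicity statement of Corollary \ref{coruniqsupcuspL'} must be exploited at the level of $\mathbb{L}_{\text{max}}'$ to bridge the two sides. The ``in particular'' claim then follows from the exactness of parabolic induction together with the fact that $\mathrm{Irr}$ of a finite direct sum is the union of the $\mathrm{Irr}$'s of its summands.
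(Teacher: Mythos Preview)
Your approach has a genuine gap. The Frobenius reciprocity identity you write characterizes the irreducible \emph{quotients} of $\cP_{[\lambda_{\rL}',\sim],b,t}$, not its irreducible subquotients: $\Hom_{\rG'}(\cP,\pi)\neq 0$ for a projective $\cP$ is equivalent to $\pi$ being a quotient of $\cP$, but an indecomposable projective can have composition factors that are not quotients. You try to close this gap by saying that the set of subquotients is ``determined by which $\rG'$-blocks meet this condition''. This is circular: the blocks of $\rG'$ are precisely what the paper is in the process of constructing, and Proposition~\ref{IrrPb} is an input to that construction (it is used in Proposition~\ref{formprop006}, Proposition~\ref{formlem007} and ultimately Theorem~\ref{thmblocksdecom}). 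Even setting circularity aside, knowing which blocks a projective object meets does not determine the set of its composition factors, only an upper bound. Also note that you invoke $\mathbf{K}_{\rM,b}'$ where the relevant functor at the $\rL_{\text{max}}'$-level is $\mathbf{K}_b'$, and the adjunction you use between tensoring by $\kappa_{max,b}'$ and applying the $\mathbf{K}$-functor is not established in the paper.

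The paper's proof avoids all of this by working directly with $\mathrm{Irr}$ rather than with $\Hom$. Using exactness of $i_{\rL_{\text{max}}'}^{\rG'}\ind_{J_{max,b}'}^{\rL_{\text{max}}'}(\kappa_{max,b}'\otimes -)$ and Proposition~\ref{finprop004}, it expresses $\mathrm{Irr}(\cP_{[\lambda_{\rL}',\sim],b,0})$ as a union indexed by $\mathbf{SC}(\cP(\mathcal{B}_{\sigma_{\rL}'}))$. For the $t$-independence it uses (as you suggest) Proposition~\ref{propconjparabl} and Lemma~\ref{lemconjBt}. For the $b$-independence the key step is different from what you propose: the cover equivalences of Corollary~\ref{iG'cover} and Proposition~\ref{prop2.6} give
\[
\ind_{J_{max,b}'}^{\rL_{\text{max}}'}\kappa_{max,b}'\otimes i_{\mathbb{L}'}^{\mathbb{L}_{\text{max}}'}\sigma_i'\ \cong\ \ind_{J_{\rP}'}^{\rL_{\text{max}}'}\kappa_{\rP}'\otimes\sigma_i',
\]
and the right-hand side involves the $\rL_{\text{max}}'$-cover $(J_{\rP}',\kappa_{\rP}')$, which is manifestly independent of $b$. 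This rewrites $\mathrm{Irr}(\cP_{[\lambda_{\rL}',\sim],b,0})$ in a $b$-free form directly, without needing any $\mathbf{K}$-functor or $\Hom$-characterization.
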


\begin{proof}
First we show that for a fixed $b\in B$, for any $t\in T$ 
$$\mathrm{Irr}(\cP_{[\lambda_{\rL}',\sim],b,t})=\mathrm{Irr}(\cP_{[\lambda_{\rL}',\sim],b,0}).$$
In fact, we have
$$\mathrm{Irr}(\cP_{[\lambda_{\rL}',\sim],b,0})=\bigcup_{\sigma'\in\mathbf{SC}(\cP(\mathcal{B}_{\sigma_{\rL}'}))}\mathrm{Irr}(i_{\rL'}^{\rL_{\text{max}}'}\ind_{J_{\rL}'}^{\rL'}\kappa_{\rL}'\otimes\sigma_{\rL}').$$
From Proposition \ref{propconjparabl} we deduce that $t(\mathbf{SC}(\mathcal{B}_{\sigma_{\rL}'}))=\mathbf{SC}(\mathcal{B}_t)$, hence
$$\mathrm{Irr}(\cP_{[\lambda_{\rL}',\sim],b,t})=\bigcup_{\sigma'\in\mathbf{SC}(\cP(\mathcal{B}_{\sigma_{\rL}'}))}\mathrm{Irr}(i_{\rL'}^{\rL_{\text{max}}'}\ind_{J_{\rL}'}^{\rL'}t(\kappa_{\rL}'\otimes\sigma_{\rL}')).$$
By Lemma \ref{lemconjBt} we have
$$\ind_{J_{\rL}'}^{\rL'}t(\kappa_{\rL}'\otimes\sigma_{\rL}')\cong\ind_{J_{\rL}'}^{\rL'}\kappa_{\rL}'\otimes\sigma_{\rL}'.$$

Now we allow variance on $b\in B$. An element $\pi'$ in $\mathrm{Irr}(\cP_{[\lambda_{\rL}',\sim],b,0})$ is an irreducible subquotient of $i_{\rL_{\text{max}}'}^{\rG'}\ind_{J_{max,b}'}^{\rL_{\text{max}}'}\kappa_{max,b}'\otimes\rho'$ where $\rho'\in\mathcal{B}_{\sigma_{\rL}'}$.
For each $\rL'$-conjugacy class in $\mathbf{SC}(\mathcal{B}_{\sigma_{\rL}'})$, we fix a representative, and denote their set = by $\mathrm{SC}(\mathcal{B}_{\sigma_{\rL}'})$. By Proposition \ref{finprop004}, we can write $\mathrm{SC}(\mathcal{B}_{\sigma_{\rL}'})=\{(\mathbb{L},\sigma_i),i\in I\}$ where $I$ is a finite index. Let $(\mathbb{L},\sigma_i')$ be an element in the supercuspidal support of $\rho'$. By Proposition \ref{iG'cover}, we have
$$i_{\rL'}^{\rG'}\ind_{J_{\rL}'}^{\rL'}\kappa_{\rL}'\otimes\sigma_i'\cong i_{\rL_{\text{max}}'}^{\rG'}\ind_{J_{\rP}'}^{\rL_{\text{max}}'}\kappa_{\rP}'\otimes\sigma_i'.$$
For the righthand side, by Proposition \ref{prop2.6}, Equation \ref{equationcompat10} and Remark \ref{remkappaconj}, we have
$$\ind_{J_{\rP}'}^{\rL_{\text{max}}'}\kappa_{\rP}'\otimes\sigma_i'\cong \ind_{J_{max,b}'}^{\rL_{\text{max}}'}\kappa_{max,b}'\otimes i_{\mathbb{L}'}^{\mathbb{L}_{\text{max}}'}\sigma_i',$$
which implies that $\pi'\in\mathrm{Irr}(\ind_{J_{\rP}'}^{\rL_{\text{max}}'}\kappa_{\rP}'\otimes\sigma_i')$. We conclude that
$$\mathrm{Irr}(\cP_{[\lambda_{\rL}',\sim],b,0})=\bigcup_{i\in I}\mathrm{Irr}(\ind_{J_{\rP}'}^{\rL_{\text{max}}'}\kappa_{\rP}'\otimes\sigma_i'),$$
hence the former is independent of $b\in B$.
\end{proof}

\begin{prop}
\label{formprop006}
Fix $b\in B$. Recall that $\rL$ is a Levi subgroup of $\rG$ such that $\rL'=\rL\cap\rG'$. Recall that $[J_{\rL}',\lambda_{\rL}']_{\rL}$ is the $\rL$-conjugacy class of $(J_{\rL}',\lambda_{\rL}')$. Let $\mathrm{Irr}[J_{\rL}',\lambda_{\rL}']_{\rL}$ be the set of isomorphism classes of irreducible representations of $\rG'$ of which the supercuspidal supports contain a supercuspidal type in the $\rL$-conjugacy class of $(J_{\rL}',\lambda_{\rL}')$. Then
\begin{itemize}
\item $\mathrm{Irr}[\lambda_{\rL}',\sim]\subset\mathrm{Irr}[J_{\rL}',\lambda_{\rL}']_{\rL}$.
\item Suppose for $g\in\mathbb{L}$ the intersection $\mathrm{Irr}[J_{\rL}',g(\lambda_{\rL}')]\cap\mathrm{Irr}[\lambda_{\rL}',\sim]$ is non-empty, then $\mathrm{Irr}[J_{\rL}',g(\lambda_{\rL}')]\subset\mathrm{Irr}[\lambda_{\rL}',\sim]$.
\end{itemize}
\end{prop}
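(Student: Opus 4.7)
The plan is to identify the irreducible subquotients of $\cP_{[\lambda_{\rL}',\sim]}$ explicitly via parabolic and parahoric inductions of supercuspidal types, and then to match their supercuspidal supports with the $\rL$-conjugacy class of $(J_{\rL}',\lambda_{\rL}')$.

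For part $1$, I begin by invoking Proposition \ref{IrrPb}: any $\pi'\in\mathrm{Irr}[\lambda_{\rL}',\sim]$ is an irreducible subquotient of $\cP_{[\lambda_{\rL}',\sim],b,0}$ for any fixed $b\in B$. Using the decomposition of $\cP(\mathcal{B}_{\sigma_{\rL}'})$ from Proposition \ref{fin003} together with the cover identifications in Proposition \ref{iG'cover} and Proposition \ref{prop2.6}, I rewrite $\cP_{[\lambda_{\rL}',\sim],b,0}$ as a direct sum of objects of the form $i_{\rL'}^{\rG'}\ind_{J_{\rL}'}^{\rL'}\kappa_{\rL}'\otimes\cP_{\rho'}$ for $\rho'$ ranging over a set of representatives of cuspidal supports in $\mathcal{B}_{\sigma_{\rL}'}$. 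Consequently $\pi'$ is a subquotient of $i_{\rL'}^{\rG'}\ind_{J_{\rL}'}^{\rL'}\kappa_{\rL}'\otimes\sigma'$ for some supercuspidal $\sigma'\in\mathbf{SC}(\cP(\mathcal{B}_{\sigma_{\rL}'}))$. By Lemma \ref{lem 001}, the supercuspidal class of $\pi'$ at the level of $\rG'$ then coincides with that determined by the supercuspidal type $(J_{\rL}',\kappa_{\rL}'\otimes\sigma')$. It remains to show that $(J_{\rL}',\kappa_{\rL}'\otimes\sigma')$ is $\rL$-conjugate to $(J_{\rL}',\lambda_{\rL}')$: by Proposition \ref{finprop004}, $\sigma'$ lies in the $\mathbb{L}_{\max}$-conjugacy class of $\sigma_{\rL}'$, and lifting this conjugation from $\mathbb{L}_{\max}=J_{\max}/J_{\max}^1$ to $J_{\max}\subset\rL_{\max}$, combined with Lemma \ref{lemconjBt} and the fact that $b\in\mathrm{Stab}_{\rL}(\kappa_{\rL}')$ preserves $\kappa_{\rL}'$ (see Section \ref{sectionKb'}), should produce an $\rL$-conjugacy.

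For part $2$, the non-empty intersection yields a $\pi_0'\in\mathrm{Irr}[J_{\rL}',g(\lambda_{\rL}')]\cap\mathrm{Irr}[\lambda_{\rL}',\sim]$ whose supercuspidal support is simultaneously $\rL'$-conjugate to $(J_{\rL}',g(\lambda_{\rL}'))$ and of the form $(J_{\rL}',\kappa_{\rL}'\otimes\sigma')$ for some $\sigma' \in \mathbf{SC}(\cP(\mathcal{B}_{\sigma_{\rL}'}))$ up to $\rL'$-conjugacy (by the analysis of part $1$). Thus $(J_{\rL}',g(\lambda_{\rL}'))$ is $\rL'$-conjugate to $(J_{\rL}',\kappa_{\rL}'\otimes\sigma')$. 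Any other $\pi''\in\mathrm{Irr}[J_{\rL}',g(\lambda_{\rL}')]$ has supercuspidal support $\rL'$-conjugate to $(J_{\rL}',g(\lambda_{\rL}'))$, hence to $(J_{\rL}',\kappa_{\rL}'\otimes\sigma')$; so $\pi''$ is an irreducible subquotient of $i_{\rL'}^{\rG'}\ind_{J_{\rL}'}^{\rL'}\kappa_{\rL}'\otimes\sigma'$, and therefore of $\cP_{[\lambda_{\rL}',\sim]}$, giving $\pi''\in\mathrm{Irr}[\lambda_{\rL}',\sim]$.

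The main obstacle is the refinement from $\mathbb{L}_{\max}$-conjugacy to $\rL$-conjugacy in part $1$. A Weyl permutation of blocks within an endo-class $I_s$ belongs to $\rL_{\max}\setminus\rL$ in general, so not every $\mathbb{L}_{\max}$-conjugation of $\sigma_{\rL}'$ lifts directly to an $\rL$-conjugation of $(J_{\rL}',\lambda_{\rL}')$. I expect to resolve this by combining the structural compatibilities in Remark \ref{remlatticedecomp} and Proposition \ref{propLevibij} with Lemma \ref{lemconjBt} to show that all relevant $\sigma'$ in $\mathbf{SC}(\cP(\mathcal{B}_{\sigma_{\rL}'}))$ are, after adjustment by an element of $\rL'$, $\rL$-conjugate (and not only $\rL_{\max}$-conjugate) to $\sigma_{\rL}'$.
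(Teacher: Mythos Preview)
Your overall strategy for both parts is the same as the paper's, and your argument for part~2 is essentially correct. The gap is precisely the ``obstacle'' you flag at the end of part~1, and the tools you propose to resolve it (Remark~\ref{remlatticedecomp}, Proposition~\ref{propLevibij}, Lemma~\ref{lemconjBt}) are not what is needed.

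The actual resolution is a one-line group-theoretic observation that you have overlooked: the determinant maps on $\mathbb{L}$ and on $\mathbb{L}_{\max}$ have the \emph{same} image in $k_F^{\times}$ (this follows from the description of these groups in Section~\ref{RSG} and from $\det(\rU(\fB_{\rL}))=\det(\rU(\fB_{\max}))$), so
\[
\mathbb{L}_{\max}/\mathbb{L}_{\max}'\;\cong\;\mathbb{L}/\mathbb{L}'.
\]
Thus any $\mathbb{L}_{\max}$-conjugate $g(\sigma_{\rL}')$ with $g\in\mathbb{L}_{\max}$ is already $\mathbb{L}_{\max}'$-conjugate to $h(\sigma_{\rL}')$ for some $h\in\mathbb{L}$. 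Since $\mathbb{L}_{\max}'$-conjugacy does not change the $\mathbb{L}_{\max}'$-supercuspidal class, you may replace $g$ by $h\in\mathbb{L}$. Now $h$ lifts to an element of $\rU(\fB_{\rL})\subset J_{\rL}\subset\rL$, and any such lift normalises $(J_{\rL}',\kappa_{\rL}')$; hence $(J_{\rL}',\kappa_{\rL}'\otimes h(\sigma_{\rL}'))=h(J_{\rL}',\lambda_{\rL}')$ lies in the $\rL$-conjugacy class $[J_{\rL}',\lambda_{\rL}']_{\rL}$, as required. Your worry about Weyl permutations in $\rL_{\max}\setminus\rL$ disappears because you never need to lift the original $g\in\mathbb{L}_{\max}$; you first pass to $\mathbb{L}$ at the finite level and only then lift.
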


\begin{proof}
For $\pi'\in\mathrm{Irr}[\lambda_{\rL}',\sim]$, there is an irreducible subquotient $\rho$ of $\cP(\mathcal{B}_{\sigma_{\rL}'})$ such that $\pi'$ is an irreducible subquotient of $i_{\rL_{\text{max}}'}^{\rG'}\ind_{J_{\text{max}}'}^{\rL_{\text{max}}'}\kappa_{\text{max}}'\otimes\rho$. By Proposition \ref{finprop004}, the supercuspidal support of $\rho$ is $[\mathbb{L}',g(\sigma_{\rL}')]$ for a $g\in\mathbb{L}_{\text{max}}$, and we can choose $g\in\mathbb{L}$ since $\mathbb{L}_{\max}\slash \mathbb{L}_{\max}'\cong\mathbb{L}\slash\mathbb{L}'$. Hence we can choose a representative of $g$ in $J_{\text{max}}$. By Corollary \ref{iG'cover} and Proposition \ref{prop2.6}, $i_{\rL_{\text{max}}'}^{\rG'}\ind_{J_{\text{max}}'}^{\rL_{\text{max}}'}\kappa_{\text{max}}'\otimes\rho$ is a subquotient of $i_{\rL'}^{\rG'}\ind_{J_{\rL}'}^{\rL'}\kappa_{\rL}'\otimes g(\sigma_{\rL}')$, which shows the first part. On the other hand, by the definition of $\mathcal{B}_{\sigma_{\rL}'}$, all irreducible subquotients of $i_{\mathbb{L}'}^{\mathbb{L}_{\text{max}}'}g(\sigma_{\rL}')$ are in $\mathcal{B}_{\sigma_{\rL}'}$. By Corollary \ref{iG'cover} and Proposition \ref{prop2.6} again, the irreducible subquotients of $i_{\rL'}^{\rG'}\ind_{J_{\rL}'}^{\rL'}\kappa_{\rL}'\otimes g(\sigma_{\rL}')$ are contained in $\mathrm{Irr}[\lambda_{\rL}',\sim]$, hence the result.
\end{proof}

\begin{lem}
\label{lemnocusp}
Suppose that $\rL_{\text{max}}'\neq\rG'$, then $\pi'$ is not cuspidal for $\pi'\in\mathrm{Irr}[\lambda_{\rL}',\sim]$.
\end{lem}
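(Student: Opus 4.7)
The plan is a proof by contradiction via lifting $\pi'$ to a cuspidal representation of $\rG$ and invoking the characterisation of $\rL_{\max}$ as the homogeneous Levi recalled in the Introduction. Suppose some $\pi'\in\mathrm{Irr}[\lambda_{\rL}',\sim]$ is cuspidal.

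Under the tameness condition the quotient $\rG/(\rZ_{\rG}\rG')$ is finite, so by standard Clifford theory there exists an irreducible smooth representation $\pi$ of $\rG$ such that $\pi'$ occurs as a direct summand of $\pi\vert_{\rG'}$. All other summands are $\rG$-conjugate to $\pi'$, and cuspidality is preserved under conjugation, so every summand of $\pi\vert_{\rG'}$ is cuspidal. The unipotent radical $\rN$ of a proper parabolic $\rL\rN$ of $\rG$ is contained in $\rG'$, hence $r_{\rL'}^{\rG'}(\pi\vert_{\rG'})\cong(r_{\rL}^{\rG}\pi)\vert_{\rL'}$ and similarly for the opposite Jacquet functor. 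The vanishing of the Jacquet modules of each summand therefore forces $r_{\rL}^{\rG}\pi=\bar r_{\rL}^{\rG}\pi=0$ for every proper Levi $\rL$, so $\pi$ is itself cuspidal of $\rG$.

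By Proposition \ref{formprop006}, the supercuspidal support of $\pi'$ lies in $[J_{\rL}',\lambda_{\rL}']_{\rL}$, so $\pi'$ is a subquotient of $i_{\rL'}^{\rG'}\tau'$ for a supercuspidal $\tau'$ of $\rL'$ containing (up to $\rL$-conjugacy) the supercuspidal type $(J_{\rL}',\lambda_{\rL}')$. A Mackey-type argument applied to the decomposition of $(i_{\rL}^{\rG}\tau)\vert_{\rG'}$, where $\tau$ is any supercuspidal representation of $\rL$ whose restriction to $\rL'$ contains $\tau'$, then shows that $\pi$ is a subquotient of $i_{\rL}^{\rG}\tau$, and that $\tau$ contains the supercuspidal type $(J_{\rL},\lambda_{\rL})$ up to $\rG$-conjugacy; in particular the wild part of $\tau$ is endo-equivalent to $\kappa_{\rL}$.

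Finally, the hypothesis $\rL_{\max}'\neq\rG'$ translates, via the bijection between standard Levi subgroups of $\rG$ and $\rG'$ fixed in the Notations, into $\rL_{\max}\subsetneq\rG$. The characterisation of $\rL_{\max}$ as the homogeneous Levi of $(J,\kappa)$ recalled in the Introduction then asserts that for every Levi $\rM$ strictly containing $\rL_{\max}$, the induced representation $i_{\rL}^{\rM}\tau$ has no cuspidal subquotient. Taking $\rM=\rG$ contradicts $\pi$ being a cuspidal subquotient of $i_{\rL}^{\rG}\tau$. The main obstacle is ensuring that the lift $\pi$ is cuspidal with the expected supercuspidal support; both points amount to Clifford-theoretic bookkeeping that is routine under the tameness assumption, but must be executed carefully since cuspidality in the $\ell$-modular setting is subtler than in the complex one.
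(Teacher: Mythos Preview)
Your argument is correct but follows a genuinely different route from the paper. The paper argues entirely inside $\rG'$ via type theory: assuming $\pi'$ cuspidal, it takes a cuspidal type $(J',\lambda')$ of $\pi'$ with wild part $\kappa'$, observes that $(J',\kappa')$ comes from a wild pair $(J,\kappa)$ on $\rG$, and then compares endo-classes. Since $\pi'\in\mathrm{Irr}[J_{\rL}',\lambda_{\rL}']_{\rL}$ by Proposition~\ref{formprop006}, the supercuspidal support of $(J',\lambda')$ forces the simple character in $\kappa$ to be endo-equivalent to $\theta_{\rL}$; but $\rL_{\max}\subsetneq\rG$ means precisely that no simple character on $\rG$ is endo-equivalent to $\theta_{\rL}$, giving the contradiction.

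Your approach instead passes to $\rG$ by Clifford theory and uses the representation-theoretic formulation of the same fact (no cuspidal subquotient of $i_{\rL}^{\rM}\tau$ for $\rM\supsetneq\rL_{\max}$). This is conceptually clean and avoids unpacking the cuspidal type of $\pi'$. Two points you gloss over deserve one line each: first, $(i_{\rL}^{\rG}\tau)\vert_{\rG'}\cong i_{\rL'}^{\rG'}(\tau\vert_{\rL'})$ since $\rG=\rG'\rP$, so $\pi'$ is a subquotient of $(i_{\rL}^{\rG}\tau)\vert_{\rG'}$ and hence $\pi\cong\rho\otimes\chi$ for some irreducible subquotient $\rho$ of $i_{\rL}^{\rG}\tau$ and some character $\chi$ of $F^{\times}$; second, twisting all blocks of $\tau$ by the same $\chi^{-1}\circ\det$ preserves the partition of $I$ into endo-classes, so $\rL_{\max}$ is unchanged and the bound still applies to $i_{\rL}^{\rG}(\tau\otimes\chi^{-1})$. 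With these remarks your proof is complete. The paper's version has the advantage of staying within the type-theoretic framework already set up and not invoking the global statement about cuspidal subquotients of $i_{\rL}^{\rM}\tau$ (which in the Introduction is stated as motivation rather than proved); yours has the advantage of being shorter and more transparent once that statement is available.
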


\begin{proof}
Suppose $\pi'\in\mathrm{Irr}[\lambda_{\rL}',\sim]$ is cuspidal, then it contains a cuspidal type $(J',\lambda')$, where $\lambda'\cong\kappa'\otimes\sigma'$ and $\sigma'$ is inflated from a cuspidal representation of $\mathbb{G}'\cong J'\slash J^{1'}$. Let $[\mathbb{M}',\tau']$ be its supercuspidal support. By a same argument in the proof of Proposition \ref{formprop0016}, there exists a standard Levi subgroup $\rM'$ and a wild pair $(J_{\rM}',\kappa_{\rM}')$ such that an element in the supercuspidal support of $\pi'$ contains $(J_{\rM}',\kappa_{\rM}'\otimes\tau')$, and there are wild-pairs $(J_{\rM},\kappa_{\rM})$ of $\rM$ (a Levi subgroup of $\rG$ such that $\rM'=\rM\cap\rG'$) and $(J,\kappa)$ of $\rG$ such that 
\begin{itemize}
\item $J\cap\rG'=J',J_{\rM}\cap\rM'=J_{\rM}'$;
\item $\kappa\vert_{J'}\cong\kappa',\kappa_{\rM}\vert_{J_{\rM}'}\cong\kappa_{\rM}'$;
\item The simple characters contained in $\kappa_{\rM}$ and in $\kappa$ are endo-equivalent (see Section \ref{Lmaxcovers}). 
\end{itemize}
However in Proposition \ref{formprop006}, we have proved that $\pi'\in\mathrm{Irr}[J_{\rL}',\lambda_{\rL}']_{\rL}$, hence $\rM'=\rL'$ and the simple character in $\kappa_{\rM}$ is conjugate $\theta_{\rL}$, hence the latter is endo-equivalent to the simple character in $\kappa$, which contradicts with the fact that the simple characters in $\rG$ is never endo-equivalent to $\theta_{\rL}$ when $\rL_{\text{max}}\neq\rG$, equivalently when $\rL_{\text{max}}'\neq\rG'$.
\end{proof}

\begin{lem}
\label{lemPsurjcusp}
Suppose $\rL_{\text{max}}'=\rG'$. For a cuspidal subquotient $\pi'\leq i_{\rL'}^{\rG'}\tau'$, there exist $b\in B,t\in T$ and cuspidal $\sigma_{b,t}'$ such that $\sigma_{b,t}'\leq i_{\mathbb{L}'}^{\mathbb{L}_{\text{max}}'}\sigma_{t}'$ and $\pi'$ contains $(J_{max,b}',\kappa_{max,b}'\otimes\sigma_{b,t}')$.
\end{lem}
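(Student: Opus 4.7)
The plan is to first produce a cuspidal $k$-type inside $\pi'$ whose wild part is conjugate to some $(J_{\max,b}',\kappa_{\max,b}')$, and then to extract the correct finite-group data by applying the exact functor $\mathbf{K}_b'$ to the inclusion $\pi'\leq i_{\rL'}^{\rG'}\tau'$.

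Since $\pi'$ is cuspidal, by the classification of cuspidal types of $\rG'$ recalled in Section~\ref{subsecmaxsimpG}, $\pi'$ contains a cuspidal type $(J_*',\kappa_*'\otimes\sigma_*')$ where $\sigma_*'$ is inflated from a cuspidal representation of the finite reductive quotient $J_*'/J_*'^{1}$. The constraint that the supercuspidal support of $\pi'$ lies in $[\rL',\tau']$ forces the simple character underlying $\kappa_*'$ to lift to a simple character of $\rG$ that is endo-equivalent to $\theta_{\rL}$. Since the hypothesis $\rL_{\max}'=\rG'$ means the endo-class is homogeneous in $\rG$, such a maximal wild pair is unique up to $\rG$-conjugation, so $(J_*',\kappa_*')$ is $\rG$-conjugate to $(J_{\max}^{\rG,'},\kappa_{\max}^{\rG,'})$.

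Next I reduce the conjugating element to a representative of $B$. Writing $(J_*',\kappa_*')=g(J_{\max}^{\rG,'},\kappa_{\max}^{\rG,'})$ with $g\in\rG$, the $\rG'$-conjugacy class depends only on the class of $g$ modulo $\rG'\cdot\mathrm{Stab}_{\rG}(\kappa_{\max}^{\rG,'})$. Using determinants together with Lemma~\ref{lemStabkappa'} and Proposition~\ref{propdetinclus}, this class admits a representative inside $\mathrm{Stab}_{\rL}(\kappa_{\rL}')$, and two such representatives are equivalent precisely when their determinants agree in the quotient $\det(\mathrm{Stab}_{\rL}(\kappa_{\rL}'))/\det(\mathrm{Stab}_{\rL}(\kappa_{\max}^{\rG,'}))$, which is exactly what $B$ parametrises by Definition~\ref{defnB}. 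After replacing $\pi'$ by a $\rG'$-conjugate I may therefore assume $(J_*',\kappa_*')=(J_{\max,b}',\kappa_{\max,b}')$ for some $b\in B$, so that $\pi'$ contains $(J_{\max,b}',\kappa_{\max,b}'\otimes\sigma_*')$.

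Finally, apply the exact functor $\mathbf{K}_b'$ of Definition~\ref{defK'functor} to the inclusion $\pi'\leq i_{\rL'}^{\rG'}\tau'$. Remark~\ref{propcompKpara} together with $\rL_{\max}'=\rG'$ gives
\begin{equation*}
\mathbf{K}_b'(\pi')\leq\mathbf{K}_b'(i_{\rL'}^{\rG'}\tau')\cong i_{\mathbb{L}'}^{\mathbb{L}_{\max}'}\mathbf{K}_{\rL}'(\tau').
\end{equation*}
Since $\pi'$ contains $\kappa_{\max,b}'\otimes\sigma_*'$, the representation $\sigma_*'$ embeds into $\mathbf{K}_b'(\pi')$. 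By Proposition~\ref{prop2.11}, applied to $\tau'$ as a quotient of $\ind_{J_{\rL}'}^{\rL'}\lambda_{\rL}'$, the composition factors of $\mathbf{K}_{\rL}'(\tau')$ lie in $\{\sigma_t':t\in T\}$, so any composition series of $\mathbf{K}_{\rL}'(\tau')$ induces a filtration of $i_{\mathbb{L}'}^{\mathbb{L}_{\max}'}\mathbf{K}_{\rL}'(\tau')$ with subquotients among the $i_{\mathbb{L}'}^{\mathbb{L}_{\max}'}\sigma_t'$. Hence $\sigma_*'$ is a composition factor of $i_{\mathbb{L}'}^{\mathbb{L}_{\max}'}\sigma_t'$ for some $t\in T$, and setting $\sigma_{b,t}':=\sigma_*'$ yields the conclusion.

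The main obstacle is the second step: rigorously identifying the $\rG'$-conjugacy class of the wild pair of $\pi'$ with one of the $b\in B$. This is a careful determinant/double-coset argument passing between $\rG$-conjugacy of maximal wild pairs in $\rG$ and $\rG'$-conjugacy of their restrictions, and it is the point where the precise definition of $B$ in terms of determinants of normalisers in $\rL$ is essential; steps one and three are then essentially formal consequences of the type theory of Sections~\ref{subsecmaxsimpG}--\ref{sectionKb'}.
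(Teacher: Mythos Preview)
Your outline is sensible and step~4 is essentially how the argument concludes, but step~3 has a real gap. You write $(J_*',\kappa_*')=g(J_{\max}^{\rG,'},\kappa_{\max}^{\rG,'})$ for some $g\in\rG$ and then assert that ``using determinants together with Lemma~\ref{lemStabkappa'} and Proposition~\ref{propdetinclus}'' one may take $g\in\mathrm{Stab}_{\rL}(\kappa_{\rL}')$. But those two results only compare $\det\bigl(\mathrm{Stab}_{\rL}(\kappa_{\max}^{\rG,'})\bigr)$, $\det\bigl(\mathrm{Stab}_{\rL}(\kappa_{\max}^{\rM,'})\bigr)$ and $\det\bigl(\mathrm{Stab}_{\rL}(\kappa_{\rL}')\bigr)$ \emph{inside $\rL$}; they say nothing about an arbitrary $g\in\rG$. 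The set $B$ of Definition~\ref{defnB} parametrises only those $\rG'$-conjugacy classes of maximal wild pairs obtained by conjugating $(J_{\max}',\kappa_{\max}')$ by elements whose determinant lies in $\det(\mathrm{Stab}_{\rL}(\kappa_{\rL}'))$; there may well be further $\rG'$-classes of $\rG$-conjugates of $(J_{\max}',\kappa_{\max}')$, and you have given no reason why the type sitting in $\pi'$ avoids them. Equivalently, you have not explained why $\mathbf{K}_b'(\pi')\neq 0$ for \emph{some} $b\in B$, which is exactly the input your step~4 needs.

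The paper supplies this missing constraint by passing to $\rG$: one lifts $\pi'$ to a cuspidal $\pi$ of $\rG$ with $(\rL,\tau)$ in its supercuspidal support, invokes \cite[Proposition~5.6]{SS} to get $\mathbf{K}_{\max}(\pi)\neq 0$, and then, using $\rG=\rL\rG'$, deduces $x(\mathbf{K}_{\max}')(\pi')\neq 0$ for some $x\in\rL$. Compatibility (Proposition~\ref{propK'}) then forces $x(\mathbf{K}_{\rL}')(\tau')\neq 0$; since $\tau'$ contains the \emph{fixed} type $(J_{\rL}',\lambda_{\rL}')$, an intertwining argument (under the tameness hypothesis) pins $x$ down to $\rL'E_{\rL}^{\times}J_{\rL}$, whence $\det(x)\in\det(\mathrm{Stab}_{\rL}(\kappa_{\rL}'))$ and one may replace $x$ by some $b\in B$. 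The point you are missing is that the determinant constraint on $g$ comes from the $\rL$-level, through the specific type contained in $\tau'$, not from abstract stabiliser comparisons; this is precisely the ``careful determinant/double-coset argument'' you flag as the obstacle, and it does require an extra idea.
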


\begin{proof}
Recall that $\tau$ is supercuspidal of $\rL$ such that $\tau'\leq\tau\vert_{\rL'}$, and $\tau$ contains $(J_{\rL},\lambda_{\rL})$, which is defined from $(\fA_{\rL},n,0,\beta)$.  We can find a cuspidal $\pi$ of $\rG$, such that: $\pi'\le\pi\vert_{\rG'}$, and $\tau$ belongs to the supercuspidal support of $\pi$. Let $(J_{max,\alpha},\lambda_{max,\alpha})$ be a $(\rG,\alpha)$-cover of $(J_{\rL},\lambda_{\rL})$, where $\lambda_{max,\alpha}\cong\kappa_{max,\alpha}\otimes\sigma_{\rL}$. Let $\mathbf{K}_{\rL}$ and $\mathbf{K}_{max,\alpha}$ be functors defined from $\kappa_{\rL}$ and $\kappa_{max,\alpha}$. By \cite[Proposition 5.6]{SS} we have
$$\mathbf{K}_{\text{max}}(\pi)\neq0.$$
Then there exists $x\in\rL$ such that 
$$x(\mathbf{K}_{\text{max}}')(\pi')\neq0,$$
where $x(\mathbf{K}_{\text{max}}')$ is defined with respect to $x(J_{\text{max}}',\kappa_{\text{max}}')$. The above inequality holds for all elements with the same determinant of $x$.
By Proposition \ref{propK'} we have
$$x(\mathbf{K}_{\rL}')(\tau')\neq 0,$$
where $x(\mathbf{K}_{\rL}')$ is defined from wild pair $(x(J_{\rL}'),x(\kappa_{\rL}'))$.
Hence $\tau'$ contains $xj(J_{\rL}',\kappa_{\rL}'\otimes\sigma_{\rL}')$ for a $j\in J_{\rL}$. Then $xj(J_{\rL}',\lambda_{\rL}')$ is conjugate to $(J_{\rL}',\lambda_{\rL}')$ by an element $x'\in\rL'$:
$$x'(J_{\rL},\lambda_{\rL}')=xj(J_{\rL}',\lambda_{\rL}'),$$
which is equivalent to say that $y:=(x')^{-1}xj$ belongs to the normaliser group of $\lambda_{\rL}'$ in $\rL$. Hence $y$ intertwines $\lambda_{\rL}$ to $\lambda_{\rL}\otimes\chi\circ\det$ for a $k$-character $\chi$ of $F^{\times}$. Since $\chi$ is trivial by the assumption that $p$ does not divide $\vert W_{\rG}\vert$, hence $y$ intertwines $\lambda_{\rL}$ to itself, then $y$ normalises $\lambda_{\rL}$, which implies that $y\in E_{\rL}^{\times}J_{\rL}$. Then $x\in\rL'E_{\rL}^{\times}J_{\rL}$, and $\det(x)\in\det(\mathrm{Stab}_{\rL}(\kappa_{\rL}'))$. There exists $b\in B$, such that $\det(b)=\det(x)$, which implies that $\mathbf{K}_{b}'(\pi')\neq0$. In particular, $\mathbf{K}_{b}'(\pi')\leq \mathbf{K}_{b}'(i_{\rL'}^{\rG'}\ind_{J_{\rL}'}^{\rL'}\lambda_{\rL}')$. Then  by Corollary \ref{corsupcuspK'}, $\pi'$ contains a cuspidal type $(J_{max,b}',\kappa_{max,b}'\otimes\sigma_{b,t}')$, where $\sigma_{b,t}'\leq i_{\mathbb{L}'}^{\mathbb{L}_{\text{max}}'}\sigma_{t}'$ is cuspidal.
\end{proof}

\begin{prop}
\label{formlem007}
Let $\pi'\in\mathrm{Irr}[\lambda_{\rL}',\sim]$. There is a surjective morphism from $\cP_{[\lambda_{\rL}',\sim],b}$ to $\pi'$ for $b\in B$.
\end{prop}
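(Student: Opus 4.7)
The plan is to reduce the statement to Lemma~\ref{lemPsurjcusp} by working inside a Levi subgroup $\rM' \subseteq \rL_{\text{max}}'$ that carries the cuspidal support of $\pi'$, and then to recognize the resulting surjection as coming from a direct summand of $\cP_{[\lambda_{\rL}',\sim],b,t}$ via the decomposition of $\cP(\mathcal{B}_t)$ from Proposition~\ref{fin003} together with the $(\rL_{\text{max}}',\alpha)$-cover theory of Section~\ref{sectionG'alphacover}.

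First I would choose a cuspidal pair $(\rM',\rho')$ in the cuspidal support of $\pi'$ such that $i_{\rM'}^{\rG'}\rho' \twoheadrightarrow \pi'$. Such a pair exists by taking $\rM'$ minimal with $r_{\rM'}^{\rG'}(\pi') \neq 0$ and $\rho'$ an irreducible subrepresentation of $r_{\rM'}^{\rG'}(\pi')$ (automatically cuspidal in $\rM'$ by minimality); Bernstein's second adjunction, available in the $\ell$-modular setting by \cite{DHKM}, upgrades this inclusion into the desired surjection. Proposition~\ref{formprop006} combined with the cuspidality bound discussed in Section~\ref{Lmaxcovers} forces $\rM' \subseteq \rL_{\text{max}}'$. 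Since $\rM \subseteq \rL_{\text{max}}$, inside $\rM$ the homogeneous Levi of $\theta_{\rL}$ equals $\rM$ itself, so Lemma~\ref{lemPsurjcusp} applies with $\rM'$ in place of $\rG'$, producing $b \in B$ (after lifting along the natural surjection $B \twoheadrightarrow B_{(\rM')}$ afforded by Proposition~\ref{propdetinclus}), $t \in T$, and a cuspidal irreducible $\sigma_{b,t}' \leq i_{\mathbb{L}'}^{\mathbb{M}'}\sigma_t'$ such that $\rho'$ contains the cuspidal type $(J_{\rM,b}',\kappa_{\rM,b}' \otimes \sigma_{b,t}')$. Frobenius reciprocity together with the projective cover surjection $\cP_{\sigma_{b,t}'} \twoheadrightarrow \sigma_{b,t}'$ yields $\ind_{J_{\rM,b}'}^{\rM'}\kappa_{\rM,b}' \otimes \cP_{\sigma_{b,t}'} \twoheadrightarrow \rho'$, and applying the exact functor $i_{\rM'}^{\rG'}$ gives a surjection onto $\pi'$.

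Finally, this induced object must be identified with a direct summand of $\cP_{[\lambda_{\rL}',\sim],b,t}$. Proposition~\ref{fin003} decomposes $\cP(\mathcal{B}_t) \cong \bigoplus_{(\mathbb{M},\rho)} i_{\mathbb{M}}^{\mathbb{L}_{\text{max}}'}\cP_\rho$; the pair $(\mathbb{M}',\sigma_{b,t}')$ belongs to $\mathbf{Cusp}(\mathcal{B}_t)$ because $i_{\mathbb{M}'}^{\mathbb{L}_{\text{max}}'}\sigma_{b,t}'$ is a subquotient of $i_{\mathbb{L}'}^{\mathbb{L}_{\text{max}}'}\sigma_t' \in \mathcal{B}_t$ by transitivity and Lemma~\ref{lemparalblock}, so the summand $i_{\mathbb{M}'}^{\mathbb{L}_{\text{max}}'}\cP_{\sigma_{b,t}'}$ appears in $\cP(\mathcal{B}_t)$. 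A generalization of Proposition~\ref{prop2.6} to the Levi chain $\rM \subseteq \rL_{\text{max}}$ furnishes the identification
\begin{equation*}
\ind_{J_{max,b}'}^{\rL_{\text{max}}'}\kappa_{max,b}' \otimes i_{\mathbb{M}'}^{\mathbb{L}_{\text{max}}'}\cP_{\sigma_{b,t}'} \;\cong\; i_{\rM'}^{\rL_{\text{max}}'}\bigl(\ind_{J_{\rM,b}'}^{\rM'}\kappa_{\rM,b}' \otimes \cP_{\sigma_{b,t}'}\bigr),
\end{equation*}
and combined with transitivity $i_{\rL_{\text{max}}'}^{\rG'} \circ i_{\rM'}^{\rL_{\text{max}}'} = i_{\rM'}^{\rG'}$ this exhibits $i_{\rM'}^{\rG'}\bigl(\ind_{J_{\rM,b}'}^{\rM'}\kappa_{\rM,b}' \otimes \cP_{\sigma_{b,t}'}\bigr)$ as a direct summand of $\cP_{[\lambda_{\rL}',\sim],b,t} \subseteq \cP_{[\lambda_{\rL}',\sim],b}$, completing the argument.

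The main obstacle is the chained cover identification displayed above: establishing it relies on the compatible system of wild pairs constructed in Section~\ref{sectionK'Levi} to ensure $(J_{\rM,b}',\kappa_{\rM,b}')$ and $(J_{max,b}',\kappa_{max,b}')$ fit together into an $(\rL_{\text{max}}',\alpha)$-cover relation that commutes with the intermediate induction $i_{\rM'}^{\rL_{\text{max}}'}$. A secondary subtlety is matching the label $b \in B$ produced by Lemma~\ref{lemPsurjcusp} applied to $\rM'$ with the outer label appearing in $\cP_{[\lambda_{\rL}',\sim],b}$; this is handled precisely by the chain of inclusions in Proposition~\ref{propdetinclus}.
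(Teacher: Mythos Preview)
Your proposal is essentially correct and follows the same route as the paper: pass to a cuspidal pair $(\rM',\rho')$, bound $\rM'\subseteq\rL_{\text{max}}'$, apply Lemma~\ref{lemPsurjcusp} inside $\rM'$ using Proposition~\ref{propdetinclus}, and then use the compatible system of Section~\ref{sectionK'Levi} together with the $(\rL_{\text{max}}',\alpha)$-cover to lift the surjection to $\cP_{[\lambda_{\rL}',\sim],b,t}$.

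Two small differences are worth noting. First, the paper makes the bound $\rM'\subseteq\rL_{\text{max}}'$ explicit via the Bernstein--Zelevinsky geometric lemma (to place $\rho'$ as a subquotient of $i_{\rL'}^{\rM'}\ind_{J_{\rL}'}^{\rL'}\lambda_{\rL}'$) and then invokes Lemma~\ref{lemnocusp}; your appeal to ``the cuspidality bound discussed in Section~\ref{Lmaxcovers}'' is the right idea, but the precise justification is Lemma~\ref{lemnocusp} applied with $\rM'$ in the role of the ambient group. Second, in the final identification the paper does \emph{not} realise the source as a direct summand of $\cP_{[\lambda_{\rL}',\sim],b,t}$; instead it passes from $\kappa_{max,b}'\otimes i_{\mathbb{M}'}^{\mathbb{L}_{\text{max}}'}\sigma_{b,t}'$ to $\kappa_{max,b}'\otimes\sigma_*'$ for a single irreducible subquotient $\sigma_*'\in\mathcal{B}_t$ and then uses only the defining surjectivity of the projective generator $\cP(\mathcal{B}_t)\twoheadrightarrow\sigma_*'$. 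This is slightly cleaner than your approach, since Definition~\ref{defcPsim} allows $\cP(\mathcal{B}_t)$ to be an arbitrary projective generator, not necessarily the specific one from Proposition~\ref{fin003}, so the direct-summand claim needs that extra normalisation whereas the paper's argument does not.
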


\begin{proof}
As in the proof of Proposition \ref{formprop006}, there is $g\in \mathbb{L}$ such that $\pi'$ is an irreducible subquotient of $i_{\rL'}^{\rG'}\ind_{J_{\rL}'}^{\rL'}\kappa_{\rL}'\otimes g(\sigma_{\rL}')$. Without loss of generality, we assume that $g=\mathds{1}$. Let $(\mathrm{M}',\rho')$ be in the cuspidal support of $\pi'$ where $\rM'$ is a standard Levi subgroup, and $(J_{0}',\lambda_0')$ is a cuspidal $k$-type contained in $\rho'$. Hence $\rho'$ is an irreducible subquotient of $r_{\rM'}^{\rG'}i_{\rL'}^{\rG'}\ind_{J_{\rL}'}^{\rL'}\kappa_{\rL}'\otimes \sigma_{\rL}'$. Since the irreducible subquotients of $\ind_{J_{\rL}'}^{\rL'}\kappa_{\rL}'\otimes \sigma_{\rL}'$ are supercuspidal, then by Bernstein-Zelevinsky geometric lemma we have $\rho'$ is a subquotient of $i_{\rL'}^{\rM'}
\ind_{x(J_{\rL}')}^{\rL'}x(\kappa_{\rL}'\otimes \sigma_{\rL}')$ for an $x\in\rG'$ that normalises $\rL'$. By applying $x^{-1}$ conjugation to $\rho'$, we may assume that $x=\mathds{1}$. By Lemma \ref{lemnocusp} we have the inclusion $\rM'\subset\rL_{\text{max}}'$. Replacing $\rL_{\text{max}}'$ by $\rM'$, by Proposition \ref{propdetinclus} and Lemma \ref{lemPsurjcusp}, there exists $b\in B,t\in T$, such that $\rho'$ contains $(J_{\rM,b}',\kappa_{\rM,b}'\otimes\sigma_{b,t}')$ where $\sigma_{b,t}'\leq i_{\mathbb{L}'}^{\mathbb{M}'}\sigma_{\rL}'$ is cuspidal of $\mathbb{M'}:=J_{\rM,b}'\slash J_{\rM,b}^{1'}$ and the wild pair $(J_{\rM,b}',\kappa_{\rM,b}')$ is defined in Section \ref{sectionKb'}. Denote by
$$\lambda_{\rM,b,t}':=\kappa_{\rM,b}'\otimes\sigma_{b,t}'.$$
We have 
$$\ind_{J_{\rM}'}^{\rM'}\lambda_{\rM,b,t}'\twoheadrightarrow \rho'.$$ 
Let $(J_{max,\alpha,b}',\lambda_{P,\alpha,b,t}')$ be the $(\rL_{\text{max}}',\alpha)$-cover of $(J_{\rM,b}',\lambda_{\rM,b,t}')$ with respect to $\kappa_{max,b}'$ (see definition in Section \ref{sectionG'alphacover}). Meanwhile by second adjunction of Bernstein we have $i_{\rM'}^{\rG'}\rho'\twoheadrightarrow\pi'$, hence
$$i_{\rL_{\text{max}}'}^{\rG'}\ind_{J_{max,\alpha,b}'}^{\rL_{\text{max}}'}\lambda_{P,\alpha,b,t}'\cong i_{\rM'}^{\rG'}\ind_{J_{\rM,b}'}^{\rM'}\lambda_{\rM,b,t}'\twoheadrightarrow\pi',$$
On the other hand, since $(\kappa_{\max,b},\kappa_{\rM,b},\kappa_{\rL})$ is a compatible system (Remark \ref{remkappaconj}), by Equation \ref{equationcompat10}, Proposition \ref{propcomparab} and Proposition \ref{prop2.6} (by replacing $\rL'$ by $\rM'$), we have 
$$\ind_{J_{max,\alpha,b}'}^{J_{max,b}'}\lambda_{P,\alpha,b,t}'\cong\kappa_{max,b}'\otimes i_{\mathbb{M'}}^{\mathbb{L}_{\text{max}}'}\sigma_{b,t}'.$$
There is an irreducible subquotient $\sigma_{\ast}'$ of $i_{\mathbb{M'}}^{\mathbb{L}_{\text{max}}'}\sigma_{b,t}'$, hence of $i_{\mathbb{L}'}^{\mathbb{L}_{\text{max}}'}\sigma_{\rL}'$, such that
$$i_{\rL_{\text{max}}'}^{\rG'}\ind_{J_{max,b}'}^{\rL_{\text{max}}'}\kappa_{max,b}'\otimes\sigma_{\ast}'\twoheadrightarrow \pi.$$
Hence
$$\cP_{[\lambda_{\rL}',\sim],b,t}=i_{\rL_{\text{max}}'}^{\rG'}\ind_{J_{max,b}'}^{\rL_{\text{max}}'}\kappa_{max,b}'\otimes\cP(\mathcal{B}_{t})\twoheadrightarrow i_{\rL_{\text{max}}'}^{\rG'}\ind_{J_{\text{max}}'}^{\rL_{\text{max}}'}\kappa_{max,b}'\otimes\sigma_{\ast}'\twoheadrightarrow \pi.$$
\end{proof}

\subsection{The blocks of $\mathrm{Rep}_k(\rG')$}
In this section, we establish the blocks of $\mathrm{Rep}_k(\rG')$. Each block is generated by finitely many supercuspidal classes, and we construct a projective generator for each block. 
\begin{defn}
We define an equivalence relation $\sim$ on the set of isomorphism classes of irreducible representations $\{g(\lambda_{\rL}'),g\in\mathbb{L}\}$ in the following way: $g_1(\lambda_{\rL}')\sim g_2(\lambda_{\rL}')$ if and only if $\mathrm{Irr}[J_{\rL}',g_1(\lambda_{\rL}')]\subset\mathrm{Irr}[g_2(\lambda_{\rL}'),\sim]$.
\end{defn}

Proposition \ref{formprop006} shows that 
$$\bigcup_{g(\lambda_{\rL}')\sim\lambda_{\rL}'}\mathrm{Irr}[J_{\rL}',\lambda_{\rL}']=\mathrm{Irr}[\lambda_{\rL}',\sim].$$ 
Denote by
\begin{equation}
\label{equaunion}
[\lambda_{\rL}',\sim]:=\bigcup_{g(\lambda_{\rL}')\sim \lambda_{\rL}'} [J_{\rL}',g(\lambda_{\rL}')].
\end{equation}

\begin{lem}
\label{lemunionlambda}
We have
$$[\lambda_{\rL}',\sim]=\bigcup_{(\mathbb{L}',\sigma')\in\mathbf{SC}(\mathcal{B}_{\sigma_{\rL}'})}[J_{\rL}',\kappa_{\rL}'\otimes\sigma'].$$
\end{lem}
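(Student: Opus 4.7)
The plan is to identify both sides as families of $\rG'$-conjugacy classes of supercuspidal types and verify their equality by tracing through the sets $\mathrm{Irr}[\cdot]$ they determine. A key preliminary observation is that, since $J_{\rL}$ normalises the wild extension $\kappa_{\rL}$ as a $J_{\rL}$-representation, any $g\in\mathbb{L}=J_{\rL}/J_{\rL}^{1}$ fixes $\kappa_{\rL}'=\kappa_{\rL}|_{J_{\rL}'}$ up to isomorphism, whence $g(\lambda_{\rL}')\cong\kappa_{\rL}'\otimes g(\sigma_{\rL}')$ in $\mathrm{Irr}(J_{\rL}')$. Both sides of the lemma are thus unions of $\rG'$-conjugacy classes of types of the shape $[J_{\rL}',\kappa_{\rL}'\otimes\tau']$ for varying supercuspidal representations $\tau'$ of $\mathbb{L}'$.

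For the inclusion $\subseteq$, I take $g\in\mathbb{L}$ with $g(\lambda_{\rL}')\sim\lambda_{\rL}'$. The proof of Proposition \ref{IrrPb} yields the description $\mathrm{Irr}[\lambda_{\rL}',\sim]=\bigcup_{(\mathbb{L}',\sigma_i')\in\mathbf{SC}(\mathcal{B}_{\sigma_{\rL}'})}\mathrm{Irr}(i_{\rL'}^{\rG'}\ind_{J_{\rL}'}^{\rL'}\kappa_{\rL}'\otimes\sigma_i')$, so any $\pi'\in\mathrm{Irr}[J_{\rL}',g(\lambda_{\rL}')]$, which lies in $\mathrm{Irr}[\lambda_{\rL}',\sim]$ by the definition of $\sim$, appears as a subquotient of some $i_{\rL'}^{\rG'}\ind_{J_{\rL}'}^{\rL'}\kappa_{\rL}'\otimes\sigma_i'$. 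Hence the supercuspidal support of $\pi'$ contains the class $[J_{\rL}',g(\lambda_{\rL}')]$ as well as the class $[J_{\rL}',\kappa_{\rL}'\otimes\sigma_i']$; the bijection between supercuspidal classes of $\rG'$ and $\rG'$-conjugacy classes of supercuspidal types forces these to coincide, exhibiting $[J_{\rL}',g(\lambda_{\rL}')]$ as a term of the RHS.

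For the inclusion $\supseteq$, I begin with $(\mathbb{L}',\sigma')\in\mathbf{SC}(\mathcal{B}_{\sigma_{\rL}'})$. Proposition \ref{finprop004} places $(\mathbb{L}',\sigma')$ in the $\mathbb{L}_{\text{max}}$-orbit of $(\mathbb{L}',\sigma_{\rL}')$; the identification $\mathbb{L}_{\text{max}}/\mathbb{L}_{\text{max}}'\cong\mathbb{L}/\mathbb{L}'$ from Section \ref{RSG} together with $\mathbb{L}\subseteq N_{\mathbb{L}_{\text{max}}}(\mathbb{L}')$ allows me to select $g\in\mathbb{L}$ with $g(\sigma_{\rL}')\cong\sigma'$ modulo $\mathbb{L}'$-conjugacy, so that $\kappa_{\rL}'\otimes\sigma'\cong g(\lambda_{\rL}')$. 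It then suffices to verify $g(\lambda_{\rL}')\sim\lambda_{\rL}'$, which by Proposition \ref{formprop006}(2) reduces to producing a single irreducible in $\mathrm{Irr}[J_{\rL}',g(\lambda_{\rL}')]\cap\mathrm{Irr}[\lambda_{\rL}',\sim]$. This is furnished by applying the analogue of Corollary \ref{corJ'supcuspsupp} (with $\sigma_{\rL}'$ replaced by $\sigma'$) to any $\rho\in\mathcal{B}_{\sigma_{\rL}'}$ whose supercuspidal support contains $(\mathbb{L}',\sigma')$: the induced representation $i_{\rL_{\text{max}}'}^{\rG'}\ind_{J_{\text{max}}'}^{\rL_{\text{max}}'}\kappa_{\text{max}}'\otimes\rho$ is a summand of $\cP_{[\lambda_{\rL}',\sim]}$ and its irreducible subquotients have supercuspidal support in the class $[J_{\rL}',\kappa_{\rL}'\otimes\sigma']=[J_{\rL}',g(\lambda_{\rL}')]$.

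The principal technical difficulty is matching the $\mathbb{L}_{\text{max}}$-orbit furnished by Proposition \ref{finprop004} with the $\mathbb{L}$-orbit of $\lambda_{\rL}'$ parameterising the LHS: the tameness hypothesis $p\nmid|W_{\rG'}|$ enters here through the determinantal identifications of Section \ref{RSG} and the conjugacy analysis of Proposition \ref{proptamecond}, ensuring that $\mathbb{L}'$-conjugacy at the level of the finite reductive quotient lifts correctly to $\rG'$-conjugacy of the corresponding $p$-adic supercuspidal types.
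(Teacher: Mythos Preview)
Your argument is correct and rests on the same key input as the paper, namely the description of $\mathrm{Irr}[\lambda_{\rL}',\sim]$ coming from Proposition~\ref{IrrPb}. The paper's own proof is considerably shorter: it simply observes that Proposition~\ref{IrrPb} together with the definition of $\cP_{[\lambda_{\rL}',\sim],b,t}$ gives
\[
\mathrm{Irr}[\lambda_{\rL}',\sim]=\bigcup_{(\mathbb{L}',\sigma')\in\mathbf{SC}(\mathcal{B}_{\sigma_{\rL}'})}\mathrm{Irr}[J_{\rL}',\kappa_{\rL}'\otimes\sigma'],
\]
and then invokes (implicitly) the bijection $\mathcal{SC}_{\rG'}\leftrightarrow\mathcal{ST}_{\rG'}$ from Section~\ref{subsecmaxsimpG} to pass from equality of $\mathrm{Irr}$-sets to equality of the underlying unions of supercuspidal type classes. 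Your $\subseteq$ direction is exactly this argument unwound; your $\supseteq$ direction takes a longer detour through Proposition~\ref{finprop004} and Proposition~\ref{formprop006}(2) to explicitly produce a $g\in\mathbb{L}$, which is unnecessary once the $\mathrm{Irr}$-level equality is in hand. One minor imprecision: you describe $i_{\rL_{\text{max}}'}^{\rG'}\ind_{J_{\text{max}}'}^{\rL_{\text{max}}'}\kappa_{\text{max}}'\otimes\rho$ as a \emph{summand} of $\cP_{[\lambda_{\rL}',\sim]}$, whereas it is only a quotient (via the surjection $\cP(\mathcal{B}_t)\twoheadrightarrow\rho$ of Proposition~\ref{fin003}); this does not affect the argument since subquotients suffice.
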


\begin{proof}
From Proposition \ref{IrrPb} and the definition of $\mathcal{P}_{[\lambda_{\rL}',\sim],b,t}$, we know that 
$$\mathrm{Irr}[\lambda_{\rL}',\sim]=\bigcup_{(\mathbb{L}',\sigma')\in\mathbf{SC}(\mathcal{B}_{\sigma_{\rL}'})}\mathrm{Irr}[J_{\rL}',\kappa_{\rL}'\otimes\sigma'].$$
Hence the result.
\end{proof}

Notice that $\mathbf{SC}(\mathcal{B}_{g(\sigma_{\rL}')})=g(\mathbf{SC}(\mathcal{B}_{\sigma_{\rL}'}))$ for $g\in\mathbb{L}$. The connected components given by $\sim$ are transferred from one to another via taking conjugation in $\mathbb{L}$.

The righthand-side of Equation \ref{equaunion} is a finite union, and we can define the full-subcategory $\mathrm{Rep}_k(\rG')_{[\lambda_{\rL}',\sim]}$. Let $(\rL',\tau_g')$ be a supercuspidal pair containing $(J_{\rL}',g(\lambda_{\rL}'))$. Define 
$$[\tau',\sim]:=\bigcup_{g\in\mathbb{L},g(\lambda_{\rL}')\sim\lambda_{\rL}'}[\rL',\tau_g'].$$ 
Then by Remark \ref{remcattau'}, we can write $\mathrm{Rep}_k(\rG')_{[\lambda_{\rL}',\sim]}=\mathrm{Rep}_k(\rG')_{[\tau',\sim]}$. In other words, it contains $\Pi$ such that the supercuspidal supports of irreducible subquotients of $\Pi$ are contained in $[\tau',\sim]$. 

The equivalence relation $\sim$  gives a partition on $\{[J_{\rL}',g(\lambda_{\rL}')],g\in\mathbb{L}\}$. We take $D$ a subset of $\mathbb{L}$ as following: for each connected component of $\{g(\lambda_{\rL}'),g\in\mathbb{L}\}$ defining from $\sim$, there is a unique $d\in D$ such that $(J_{\rL}',\lambda_d')$ belongs to this component, where $\lambda_d':=d(\lambda_{\rL}')\cong\kappa_{\rL}'\otimes\sigma_d'$ and $\sigma_d':= d(\sigma_{\rL}')$. Notice that in general 
$$\bigcup_{g\in\mathbb{L}}[J_{\rL}',\kappa_{\rL}'\otimes g(\sigma_{\rL}')]\neq [J_{\rL}',\lambda_{\rL}']_{\rL}.$$ 
Now, let $\tilde{N}$ be the group of elements in $\rL$ that their conjugation stabilise the union $\bigcup_{g\in\mathbb{L}}[J_{\rL}',\kappa_{\rL}'\otimes g(\sigma_{\rL}')]$. The quotient $Q:=\rL\slash \tilde{N}\cong F^{\times}\slash \det(\tilde{N})$ is finite. We choose a representative of each $q\in Q$ in the fixed maximal split torus of $\rG$, that by identifying $q(J_{\rL}')\slash q(J_{\rL}^{1'})$ with $\mathbb{L}'$ via $q$-conjugation, we have $q(\lambda_{\rL}')\cong q(\kappa_{\rL}')\otimes\sigma_{\rL}'$.  We have a union
$$[J_{\rL}',\lambda_{\rL}']_{\rL}=\bigcup_{g\in \mathbb{L},q\in Q}q[J_{\rL}',g(\lambda_{\rL}')].$$
We denote by $(J_{\rL,q}',\lambda_{d,q}')$ the conjugation $q(J_{\rL}',\lambda_{d}')=(q(J_{\rL}'),q(\kappa_{\rL}')\otimes\sigma_d')$. Notice that by Remark \ref{remkappaconj}, all the results in Section \ref{sectiondecomp} for $(J_{\rL}',\lambda_{\rL}')$ can be applied to $q(J_{\rL}',\lambda_{d}')$ for each $q\in Q,d\in D$. Now for each $d\in D$, we consider a projective generator $\cP(\mathcal{B}_{\sigma_{d}'})=d(\cP(\mathcal{B}_{\sigma_{\rL}'}))$ of the $\ell$-parablock of $\mathbb{L}_{\text{max}}'$ containing $i_{\mathbb{L}'}^{\mathbb{L}_{\text{max}}'}\sigma_d'$. Notice that the sets $B,T,D$ are independent of the choice of $q\in Q$. Now we list the useful objects:
\begin{defn}
\begin{enumerate} 
\item
\begin{itemize}
\item $\cP_{[\lambda_{d,q}',\sim],b,t}:=i_{\rL_{\text{max}}'}^{\rG'}\ind_{q(J_{max,b}')}^{\rL_{\text{max}}'}q(\kappa_{max,b}')\otimes d(\cP(\mathcal{B}_{t})).$
\item $\cP_{[\lambda_{d,q}',\sim],b}:=\oplus_{t\in T}\cP_{[\lambda_{d,q}',\sim],b,t},$
\item  $\cP_{[\lambda_{d,q}',\sim]}:=\oplus_{b\in B}\cP_{[\lambda_{d,q}',\sim],b}.$
\end{itemize}

\item $\mathrm{Irr}[\lambda_{d,q}',\sim]:=\bigcup_{\sigma'\in\mathbf{SC}(\mathcal{B}_{\sigma_{d}'})}\mathrm{Irr}[J_{\rL,q}',q(\kappa_{\rL}')\otimes\sigma_d']=\mathrm{Irr}(\cP_{[\lambda_{d,q}',\sim],b,t})$ for any $b \in B,t\in T$.

\end{enumerate}
\end{defn} 

\begin{prop}
\begin{itemize}
\item $\mathrm{Irr}[\lambda_{d_1,q_1}',\sim]\cap\mathrm{Irr}[\lambda_{d_2,q_2}',\sim]\neq\emptyset$, if and only if $d_1=d_2$ and $q_1=q_2$.
\item $\bigsqcup_{d\in D,q\in Q}\mathrm{Irr}[\lambda_{d,q}',\sim]=\mathrm{Irr}[J_{\rL}',\lambda_{\rL}']_{\rL}$.
\item For each $d\in D,q\in Q$, there is a surjective morphism from $\cP_{[\lambda_{d,q}',\sim]}$ to any $\pi\in\mathrm{Irr}[\lambda_{d,q}',\sim]$.
\end{itemize}
\end{prop}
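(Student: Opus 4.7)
The plan is to treat the three assertions in the order (2), (3), (1): the first two are essentially bookkeeping consequences of Lemma \ref{lemunionlambda}, Proposition \ref{formprop006}, Proposition \ref{formlem007} and Remark \ref{remkappaconj}, while (1) is the delicate part where the very definitions of $B$, $D$ and $Q$ do the work.

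For (2), I would start from the decomposition
$[J_{\rL}',\lambda_{\rL}']_{\rL}=\bigcup_{g\in\mathbb{L},q\in Q}q[J_{\rL}',g(\lambda_{\rL}')]$
that is recorded just before the statement. For each $g\in\mathbb{L}$ there is a unique $d\in D$ with $g(\lambda_{\rL}')\sim\lambda_{d}'$, so Proposition \ref{formprop006} gives $\mathrm{Irr}[J_{\rL}',g(\lambda_{\rL}')]\subset\mathrm{Irr}[\lambda_{d}',\sim]$; applying $q$-conjugation to both sides (using Remark \ref{remkappaconj} to transport every construction of Section \ref{sectiondecomp} to the $q$-conjugate wild pair) yields $\mathrm{Irr}[J_{\rL,q}',q(g(\lambda_{\rL}'))]\subset\mathrm{Irr}[\lambda_{d,q}',\sim]$. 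The reverse inclusion is immediate from Lemma \ref{lemunionlambda} together with Remark \ref{remkappaconj}, since each $[J_{\rL,q}',q(\kappa_{\rL}')\otimes\sigma']$ with $\sigma'\in\mathbf{SC}(\mathcal{B}_{\sigma_{d}'})$ is $\rL$-conjugate to $(J_{\rL}',\lambda_{\rL}')$.

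For (3), I would invoke Proposition \ref{formlem007} after transporting it along the $(qd)$-conjugation. Remark \ref{remkappaconj} guarantees that $(qd(J_{\max}^{\rG},\kappa_{\max}^{\rG}),qd(J_{\max}^{\rM},\kappa_{\max}^{\rM}),qd(J_{\rL},\kappa_{\rL}))$ is again a compatible system, so all of Sections \ref{sectionKb'} and the construction of $\cP_{[\lambda_{\rL}',\sim],b}$ carry over verbatim to produce $\cP_{[\lambda_{d,q}',\sim],b}$. Proposition \ref{formlem007}, applied in this conjugated setting, yields for each $\pi\in\mathrm{Irr}[\lambda_{d,q}',\sim]$ and each $b\in B$ a surjection $\cP_{[\lambda_{d,q}',\sim],b}\twoheadrightarrow\pi$; taking the direct sum over $b\in B$ gives the desired surjection from $\cP_{[\lambda_{d,q}',\sim]}$.

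For (1), the forward direction ``$d_1=d_2,q_1=q_2\Rightarrow$ non-empty intersection'' is trivial, so I would focus on the converse. Pick $\pi$ in the intersection. By the classification recorded in (2) and Lemma \ref{lemunionlambda}, the supercuspidal support of $\pi$ meets both $\bigcup_{\sigma'\in\mathbf{SC}(\mathcal{B}_{\sigma_{d_i}'})}[J_{\rL,q_i}',q_i(\kappa_{\rL}')\otimes\sigma']$, and since the supercuspidal support of an irreducible of $\rG'$ is a single $\rG$-conjugacy class of supercuspidal types of $\rL'$, there exists $g\in\rG$ identifying a chosen member of the first family with a member of the second. Adjusting by $\rL'$ we may assume $g$ normalises $\rL$, and comparing endo-classes forces $g\in \rL$ (the wild parts must be $\rL$-conjugate). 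The resulting element of $\rL$ therefore stabilises the $\rL$-conjugacy class of wild pairs $q_1(J_{\rL}',\kappa_{\rL}')$ while sending it to $q_2(J_{\rL}',\kappa_{\rL}')$, which by the very definition of $\tilde{N}$ and $Q$ forces $q_1=q_2$. With $q_1=q_2=q$ in hand, the residual conjugation lies in $\tilde{N}$ and, via Proposition \ref{propconjparabl}, permutes the $\ell$-parablocks $\mathcal{B}_{\sigma_{d}'}$ by $\mathbb{L}$-conjugation; thus the $\sim$-classes of $\lambda_{d_1}'$ and $\lambda_{d_2}'$ coincide, and the requirement that $D$ contain exactly one representative per $\sim$-class yields $d_1=d_2$.

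The main obstacle is the final argument: one must confirm that conjugation in $\rG$ between two supercuspidal types of the form $(J_{\rL,q}',q(\kappa_{\rL}')\otimes\sigma')$ can always be reduced to conjugation inside $\rL$, and then that this $\rL$-conjugation respects the $Q$- and $\sim$-stratifications. Both reductions rely crucially on the tameness hypothesis (through the rigidity of wild pairs established in Section \ref{sectionK'Levi}) and on Proposition \ref{propconjparabl}; the remaining two assertions are essentially formal consequences of the work already done.
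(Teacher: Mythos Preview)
Your treatment of (2) and (3) coincides with the paper's: both are read off from the definitions of $D$ and $Q$ together with Proposition \ref{formlem007}, transported via Remark \ref{remkappaconj}.

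For (1) the paper takes a shorter route, and your argument has a gap at the step ``comparing endo-classes forces $g\in\rL$''. A $\rG'$-conjugation between two supercuspidal types of $\rL'$ can be arranged to lie in $N_{\rG'}(\rL')$, but the endo-class is preserved by any Weyl element that permutes blocks carrying the same endo-class; so at best you land in $N_{\rL_{\max}'}(\rL')$, not in $\rL$, and the subsequent appeal to the definition of $\tilde{N}$ does not go through as stated. The paper avoids this reduction entirely. It uses that the map $[J_{\rL}',\mu']\mapsto\mathrm{Irr}[J_{\rL}',\mu']$ from $\rG'$-conjugacy classes of supercuspidal types to subsets of $\mathrm{Irr}(\rG')$ has pairwise disjoint image (each side indexes a single supercuspidal class via the bijection $\mathcal{SC}_{\rG'}\leftrightarrow\mathcal{ST}_{\rG'}$). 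It then records the fact, coming from Proposition \ref{proptamecond}, that $\mathrm{Irr}[J_{\rL}',g(\lambda_{\rL}')]=\mathrm{Irr}[J_{\rL}',\lambda_{\rL}']$ if and only if $g(\lambda_{\rL}')=t(\lambda_{\rL}')$ for some $t\in T$; since $T$-conjugates are always $\sim$-equivalent, this disposes of the case $q_1=q_2$, $d_1\neq d_2$. For $q_1\neq q_2$ the paper simply notes that, by the very definition of $Q$, the classes $[J_{\rL,q_1}',\lambda_{d_1,q_1}']$ and $[J_{\rL,q_2}',\lambda_{d_2,q_2}']$ are distinct. Either rescue your reduction to $\rL$ (which is possible but needs more than endo-class comparison) or switch to this direct disjointness argument.
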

\begin{proof}
The second part can be read from the definition of $D$ and $Q$. The last part is obtained by applying Proposition \ref{formlem007} to each $d\in D,q\in Q$. For the first part, we notice a fact: for every $g\in\mathbb{L}$, a conjugation by $t\in T$ belongs to the same equivalence class as $g(\lambda_{\rL}')$, and
$$\mathrm{Irr}[J_{\rL}',g(\lambda_{\rL}')]=\mathrm{Irr}[J_{\rL}',\lambda_{\rL}'],$$
if and only if $g(\lambda_{\rL}')=t(\lambda_{\rL}')$ for a $t\in T$. On the other hand,  for any $d_1,d_2\in D$, $[J_{\rL,q_1}',\lambda_{d_1,q_1}']\neq [J_{\rL,q_2}',\lambda_{d_2,q_2}']$ when $q_1\neq q_2$, hence $\mathrm{Irr}[J_{\rL,q_1}',\lambda_{d_1,q_1}']\cap \mathrm{Irr}[J_{\rL,q_2}',\lambda_{d_2,q_2}']=\emptyset$ in this case, which gives the result.
\end{proof}

\begin{rem}
\label{remconjsim}
\begin{itemize}
\item For each $d\in D,q\in Q$, we have an equation
$$[\lambda_{d,q}',\sim]= qd([\lambda_{\rL}',\sim]).$$
\item For each $d\in D,q\in Q$, we take a supercuspidal pair $(\rL',\tau_{d,q}')$ containing $(J_{\rL,q}',\lambda_{d,q}')$. We define $[\tau_{d,q}',\sim]$ in a same manner as $[\tau',\sim]$, which is a union of finite supercuspidal classes containing a supercuspidal type in $[\lambda_{d,q}',\sim]$. By definition, we denote the full sub-category generated by this union in both manners:
$$\mathrm{Rep}_k(\rG')_{[\lambda_{d,q}',\sim]}=\mathrm{Rep}_k(\rG')_{[\tau_{d,q}',\sim]}.$$
\end{itemize}
\end{rem}

Now we conclude:
\begin{thm}
\label{thmblocksdecom}
\begin{itemize}
\item We have the block decomposition:
$$\mathrm{Rep}_k(\rG')_{[J_{\rL}',\lambda_{\rL}']_{\rL}}\cong\prod_{d\in D,q\in Q}\mathrm{Rep}_k(\rG')_{[\lambda_{d,q}',\sim]}.$$
\item In particular when $d=1,q=1$, the representation $\cP_{[\lambda_{\rL}',\sim]}$ is a projective generator of the block $\mathrm{Rep}_k(\rG')_{[\lambda_{\rL}',\sim]}$ that contains $\mathrm{Rep}_k(\rG')_{[J_{\rL}',\lambda_{\rL}']}=\mathrm{Rep}_k(\rG')_{[\rL',\tau']}$.
\end{itemize}
\end{thm}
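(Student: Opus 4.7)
The plan is to apply Morita's criterion (Theorem \ref{ThmMorita}) to the finite family $\{\cP_{[\lambda_{d,q}',\sim]}\}_{(d,q)\in D\times Q}$ inside the category $\mathrm{Rep}_k(\rG')_{[J_{\rL}',\lambda_{\rL}']_{\rL}}$. Each $\cP_{[\lambda_{d,q}',\sim]}$ is projective: it is a finite direct sum of terms of the form $i_{\rL_{\text{max}}'}^{\rG'}\ind_{q(J_{max,b}')}^{\rL_{\text{max}}'} q(\kappa_{max,b}')\otimes d(\cP(\mathcal{B}_t))$, the compact induction $\ind$ preserves projectivity (the source being open), and parabolic induction $i_{\rL_{\text{max}}'}^{\rG'}$ preserves projectivity thanks to second adjunction (cited from \cite{DHKM}). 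The three hypotheses of Morita's theorem are then exactly the three bullets of the proposition immediately preceding the theorem: the disjointness of $\mathrm{Irr}[\lambda_{d,q}',\sim]$ as $(d,q)$ varies, the exhaustion $\bigsqcup_{d,q}\mathrm{Irr}[\lambda_{d,q}',\sim]=\mathrm{Irr}[J_{\rL}',\lambda_{\rL}']_{\rL}$, and the surjection from $\cP_{[\lambda_{d,q}',\sim]}$ onto every element of $\mathrm{Irr}[\lambda_{d,q}',\sim]$. Morita then produces a decomposition into full subcategories indexed by $(d,q)$, and by construction these subcategories are exactly $\mathrm{Rep}_k(\rG')_{[\lambda_{d,q}',\sim]}$, yielding the first assertion.

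For the second assertion I must prove that $\mathrm{Rep}_k(\rG')_{[\lambda_{\rL}',\sim]}$ is non-split (i.e.\ actually a single block); by Remark \ref{remconjsim} every factor is $q d$-conjugate to the case $(d,q)=(1,1)$, so it suffices to treat this one. Suppose for contradiction that the category splits further, inducing a partition of $[\tau',\sim]$ into two non-empty pieces. The cited non-splitness of $\mathrm{Rep}_k(\rG')_{[\rL',\tau_g']}$ for a single supercuspidal class forces the partition to be compatible with the individual supercuspidal classes $[\rL',\tau_g']$ with $g(\lambda_{\rL}')\sim\lambda_{\rL}'$. By Lemma \ref{lemunionlambda}, these classes are in bijection with $\rL'$-conjugacy classes in $\mathbf{SC}(\mathcal{B}_{\sigma_{\rL}'})$, so the partition descends to a partition of $\mathbf{SC}(\mathcal{B}_{\sigma_{\rL}'})$.

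The crucial step, and what I expect to be the main obstacle, is to contradict this by exhibiting a chain linking any two supercuspidal classes in $[\tau',\sim]$ through indecomposable projective objects of $\rG'$ living inside $\mathrm{Rep}_k(\rG')_{[\lambda_{\rL}',\sim]}$. Since $\mathcal{B}_{\sigma_{\rL}'}$ is an $\ell$-parablock of $\mathbb{L}_{\text{max}}'$, Lemma \ref{finlem001} (combined with Definition \ref{defn2.5}) says any two supercuspidal representations in $\mathbf{SC}(\mathcal{B}_{\sigma_{\rL}'})$ are connected by a zig-zag alternating (a) indecomposable projective covers of $\mathbb{L}_{\text{max}}'$ sharing irreducible subquotients, and (b) pairs of cuspidals whose supercuspidal supports meet. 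I transfer such a chain via the functor $\rho\mapsto i_{\rL_{\text{max}}'}^{\rG'}\ind_{J_{\text{max}}'}^{\rL_{\text{max}}'}\kappa_{\text{max}}'\otimes\rho$: each indecomposable projective summand $\cP$ of $\cP(\mathcal{B}_{\sigma_{\rL}'})$ produces a projective of $\rG'$ whose irreducible subquotients all lie in a common block of $\rG'$ (since indecomposable projectives are non-split), and by Corollary \ref{corsupcuspK'} together with Proposition \ref{IrrPb} these subquotients realise the intended linking on the supercuspidal side. The genuinely delicate point is checking that indecomposability on the finite side transfers to a non-split block relation on the $p$-adic side; I will handle this using the $\mathbf{K}$-functors of Definition \ref{defK'functor} and the compatibility of Proposition \ref{propK'}, which preserve exactness and identify the supercuspidal links $\sim_\ell$ on $\mathbb{L}_{\text{max}}'$ with the corresponding links in $\rG'$ after type-theoretic dressing by $\kappa_{\text{max}}'$.

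Once non-splitness is established, faithful projectivity of $\cP_{[\lambda_{\rL}',\sim]}$ follows from the surjection property of the third Morita bullet plus the fact that $\cP_{[\lambda_{\rL}',\sim]}$ contains, up to direct summand, a cover of every simple object in the block; this gives the equivalence of $\mathrm{Rep}_k(\rG')_{[\lambda_{\rL}',\sim]}$ with the category of $\mathrm{End}(\cP_{[\lambda_{\rL}',\sim]})$-modules and so identifies $\cP_{[\lambda_{\rL}',\sim]}$ as a projective generator, completing the proof.
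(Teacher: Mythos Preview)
Your overall strategy matches the paper's: invoke Morita's criterion via the preceding proposition, then prove non-splitness by transferring a zig-zag in the $\ell$-parablock $\mathcal{B}_{\sigma_{\rL}'}$ up to $\rG'$ through the functor $\rho\mapsto i_{\rL_{\text{max}}'}^{\rG'}\ind_{J_{\text{max},b}'}^{\rL_{\text{max}}'}\kappa_{\text{max},b}'\otimes\rho$. Two points need tightening.

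For non-splitness, the ``genuinely delicate point'' you flag---that an indecomposable projective $P$ of $\mathbb{L}_{\text{max}}'$ yields a non-split object of $\rG'$ after dressing by $\kappa_{\text{max},b}'$ and inducing---is precisely Proposition~\ref{formprop0016}, applied with the r\^ole of $\rL$ there played by $\rL_{\text{max}}$ (every indecomposable projective of a finite group is a projective cover $\cP_{\sigma'}$). The paper simply cites that proposition at this step; your plan to re-derive it via the $\mathbf{K}'$-functors would just repeat its proof. Note that your parenthetical ``since indecomposable projectives are non-split'' is misleading: the induced object on $\rG'$ need not itself be indecomposable, so its non-splitness really does require Proposition~\ref{formprop0016}, not a tautology.

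For the projective-generator claim, your argument is incomplete. Surjecting onto every simple and being projective does not, in a category like $\mathrm{Rep}_k(\rG')$, force $\mathrm{Hom}(\cP_{[\lambda_{\rL}',\sim]},-)$ to commute with arbitrary direct sums, which is what is needed. The paper checks this directly: by second adjunction and Frobenius reciprocity one reduces to $\bar r_{\rL_{\text{max}}'}^{\rG'}$ commuting with direct sums (it is a Jacquet functor, hence given by coinvariants) and to $\kappa_{\text{max},b}'\otimes\cP(\mathcal{B}_t)$ being finite-dimensional. You should replace your final paragraph by this argument.
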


\begin{proof}
The above proposition implies that the set of projective objects $\{\cP_{[\lambda_{d,q}',\sim]},d\in D,q\in Q\}$ verifies the conditions of Morita's equivalence (Theorem \ref{ThmMorita}). Hence 
$$\mathrm{Rep}_k(\rG')_{[J_{\rL}',\lambda_{\rL}']_{\rL}}\cong\prod_{d\in D,q\in Q}\mathrm{Rep}_k(\rG')_{[\lambda_{d,q}',\sim]}.$$
For the full-subcategory  $\mathrm{Rep}_k(\rG')_{[\lambda_{d,q}',\sim]}$, to show the projective object $\cP_{[\lambda_{d,q}',\sim]}$ is a projective generator is equivalent to show that for arbitrary index set $I$ and $\{\Pi_i\}_{i\in I}\subset\mathrm{Rep}_k(\rG')_{[\lambda_{d,q}',\sim]}$, we have
$$\oplus_{i\in I}\mathrm{Hom}(\cP_{[\lambda_{d,q}',\sim]},\Pi_i)\cong\mathrm{Hom}(\cP_{[\lambda_{d,q}',\sim]},\oplus_{i\in I}\Pi_i).$$
The above equivalence can be deduced from the facts that the opposite parabolic restriction $\bar{r}_{\rL'}^{\rG'}$ commutes with direct sums and $q(\kappa_{max,b}')\otimes d(\cP(\mathcal{B}_t))$ is finite dimensional. In particular, $\cP_{[\lambda_{\rL}',\sim]}$ is the projective generator of the full-subcategory $\mathrm{Rep}_k(\rG')_{[\lambda_{\rL}',\sim]}$.

It is left to show that $\mathrm{Rep}_k(\rG')_{[\lambda_{d,q}',\sim]}$ is non-split. Without loss of generality, we assume that $d=1,q=1$. It is sufficient to show that for each non-trivial partition on the finite set $\{[J_{\rL}',g(\lambda_{\rL}')]\}_{g\in\mathbb{L},g(\lambda_{\rL}')\sim\lambda_{\rL}'}=\mathrm{I}_1\cup\mathrm{I}_2$, we construct a representation $\cP$ of which a decomposition

$$\cP\cong \cP_1\oplus\cP_2,$$
such that $\cP_1\in\mathrm{Rep}_k(\rG')_{\mathrm{I}_1}$ and $\cP_2\in\mathrm{Rep}_k(\rG')_{\mathrm{I}_2}$ is impossible.

Assume an above decomposition exits. Let $(J_{\rL}',\lambda_1')\in I_1$ and $(J_{\rL}',\lambda_2')\in I_2$, where $\lambda_i'\cong\kappa_{\rL}'\otimes\sigma_i'$ for $i=1,2$. By Lemma \ref{lemunionlambda} and Remark \ref{remconjsim}, after conjugation by an element in $T$ (see Remark \ref{defnofT}) we may assume that $i_{\mathbb{L}'}^{\mathbb{L}_{\text{max}}'}\sigma_1'$ and $i_{\mathbb{L}'}^{\mathbb{L}_{\text{max}}'}\sigma_2'$ belong to a same $\ell$-parablock. By the definition of $\ell$-parablock (see Definition \ref{defn2.5}), we can find a family of indecomposable projective objects $\{P_{h},h\leq H\}$ of $\mathbb{L}_{\max}'$ for an $H\in\mathbb{N}$, such that:
\begin{equation}
\label{interSCcP}
\mathbf{SC}(P_{h})\cap\mathbf{SC}(P_{h+1})\neq\emptyset,
\end{equation}
plus $\mathrm{Irr}(P_1)\cap\mathrm{Irr}(i_{\mathbb{L}'}^{\mathbb{L}_{\text{max}}'}\sigma_1')\neq\emptyset$ and $\mathrm{Irr}(P_H)\cap\mathrm{Irr}(i_{\mathbb{L}'}^{\mathbb{L}_{\text{max}}'}\sigma_2')\neq\emptyset$. Denote by $[h]:=\bigcup_{\sigma'\in\mathbf{SC}(P_{h})}[J_{\rL}',\kappa_{\rL}'\otimes\sigma']$. There exists $h_0\leq H$ such that $[h_0]\cap I_1\neq\emptyset$ and $[h_0+1]\cap I_2\neq\emptyset$. We deduce from Equation \ref{interSCcP} that one of the two properties below is verified
\begin{enumerate}
\item $[h_0]\cap I_2\neq\emptyset$,
\item $[h_0+1]\cap I_1\neq \emptyset$.
\end{enumerate}
For an arbitrary $b\in B$. It implies that either $i_{\rL_{\text{max}}'}^{\rG'}\kappa_{max,b}'\otimes P_{h_0}$ or $i_{\rL_{\text{max}}'}^{\rG'}\kappa_{max,b}'\otimes P_{h_0+1}$ is decomposable with respect to 
$$\mathrm{Rep}_k(\rG')_{[\lambda_{\rL}',\sim]}\cong\mathrm{Rep}_k(\rG')_{I_1}\times\mathrm{Rep}_k(\rG')_{I_2}.$$
A contradiction arises from Proposition \ref{formprop0016}. Hence $\mathrm{Rep}_k(\rG')_{[\lambda_{\rL}',\sim]}$ is non-split, and we finish the proof.

\end{proof}


\begin{rem}
\begin{itemize}
\item The projective generator constructed above is highly correlated to the decomposition of depth zero subcategory given by Lanard in \cite{La18}, which is the reason that our result is relevant to the idea of reduction to depth zero.
\item While the author is completing this version, a work on integral blocks of classical groups came to fruition \cite{HKSS}.
\end{itemize}
\end{rem}

\end{document}